\newcommand{\N}{\mathbb{N}}
\newcommand{\Z}{\mathbb{Z}}
\newcommand{\R}{\mathbb{R}}
\newcommand{\C}{\mathbb{C}}
\newcommand{\cfrak}{\mathfrak{c}}
\newcommand{\cbar}{\overline{\mathfrak{c}}}
\newcommand{\fA}{\mathfrak{A}}
\newcommand{\B}{\mathfrak{B}}
\newcommand{\K}{\mathfrak{K}}
\newcommand{\cS}{\mathcal{S}}
\newcommand{\cA}{\mathcal{A}}
\newcommand{\cH}{\mathcal{H}}
\newcommand{\Cl}{{\C\ell}}
\newcommand{\bigcupdot}{\dot{\bigcup}}
\newcommand{\hatotimes}{\mathbin{\widehat\otimes}}
\newcommand{\hatboxtimes}{\mathbin{\widehat\boxtimes}}
\DeclareMathOperator{\id}{id}
\DeclareMathOperator{\im}{im}
\DeclareMathOperator{\ind}{ind}
\DeclareMathOperator{\End}{End}
\DeclareMathOperator{\supp}{supp}
\DeclareMathOperator{\dist}{dist}
\DeclareMathOperator{\grad}{grad}
\theoremstyle{plain}
\newtheorem{thm}{Theorem}[section]
\newtheorem{lem}[thm]{Lemma}
\newtheorem{prop}[thm]{Proposition}
\newtheorem{cor}[thm]{Corollary}
\newtheorem*{mainthm1}{Theorem \ref{mainthm:indexformula}}
\newtheorem*{mainthm2}{Theorem \ref{thm:lambdaBundleTwist}}
\newtheorem*{mainthm3}{Theorem \ref{thm:Inverselimitindexformula}}
\newtheorem*{mainthm4}{Theorem \ref{mainthm:modKindexformula}}
\newtheorem*{mainthm5}{Theorem \ref{mainthm:moduleindexformula}}
\newtheorem*{mainthm6}{Theorem \ref{thm:pairinginterpretation}}
\newtheorem*{mainthmbordism}{Theorem
\ref{thm:BordismInvariance}}
\theoremstyle{definition}
\newtheorem{defn}[thm]{Definition}
\newtheorem{exam}[thm]{Example}
\newtheorem*{acknow}{Acknowledgements}
\theoremstyle{remark}
\newtheorem{rem}[thm]{Remark}
\title{Coarse indices of twisted operators}
\author{Christopher Wulff\thanks{Supported by the Program of Post-Doctoral
Scholarships at the Universidad Nacional Aut\'onoma de M\'exico.}}
\newtheorem*{msc}{Mathematics Subject Classification (2010)}
\newtheorem*{keyws}{Keywords}
\begin{document}

\maketitle
\begin{abstract}
Several formulas for computing coarse indices of twisted Dirac type operators are introduced. One type of such formulas is by composition product in $E$-theory. 
The other type is by module multiplications in $K$-theory, which also yields an index theoretic interpretation of the duality between Roe algebra and stable Higson corona.
\end{abstract}

\begin{msc}
58J22, 19K56, 19K35.
\end{msc}
\begin{keyws}
Coarse index theory, twisted Dirac operators, index formulas, $E$-theory, stable Higson corona.
\end{keyws}

\section*{Introduction}
The purpose of this paper is to give several $E$- and $K$-theoretic formulas for calculating coarse indices of twisted Dirac operators over complete Riemannian manifolds, which eventually also enable us to give an index theoretic interpretation of the duality between Roe algebra and stable Higson corona (cf.\ \cite{EmeMey}).

Recall that the index of twisted longitudinally elliptic  operators $D_E$ over foliations $(M,\mathcal{F})$ can be calculated from a fundamental class $[D]\in KK(C(M),C^*(M,\mathcal{F}))$ of the original operator $D$ and the $K$-theory class $[E]\in K^0(M)$ of the vector bundle $E\to M$ by composition product in $KK$-theory: 
\begin{equation*}
\ind(D_E)=[D]\circ [E]
\end{equation*}
(cf.\ \cite[Section 8.2]{Kordyukov}, \cite[Theorem 13]{Kucerovsky}).
This formula generalizes the index pairing between $K$-homology and $K$-theory, which allows to calculate the index of a twisted ($\Z_2$-)graded Dirac operator by $\ind(D_E)=\langle [D],[E]\rangle\in\Z$ from the fundamental class $[D]\in K_0(M)$ and again $[E]\in K^0(M)$.

A similar formula in coarse index theory had been missing so far. 
A fundamental class $[D]\in K_0(M)$ of a Dirac operator over a complete Riemannian manifold $M$ does exist and one obtains the coarse index $\ind(D)\in K_0(C^*(M))$ from it as the image under the coarse assembly map $\mu\colon K_*(M)\to K_*(C^*(M))$ \cite[Chapter 12]{HigRoe}. 
But at first glance\footnote{For a second glance, see Section \ref{sec:descent}.}, this fundamental class seems inappropriate for calculating coarse indices of twisted operators for two different reasons: first, 
one might be interested in twisting by more general vector bundles $E\to M$ than those which are trivial outside of a compact subset $K\subseteq M$ and thus yield elements of $K^0(M)$
and, second, the coarse index of the twisted operator is an element of $K_*(C^*M))$ whereas the index pairing yields only an integer.

Instead one might hope for a fundamental class in $KK(C_b(M,\K),C^*(M))$, because arbitrary vector bundles over $M$ correspond to projection valued functions $M\to\K$, $\K$ being the compact operators.
However, the $C^*$-algebra $C_b(M,\K)$ is too large to hope for such an element. First of all, it is non-separable, which is a problem because the definition of the composition product in $KK$-theory relies on separability. We circumvent this problem by using $E$-theory instead. The concrete picture of $E$-theory which is suitable for non-separable $C^*$-algebras will be reviewed in Section \ref{sec:Etheory}.

Second,  $C_b(M,\K)$ is too large in the sense that our construction yields only a fundamental class $\llbracket D\rrbracket\in E(\cA(M),C^*(M))$, where $\cA(M)\subseteq C_b(M,\K)$ is the sub-$C^*$-algebra generated by the smooth functions with bounded gradient (which is still non-separable). 
If $M$ is a manifold of bounded geometry, then $\cA(M)$ consists exactly of the uniformly continuous bounded functions $M\to\K$ and $K(\cA(M))$ is a stabilized version of the uniform $K$-theory $K_u^0(M)$ defined in \cite[Definition 4.1]{AlexUniform}.
A smooth projection $P\in\cA(M)$ yields a smooth vector bundle $E\to M$ with fibres $E_x:=\im(P(x))$ --we call such bundles vector bundles of bounded variation-- and a $K$-theory class $\llbracket E\rrbracket:=\llbracket P\rrbracket\in K(\cA(M))$.
The index of the twisted operator $\ind(D_E)$ is independent of the choice of a connection on $E$ because of bordism invariance and the expected index formula holds: $\ind(D_E)=\llbracket D\rrbracket\circ\llbracket E\rrbracket\in K(C^*(M))$.

In fact, we will use a much more general set-up by considering Dirac bundles whose fibres are finitely generated graded projective Hilbert-$A$-modules, $A$ a unital graded $C^*$-algebra. Indices of the associated $A$-linear Dirac operators $D$ are then elements of the $K$-theory of the Roe algebra with coefficients in $A$, $\ind(D)\in K(C^*(M,A))$. In particular this covers $\Cl_k$-linear Dirac operators, thereby also taking care of the index theory of ungraded Dirac operators by Bott periodicity.
Given another unital graded $C^*$-algebra $B$, we define $\cA^\infty(M,B)$ to be the algebra of functions $f\in C_b^\infty(M,B\hatotimes\widehat\K)$ with bounded gradient and $\cA(M,B)$ its closure in $C_b(M,B\hatotimes\widehat\K)$.
Here, $\widehat\K$ is the \emph{graded} $C^*$-algebra of compact operators.
The fundamental class of Definition \ref{def:ETheoryClassBoundedGradient} is then an element
$\llbracket D;B\rrbracket\in E(\cA(M,B),C^*(M,A\hatotimes B))$.

Although we define these $E$-theory classes by direct construction,
we shall establish a ``descent'' homomorphism 
\[K_n(M)\to E(\cA(M,B),C^*(M,\Cl_n\hatotimes B))\]
for manifolds of bounded geometry
in Section \ref{sec:descent} which allows to calculate the $E$-theory class $\llbracket D,B\rrbracket$ of a $\Cl_n$-linear Dirac operator $D$ from its $K$-homology class $[D]$.
It probably generalizes in some way to general $A$-linear Dirac operators, but we will not be pursue this any further in this paper, because we are happy with the direct definition of the $E$-theory classes.

Our first main theorem is the following index formula. The notion of twisted operator in the context of Hilbert module bundles is introduced in Lemma \ref{lem:twistedConnection} and Definition \ref{def:twistedoperator}.
\begin{mainthm1}
Let $M$ be a complete Riemannian manifold, $S\to M$ a Dirac $A$-bundle with associated Dirac operator $D$
and $E\to M$  a $B$-bundle of bounded variation.
Then the index of the twisted operator $D_E$ is
\[\ind(D_E)=\llbracket D;B\rrbracket\circ\llbracket E\rrbracket\,.\]
\end{mainthm1}

As a main technical tool we will prove a bordism invariance result for the fundamental classes. It lifts the bordism invariance of coarse indices (see also \cite{WulffBordism}) and is also of separate interest.
\begin{mainthmbordism}[Bordism invariance]
Let $W$ be a complete Riemannian manifold with a boundary $\partial W$ which decomposes into two complete Riemanian manifolds $M_0$, $M_1$ and assume that there are collar neighborhoods $M_0\times[0,2\varepsilon)$ of $M_0$ and $M_1\times(-2\varepsilon,0]$ of $M_1$ on which the Riemannian metric of $W$ is the product metric.
Let $S_{0}\to M_0$ and $S_1\to M_1$ be Dirac $A$-bundles with associated Dirac operators $D_0$, $D_1$, respectively, and assume that there is a Dirac $A\hatotimes\Cl_1$-bundle $S\to W$, which restricts to the product Dirac $A\hatotimes\Cl_1$-bundles $S_0\hatboxtimes S_{[0,2\varepsilon)}$ and $S_1\hatboxtimes S_{(-2\varepsilon,0]}$ over the respective collar neighborhoods. 
Then for every graded unital $C^*$-algebra $B$ the following diagram in $E$-theory commutes:
\[\xymatrix@C=3em{
&\cA(M_0,B)\ar[r]_-{\llbracket D_0;B\rrbracket}&C^*(M_0,A\hatotimes B)\ar[dr]_{(M_0\subseteq W)_*}&
\\\cA(W,B)\ar[ur]_{\text{restr.}}\ar[dr]^{\text{restr.}}
&&&C^*(W,A\hatotimes B)
\\&\cA(M_1,B)\ar[r]^-{\llbracket D_1;B\rrbracket}&C^*(M_1,A\hatotimes B)\ar[ur]^{(M_1\subseteq W)_*}&
}\]
\end{mainthmbordism}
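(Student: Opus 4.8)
The plan is to reduce the statement to a single fundamental-class construction on $W$ itself, together with functoriality of that construction under restriction and under inclusion of subspaces. The key observation is that the product Dirac $A\hatotimes\Cl_1$-bundle on a collar $M_i\times I$ has associated operator $D_i\hatotimes 1 + 1\hatotimes(\text{Dirac on }I)$, and on the half-line/interval factor the $\Cl_1$-linear operator $\partial_t$ has a well-understood index behaviour: its fundamental class over $[0,\infty)$ is, up to the boundary inclusion, exactly the "unit" that collapses the $\Cl_1$ and produces the Bott map. So the first step is to show that the composite
\[
\cA(M_i,B)\xrightarrow{\;\text{restr.}^{-1}?\;}\cA(M_i\times[0,2\varepsilon),B)\xrightarrow{\;\llbracket D_i\hatboxtimes\partial_t;B\rrbracket\;}C^*(M_i\times[0,2\varepsilon),A\hatotimes\Cl_1\hatotimes B)\xrightarrow{(M_i\subseteq W)_*}\cdots
\]
agrees with $(M_i\subseteq W)_*\circ\llbracket D_i;B\rrbracket$ after identifying $C^*(M_i\times[0,2\varepsilon),-)\simeq C^*(M_i,-)$ coarsely (the collar is coarsely equivalent to $M_i$). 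This is a purely local computation on the collar and should follow from the multiplicativity of the fundamental-class construction under external products $\hatboxtimes$ — a property that I expect is established (or is immediate from) the direct construction in Definition \ref{def:ETheoryClassBoundedGradient}, and from the standard fact that the $\Cl_1$-linear operator $\partial_t$ on $[0,\infty)$ has trivial (Bott-trivial) coarse index.

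The second, and main, step is to build an explicit $E$-theory morphism out of $\cA(W,B)$ into a suitable "Roe algebra of the collared manifold" that simultaneously restricts to the two boundary contributions. The idea is standard in bordism arguments: consider the non-complete manifold $W$ with its two product collars and form an operator on $W$ using the Dirac $A\hatotimes\Cl_1$-bundle $S$; the extra $\Cl_1$-direction is precisely what makes the two boundary pieces appear with opposite "signs" so that, pushed into $C^*(W,A\hatotimes B)$ (note the $\Cl_1$ is absorbed, by Bott periodicity / the collapse of $\Cl_1$, since $W$ is odd-dimensionally collared relative to the $M_i$), the difference of the two images vanishes. Concretely I would:
(i) attach infinite cylinders $M_i\times(-\infty,0]$ and $M_i\times[0,\infty)$ to get a complete manifold $\widehat W\supseteq W$ carrying a Dirac $A\hatotimes\Cl_1$-bundle extending $S$ via the product structure;
(ii) observe that the fundamental class $\llbracket D_{\widehat W};B\rrbracket$ over $\widehat W$ lives in $E(\cA(\widehat W,B),C^*(\widehat W,A\hatotimes\Cl_1\hatotimes B))$ and that $C^*(\widehat W,-)$ is coarsely the same as $C^*(W,-)$ (adding a cylinder is a flasque-type / coarsely-trivial modification, or at worst is handled by a Mayer–Vietoris / excision argument on the coarse side);
(iii) compute the restriction of $\llbracket D_{\widehat W};B\rrbracket$ to each cylindrical end using step one, getting exactly $(M_i\subseteq W)_*\circ\llbracket D_i;B\rrbracket$ for $i=0$ and a reversed-orientation version for $i=1$; and
(iv) conclude that the two boundary morphisms, pushed into $K$-theory of $C^*(W,A\hatotimes B)$ (after collapsing $\Cl_1$), coincide, because they are the restrictions of one and the same morphism $\llbracket D_{\widehat W};B\rrbracket$ along the two ends of a connected manifold and the coarse structure glues them.

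The hard part will be step (iii)–(iv) made rigorous at the level of $E$-theory morphisms rather than just their induced $K$-theory maps: I need the fundamental-class construction to be sufficiently functorial and local that "restriction to a cylindrical end" commutes with $\llbracket - ;B\rrbracket$, and that the $\Cl_1$-twist genuinely interpolates between the two ends. I anticipate this requires a homotopy (an explicit path of operators on $\widehat W$, e.g. rescaling the metric or shifting the support along the cylinders) together with the homotopy invariance of $E$-theory classes, plus a careful bookkeeping of the graded tensor factors $A$, $\Cl_1$, $B$, $\widehat\K$ and of how $\Cl_1$ is absorbed on the target side (the identification $C^*(W,A\hatotimes\Cl_1\hatotimes B)$ with a shift of $C^*(W,A\hatotimes B)$). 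Everything else — multiplicativity under $\hatboxtimes$, coarse invariance of the Roe algebra under adding product collars, and functoriality under the inclusions $(M_i\subseteq W)$ — should be routine given the direct definition of the classes and the standard machinery of coarse geometry recalled in the earlier sections.
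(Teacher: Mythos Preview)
Your overall architecture is close to the paper's: both arguments pass to a complete extension, invoke the exterior-product compatibility of the fundamental class (the paper's Lemma~\ref{lem:extProd}), and reduce the bordism statement to comparing two maps coming from the two boundary collars, each precomposed with a single class over the interior of $W$. The paper also uses the Bott and dual Bott elements to manage the extra $\Cl_1$ factor, much as you anticipate.

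However, your step (iv) contains a genuine gap. Saying that the two boundary contributions ``coincide, because they are the restrictions of one and the same morphism $\llbracket D_{\widehat W};B\rrbracket$ along the two ends of a connected manifold and the coarse structure glues them'' is not an argument: restricting a single $E$-theory morphism to two disjoint subspaces and then pushing both forward into the same target yields no a~priori relation between the results --- that relation is precisely what the theorem asserts. The paper fills exactly this gap with a concrete nullhomotopy. After reducing both compositions to $\llbracket D_{\mathring W};B\rrbracket\hatotimes 1_{\Cl_1}$ precomposed with two $*$-homomorphisms
\[
\kappa_0,\kappa_1\colon \cA(W,B)\hatotimes C_0(0,\varepsilon)\to\cA(\mathring W,B)
\]
(one for each collar, with the sign from the reflection $x\mapsto -x$ on the interval already absorbed), the key observation is that $\kappa_0$ and $\kappa_1$ have \emph{orthogonal images}, so $\kappa_0+\kappa_1$ is again a $*$-homomorphism representing the sum of their $E$-theory classes. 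This sum is then homotoped to zero via $f\hatotimes\psi\mapsto f\cdot\psi(t\cdot\dist_{\partial W})$, $t\in[0,1]$, using that $\grad(f)$ is bounded. This explicit homotopy through the distance-to-boundary function is the heart of the proof and is not supplied by your ``shifting the support along the cylinders'' heuristic.

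A secondary issue: in your step (ii), attaching infinite cylinders does \emph{not} produce a coarse equivalence $\widehat W\simeq W$ unless the $M_i$ are compact, so your proposed identification of Roe algebras needs more work than you indicate. The paper sidesteps this by working with the restricted class $\llbracket D_{\mathring W};B\rrbracket$ of Lemma~\ref{lem:relativeasymptoticmorphism}, whose target is already $C^*(W,S\hatotimes B)$; the extension to a complete manifold enters only through Lemma~\ref{lem:independence} and never requires comparing Roe algebras of $W$ and an enlarged space.
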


The index formula of Theorem \ref{mainthm:indexformula} covers probably the most common bundles $E$, but nevertheless it is possible to derive a similar formula for arbitrary vector bundles. This will be covered in Section \ref{sec:generalbundles}:
Assume that $E$ is given by a smooth projection $P\in C_b^\infty(M,B\hatotimes\widehat\K)$. We can always find a smooth function $\lambda\colon M\to [1,\infty)$ such that 
$\lambda^{-1}\cdot\grad(P)$ is bounded, i.\,e.\ that it is contained in 
\[\cA_\lambda^\infty(M,B):=\{f\in C_b^\infty(M,B\hatotimes\widehat\K)\mid \lambda^{-1}\cdot\grad(f)\text{ is bounded}\}\,.\]
We can treat this case by conformally changing the metric on $M$ by multiplying with $\lambda^2$. Denote $M$ with this new metric by $M^\lambda$ and the suitably changed Dirac operator by $D^\lambda$.
Then the closure $\cA_\lambda(M,B)$ of $\cA^\infty_\lambda(M,B)$ is equal to $\cA(M^\lambda,B)$ and we can apply Theorem \ref{mainthm:indexformula} to obtain:
\begin{mainthm2}
The index of the twisted operator $D_E$ is the $E$-theory product
\[\ind(D_E)=(c_\lambda)_*\circ \llbracket D^\lambda;B\rrbracket \circ \llbracket E\rrbracket\in K(C^*(M,A\hatotimes B))\]
of the element induced by the coarse map $c_\lambda:=\id\colon M^\lambda\to M$, the $E$-theory class $\llbracket D^\lambda;B\rrbracket\in E(\cA(M^\lambda,B),C^*(M^\lambda,A\hatotimes B))$ and the $K$-theory class $\llbracket E\rrbracket\in K(\cA_\lambda(M,B))$ determined by the projection $P$.
\end{mainthm2}
Furthermore, the $(c_\lambda)_*\circ\llbracket D^\lambda;B\rrbracket$ combined yield an element $\underleftarrow{\llbracket D;B\rrbracket}$ in the inverse limit
\[\underleftarrow{E}(C_b(M,B\hatotimes\widehat\K),C^*(M,A\hatotimes B)):=\varprojlim_\lambda E(\cA_\lambda(M,B),C^*(M,A\hatotimes B))\,.\]
This group pairs with 
$K(C_b(M,B\hatotimes\widehat\K))=\varinjlim_\lambda K(\cA_\lambda(M,B))$
to give elements of $K(C^*(M,A\hatotimes B))$, thus condensing Theorem \ref{thm:lambdaBundleTwist} into one formula:
\begin{mainthm3}
If the $B$-bundle $E$ is determined by a smooth projection valued function
$P\colon M\to B\hatotimes\widehat\K$, then 
$\ind(D_E)=\underleftarrow{\llbracket D;B\rrbracket}\circ \llbracket E\rrbracket$, where $\llbracket E\rrbracket$ denotes the $K$-theory class of $P$.
\end{mainthm3}

From Section \ref{sec:dividingoutcompacts} on we change our view of coarse index theory by considering coarse indices not as elements of $K(C^*(M,A))$ but as elements of $K(C^*_{/\K}(M,A))$, where $C^*_{/\K}(M,A)$ is the quotient of $C^*(M,A)$ by the compact operators.
This is motivated by the observation that coarse index theory on compact manifolds behaves quite differently from coarse index theory on non-compact complete Riemannian manifolds.
Indeed, for compact $M$ the coarse index is the same as the classical $A$-index under a canonical isomorphism $K(C^*(M,A))\cong K(A)$ whereas
coarse indices of operators over non-compact connected manifolds only contain large-scale information and everything that happens on compact subsets is irrelevant.
The new index in $K(C^*_{/\K}(M,A))$ does not have this deficiency, because it vanishes whenever $M$ is compact, but preserves all information if $M$ is non-compact and connected.

A big advantage of this new set-up is that Dirac operators $D$ which are defined \emph{outside of a compact subset $K\subseteq M$} now have coarse indices $\ind_{/\K}(D)\in K(C^*_{/\K}(M,A))$. Furthermore, we will construct fundamental $E$-theory classes 
\[\llbracket D;B\rrbracket_{/\K/C_0}\in E(\cA_{/C_0}(M,B),C^*_{/\K}(M,A\hatotimes B))\]
where
$\cA_{/C_0}(M,B):=\cA(M,B)/C_0(M,B\hatotimes\K)$. 
The bundles $E\to M\setminus K$ which determine classes in the $K$-theory of this $C^*$-algebra are the \emph{$B$-bundles of bounded variation defined outside of $K$}.
The corresponding index formula is:
\begin{mainthm4}
Let $M$ be a complete Riemannian manifold, $S\to M\setminus K$ a Dirac $A$-bundle  with Dirac operator $D$ and $E\to M\setminus K$ a $B$-bundle of bounded variation defined outside of a compact subset $K\subseteq M$. Then the index of the twisted operator $D_E$ is
\[\ind_{/\K}(D_E)=\llbracket D;B\rrbracket_{/\K/C_0}\circ \llbracket E\rrbracket_{/C_0}\,.\]
\end{mainthm4}

A completely different type of index formula is obtained in Section \ref{sec:duality} by specializing even further to vector bundles $E\to M\setminus K$ which are defined by functions $P\in\cA(M,B)$ which are projection valued outside of a compact subset $K\subseteq M$ and whose gradients vanish at infinity. 
Assuming that $M$ has bounded geometry, these projections are exactly those mapping to projections in the stable Higson corona with coefficients in $B$ of \cite{EmeMey}: $P\in \cfrak(M,B)$, thus defining $\llbracket E\rrbracket_{\cfrak}\in K(\cfrak(M,B))$.

Now, there are associative products
\[K_*(C_{/\K}^*(M,A))\hatotimes K_*(\cfrak(M,B))\to K_*(C_{/\K}^*(M,A\hatotimes B))\,,\]
which are in fact module multiplications in the case $B=\C$. Note that $K_*(C_{/\K}^*(M,A))$ behaves somewhat like a ``coarse homology theory'' while $K_*(\cfrak(M,B))$ behaves like a ``coarse cohomology theory''. The multiplication can thus be interpreted as a cap product.

This module structure allows the calculation of the index of the twisted operator $D_E$ even without knowing the fundamental class of $D$. Only the index of $D$ and the class of $E$ are required:
\begin{mainthm5}
Let $M$ be a complete Riemannian manifold of bounded geometry, $D$ the Dirac operator of a Dirac $A$-bundle $S\to M\setminus K$ and $E\to M\setminus K$ a
$B$-bundle of vanishing variation defined outside of the compact subset $K\subseteq M$.
Then 
\[\ind_{/\K}(D_E)=\ind_{/\K}(D)\cdot \llbracket E\rrbracket_{\cfrak}\,.\]
\end{mainthm5}
This theorem is another instance of 
``calculating indices by module multiplication'', the first being \cite[Corollary 11.3]{WulffFoliations}. Furthermore, the author believes that this type of index formula might also serve as a prototype for similar results in other types of $C^*$-algebraic index theory.

Recall that the stable Higson corona, which appears here, was originally introduced in \cite{EmeMey} to construct a coarse co-assembly map
\[\mu^*\colon \tilde K_{-*+1}(\cfrak(M,B))\to KX^*(M,B)\]
which is dual to the well-studied coarse assembly map
\[\mu\colon KX_*(M,A)\to K_*(C^*(M,A))\,.\]
At first glance it might cause a bit of a stomach-ache why the domain of the coarse co-assembly map is not the $K$-homology group $K^*(C^*(M,B))$.
Our Theorem \ref{mainthm:moduleindexformula} can be seen as an index theoretic interpretation of the duality between Roe algebra and stable Higson corona, suggesting that the stable Higson corona is indeed a good choice. 

Emerson and Meyer motivated this duality furthermore by constructing a pairing between $K_*(C^*(M,\C))$ and $\tilde K_{-*+1}(\cfrak(M,\C))$ which is compatible with assembly, co-assembly and the pairing between $KX_*$ and $KX^*$ in the obvious way. In our context of a non-compact, complete, connected Riemannian manifold of bounded geometry, this pairing corresponds after slight modification to the composition of the module multiplication
$K_*(C_{/\K}^*(M,\C))\hatotimes K_{-*+1}(\cfrak(M,\C))\to K_1(C_{/\K}^*(M,\C))$ and the boundary map $K_1(C_{/\K}^*(M,\C))\to K_0(\widehat\K)\cong\Z$.
These pairings, which we construct in slight more generality in Section \ref{sec:duality}, are stabilized versions of the pairing of \cite{YuKIndices} and may be used to detect non-vanishing of indices.
Finally, our results immediately yield the following index theoretic interpretation of the pairing:
\begin{mainthm6}
Let $M$ be a complete non-compact Riemannian manifold of bounded geometry, $D$ the Dirac operator of a Dirac $A$-bundle $S\to M\setminus K$ and $E\to M\setminus K$ a
$B$-bundle of vanishing variation defined outside of the compact subset $K\subseteq M$.
Then the pairing between $\ind_{/\K}(D)$ and $\llbracket E\rrbracket_{\cfrak}$ is an obstruction to the extendibility of $D_E$ to all of $M$.
\end{mainthm6}

Finally, in  Section \ref{sec:Application} we use the partitioned manifold theorem to show that (in certain set-ups) the connecting homomorphism $\partial\colon K_1(C^*_{/\K}(M,A))\to K_0(A)$ maps the index $\ind_{/\K}(D)$ of an $A\hatotimes\Cl_1$-linear Dirac operator $D_M$ defined outside of a compact subset to the $A$-index of a suitable $A$-linear Dirac operator $D_N$ on a compact connected codimension one submanifold $N\subset M$.
This map is therefore the natural choice for detecting  non-vanishing of the modulo-$\K$-indices.

Interestingly, this method appears to become very powerful in combination with the module structure: In Example \ref{ex:Hopfbundletwist} we will illustrate this by showing that $\partial$ maps the modulo-$\K$-index of the ($\Cl_3$-linear) spinor Dirac operator $\slashed D$ over $\R^3$ to $0$, but after twisting with the pullback $E\to\R^3\setminus\{0\}$ of the nontrivial vector bundle $H\to S^2$ under the radial projection we have 
$\partial(\ind_{/\K}(\slashed D)\cdot\llbracket E\rrbracket_{\cfrak})=\partial(\ind_{/\K}(\slashed D_E))\not=0$, and this finally implies non-triviality of $\ind_{/\K}(\slashed D)$.

\begin{acknow}
The author is grateful to Thomas Schick, Rudolf Zeidler and Alexander Engel for valuable discussions and helpful comments. 
\end{acknow}

\tableofcontents

\section{The asymptotic category and $E$-theory}\label{sec:Etheory}
The central building block of the results presented in this paper is $E$-theory of non-separable $\Z_2$-graded $C^*$-algebras. For this reason, we begin with a whole section reviewing its definition and basic properties.
This section is in large parts identical with \cite[Section 8]{WulffFoliations}, but the focus will partly be on different properties.

We use the picture of $E$-theory presented in \cite{HigGue}. 
A more detailed exposition of $E$-theory, which is based on a slightly different definition, is found in \cite{GueHigTro}.
Furthermore, we treat $K$-theory of graded $C^*$-algebras simply as the special case $K(B)=E(\C,B)$, which is essentially the spectral picture of $K$-theory of \cite{Trout}.

At this point it is necessary to fix some terminology concerning graded $C^*$-algebras.
In this paper, all $C^*$-algebras are $\Z_2$-graded $C^*$-algebras; ungraded $C^*$-algebras are treated as trivially graded. 
For simplicity we will always drop the ``$\Z_2$-'' and simply speak of ``gradings'' in all contexts.

The tensor product we use is always the \emph{maximal} graded tensor product which we denote by $\hatotimes$.
This is in contrast to the usual convention, which uses this symbol for the minimal tensor product. However, the minimal tensor product does not appear in this paper -- except at one place where it is explicitly mentioned. Recall that there is a canonical symmetry isomorphism $A\hatotimes B\cong B\hatotimes A$ which sends $a\hatotimes b$ to $(-1)^{\partial a\partial b}b\hatotimes a$.

Here are the most important graded $C^*$-algebras, which we encounter in this paper:
\begin{itemize}
\item By $\cS$ we mean the $C^*$-algebra $C_0(\R)$ but equipped with the grading into even and odd functions.
\item Given a graded Hilbert-$A$-module $\cH=\cH^+\oplus\cH^-$, the $C^*$-algebras of compact and adjointable operators are denoted by $\K_A(\cH)$ and $\B_A(\cH)$, respectively, and we consider them equipped with the grading into diagonal and off-diagonal matrices. In the Hilbert space case, i.\,e.\ $A=\C$, we drop the index.
\item Let $\ell^2$ be our favourite ungraded infinite dimensional separable Hilbert space (e.\,g.\ $\ell^2=\ell^2(\N)$) and  $\widehat{\ell^2}=\ell^2\oplus\ell^2$ the graded Hilbert space with even and odd part equal to $\ell^2$. Abbreviate $\K:=\K(\ell^2)$ and $\widehat{\K}:=\K(\widehat{\ell^2})$. Furthermore, for $i,j\in\N$ let $\C^{i,j}:=\C^i\oplus\C^j$ graded as indicated. We obtain the graded matrix algebras $M_{i,j}(\C)=\K(\C^{i,j})=\B(\C^{i,j})$.
\item Given a Euclidean vector space $V$ we denote its complex Clifford algebra by $\Cl(V)$ and equip it with the grading in which $V\subseteq (\Cl(V))^{\mathrm{odd}}$.
 In particular for $V=\R^k$ with standard basis $e_1,\dots,e_k$ we write $\Cl_k:=\Cl(\R^k)$ and recall that it is generated by $e_1,\dots, e_k$ under the relations
$e_ie_j+e_je_i=-2\delta_{ij}$. For  all $k\in\N$ we have $\Cl_{2k}\cong M_{k,k}(\C)$.
Note furthermore that there are canonical $*$-isomorphisms 
\begin{align}
\B_{A\hatotimes\Cl_k}(\cH\hatotimes\Cl_k)
&\cong\B_A(\cH)\hatotimes\Cl_k\qquad
\label{eq:AdjointableClifford}
\end{align}
for every Hilbert $A$-module $\cH$.
\end{itemize}

Finally it should be said that, throughout the whole paper, equivalence classes will always be denoted by overlines.

We can now proceed with the actual topic of this section, namely the asymptotic methods.

\begin{defn}[{\cite[Definition 2.2]{HigGue},\cite[Definition 1.1]{GueHigTro}}]
Let $B$ be a graded $C^*$-algebra. The \emph{asymptotic $C^*$-algebra of $B$} is 
\[\mathfrak{A}(B):=C_b([1,\infty),B)/C_0([1,\infty),B).\]
$\mathfrak{A}$ is a functor from the category of $\Z_2$-graded $C^*$-algebras into itself.

An \emph{asymptotic morphisms} is a graded $*$-homomorphisms $A\to \mathfrak{A}(B)$.
\end{defn}

\begin{defn}[{\cite[Definition 2.3]{HigGue},\cite[Definition 2.2]{GueHigTro}}]
Let $A,B$ be graded $C^*$-algebras.
The \emph{asymptotic functors} $\mathfrak{A}^0,\mathfrak{A}^1,\dots$ are defined by $\mathfrak{A}^0(B)=B$ and
\[\mathfrak{A}^n(B)=\mathfrak{A}(\mathfrak{A}^{n-1}(B)).\]
Two $*$-homomorphisms $\phi^0,\phi^1\colon A\to\mathfrak{A}^n(B)$ are $n$-homotopic if there exists a $*$-homomorphism
$\Phi\colon A\to\mathfrak{A}^n(B[0,1])$, called $n$-homotopy between $\phi^0,\phi^1$, from which the $*$-homomorphisms $\phi^0,\phi^1$ are recovered as the compositions
\[A\xrightarrow{\Phi}\mathfrak{A}^n(B[0,1])\xrightarrow{\text{evaluation at }0,1}\mathfrak{A}^n(B).\]
\end{defn}

\begin{lem}[{\cite[Proposition 2.3]{GueHigTro}}]
The relation of $n$-homotopy is an equivalence relation on the set of $*$-homomorphisms from $A$ to $\mathfrak{A}^n(B)$.
\qed
\end{lem}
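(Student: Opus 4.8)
The plan is to verify the three properties of an equivalence relation, in each case by transporting along $*$-homomorphisms of the coefficient algebra and invoking functoriality of $\mathfrak{A}^n$. Throughout write $\mathrm{ev}_0,\mathrm{ev}_1\colon B[0,1]\to B$ for the two endpoint evaluations and $c\colon B\to B[0,1]$ for the inclusion of constant functions, so that $\mathrm{ev}_i\circ c=\id_B$. By definition, an $n$-homotopy from $\phi^0$ to $\phi^1$ is a $*$-homomorphism $\Phi\colon A\to\mathfrak{A}^n(B[0,1])$ with $\mathfrak{A}^n(\mathrm{ev}_i)\circ\Phi=\phi^i$ for $i=0,1$.

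\emph{Reflexivity and symmetry.} Given $\phi\colon A\to\mathfrak{A}^n(B)$, the composite $\mathfrak{A}^n(c)\circ\phi$ satisfies $\mathfrak{A}^n(\mathrm{ev}_i)\circ\mathfrak{A}^n(c)\circ\phi=\mathfrak{A}^n(\mathrm{ev}_i\circ c)\circ\phi=\phi$ for $i=0,1$, hence is an $n$-homotopy from $\phi$ to itself. For symmetry, let $r\colon B[0,1]\to B[0,1]$ be the $*$-isomorphism $(rf)(t)=f(1-t)$, for which $\mathrm{ev}_0\circ r=\mathrm{ev}_1$ and $\mathrm{ev}_1\circ r=\mathrm{ev}_0$; if $\Phi$ is an $n$-homotopy from $\phi^0$ to $\phi^1$, then $\mathfrak{A}^n(r)\circ\Phi$ is one from $\phi^1$ to $\phi^0$.

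\emph{Transitivity.} This is the only step requiring work. Suppose $\Phi$ is an $n$-homotopy from $\phi^0$ to $\phi^1$ and $\Psi$ one from $\phi^1$ to $\phi^2$, so that $\mathfrak{A}^n(\mathrm{ev}_1)\circ\Phi=\phi^1=\mathfrak{A}^n(\mathrm{ev}_0)\circ\Psi$. Form the pullback $C:=\{(f,g)\in B[0,1]\oplus B[0,1]\mid f(1)=g(0)\}$ with projections $p_1,p_2\colon C\to B[0,1]$, so $\mathrm{ev}_1\circ p_1=\mathrm{ev}_0\circ p_2$. The key claim is that $\mathfrak{A}^n$ sends this pullback square to a pullback square, i.e.
\[\mathfrak{A}^n(C)\;=\;\mathfrak{A}^n(B[0,1])\times_{\mathfrak{A}^n(B)}\mathfrak{A}^n(B[0,1])\,.\]
Granting this, the pair $(\Phi,\Psi)$ — which lands in the right-hand side by the displayed compatibility — lifts to a $*$-homomorphism $\Theta\colon A\to\mathfrak{A}^n(C)$ with $\mathfrak{A}^n(p_1)\circ\Theta=\Phi$ and $\mathfrak{A}^n(p_2)\circ\Theta=\Psi$. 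Finally, the reparametrization $\sigma\colon B[0,1]\xrightarrow{\ \cong\ }C$, $\sigma(h)=\bigl(h(\cdot/2),\,h((\cdot+1)/2)\bigr)$, satisfies $\mathrm{ev}_0\circ p_1\circ\sigma=\mathrm{ev}_0$ and $\mathrm{ev}_1\circ p_2\circ\sigma=\mathrm{ev}_1$, whence $\Xi:=\mathfrak{A}^n(\sigma^{-1})\circ\Theta$ obeys $\mathfrak{A}^n(\mathrm{ev}_0)\circ\Xi=\mathfrak{A}^n(\mathrm{ev}_0\circ p_1)\circ\Theta=\mathfrak{A}^n(\mathrm{ev}_0)\circ\Phi=\phi^0$ and similarly $\mathfrak{A}^n(\mathrm{ev}_1)\circ\Xi=\phi^2$; so $\Xi$ is an $n$-homotopy from $\phi^0$ to $\phi^2$.

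\emph{The main obstacle} is thus the displayed pullback identity, and it is where the only genuinely analytic input enters. It follows once one knows that $\mathfrak{A}$, hence $\mathfrak{A}^n$ by induction, preserves short exact sequences: applying $\mathfrak{A}^n$ to the evident morphism of extensions $0\to\ker(\mathrm{ev}_0)\to C\xrightarrow{p_1}B[0,1]\to0$ and $0\to\ker(\mathrm{ev}_0)\to B[0,1]\xrightarrow{\mathrm{ev}_0}B\to0$ (with vertical maps $\id$, $p_2$, $\mathrm{ev}_1$, which agree with the identity on kernels) exhibits the square of the two remaining corners as fitting into a morphism of short exact sequences that is the identity on kernels, and such a square is automatically Cartesian. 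Exactness of $\mathfrak{A}$ reduces, by a snake-lemma argument applied to the defining quotient $\mathfrak{A}(B)=C_b([1,\infty),B)/C_0([1,\infty),B)$, to the statement that $C_b([1,\infty),-)$ and $C_0([1,\infty),-)$ carry surjective $*$-homomorphisms to surjections; that in turn amounts to producing a continuous bounded lift of a $B/I$-valued function along the surjection $B\to B/I$, which one obtains by composing with a continuous bounded section furnished by the Bartle--Graves selection theorem. Everything else in the argument is formal functoriality.
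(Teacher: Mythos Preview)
The paper does not supply a proof of this lemma at all: it is stated with a citation to \cite[Proposition 2.3]{GueHigTro} and closed immediately with \qed. So there is nothing in the paper to compare your argument against.

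That said, your proof is correct and is essentially the standard one (and, to my knowledge, the one given in \cite{GueHigTro}). Reflexivity and symmetry are trivial, and for transitivity you reduce to showing that $\mathfrak{A}^n$ preserves the relevant pullback square, which in turn follows from exactness of $\mathfrak{A}$. Your justification of exactness via the Bartle--Graves selection theorem is the right idea; one small point worth making explicit is that the continuous section $s\colon B/I\to B$ can be taken with $s(0)=0$ (replace $s$ by $s-s(0)$, noting $s(0)\in I$), so that $s$ also carries $C_0([1,\infty),B/I)$ into $C_0([1,\infty),B)$, which you need both for surjectivity at the $C_0$ level and for identifying the kernel of $\mathfrak{A}(B)\to\mathfrak{A}(B/I)$ with $\mathfrak{A}(I)$. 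With that detail in place your argument goes through.
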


\begin{defn}[{\cite[Definition 2.4]{HigGue},\cite[Definition 2.6]{GueHigTro}}]
Let $A,B$  be  graded $C^*$-algebras. Denote by $\llbracket A,B\rrbracket_n$ the set of $n$-homotopy classes of $*$-homomorphisms  from $A$ to $\mathfrak{A}^n(B)$.
\end{defn}

There are two natural transformations $\mathfrak{A}^n\to\mathfrak{A}^{n+1}$:
The first is defined by including $\mathfrak{A}^n(B)$ into $\mathfrak{A}^{n+1}(B)=\mathfrak{A}(\mathfrak{A}^n(B))$ as constant functions.
The second is defined by applying the functor $\mathfrak{A}^n$ to the inclusion of $B$ into $\mathfrak{A}B$ as constant functions.
Both of them define maps 
$\llbracket A,B\rrbracket_n\to\llbracket A,B\rrbracket_{n+1}.$
\begin{lem}[{\cite[Proposition 2.8]{GueHigTro}}]
The above natural transformations define the same map $\llbracket A,B\rrbracket_n\to\llbracket A,B\rrbracket_{n+1}$. \qed
\end{lem}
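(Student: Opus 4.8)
\emph{Proof strategy.}
Write $\iota_C\colon C\to\mathfrak{A}(C)$ for the embedding of $C$ as constant functions. For a $*$-homomorphism $\phi\colon A\to\mathfrak{A}^n(B)$ the first natural transformation produces $\iota_{\mathfrak{A}^n(B)}\circ\phi$ (using $\mathfrak{A}^{n+1}=\mathfrak{A}\circ\mathfrak{A}^n$) and the second produces $\mathfrak{A}^n(\iota_B)\circ\phi$ (using $\mathfrak{A}^{n+1}=\mathfrak{A}^n\circ\mathfrak{A}$), and I must show these are $(n{+}1)$-homotopic. Since composing an $(n{+}1)$-homotopy between two $*$-homomorphisms $\mathfrak{A}^n(B)\to\mathfrak{A}^{n+1}(B)$ with the fixed $*$-homomorphism $\phi$ produces an $(n{+}1)$-homotopy between the two composites, it suffices to homotope the maps $\iota_{\mathfrak{A}^n(B)}$ and $\mathfrak{A}^n(\iota_B)$ themselves. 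The functor $\mathfrak{A}$ carries homotopies to homotopies — one uses the canonical isomorphism $\mathfrak{A}(D[0,1])\cong\mathfrak{A}(D)[0,1]$, available because $[0,1]$ is compact — so, applying $\mathfrak{A}^k$ and concatenating along the chain $\iota_{\mathfrak{A}^n(B)},\ \mathfrak{A}^1(\iota_{\mathfrak{A}^{n-1}(B)}),\ \dots,\ \mathfrak{A}^n(\iota_B)$ of $*$-homomorphisms $\mathfrak{A}^n(B)\to\mathfrak{A}^{n+1}(B)$ — two consecutive terms of which are $\mathfrak{A}^k$ applied to the pair $(\iota_{\mathfrak{A}(C)},\mathfrak{A}(\iota_C))$ at $C=\mathfrak{A}^{n-k-1}(B)$ — the whole statement reduces to the case $n=1$: for every graded $C^*$-algebra $C$ the $*$-homomorphisms $\iota_{\mathfrak{A}(C)}$ and $\mathfrak{A}(\iota_C)$ from $\mathfrak{A}(C)$ to $\mathfrak{A}^2(C)$ are homotopic.

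For this I picture $\mathfrak{A}^2(C)=\mathfrak{A}(\mathfrak{A}(C))$ via classes of bounded continuous $C$-valued functions on the quadrant $[1,\infty)^2$, the first coordinate $t$ belonging to the outer $\mathfrak{A}$ and the second $s$ to the inner one; then $\iota_{\mathfrak{A}(C)}(x)$ is represented by a function independent of $t$, $\mathfrak{A}(\iota_C)(x)$ by one independent of $s$, and the two are exchanged by interchanging the coordinates. The homotopy $\mathfrak{A}(C)\to\mathfrak{A}^2(C[0,1])$ is then obtained by \emph{rotating the coordinate $s$ onto the coordinate $t$}: choose a continuous family $\gamma_u\colon[1,\infty)^2\to[1,\infty)$, $u\in[0,1]$, with $\gamma_0=\mathrm{pr}_s$ and $\gamma_1=\mathrm{pr}_t$, and send a class with representative $f$ to the class of $(t,s,u)\mapsto f(\gamma_u(t,s))$. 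Evaluation at $u=0,1$ returns the two maps, and in view of the reduction above this finishes the proof.

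The one nontrivial point — and the step I expect to be the main obstacle — is to verify that this prescription really defines a $*$-homomorphism into $\mathfrak{A}^2(C[0,1])$, i.e.\ that each reparametrised class genuinely lies in $\mathfrak{A}^2(C[0,1])$ and depends norm-continuously on $u$. The difficulty is that a class in $\mathfrak{A}(C)$ need not admit a uniformly continuous representative, so for the naive straight-line choice $\gamma_u(t,s)=(1-u)s+ut$ the putative element need not be continuous in the outer variable $t$. The remedy is to first replace the given asymptotic morphism by a homotopic one that \emph{is} uniformly continuous — routine when $A$ is separable (reparametrise the morphism so as to slow it down), and requiring the more careful bookkeeping of \cite{GueHigTro} in general — after which the above $\gamma_u$ works verbatim. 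Everything else (the reduction to $n=1$, functoriality of $\mathfrak{A}$ on homotopies, and concatenation of homotopies) is formal.
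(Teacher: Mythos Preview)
The paper does not give its own proof of this lemma; it is simply cited from \cite{GueHigTro} and marked \qed. So there is nothing in the paper to compare your argument against directly. That said, your proposal contains a genuine gap.

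The problem is the internal tension between your reduction strategy and your proposed fix. You first eliminate $\phi$ entirely and claim it suffices to construct, for every $C$, a $*$-homomorphism $H\colon\mathfrak{A}(C)\to\mathfrak{A}^2(C[0,1])$ homotoping $\iota_{\mathfrak{A}(C)}$ and $\mathfrak{A}(\iota_C)$. But such a universal $H$ does not exist. Your rotation formula $[f]\mapsto[(t,s,u)\mapsto f((1-u)s+ut)]$ fails precisely for the reason you identify: take $C=\C$ and $f(r)=\sin(r^2)$; then for $t_1\neq t_2$ one has
\[
\limsup_{s\to\infty}\sup_{u\in[0,1]}\big|f((1-u)s+ut_1)-f((1-u)s+ut_2)\big|=2,
\]
so the outer map $t\mapsto[s\mapsto(u\mapsto f(\gamma_u(t,s)))]$ is not continuous into $\mathfrak{A}(C[0,1])$. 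The remedy you propose --- replace the asymptotic morphism by a uniformly continuous one --- is unavailable after the reduction: once $\phi$ has been discarded, the input to $H$ is an \emph{arbitrary} element of $\mathfrak{A}(C)$, and not every such element admits a uniformly continuous representative (the class of $\sin(r^2)$ does not). So the fix and the reduction are incompatible: either you keep $\phi$ throughout and exploit that its image is special, or you reduce and must produce a homotopy that works for all of $\mathfrak{A}(C)$, which you cannot.

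A smaller issue: the claimed isomorphism $\mathfrak{A}(D[0,1])\cong\mathfrak{A}(D)[0,1]$ is false. The class of $(t,u)\mapsto e^{itu}$ lies in $\mathfrak{A}(\C[0,1])$ but the evaluations $u\mapsto[t\mapsto e^{itu}]$ are not continuous in $\mathfrak{A}(\C)$. There is only a natural $*$-homomorphism $\mathfrak{A}(D)[0,1]\to\mathfrak{A}(D[0,1])$, which would actually suffice for your chain argument --- but only once the base case is settled, which it is not.
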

These maps organize the sets $\llbracket A,B\rrbracket_n$ into a directed system
\[\llbracket A,B\rrbracket_0\to\llbracket A,B\rrbracket_1\to\llbracket A,B\rrbracket_2\to\dots\]
\begin{defn}[{\cite[Definition 2.5]{HigGue},\cite[Definition 2.7]{GueHigTro}}]
Let  $A,B$  be  $\Z_2$-graded $C^*$-algebras. Denote by $\llbracket A,B\rrbracket_\infty$ the direct limit of the above directed system.
We denote the class of a $*$-homomorphism $\phi\colon A\to\mathfrak{A}^n(B)$ by $\llbracket\phi\rrbracket$.
\end{defn}

\begin{prop}[{\cite[Proposition 2.12]{GueHigTro}}]\label{prop:asymptoticcomposition}
Let $\phi\colon A\to\mathfrak{A}^n(B)$ and $\psi\colon B\to\mathfrak{A}^m(C)$ be $*$-homomorphisms. The class $\llbracket\psi\rrbracket\circ\llbracket\phi\rrbracket\in \llbracket A,C\rrbracket_\infty$ of the composite $*$-homomorphism
\[A\xrightarrow{\phi}\mathfrak{A}^n(B)\xrightarrow{\mathfrak{A}^n(\psi)}\mathfrak{A}^{n+m}(C)\]
depends only on the classes $\llbracket\phi\rrbracket\in\llbracket A,B\rrbracket_\infty$, $\llbracket\psi\rrbracket\in\llbracket B,C\rrbracket_\infty$ of $\phi,\psi$. The composition law
\[\llbracket A,B\rrbracket_\infty\times\llbracket B,C\rrbracket_\infty\to \llbracket A,C\rrbracket_\infty,\quad(\llbracket\phi\rrbracket,\llbracket\psi\rrbracket)\mapsto \llbracket\psi\rrbracket\circ\llbracket\phi\rrbracket\]
so defined is associative. \qed
\end{prop}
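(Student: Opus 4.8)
The plan is to exhibit the composite directly, isolate the two genuine verifications --- invariance under $n$-homotopy of the representatives and compatibility with the stabilization maps $\llbracket\cdot,\cdot\rrbracket_n\to\llbracket\cdot,\cdot\rrbracket_{n+1}$ --- and then observe that associativity is purely formal. The composite $*$-homomorphism of the statement is $\mathfrak{A}^n(\psi)\circ\phi\colon A\to\mathfrak{A}^{n+m}(C)$, which represents a class in $\llbracket A,C\rrbracket_{n+m}$ and hence, through the directed system, in $\llbracket A,C\rrbracket_\infty$; this is the candidate for $\llbracket\psi\rrbracket\circ\llbracket\phi\rrbracket$. I will repeatedly use three elementary facts: that $\mathfrak{A}^{n+m}=\mathfrak{A}^n\circ\mathfrak{A}^m$ as functors, so $\mathfrak{A}^n(\mathfrak{A}^m(f))=\mathfrak{A}^{n+m}(f)$ and each $\mathfrak{A}^n$ preserves composition; that the evaluations $\mathfrak{A}^k(E[0,1])\to\mathfrak{A}^k(E)$ in the definition of $k$-homotopy are the functorial images $\mathfrak{A}^k(\mathrm{ev}_t)$ of $\mathrm{ev}_t\colon E[0,1]\to E$; and that there is a canonical natural $*$-homomorphism $\mathfrak{A}^m(E)[0,1]\to\mathfrak{A}^m(E[0,1])$ intertwining the two kinds of evaluation (it is induced by $C_b([1,\infty),E)\otimes C[0,1]\to C_b([1,\infty),E[0,1])$, passing to the quotient by $C_0$ using nuclearity of $C[0,1]$, and then iterated).

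For homotopy invariance I would handle the two slots separately. If $\phi^0,\phi^1\colon A\to\mathfrak{A}^n(B)$ are $n$-homotopic via $\Phi\colon A\to\mathfrak{A}^n(B[0,1])$, I build from $\psi$ a $*$-homomorphism $\psi_{[0,1]}\colon B[0,1]\to\mathfrak{A}^m(C[0,1])$, namely $f\mapsto(t\mapsto\psi(f(t)))$ followed by the comparison map above, so that $\mathfrak{A}^m(\mathrm{ev}_t)\circ\psi_{[0,1]}=\psi\circ\mathrm{ev}_t$ for all $t$. Then $\mathfrak{A}^n(\psi_{[0,1]})\circ\Phi\colon A\to\mathfrak{A}^{n+m}(C[0,1])$ is an $(n+m)$-homotopy between $\mathfrak{A}^n(\psi)\circ\phi^0$ and $\mathfrak{A}^n(\psi)\circ\phi^1$, which one checks by applying $\mathfrak{A}^{n+m}(\mathrm{ev}_t)$ and using functoriality. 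In the other slot, if $\psi^0,\psi^1\colon B\to\mathfrak{A}^m(C)$ are $m$-homotopic via $\Psi\colon B\to\mathfrak{A}^m(C[0,1])$, then $\mathfrak{A}^n(\Psi)\circ\phi\colon A\to\mathfrak{A}^{n+m}(C[0,1])$ is at once an $(n+m)$-homotopy between $\mathfrak{A}^n(\psi^0)\circ\phi$ and $\mathfrak{A}^n(\psi^1)\circ\phi$; no comparison map is needed here.

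The step that requires the most care is compatibility with stabilization. I would represent the stabilization of $\phi$ by $\iota\circ\phi$, with $\iota\colon\mathfrak{A}^n(B)\to\mathfrak{A}^{n+1}(B)$ the inclusion as constant functions into the outermost $\mathfrak{A}$; naturality of ``inclusion as constants'' then yields, on the nose, $\mathfrak{A}^{n+1}(\psi)\circ\iota=\iota'\circ\mathfrak{A}^n(\psi)$ with $\iota'\colon\mathfrak{A}^{n+m}(C)\to\mathfrak{A}^{n+m+1}(C)$ again an inclusion as constants, i.e.\ a stabilization map over $C$. Likewise I represent the stabilization of $\psi$ by $\mathfrak{A}^m(j)\circ\psi$, with $j\colon C\to\mathfrak{A}(C)$ the inclusion as constants, so that $\mathfrak{A}^n(\mathfrak{A}^m(j)\circ\psi)\circ\phi=\mathfrak{A}^{n+m}(j)\circ\mathfrak{A}^n(\psi)\circ\phi$ and $\mathfrak{A}^{n+m}(j)$ is the other model of a stabilization map over $C$. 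In each case the composite is thereby sent to a stabilization of $\mathfrak{A}^n(\psi)\circ\phi$, hence to the same element of $\llbracket A,C\rrbracket_\infty$; that it does not matter which of the two models of the stabilization maps we used is precisely the preceding lemma. Together with the homotopy invariance above, this shows $\llbracket\psi\rrbracket\circ\llbracket\phi\rrbracket$ depends only on $\llbracket\phi\rrbracket$ and $\llbracket\psi\rrbracket$.

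Associativity is then immediate: for a further $*$-homomorphism $\chi\colon C\to\mathfrak{A}^l(D)$, both $\chi\circ(\psi\circ\phi)$ and $(\chi\circ\psi)\circ\phi$ are represented by one and the same $*$-homomorphism $\mathfrak{A}^{n+m}(\chi)\circ\mathfrak{A}^n(\psi)\circ\phi\colon A\to\mathfrak{A}^{n+m+l}(D)$ --- for the first this is the definition, and for the second it follows from $\mathfrak{A}^n(\mathfrak{A}^m(\chi)\circ\psi)=\mathfrak{A}^n(\mathfrak{A}^m(\chi))\circ\mathfrak{A}^n(\psi)=\mathfrak{A}^{n+m}(\chi)\circ\mathfrak{A}^n(\psi)$ --- so associativity holds already before passing to homotopy classes. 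Thus the only non-formal ingredients are the existence and naturality of the comparison map $\mathfrak{A}^m(E)[0,1]\to\mathfrak{A}^m(E[0,1])$ and the identification of the two stabilization transformations supplied by the previous lemma; the rest is diagram chasing.
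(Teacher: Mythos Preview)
The paper does not supply a proof of this proposition: it is stated with a terminal \qed and attributed to \cite[Proposition 2.12]{GueHigTro}, so there is nothing in the paper to compare against beyond the bare citation. Your argument is correct and is essentially the standard one from the cited reference: you verify well-definedness on $\llbracket A,B\rrbracket_\infty\times\llbracket B,C\rrbracket_\infty$ by checking invariance under homotopy in each slot and under the two stabilization maps (invoking the preceding lemma that the two models agree), and then observe that associativity holds on the nose for representatives because $\mathfrak{A}^n$ is a functor and $\mathfrak{A}^n\circ\mathfrak{A}^m=\mathfrak{A}^{n+m}$.

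One small remark on presentation: your construction of $\psi_{[0,1]}$ via the comparison map $\mathfrak{A}^m(E)[0,1]\to\mathfrak{A}^m(E[0,1])$ is fine, but you can avoid it entirely. The map $B[0,1]\to\mathfrak{A}^m(C)[0,1]$, $f\mapsto(t\mapsto\psi(f(t)))$, is already a $*$-homomorphism, and composing with the comparison map then with $\mathfrak{A}^n(-)$ and with $\Phi$ is the same as first applying $\mathfrak{A}^n$ to the comparison map; since $\mathfrak{A}^n$ of a natural $*$-homomorphism is again natural with respect to evaluations, your verification goes through. Alternatively one can simply observe that $\psi$ induces $\psi\otimes\id\colon B[0,1]\to\mathfrak{A}^m(C)[0,1]$ and then use the comparison map once, as you do. Either way the argument is sound.
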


According to the proposition, we obtain a category:
\begin{defn}[{\cite[Definition 2.6]{HigGue},\cite[Definition 2.13]{GueHigTro}}]
The \emph{asymptotic category} is the category whose objects are graded $C^*$-algebras, whose morphisms are elements of the sets $\llbracket A,B\rrbracket_\infty$, and whose composition law is defined in Proposition \ref{prop:asymptoticcomposition}.
\end{defn}
The identity morphism $1_A\in \llbracket A,A\rrbracket_\infty$ is represented by the identity $\id_A\colon A\to A=\mathfrak{A}^0(A)$.

For arbitrary graded $C^*$-algebras $B,D$, there are canonical asymptotic morphisms
\begin{align*}
\mathfrak{A}(B)\hatotimes  D&\to \mathfrak{A}(B\hatotimes  D)&\bar g\hatotimes  d&\mapsto\overline{t\mapsto g(t)\hatotimes  d}
\\D\hatotimes \mathfrak{A}(B)&\to \mathfrak{A}(D\otimes  B)&d\hatotimes \bar g&\mapsto\overline{t\mapsto d\hatotimes  g(t)}
\end{align*}
and inductively also canonical $*$-homomorphisms $\mathfrak{A}^n(B)\hatotimes  D\to \mathfrak{A}^n(B\hatotimes  D)$, $D\hatotimes \mathfrak{A}^n(B)\to \mathfrak{A}^n(D\hatotimes  B)$.
This is a consequence of {\cite[Lemmas 4.1, 4.2 \& Chapter 3]{GueHigTro}.

\begin{prop}[{\cite[Theorem 4.6]{GueHigTro}}]
The asymptotic category is a mo\-no\-idal category with respect to the \emph{maximal graded tensor product} $\hatotimes $ of $C^*$-algebras and a tensor product on the morphism sets,
\[\hatotimes \colon \llbracket A_1,B_1\rrbracket_\infty\times\llbracket A_2,B_2\rrbracket_\infty\to\llbracket A_1\hatotimes  A_2,B_1\hatotimes  B_2\rrbracket_\infty,\]
with the following property:
If 
$\llbracket\phi\rrbracket\in \llbracket A_1,B_1\rrbracket_\infty$ and $\llbracket\psi\rrbracket\in \llbracket A_2,B_2\rrbracket_\infty$
are represented by $\phi\colon A_1\to\mathfrak{A}^m(B_1)$ and $\psi\colon A_2\to\mathfrak{A}^n(B_2)$ , respectively, and $D$ is another graded $C^*$-algebra, then 
\begin{align*}
\llbracket\phi\rrbracket\hatotimes 1_D&\in \llbracket A_1\hatotimes  D,B_1\hatotimes  D\rrbracket_\infty,
\\1_D\hatotimes  \llbracket\psi\rrbracket&\in \llbracket D\hatotimes  A_2,D\hatotimes  B_2\rrbracket_\infty
\end{align*}
are represented by the compositions
\begin{align*}
A_1\hatotimes  D&\xrightarrow{\phi\hatotimes \id_D}\mathfrak{A}^m(B_1)\hatotimes  D\to\mathfrak{A}^m(B_1\hatotimes  D),
\\D\hatotimes  A_2&\xrightarrow{\id_D\hatotimes \psi}D\hatotimes \mathfrak{A}^n(B_2)\to\mathfrak{A}^n(D\hatotimes  B_2),
\end{align*}
respectively. \qed
\end{prop}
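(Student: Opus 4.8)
The plan is to build the tensor product on morphism sets out of the canonical comparison $*$-homomorphisms recalled just above the statement, and then to check that every monoidal-category axiom reduces, after passing to homotopy classes, to a strictly true identity for the maximal graded tensor product of $C^*$-algebras combined with naturality of those comparison maps. Write $\sigma^m_{B,D}\colon\mathfrak{A}^m(B)\hatotimes D\to\mathfrak{A}^m(B\hatotimes D)$ and $\tau^n_{D,B}\colon D\hatotimes\mathfrak{A}^n(B)\to\mathfrak{A}^n(D\hatotimes B)$ for the iterated comparison maps. Given representatives $\phi\colon A_1\to\mathfrak{A}^m(B_1)$ and $\psi\colon A_2\to\mathfrak{A}^n(B_2)$, I define $\llbracket\phi\rrbracket\hatotimes\llbracket\psi\rrbracket$ to be the class of the composite
\[A_1\hatotimes A_2 \xrightarrow{\phi\hatotimes\psi} \mathfrak{A}^m(B_1)\hatotimes\mathfrak{A}^n(B_2) \longrightarrow \mathfrak{A}^m\bigl(B_1\hatotimes\mathfrak{A}^n(B_2)\bigr) \longrightarrow \mathfrak{A}^{m+n}(B_1\hatotimes B_2),\]
where $\phi\hatotimes\psi$ is the functorial maximal tensor product of $*$-homomorphisms and the last two arrows are $\sigma^m_{B_1,\mathfrak{A}^n(B_2)}$ followed by $\mathfrak{A}^m(\tau^n_{B_1,B_2})$.

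Next I would check independence of representatives. If $\Phi\colon A_1\to\mathfrak{A}^m(B_1[0,1])$ is an $m$-homotopy and $\Psi\colon A_2\to\mathfrak{A}^n(B_2[0,1])$ an $n$-homotopy, then running the same recipe on $\Phi\hatotimes\Psi$ produces, using naturality of the comparison maps in the coefficient variable applied to evaluation at the endpoints of $[0,1]$, a $*$-homomorphism into $\mathfrak{A}^{m+n}\bigl((B_1\hatotimes B_2)[0,1]\bigr)$ that restricts to the two composites at $0$ and $1$; hence the class is unchanged. One also verifies compatibility with the two stabilization transformations $\mathfrak{A}^k\to\mathfrak{A}^{k+1}$ on either factor, which is again a naturality statement, so the operation descends to a well-defined map $\llbracket A_1,B_1\rrbracket_\infty\times\llbracket A_2,B_2\rrbracket_\infty\to\llbracket A_1\hatotimes A_2,B_1\hatotimes B_2\rrbracket_\infty$.

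The real work is bifunctoriality, i.e.\ the interchange law $(\llbracket\phi_1\rrbracket\circ\llbracket\phi_2\rrbracket)\hatotimes(\llbracket\psi_1\rrbracket\circ\llbracket\psi_2\rrbracket)=(\llbracket\phi_1\rrbracket\hatotimes\llbracket\psi_1\rrbracket)\circ(\llbracket\phi_2\rrbracket\hatotimes\llbracket\psi_2\rrbracket)$. Unwinding both sides into honest $*$-homomorphisms, this becomes the assertion that a polygonal diagram built from the maps $\sigma$, $\tau$ and various iterated functors $\mathfrak{A}^k$ commutes up to homotopy at the appropriate level. The obstacle is precisely that $\sigma$ and $\tau$ are only asymptotic (not strict) morphisms once one reindexes the asymptotic variables, so one must exhibit explicit homotopies reparametrizing those variables — the same kind of coherence computation carried out in \cite[Chapter 3 and Lemmas 4.1, 4.2]{GueHigTro}. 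Once this is established, associativity of $\hatotimes$ on morphisms, the associativity and left/right unit constraints (with unit object $\C$, using $\C\hatotimes B=B$ strictly), and the pentagon and triangle identities all follow from the corresponding strict identities for the maximal graded tensor product together with the naturality of the comparison maps.

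Finally, the displayed property in the statement is just the special case $\psi=\id_D\colon D\to D=\mathfrak{A}^0(D)$: then $n=0$, the map $\sigma^0$ is the identity, $\tau^0$ is the identity, and the composite above collapses to $A_1\hatotimes D\xrightarrow{\phi\hatotimes\id_D}\mathfrak{A}^m(B_1)\hatotimes D\xrightarrow{\sigma^m_{B_1,D}}\mathfrak{A}^m(B_1\hatotimes D)$, which is exactly the claimed representative of $\llbracket\phi\rrbracket\hatotimes 1_D$; symmetrically, taking $\phi=\id_D$ and using $\tau^n_{D,B_2}$ gives the claimed representative of $1_D\hatotimes\llbracket\psi\rrbracket$. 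In summary, the only nonformal ingredient is the third step: bookkeeping of the indices of the iterated asymptotic functors and producing the reparametrization homotopies that make the coherence diagrams commute; everything else is purely formal manipulation of the maximal tensor product.
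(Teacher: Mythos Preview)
The paper does not prove this proposition at all: it is quoted from \cite[Theorem 4.6]{GueHigTro} and closed with a \qed, so there is no in-paper argument to compare against. Your outline is a faithful sketch of the strategy in the cited reference (build the tensor via the comparison maps $\sigma,\tau$, verify well-definedness, reduce the monoidal axioms to coherence diagrams for $\sigma,\tau$, then read off the special case $\psi=\id_D$).

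One small wrinkle worth tightening: in your independence-of-representatives step you feed in homotopies $\Phi,\Psi$ simultaneously, which lands you in $\mathfrak{A}^{m+n}\bigl(B_1[0,1]\hatotimes B_2[0,1]\bigr)$ rather than $\mathfrak{A}^{m+n}\bigl((B_1\hatotimes B_2)[0,1]\bigr)$; you then need to restrict to the diagonal of $[0,1]\times[0,1]$, or---cleaner---vary one factor at a time with the other held fixed. Apart from that, your identification of the bifunctoriality/interchange law as the only substantive step, and your deferral to the reparametrization homotopies of \cite[Chapter 3, Lemmas 4.1--4.2]{GueHigTro}, is exactly right.
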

The general form of the tensor product is of course
\[\llbracket\phi\rrbracket\hatotimes \llbracket\psi\rrbracket=(\llbracket\phi\rrbracket\hatotimes 1_{B_2})\circ(1_{A_1}\hatotimes \llbracket\psi\rrbracket)=(1_{B_1}\hatotimes \llbracket\psi\rrbracket)\circ(\llbracket\phi\rrbracket\hatotimes 1_{A_2}).
\]

There is an obvious monoidal functor from the category of graded $C^*$-algebras into the asymptotic category which is the identity on the objects and 
maps a $*$-homomorphism $A\to B$ to its class in $\llbracket A,B\rrbracket_\infty$ by considering it as a $*$-homomorphism $A\to \mathfrak{A}^0(B)$.

The definition of $E$-theory involves the two graded $C^*$-algebras $\widehat{\K}$ and $\cS$ which we saw at the beginning of this section.

The role of $\widehat\K$ is stabilization: Given two separable, graded Hilbert spaces $H_1,H_2$, any isometry $V\colon H_1\hatotimes \widehat{\ell^2}\to H_2\hatotimes \widehat{\ell^2}$ defines an injective $*$-homomorphisms
\[\operatorname{Ad}_V\colon \K(H_1)\hatotimes \widehat\K\to\K(H_2)\hatotimes  \widehat\K,\quad T\mapsto VTV^*.\]
The homotopy class of $\operatorname{Ad}_V$ is independent of the choice of $V$ and therefore defines a canonical isomorphism between $\K(H_1)\hatotimes \widehat\K$ and $\K(H_2)\hatotimes  \widehat\K$ in the asymptotic category. In particular, $\widehat\K\hatotimes \widehat\K$, $\K\hatotimes \widehat\K$, $M_{i,j}(\C)\hatotimes \widehat\K$ and $\Cl_{2k}\hatotimes \widehat\K\cong M_{k,k}(\C)\hatotimes \widehat\K$ are all canonically isomorphic to $\widehat\K$.

Recall from \cite[Section 1.3]{HigGue} that the second $C^*$-algebra, $\cS$, is also a co-algebra with co-unit $\eta\colon \cS\to\C,f\mapsto f(0)$ and a co-multiplication $\Delta\colon \cS\to\cS\hatotimes \cS$,
i.\,e.\ the diagrams
\begin{equation}\label{eq:coalgebraaxioms}
\xymatrix{
\cS\ar[r]^{\Delta}\ar[d]_{\Delta}&\cS\hatotimes \cS\ar[d]^{\id\hatotimes \Delta}&\cS\ar[d]_{\id}\ar[r]^{\id}\ar[dr]^{\Delta}&\cS
\\\cS\hatotimes \cS\ar[r]_{\Delta\hatotimes \id}&\cS\hatotimes \cS\hatotimes \cS&\cS&\cS\hatotimes \cS\ar[l]^{\eta\hatotimes \id}\ar[u]_{\eta\hatotimes \id}
}\end{equation}
commute. As $\cS$ is generated by the two functions
$u(x):=e^{-x^2}$ and $v(x):=xe^{-x^2}$, the $*$-homomorphism $\Delta$ is completely determined by the values
\begin{equation*}
\Delta(u)=u\hatotimes u\,,\quad \Delta(v)=u\hatotimes v+v\hatotimes u\,.
\end{equation*}

\begin{defn}\label{def:Etheory}
Let $A,B$ be graded $C^*$-algebras. The \emph{$E$-theory} of $A,B$  is 
\[E(A,B)=\llbracket\cS\hatotimes  A\hatotimes \widehat\K,B\hatotimes \widehat\K\rrbracket_\infty.\]
It is a group with addition given by direct sum 
 of $*$-homomorphisms 
 \[\cS\hatotimes  A\hatotimes \widehat\K\to \mathfrak{A}^n(B\hatotimes \widehat\K)\]
  (via an inclusion $\widehat\K\oplus\widehat\K\hookrightarrow\widehat\K$, which is canonical up to homotopy) and the zero element represented by the zero $*$-homomorphism.
\end{defn}

\begin{rem}
By \cite[Theorem 2.16]{GueHigTro}, this definition is equivalent to \cite[Definition 2.1]{HigGue} when $A,B$ are separable.
For non-separable $C^*$-al\-ge\-bras, however, it is essential to use Definition \ref{def:Etheory}, because otherwise the products defined below might not exist.
\end{rem}

There is a composition product 
\[E(A,B)\otimes E(B,C)\to E(A,C),\quad (\phi,\psi)\mapsto \psi\circ\phi,\]
where $\psi\circ\phi\in E(A,C)$ is defined to be the composition 
\[\cS\hatotimes  A\hatotimes \widehat\K\xrightarrow{\Delta\hatotimes \id_{A\hatotimes \widehat\K}}\cS\hatotimes \cS\hatotimes  A\hatotimes \widehat\K\xrightarrow{\id_{\cS}\hatotimes \phi}\cS\hatotimes  B\hatotimes \widehat\K\xrightarrow{\psi}C\hatotimes \widehat\K\]
of morphisms in the asymptotic category.

There is also an exterior product
\[E(A_1,B_1)\otimes E(A_2,B_2)\to E(A_1\hatotimes  A_2,B_1\hatotimes  B_2),\quad (\phi,\psi)\mapsto \phi\hatotimes \psi,\]
where $\phi\hatotimes \psi\in  E(A_1\hatotimes  A_2,B_1\hatotimes  B_2)$ is defined to be the composition
\begin{align*}
\cS\hatotimes  A_1\hatotimes  A_2\hatotimes \widehat\K&\xrightarrow{\Delta\hatotimes \id}\cS\hatotimes \cS\hatotimes  A_1\hatotimes  A_2\hatotimes \widehat\K
\cong\cS\hatotimes  A_1\hatotimes \widehat\K\hatotimes \cS\hatotimes  A_2\hatotimes \widehat\K
\\&\xrightarrow{\phi\hatotimes \psi}B_1\hatotimes \widehat\K\hatotimes  B_2\hatotimes \widehat\K
\cong B_1\hatotimes  B_2\hatotimes \widehat\K
\end{align*}
of morphisms in the asymptotic category.

\begin{thm}[{\cite[Theorems 2.3, 2.4]{HigGue}}]
With these composition and exterior products,
the $E$-theory groups $E(A,B)$ are the morphism groups in an additive monoidal category $\mathbf{E}$ whose objects are the graded $C^*$-algebras. \qed
\end{thm}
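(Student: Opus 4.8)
The strategy is to transport everything from the asymptotic category: the entire statement will follow from the associativity of composition there (Proposition~\ref{prop:asymptoticcomposition}), its monoidal structure with respect to $\hatotimes$ (\cite[Theorem~4.6]{GueHigTro}), the coalgebra axioms \eqref{eq:coalgebraaxioms} for $\cS$, and the stabilization properties of $\widehat{\K}$. I would check the axioms of an additive monoidal category one at a time in that order. The first point is well-definedness of the composition product. Writing $\psi\circ\phi$ as the composite of asymptotic morphisms built from $\Delta\hatotimes\id$, $\id_{\cS}\hatotimes\phi$ and $\psi$ — using the canonical natural maps $\mathfrak{A}^n(X)\hatotimes D\to\mathfrak{A}^n(X\hatotimes D)$ recalled before Definition~\ref{def:Etheory} — one sees that it depends only on $\llbracket\phi\rrbracket$ and $\llbracket\psi\rrbracket$: this is immediate from Proposition~\ref{prop:asymptoticcomposition} together with the naturality of those canonical maps and their compatibility with the structure maps $\mathfrak{A}^n\to\mathfrak{A}^{n+1}$, so that the construction passes to $n$-homotopy classes and to the direct limit. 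The same argument applies verbatim to the exterior product.

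Next, associativity of $\circ$. Expanding $(\chi\circ\psi)\circ\phi$ and $\chi\circ(\psi\circ\phi)$ into asymptotic composites, both become a composite of the shape "iterated coproduct on the $\cS$-slot, then $\id_{\cS}\hatotimes\id_{\cS}\hatotimes\phi$, then $\id_{\cS}\hatotimes\psi$, then $\chi$", the two expansions differing only in whether one applies $(\Delta\hatotimes\id_{\cS})\circ\Delta$ or $(\id_{\cS}\hatotimes\Delta)\circ\Delta$; coassociativity (the left square of \eqref{eq:coalgebraaxioms}) identifies these, and the associativity of asymptotic composition does the rest. For the units I would take $1_A:=\llbracket\eta\hatotimes\id_{A\hatotimes\widehat{\K}}\rrbracket\in E(A,A)$; unwinding $1_B\circ\phi$ and $\phi\circ 1_A$ and using the counit identities $(\eta\hatotimes\id_{\cS})\circ\Delta=\id_{\cS}=(\id_{\cS}\hatotimes\eta)\circ\Delta$ (the right diagram of \eqref{eq:coalgebraaxioms}) gives $1_B\circ\phi=\phi=\phi\circ 1_A$, and naturality of $1_A$ is automatic.

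It remains to make $\mathbf{E}$ preadditive, additive, and monoidal. The abelian group structure on each $E(A,B)$ is the one already built into Definition~\ref{def:Etheory} (the only nonformal ingredient there, the existence of additive inverses, comes from a standard rotation homotopy absorbed into the $\widehat{\K}$-stabilization). I would then prove bi-additivity of $\circ$ by comparing the block-diagonal $*$-homomorphisms representing $\psi\circ(\phi_1\oplus\phi_2)$ and $(\psi\circ\phi_1)\oplus(\psi\circ\phi_2)$, and symmetrically in the other variable: the point is that the stabilizing isometries involved, in particular the inclusions $\widehat{\K}\oplus\widehat{\K}\hookrightarrow\widehat{\K}$ used to define $+$, are unique up to homotopy, so that $\Delta$ cooperates with them up to $n$-homotopy. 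Together with the zero object $0$ and the biproducts $A_1\oplus A_2$ (coming from the evident split-exactness of the stabilized construction in each variable) this makes $\mathbf{E}$ additive. Finally, for the monoidal structure I would check that the exterior product is a bifunctor, i.e. $(\phi_1\hatotimes\phi_2)\circ(\psi_1\hatotimes\psi_2)=(\phi_1\circ\psi_1)\hatotimes(\phi_2\circ\psi_2)$ — a diagram chase now using coassociativity and cocommutativity of $\cS$ together with the symmetry isomorphism $A\hatotimes B\cong B\hatotimes A$ — take $\C$ as the unit object with unit $1_{\C}$, inherit the associativity and unit natural isomorphisms as well as the pentagon and triangle coherences from the monoidal structure of the asymptotic category (\cite[Theorem~4.6]{GueHigTro}), and verify bi-additivity of $\hatotimes$ exactly as for $\circ$.

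\textbf{Main obstacle.} Each individual verification is routine, but the genuinely delicate bookkeeping is concentrated in the bi-additivity of $\circ$ and $\hatotimes$ and the bifunctoriality of $\hatotimes$: one must make the single coproduct $\Delta$ appearing in a composite (resp.\ exterior product) cohere with the several independent $\widehat{\K}$-stabilizations and $\Delta$'s carried by the factors. This is where literal equalities of $*$-homomorphisms have to be replaced by $n$-homotopies produced from the essential uniqueness of stabilizing isometries, and where the graded signs in the symmetry of the maximal graded tensor product must be tracked carefully.
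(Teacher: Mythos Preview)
The paper does not prove this theorem at all: the statement is marked with \qed\ and cited directly from \cite[Theorems 2.3, 2.4]{HigGue}, so there is no proof in the paper to compare against. Your proof sketch is correct and is essentially the argument found in the cited references \cite{HigGue,GueHigTro}: reduce everything to the asymptotic category via Proposition~\ref{prop:asymptoticcomposition} and \cite[Theorem~4.6]{GueHigTro}, use the coalgebra axioms \eqref{eq:coalgebraaxioms} for $\cS$ to handle associativity and units, and use essential uniqueness of stabilizing isometries into $\widehat{\K}$ for the additive structure and the coherence of the various stabilizations. Your identification of the ``main obstacle'' --- making the single $\Delta$ in a product cohere with the multiple independent $\widehat{\K}$-stabilizations --- is exactly where the routine bookkeeping lies in the original proofs as well.
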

Here are some properties of $E$-theory.
Our earlier observations imply:
\begin{thm}[Stability]\label{thm:EStability}
For any separable graded Hilbert space $H$, the graded $C^*$-algebra $\K(H)$ is canonically isomorphic in the category $\mathbf{E}$ to $\C$. In particular, this applies to $\widehat\K$, $\K$, $M_{i,j}(\C)$ and $\Cl_{2k}$. \qed
\end{thm}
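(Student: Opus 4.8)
The plan is to promote the canonical asymptotic isomorphism $\K(H)\hatotimes\widehat\K\cong\widehat\K$ recorded in the paragraph preceding Definition \ref{def:Etheory} to an isomorphism in $\mathbf{E}$, while cancelling the auxiliary stabilizing tensor factor. We may assume $H\neq 0$; then $H\hatotimes\widehat{\ell^2}$ is a separable graded Hilbert space both of whose homogeneous parts are infinite-dimensional, so there is a graded unitary $V\colon H\hatotimes\widehat{\ell^2}\to\widehat{\ell^2}$. Under the canonical identifications $\K(H)\hatotimes\widehat\K\cong\K(H\hatotimes\widehat{\ell^2})$ and $\widehat\K=\K(\widehat{\ell^2})$, this $V$ yields mutually inverse graded $*$-isomorphisms $\operatorname{Ad}_V\colon\K(H)\hatotimes\widehat\K\to\widehat\K$ and $\operatorname{Ad}_{V^*}\colon\widehat\K\to\K(H)\hatotimes\widehat\K$.

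Writing $\eta\colon\cS\to\C,\ f\mapsto f(0)$ for the co-unit and using the identifications $E(\K(H),\C)=\llbracket\cS\hatotimes\K(H)\hatotimes\widehat\K,\widehat\K\rrbracket_\infty$ and $E(\C,\K(H))=\llbracket\cS\hatotimes\widehat\K,\K(H)\hatotimes\widehat\K\rrbracket_\infty$, I would put
\[\phi:=\llbracket\eta\hatotimes\operatorname{Ad}_V\rrbracket\in E(\K(H),\C),\qquad\psi:=\llbracket\eta\hatotimes\operatorname{Ad}_{V^*}\rrbracket\in E(\C,\K(H)).\]
By the cited paragraph the homotopy class of $\operatorname{Ad}_V$ is independent of the choice of $V$, hence $\phi$ and $\psi$ depend only on $H$; these are the candidates for the asserted canonical isomorphism.

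It remains to show $\psi\circ\phi=1_{\K(H)}$ and $\phi\circ\psi=1_\C$. For this I would first record the elementary fact that the identity morphism of any graded $C^*$-algebra $D$ in $\mathbf{E}$ is represented by $\eta\hatotimes\id_{D\hatotimes\widehat\K}\colon\cS\hatotimes D\hatotimes\widehat\K\to\C\hatotimes D\hatotimes\widehat\K$, which follows at once from the co-unit identities in \eqref{eq:coalgebraaxioms} together with bifunctoriality of $\hatotimes$ in the asymptotic category. Then, unwinding the definition of the composition product in $E$-theory and using exactly the identity $(\id_\cS\hatotimes\eta)\circ\Delta=\id_\cS$ to absorb the copy of $\cS$ created by $\Delta$, one finds that $\psi\circ\phi$ is represented by $\eta\hatotimes(\operatorname{Ad}_{V^*}\circ\operatorname{Ad}_V)=\eta\hatotimes\id_{\K(H)\hatotimes\widehat\K}$, so $\psi\circ\phi=1_{\K(H)}$; symmetrically $\phi\circ\psi$ is represented by $\eta\hatotimes(\operatorname{Ad}_V\circ\operatorname{Ad}_{V^*})=\eta\hatotimes\id_{\widehat\K}$ (here using that $V$ is surjective), so $\phi\circ\psi=1_\C$. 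Hence $\K(H)$ and $\C$ are canonically isomorphic in $\mathbf{E}$. The ``in particular'' clause is immediate, since $\widehat\K=\K(\widehat{\ell^2})$, $\K=\K(\ell^2)$, $M_{i,j}(\C)=\K(\C^{i,j})$ and $\Cl_{2k}\cong M_{k,k}(\C)=\K(\C^{k,k})$ are all of the form $\K(H)$ with $H$ a nonzero separable graded Hilbert space.

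The step I expect to cost the most effort is the composition-product computation in the previous paragraph: one must keep careful track of the canonical isomorphism $\K(H_1)\hatotimes\K(H_2)\cong\K(H_1\hatotimes H_2)$, of the exact placement of the tensor factors $\cS$ and $\widehat\K$ in the definition of $E(A,B)$, and of the symmetry isomorphisms of $\hatotimes$, and then confirm that the composites collapse precisely to the stated maps $\eta\hatotimes(\operatorname{Ad}\text{-composite})$. Beyond this bookkeeping there is no analytic or homotopy-theoretic obstacle: the only homotopy-theoretic ingredient, independence of $\llbracket\operatorname{Ad}_V\rrbracket$ from $V$, is already in hand, and every other map in sight is an honest graded $*$-isomorphism.
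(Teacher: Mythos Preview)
Your argument is correct and is precisely the verification the paper leaves implicit: the paper gives no proof beyond ``Our earlier observations imply'' and a \qed, referring to the canonical asymptotic isomorphism $\K(H)\hatotimes\widehat\K\cong\widehat\K$ established just before Definition~\ref{def:Etheory}. You have simply spelled out how that asymptotic isomorphism, fed through the definition $E(A,B)=\llbracket\cS\hatotimes A\hatotimes\widehat\K,B\hatotimes\widehat\K\rrbracket_\infty$ and the co-unit identity, yields mutually inverse $E$-theory elements; this is exactly the intended (but omitted) check.
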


\begin{thm}[{\cite[Theorems 2.3, 2.4]{HigGue}}]\label{thm:EFunctoriality}
There is a monoidal functor from the asymptotic category into $\mathbf{E}$ which is the identity on the objects and maps $\phi\in\llbracket A,B\rrbracket_\infty$ to the morphism
\[\cS\hatotimes  A\hatotimes \widehat\K\xrightarrow{\llbracket\eta\rrbracket\hatotimes \phi\hatotimes 1_{\widehat\K}}B\hatotimes \widehat\K\]
in the asymptotic category, which we denote by the same letter $\phi$. \qed
\end{thm}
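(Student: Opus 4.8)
The plan is to define the functor $F$ to be the identity on objects and to send a morphism $\phi\in\llbracket A,B\rrbracket_\infty$ to $F(\phi):=\llbracket\eta\rrbracket\hatotimes\phi\hatotimes 1_{\widehat\K}$, viewed as an element of $\llbracket\cS\hatotimes A\hatotimes\widehat\K,\C\hatotimes B\hatotimes\widehat\K\rrbracket_\infty=E(A,B)$, and then to verify three things: that this is well-defined, that it is a functor, and that it is monoidal. Well-definedness is immediate, because the asymptotic category is monoidal (the preceding Proposition): tensoring the variable morphism $\phi$ with the two fixed morphisms $\llbracket\eta\rrbracket\in\llbracket\cS,\C\rrbracket_\infty$ and $1_{\widehat\K}\in\llbracket\widehat\K,\widehat\K\rrbracket_\infty$ yields a well-defined morphism of the asymptotic category, and every $E$-theory group is such a morphism group.

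For functoriality, one first checks that $F(1_A)=\llbracket\eta\rrbracket\hatotimes 1_A\hatotimes 1_{\widehat\K}$ represents the identity $1_A$ in $\mathbf{E}$: unwinding the composition product of this morphism with an arbitrary $\phi\in E(B,A)$ and applying the counit axiom $(\eta\hatotimes\id_{\cS})\circ\Delta=\id_{\cS}$ from \eqref{eq:coalgebraaxioms} collapses the extra $\cS\hatotimes\cS$ back to $\cS$ and returns $\phi$. For composition, take $\phi\in\llbracket A,B\rrbracket_\infty$ and $\psi\in\llbracket B,C\rrbracket_\infty$; expanding $F(\psi)\circ F(\phi)$ via the definition of the composition product in $\mathbf{E}$ gives the composite in the asymptotic category
\[\cS\hatotimes A\hatotimes\widehat\K\xrightarrow{\Delta\hatotimes\id}\cS\hatotimes\cS\hatotimes A\hatotimes\widehat\K\xrightarrow{\id_{\cS}\hatotimes F(\phi)}\cS\hatotimes B\hatotimes\widehat\K\xrightarrow{F(\psi)}C\hatotimes\widehat\K.\]
The middle map applies $\eta$ to the second $\cS$-tensorand, so precomposing with $\Delta\hatotimes\id$ produces $(\id_{\cS}\hatotimes\eta)\circ\Delta=\id_{\cS}$ on the surviving $\cS$-factor; what is left is $\id_{\cS}\hatotimes\phi\hatotimes 1_{\widehat\K}$ followed by $\llbracket\eta\rrbracket\hatotimes\psi\hatotimes 1_{\widehat\K}$, which by functoriality of $\hatotimes$ on morphisms of the asymptotic category is $\llbracket\eta\rrbracket\hatotimes(\psi\circ\phi)\hatotimes 1_{\widehat\K}=F(\psi\circ\phi)$.

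For monoidality, $F$ is the identity on objects and on the monoidal unit $\C$, so it suffices to verify $F(\phi\hatotimes\psi)=F(\phi)\hatotimes F(\psi)$ for $\phi\in\llbracket A_1,B_1\rrbracket_\infty$, $\psi\in\llbracket A_2,B_2\rrbracket_\infty$; compatibility with the associativity and unit constraints then follows automatically since both categories carry the same underlying tensor product of $C^*$-algebras and $F$ agrees with the canonical inclusion functor on genuine $*$-homomorphisms. Expanding the exterior product of $F(\phi)$ and $F(\psi)$ in $\mathbf{E}$ and regrouping the tensorands, the two $\cS$-legs receive $(\eta\hatotimes\eta)\circ\Delta$, which equals $\eta$ by the counit axioms, while the two $\widehat\K$-legs are glued by the canonical morphism $\widehat\K\hatotimes\widehat\K\to\widehat\K$ of Theorem \ref{thm:EStability}; since this morphism, composed with the canonical identification $\widehat\K\cong\widehat\K\hatotimes\widehat\K$ used to split off the stabilizing tensorand, is the identity in $\mathbf{E}$, the whole composite collapses to $\llbracket\eta\rrbracket\hatotimes(\phi\hatotimes\psi)\hatotimes 1_{\widehat\K}=F(\phi\hatotimes\psi)$.

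I expect the main obstacle to be the bookkeeping in the monoidal step: one has to chase the symmetry and reassociation isomorphisms of the maximal graded tensor product together with the level shifts $\mathfrak{A}^n(\,\cdot\,)\hatotimes D\to\mathfrak{A}^n(\,\cdot\,\hatotimes D)$, and, most delicately, confirm that all the canonical identifications of $\widehat\K$-stabilizations — which are defined only up to homotopy, via $\operatorname{Ad}_V$ for an isometry $V$ — are mutually coherent, so that the identity-on-objects assignment genuinely upgrades to a strict monoidal functor rather than merely a lax one. Everything else reduces to direct computation from the coalgebra axioms \eqref{eq:coalgebraaxioms} for $\cS$ and the functoriality of $\hatotimes$ on morphisms of the asymptotic category recorded above.
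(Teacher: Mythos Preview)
The paper does not give its own proof of this statement: the theorem is marked \qedsymbol{} immediately after the statement and is attributed to \cite[Theorems 2.3, 2.4]{HigGue}, so there is nothing in the paper to compare your argument against. Your sketch is essentially the standard verification one finds in the cited reference: well-definedness from the monoidal structure of the asymptotic category, functoriality from the counit identity $(\id_{\cS}\hatotimes\eta)\circ\Delta=\id_{\cS}$, and monoidality from $(\eta\hatotimes\eta)\circ\Delta=\eta$ together with stability of $\widehat\K$. The outline is sound and the identification of the $\widehat\K$-coherence bookkeeping as the delicate point is accurate; if you want this to stand as a complete proof you would need to spell out those coherence checks rather than just flag them, but the paper itself simply defers all of this to Higson--Guentner.
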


Thus, by taking the $E$-theory product with this $E$-theory element, we obtain homomorphisms 
\[E(D,A)\xrightarrow{\phi\circ}E(D,B),\quad E(B,D)\xrightarrow{\circ\phi}E(A,D)\]
for any third graded $C^*$-algebra $D$.
Consequently, the $E$-theory groups are contravariantly functorial in the first variable and  covariantly functorial in the second variable with respect to morphisms in the asymptotic category and in particular with respect to $*$-homomorphisms.

These functorialities can be computed more easily than arbitrary composition products in $E$-theory:
If $\psi\in E(D,A)$ and $\phi\in \llbracket A,B\rrbracket_\infty$, then $\phi\circ\psi\in E(D,B)$ is the composition
\[\cS\hatotimes  D\hatotimes \K\xrightarrow{\psi}A\hatotimes \widehat\K\xrightarrow{\phi\hatotimes 1_{\widehat\K}}B\hatotimes \K\]
in the asymptotic category. This is, because the co-multiplication $\Delta\colon \cS\to\cS\hatotimes \cS$ in the definition of the composition product in $E$-theory cancels with the co-unit $\eta\colon \cS\to\C$ appearing in the functor from the asymptotic category to $E$-theory by \eqref{eq:coalgebraaxioms}.

Similarly, if $\psi\in E(B,D)$ and $\phi\in \llbracket A,B\rrbracket_\infty$, then $\psi\circ\phi\in E(A,D)$ is the composition
\[\cS\hatotimes  A\hatotimes \widehat\K\xrightarrow{1_{\cS}\hatotimes \phi\hatotimes  1_{\widehat\K}}\cS\hatotimes  B\hatotimes \widehat\K\xrightarrow{\psi}B\hatotimes \widehat\K,\]
and the exterior product of $\phi\in E(A_1,B_1)$ and $\psi\in \llbracket A_2,B_2\rrbracket_\infty$ is the composition
\[\cS\hatotimes  A_1\hatotimes  A_2\hatotimes \widehat\K\xrightarrow{\phi\hatotimes \psi}B_1\hatotimes  B_2\hatotimes \widehat\K.\]

Generalizing the functor from the asymptotic category to the $E$-theory category, 
elements of $E(A,B)$ are also obtained from any morphism in the asymptotic category of the form
\[A\hatotimes \K(H_1)\to B\hatotimes \K(H_2)\quad\text{or}\quad\cS\hatotimes  A\hatotimes \K(H_1)\to B\hatotimes \K(H_2)\]
where $H_1,H_2$ are arbitrary separable, graded Hilbert spaces.
The $E$-theory element is obtained by tensoring with 
$\llbracket\eta\rrbracket\hatotimes \id_{\widehat\K}$ respectively $\id_{\widehat\K}$ and applying stability.

To treat bordisms later on, we need  Bott periodicity. 
Denote by $X$ the odd, unbounded multiplier on $C_0(\R)\hatotimes\Cl_1=C_0(\R,\Cl_1)$ by multiplication with the function $\R\ni x\mapsto x\in(\Cl_1)^{\text{odd}}$ and by 
$D_\R$ the closure of the Dirac operator $e\frac{\partial}{\partial x}\colon C_c(\R,\Cl_1)\to C_c(\R,\Cl_1)$ over $\R$, which is an odd, self-adjoint, regular operator on the Hilbert module $L^2(\R,\Cl_1)$.
\begin{defn}
The \emph{Bott element} $\mathfrak{b}\in E(\C,C_0(\R)\hatotimes\Cl_1)$ is defined by
\[\cS\to C_0(\R)\hatotimes\Cl_1\,,\quad\phi\mapsto \phi(X)\]
and the \emph{dual Bott element} $\mathfrak{b}^*\in E(C_0(\R),\Cl_1)$ is defined by the asymptotic morphism
\[\cS\hatotimes C_0(\R)\to\fA(\K(L^2(\R))\hatotimes\Cl_1)\,,\quad \phi\hatotimes f\mapsto\overline{t\mapsto \phi(t^{-1}D_\R)f}\,.\]
\end{defn}
\begin{thm}[{\cite[Theorem 1.12, Corollary 1.2]{HigGue}}]\label{thm:Bottperiodicity}
The elements $\mathfrak{b}$ and $\mathfrak{b}^*$ are mutually inverse in the sense that
\begin{align*}
(\mathfrak{b}^*\hatotimes 1_{\Cl_1})\circ \mathfrak{b}&\in E(\C,\Cl_2)\cong E(\C,\C)\qquad\qquad\text{and}
\\(\mathfrak{b}\hatotimes 1_{\Cl_1})\circ \mathfrak{b}^*&\in E(C_0(\R),C_0(\R)\hatotimes\Cl_2)\cong E(C_0(\R),C_0(\R))
\end{align*}
are the respective identity morphisms. \qed
\end{thm}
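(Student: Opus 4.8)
The plan is to compute the two composites from the definitions of $\mathfrak{b}$, $\mathfrak{b}^*$ and of the composition product in $E$-theory, and to recognise each of them, up to homotopy and the stability isomorphisms of Theorem~\ref{thm:EStability}, as an identity morphism. This is the asymptotic-morphism proof of Bott periodicity going back to Connes--Higson; the analytic input is concentrated in the spectral theory of the one-dimensional harmonic oscillator.

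First I would unwind $(\mathfrak{b}^*\hatotimes 1_{\Cl_1})\circ\mathfrak{b}$. By the definition of the composition product it is represented by precomposing $\mathfrak{b}^*\hatotimes 1_{\Cl_1}$ (applied to the $\cS\hatotimes C_0(\R)$ tensor factors) with $\Delta\hatotimes\id$ and then substituting $\mathfrak{b}$. Since $\cS$ is generated by $u$ and $v$ with $\Delta(u)=u\hatotimes u$ and $\Delta(v)=u\hatotimes v+v\hatotimes u$, the composite is the asymptotic morphism $\cS\to\fA\bigl(\K(L^2(\R,\Cl_1))\bigr)$ with $u\mapsto\overline{t\mapsto u(t^{-1}D_\R)\,u(X)}$ and $v\mapsto\overline{t\mapsto u(t^{-1}D_\R)\,v(X)+v(t^{-1}D_\R)\,u(X)}$, where $X$ acts by multiplication and the two copies of $\Cl_1$ (the one carried by $D_\R$ and the one produced by $\mathfrak{b}$) have been merged into $\Cl_2\cong M_{1,1}(\C)$ and absorbed into $\K(L^2(\R,\Cl_1))$.

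The step I expect to be the main obstacle is to identify this asymptotic morphism, up to homotopy, with the one that factors as $\cS\xrightarrow{\eta}\C\to\K(L^2(\R,\Cl_1))$, the second arrow being the inclusion of the rank-one projection $P_{\ker b}$ onto the kernel of the Bott--Dirac operator $b:=D_\R+X$. The relevant facts are that $b$ is odd, self-adjoint and regular, that $b^2$ is the harmonic oscillator (number operator), hence has spectrum $\{0\}\cup[c,\infty)$ for some $c>0$, and that $\ker b$ is the one-dimensional span of the Gaussian $\gamma(x)=\pi^{-1/4}e^{-x^2/2}$. A homotopy that rescales the variable of $L^2(\R)$ and rotates $t^{-1}D_\R$ and $X$ into one another first replaces the product of functional calculi by $\phi\mapsto\overline{t\mapsto\phi(t^{-1}b)}$; one then has to show this last morphism is homotopic to $\phi\mapsto\phi(0)P_{\ker b}$. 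The subtlety is that $\phi(t^{-1}b)$ does \emph{not} converge in norm to $\phi(0)P_{\ker b}$, so one must build an explicit spectral-rescaling homotopy through asymptotic morphisms, exploiting the \emph{uniform} spectral gap of $b^2$ and the finiteness of the multiplicity of $0$ --- concretely via Mehler's formula for $e^{-sb^2}$, as in \cite{HigGue}. Granting this, the composite represents $1_\C$ in $E(\C,\Cl_2)\cong E(\C,\C)$ by Theorem~\ref{thm:EStability}, which is the first identity.

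Finally I would establish $(\mathfrak{b}\hatotimes 1_{\Cl_1})\circ\mathfrak{b}^*=1$ in $E(C_0(\R),C_0(\R))$. Unwinding this composite in the same way leaves an auxiliary copy of $C_0(\R)$ --- the variable introduced by $\mathfrak{b}$ --- and an operator assembled from $t^{-1}D_\R$ and the position multipliers on $L^2(\R)\hatotimes C_0(\R)$; a rotation homotopy in the plane spanned by the auxiliary variable and the $L^2$-variable, combined with the harmonic-oscillator analysis from the previous step, deforms the composite to the identity. Alternatively, once $\mathfrak{b}$ is known to admit the one-sided inverse above, the fact that $-\hatotimes 1_{\Cl_1}$ is an equivalence on the relevant $E$-groups (again Theorem~\ref{thm:EStability}) promotes it to a two-sided inverse. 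Throughout, the bookkeeping with the maximal graded tensor product, the $\widehat\K$-stabilisation, and the identifications $\Cl_{2k}\cong M_{k,k}(\C)$ is routine in view of Theorems~\ref{thm:EStability} and~\ref{thm:EFunctoriality}.
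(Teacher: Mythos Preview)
The paper does not prove this theorem: it is stated with a \qed\ and attributed to \cite[Theorem 1.12, Corollary 1.2]{HigGue}, so there is no argument to compare against. Your proposal is a reasonable outline of the Connes--Higson proof as presented in that reference---the reduction to functional calculus of the Bott--Dirac operator $b=D_\R+X$, the spectral gap of $b^2$, and Mehler's formula are indeed the key ingredients there---so in effect you are sketching the cited proof rather than supplying an alternative.

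One small correction: the spectrum of $b^2$ is not $\{0\}\cup[c,\infty)$ but the discrete set $\{0,2,4,\dots\}$ (the harmonic oscillator spectrum), though of course this still gives the spectral gap you need. Also, your description of the first homotopy (``rescales the variable of $L^2(\R)$ and rotates $t^{-1}D_\R$ and $X$ into one another'') conflates two separate steps: the passage from the product of functional calculi $\phi(t^{-1}D_\R)\psi(X)$ to $\phi(t^{-1}D_\R+X)$ is precisely where Mehler's formula does the work, not a subsequent step. The ``alternative'' argument you offer for the second identity---promoting a one-sided inverse to a two-sided one---would need more care, since knowing $\mathfrak{b}^*$ is a left inverse to $\mathfrak{b}$ does not by itself make it a right inverse without some additional input (e.g.\ that one of them is already known to be invertible).
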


Now that we have introduced all that we need to know about $E$-theory, we simply treat $K$-theory as the special case $K(B):=E(\C,B)$. In this special case, it is not even necessary to use asymptotic morphisms, because
$E(\C,B)\cong \llbracket \cS,B\hatotimes\widehat\K\rrbracket_0$ by \cite[Proposition 1.3]{HigGue}. This is the spectral picture of $K$-theory of \cite{Trout}.

The relation to the classical picture of $K$-theory is the following: If $P\in B\hatotimes\widehat{\K}$ is a projection, then its $K$-theory element is represented by the $*$-homomorphism
\[\cS\to B\hatotimes\widehat{\K}\,,\quad \phi\mapsto \phi(0)\cdot P\,.\]

\section{The Roe algebra with coefficients}
We shall now recall the construction of the Roe algebra  with coefficients.
The coefficient $C^*$-algebras appearing in the remaining part of this paper will usually be denoted by $A,A_1,A_2,B,B_1,B_2,C$. Therefore, we will from now on always assume that these letters stand for graded, unital, complex $C^*$-algebras.

It is important to begin by making some remarks about the spaces $X$ we work with: 
We do not use the notion of a coarse space in this paper, which would be the most general setup. Instead, our spaces will always be proper metric spaces. However, we do allow the metric to take the value infinity, thus yielding a decomposition into subspaces which are infinitely far apart from each other, the so-called coarse components. The reason is that some constructions in the sequel yield non-connected complete Riemannian manifolds, whose path-metric is of this type.
We emphasize right here at the beginning that many arguments in coarse geometry generalize to spaces with such metrics simply by applying them to each coarse component separately. 

The theory which we are about to recall in this section usually makes heavy use of the fact that--if the space has only one coarse component--the bounded subsets are exactly the precompact ones. If the space has more than one coarse component, this has to be adapted to:
\begin{lem}\label{lem:boundednotprecompact}
The precompact subsets are exactly those which are a finite union of bounded subsets. \qed
\end{lem}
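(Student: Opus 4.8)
The plan is to prove that the precompact subsets of a proper metric space $X$ (with the extended metric allowed to take the value $\infty$) are exactly the finite unions of bounded subsets. The argument splits into the two inclusions, and the essential input is the observation that the coarse components (the equivalence classes under the relation $x\sim y\iff d(x,y)<\infty$) are both open and closed, so that a set meeting infinitely many of them cannot be precompact, while within a single coarse component properness gives the familiar equivalence between bounded and precompact.

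\emph{Bounded $\Rightarrow$ precompact on a single coarse component.} Recall that $X$ is assumed proper, meaning every closed ball $\overline{B}(x,r)$ with $r<\infty$ is compact. Hence a bounded subset $S$ (i.e. one contained in some $\overline{B}(x,r)$ with $r<\infty$) has compact closure. Consequently a finite union $S_1\cup\dots\cup S_k$ of bounded subsets has closure contained in $\overline{S_1}\cup\dots\cup\overline{S_k}$, a finite union of compact sets, hence compact; so the union is precompact. This direction does not even need the coarse component remarks.

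\emph{Precompact $\Rightarrow$ finite union of bounded subsets.} Suppose $S\subseteq X$ is precompact, so $\overline{S}$ is compact. For each coarse component $C$ the set $C$ is open (if $x\in C$ then the whole ball $B(x,1)\subseteq C$) and its complement, a union of other coarse components, is also open; thus the coarse components form an open cover of $\overline{S}$ by pairwise disjoint sets. By compactness only finitely many coarse components $C_1,\dots,C_k$ meet $\overline{S}$, and $\overline{S}=\bigcupdot_{i=1}^k(\overline{S}\cap C_i)$ with each $\overline{S}\cap C_i$ closed in $\overline{S}$, hence compact. Within the component $C_i$ the metric is finite-valued, so picking any $x_i\in \overline{S}\cap C_i$, compactness of $\overline{S}\cap C_i$ forces it to lie in some finite ball $\overline{B}(x_i,r_i)$ — otherwise the continuous function $d(x_i,\cdot)$ would be unbounded on a compact set. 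Therefore $S\subseteq\overline{S}=\bigcup_{i=1}^k(\overline{S}\cap C_i)$ is a finite union of bounded subsets.

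The only mild subtlety — and the step I would flag as the place to be careful — is the interaction between the extended metric and the topology: one must check that the coarse components are genuinely clopen in the metric topology (so that compactness of $\overline{S}$ really does limit them to finitely many), and that "bounded" is interpreted as "contained in a ball of \emph{finite} radius" rather than the vacuously-true "contained in a ball of radius $\infty$", which would make the statement trivial and useless. Once these conventions are pinned down, as the preceding paragraph in the paper already does, the proof is the routine argument above, and it is exactly the adaptation of the standard fact for proper metric spaces to the multi-component setting, applied "to each coarse component separately" as promised.
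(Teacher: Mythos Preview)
Your proof is correct. The paper itself gives no proof at all --- the \qed\ immediately after the statement indicates the author considers it a straightforward observation --- so there is nothing to compare; your argument is exactly the routine verification the paper implicitly leaves to the reader, carried out componentwise as the surrounding text suggests.
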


That said, complete Riemannian manifolds without boundary are understood to be equipped with the path-metric. Nonetheless, we also encounter spaces with different metrics. For example, if $X$ is the closure of an open subset of a complete Riemannian manifold $M$, then we will usually equip $X$ with the restricted path-metric from $M$. We shall write $X^{\subseteq M}$ instead of $X$ in these cases, to indicate that the restricted metric from $M$ has been chosen.

The basic properties of the Roe algebra stated in this section do not come surprisingly at all, as they are well known in similar set-ups (e.\,g.\ \cite[Chapter 6]{HigRoe}). However, they have not been stated in the context of $E$-theory and in the presence of $\Z_2$-gradings before, and for this reason we will be a bit detailed where it seems appropriate.

\begin{defn}[cf.\ {\cite[Definition 3.2]{HankePapeSchick}, \cite[Definitions 5.1--5.3]{HigsonPedersenRoe}}]
Let $\cH$ be a separable right Hilbert $A$-module,  $\rho\colon C_0(X)\to\B_A(\cH)$ a representation and $T\in\B_A(\cH)$.
\begin{itemize}
\item $T$ is \emph{locally compact} if $T\circ\rho(f),\rho(f)\circ T\in\K_A(\cH)$ for all $f\in C_0(X)$, or equivalently if this holds for all bounded Borel functions $f$ which vanish at infinity.
\item $T$ has \emph{finite propagation} if there exists $R>0$ such that $\rho(f)T\rho(g)$ vanishes for all $f,g\in C_0(X)$ with $\dist(\supp(f),\supp(g))\geq R$. The smallest such $R$ is called the \emph{propagation} of $T$. 
\item The \emph{Roe-$C^*$-algebra} of $X$ associated with $\rho$ is the sub-$C^*$-algebra $C^*(X,\cH)\subseteq \B_A(\cH)$ generated by all locally compact operators with finite propagation. The dependence of the representation $\rho$ is understood implicitly.
\end{itemize}
\end{defn}

There is the well known ``functoriality'' of the Roe algebras under coarse maps.
\begin{defn}
Let $X_1$, $X_2$, $X_3$ be proper metric spaces.
\begin{itemize}
\item A \emph{coarse map} between $X_1,X_2$ is a proper Borel map $h\colon X_1\to X_2$, proper in the sense that preimages of precompact sets are precompact\footnote{In view of Lemma \ref{lem:boundednotprecompact} this is not the same as saying that preimages of bounded sets are bounded.}, such that for each $R>0$ there is $S>0$ such that 
\[\forall x,y\in X_1\colon d(x,y)<R\Rightarrow d(f(x),f(y))<S\,.\]
\item Two coarse maps $h_1,h_2\colon X_1\to X_2$ are called \emph{close} (or \emph{coarsely equivalent}) if the function $X_1\ni x\mapsto d(h_1(x),h_2(x))\in\R$ is bounded. This defines an equivalence relation on the set of coarse maps from $X_1$ to $X_2$.
\item Proper metric spaces together with closeness classes of coarse maps comprise a category. A \emph{coarse equivalence} is a coarse map which is an isomorphism in this category.
\end{itemize}
\end{defn}

\begin{defn}[cf.\ {\cite[Definitions 5.1]{HigsonPedersenRoe}}]
Let $X_1,X_2$ be two proper metric spaces, $A$ a $\Z_2$-graded unital $C^*$-algebra, $\rho_{1,2}\colon C_0(X_{1,2})\to\B_A(\cH_{1,2})$ grading preserving representations on separable $\Z_2$-graded Hilbert-$A$-modules and $V\colon \cH_1\to\cH_2$ an isometry of Hilbert modules.
\begin{itemize}
\item The \emph{support} $\supp(V)$ of $V$ is the complement in $X_1\times X_2$ of the union of all open subsets of the form $U_1\times U_2$ such that $\rho_2(f_2)V\rho_1(f_1)=0$ for all $f_{1,2}\in C_0(U_{1,2})$.
\item A coarse map $h\colon X_1\to X_2$ is \emph{covered} by $V$, if there is an $R>0$ such that 
\[\forall (x,y)\in\supp(V)\colon d(h(x),y)<R\,.\]
\end{itemize}
\end{defn}

\begin{lem}\label{lem:coveringIsometries}
Given an isometry $V$ which covers a coarse map $h$ as in the previous definition, adjoining by $V$ yields a $*$-homomorphism
\[\operatorname{Ad}_V\colon C^*(X_1,\cH_1)\to C^*(X_2,\cH_2)\,,\quad T\mapsto VTV^*\,.\]
The $E$-theory class $h_*\in E(C^*(X_1,\cH_1), C^*(X_2,\cH_2))$ determined by this $*$-homomorphism is independent of the choice of $V$ and depends only on the closeness class of $h$. If $X_1,X_2,X_3$ are three proper metric spaces with representations on Hilbert $A$-modules $\cH_1,\cH_2,\cH_3$ and $h_1\colon X_1\to X_2$ and $h_2\colon X_2\to X_3$ are two coarse maps, which are covered by isometries $\cH_1\to\cH_2$, $\cH_2\to\cH_3$, respectively, then $(h_2\circ h_1)_*=(h_2)_*\circ (h_1)_*$.
\end{lem}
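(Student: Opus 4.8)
The plan is to verify the three assertions in turn: that $\operatorname{Ad}_V$ is a well-defined $*$-homomorphism, that its $E$-theory class depends only on the closeness class of $h$ (and not on $V$), and that the construction is functorial. First I would check well-definedness: for $T\in C^*(X_1,\cH_1)$ one must see that $VTV^*$ is again locally compact with finite propagation. If $V$ covers $h$ with constant $R$ and $T$ has propagation $\leq R'$, then for $f_2,g_2\in C_0(X_2)$ with $\dist(\supp f_2,\supp g_2)$ large, $\rho_2(f_2)VTV^*\rho_2(g_2)$ can be rewritten (approximating $f_2,g_2$ by Borel functions supported near the images of their supports under $h^{-1}$ widened by $R$) so that it factors through $\rho_1(f_1)T\rho_1(g_1)$ with $\dist(\supp f_1,\supp g_1)\geq R'$; hence it vanishes, giving $\operatorname{Ad}_V(T)$ propagation $\leq R'+2R$. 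Local compactness follows similarly: $VTV^*\rho_2(f_2)$ is a limit of $V T\rho_1(f_1)V^*$-type terms with $f_1$ compactly supported, which lies in $\K_A(\cH_2)$ because $V,V^*$ are adjointable and $T\rho_1(f_1)$ is compact; here one uses properness of $h$ to ensure the relevant $f_1$ have precompact support. Since $VTV^*$ is a limit of locally compact finite-propagation operators, it lies in $C^*(X_2,\cH_2)$, and $\operatorname{Ad}_V$ is clearly a grading-preserving $*$-homomorphism.

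Next, independence of $V$ and dependence only on the closeness class of $h$. Suppose $V_0,V_1\colon\cH_1\to\cH_2$ both cover coarse maps $h_0,h_1$ that are close, say all supports within distance $R$ of the graph of $h_0$. The standard trick is to form the isometry $W=\begin{pmatrix}V_0&0\\0&V_1\end{pmatrix}$, or rather to use a rotation homotopy: on $\cH_2\oplus\cH_2$ consider the path $V_t=\begin{pmatrix}\cos(\tfrac{\pi t}{2})V_0\\ \sin(\tfrac{\pi t}{2})V_1\end{pmatrix}$ of isometries $\cH_1\to\cH_2\oplus\cH_2$, each of which covers a coarse map close to $h_0$. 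This yields a $*$-homomorphism $C^*(X_1,\cH_1)\to C_b([0,1],C^*(X_2,\cH_2\oplus\cH_2))$ (one must check that $\operatorname{Ad}_{V_t}(T)$ is norm-continuous in $t$, which follows since $T$ has finite propagation so only finitely many "diagonal blocks" interact), giving a homotopy of $*$-homomorphisms between $\operatorname{Ad}_{V_0}$ and $\operatorname{Ad}_{V_1}$ composed with the two corner inclusions $C^*(X_2,\cH_2)\hookrightarrow C^*(X_2,\cH_2\oplus\cH_2)$. Since those corner inclusions induce the same $E$-theory morphism (both are stabilizations by an isometry onto a summand, hence equal in the asymptotic category by the stability discussion preceding Theorem \ref{thm:EStability}), the $E$-theory classes of $\operatorname{Ad}_{V_0}$ and $\operatorname{Ad}_{V_1}$ agree. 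In particular, taking $h_0=h_1=h$ gives independence of $V$, and varying $h_0,h_1$ within a closeness class gives the asserted invariance; existence of at least one covering isometry in each case is guaranteed by a standard absorption argument (or may be taken as a hypothesis, as in the statement).

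For functoriality, let $V_1\colon\cH_1\to\cH_2$ cover $h_1$ and $V_2\colon\cH_2\to\cH_3$ cover $h_2$. Then $V_2V_1$ is an isometry $\cH_1\to\cH_3$, and a short estimate on supports shows that $V_2V_1$ covers $h_2\circ h_1$: if $(x,z)\in\supp(V_2V_1)$ then there is $y$ with $(x,y)\in\supp(V_1)$ and $(y,z)\in\supp(V_2)$ (up to the usual approximation), so $d(h_2(h_1(x)),z)\leq d(h_2(h_1(x)),h_2(y))+d(h_2(y),z)$, which is bounded using the coarseness modulus of $h_2$ on the first term and $R_2$ on the second. Since $\operatorname{Ad}_{V_2V_1}=\operatorname{Ad}_{V_2}\circ\operatorname{Ad}_{V_1}$ as $*$-homomorphisms, and the functor from $*$-homomorphisms into $E$-theory is multiplicative, we get $(h_2\circ h_1)_*=(h_2)_*\circ(h_1)_*$ by the already-established well-definedness applied to the covering isometry $V_2V_1$.

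I expect the main obstacle to be the first step — verifying that $\operatorname{Ad}_V$ actually lands in the Roe algebra — because it requires careful bookkeeping with Borel functions approximating characteristic functions of $R$-neighborhoods of supports, and the interplay between properness of $h$ (needed for local compactness) and the finite-propagation bound (needed for the propagation estimate). The homotopy argument in the second step is conceptually the heart of the matter but is by now routine in coarse geometry; the continuity of $t\mapsto\operatorname{Ad}_{V_t}(T)$ is the only subtle point there and is handled by the finite-propagation hypothesis on $T$.
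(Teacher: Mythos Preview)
Your proposal is correct and follows essentially the same route as the paper: the rotation homotopy $V_t=\begin{pmatrix}\cos(\tfrac{\pi}{2}t)V_0\\\sin(\tfrac{\pi}{2}t)V_1\end{pmatrix}$ is exactly the one the paper uses, and your functoriality argument via $V_2V_1$ matches the paper's verbatim. The paper simply cites \cite[Section~5]{HigsonPedersenRoe} for the well-definedness of $\operatorname{Ad}_V$ rather than spelling it out as you do, and it handles closeness by the slightly shorter observation that any $V$ covering $h_1$ automatically covers every $h_2$ close to $h_1$; your worry about continuity of $t\mapsto\operatorname{Ad}_{V_t}(T)$ is unnecessary, as it follows directly from the norm-continuity of $t\mapsto V_t$.
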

\begin{proof}
The first part was already mentioned in \cite[Section 5]{HigsonPedersenRoe}.
If $V_0,V_1\colon H_1\to H_2$ are two isometries which cover $h$, then $\operatorname{Ad}_{V_0}$, $\operatorname{Ad}_{V_1}$ determine the same $E$-theory elements as the $*$-homomorphisms 
\[\begin{pmatrix}
\operatorname{Ad}_{V_0}&0\\0&0
\end{pmatrix}\,,\,
\begin{pmatrix}
0&0\\0&\operatorname{Ad}_{V_1}
\end{pmatrix}\colon 
C^*(M_1,\rho_1)\to C^*(M_2,\rho_2)\hatotimes M_2(\C)\,,
\]
repectively, but these are homotopic via the $*$-homomorphisms obtained by adjoining with $V_t:=\begin{pmatrix}
\cos(\frac{\pi}{2}t)V_0\\\sin(\frac{\pi}{2}t)V_1
\end{pmatrix}$, $t\in[0,1]$, and thus determine the same element in $E$-theory.

If $V$ covers the coarse map $h_1$ and the coarse map $h_2$ is close to $h_1$, then $V$ obviously covers $h_2$, too.

Finally, if $V_1\colon \cH_1\to\cH_2$, covers $h_1$ and $V_2\colon \cH_2\to\cH_3$ covers $h_2$, then $V_2V_1$ covers $h_2\circ h_1$.
\end{proof}

Isometries which cover coarse maps do not always exist. They do, however, exist if the Hilbert module $\cH_2$ is ``sufficiently large'' \cite[Proposition 5.5]{HigsonPedersenRoe}. 
In the cases mentioned at the beginning, i.\,e.\ a manifold with or without boundary or, more general, closures of open subsets of manifolds, then it is easy to give sufficiently large Hilbert modules explicitely, for example $L^2(X)\hatotimes A\hatotimes\widehat{\ell^2}$. 
We will use this Hilbert module for constructing the Roe algebra with coefficients concretely: 
\begin{defn}\label{def:concreteRoealgebra}
If $X$ is the closure of an open subset of a complete, connected Riemannian manifold and $A$ a graded unital $C^*$-algebra, then the Roe algebra of $X$ with coefficients in $A$ is 
\[C^*(X,A):=C^*(X,L^2(X)\hatotimes A\hatotimes\widehat{\ell^2}))\,.\]
\end{defn}
\begin{lem}\label{lem:coarseFunctoriality}
Given spaces $X_1,X_2$  as in the previous definition, a representation of $C_0(X_1)$ on a separable Hilbert $A$-module $\cH$ and a coarse map $h\colon X_1\to X_2$, then there exists an isometry $\cH\to L^2(M)\hatotimes A\hatotimes\widehat\ell^2$ covering $h$ and hence the $E$-theory element
\[h_*\in E(C^*(X_1,\cH),C^*(X_2,A))\]
exists.
In particular, the assignments $X\to C^*(X,A)$ and $h\mapsto h_*$ define a functor from the category of spaces as in the previous definition and closeness classes of coarse maps to the category of graded $C^*$-algebras.
\end{lem}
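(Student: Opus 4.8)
The plan is to deduce Lemma~\ref{lem:coarseFunctoriality} from the two preceding results, namely Lemma~\ref{lem:coveringIsometries} and the existence statement \cite[Proposition 5.5]{HigsonPedersenRoe}, with the only real work being to verify that the concrete Hilbert module $L^2(X_2)\hatotimes A\hatotimes\widehat{\ell^2}$ of Definition~\ref{def:concreteRoealgebra} is ``sufficiently large'' for the latter to apply, and then to check functoriality. First I would recall the precise ampleness hypothesis of \cite[Proposition 5.5]{HigsonPedersenRoe}: a representation $\rho_2\colon C_0(X_2)\to\B_A(L^2(X_2)\hatotimes A\hatotimes\widehat{\ell^2})$ is sufficiently large (ample) provided no nonzero $f\in C_0(X_2)$ acts as a compact operator, and provided enough multiplicity is present. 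The multiplication representation of $C_0(X_2)$ on $L^2(X_2)$ already has the property that $\rho(f)$ is never compact for $f\neq 0$ (since $X_2$, being the closure of an open subset of a manifold, has no isolated points and $L^2(X_2)$ is infinite-dimensional over every open set), and tensoring with the infinite-dimensional $\widehat{\ell^2}$ supplies the required infinite multiplicity; tensoring with the unital $A$ does no harm. So the hypothesis of \cite[Proposition 5.5]{HigsonPedersenRoe} is met, and for any coarse map $h\colon X_1\to X_2$ and any representation of $C_0(X_1)$ on a separable Hilbert $A$-module $\cH$ there is an isometry $V\colon\cH\to L^2(X_2)\hatotimes A\hatotimes\widehat{\ell^2}$ covering $h$.

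Granting that isometry, Lemma~\ref{lem:coveringIsometries} immediately produces the $*$-homomorphism $\operatorname{Ad}_V\colon C^*(X_1,\cH)\to C^*(X_2,A)$ and hence the $E$-theory class $h_*\in E(C^*(X_1,\cH),C^*(X_2,A))$, independent of $V$ and depending only on the closeness class of $h$. For the functoriality claim I would take $\cH=L^2(X_1)\hatotimes A\hatotimes\widehat{\ell^2}$ throughout, i.e.\ restrict attention to the standard modules of Definition~\ref{def:concreteRoealgebra}. Given $h_1\colon X_1\to X_2$ and $h_2\colon X_2\to X_3$, choose covering isometries $V_1\colon L^2(X_1)\hatotimes A\hatotimes\widehat{\ell^2}\to L^2(X_2)\hatotimes A\hatotimes\widehat{\ell^2}$ and $V_2\colon L^2(X_2)\hatotimes A\hatotimes\widehat{\ell^2}\to L^2(X_3)\hatotimes A\hatotimes\widehat{\ell^2}$; then $V_2V_1$ covers $h_2\circ h_1$ (last sentence of the proof of Lemma~\ref{lem:coveringIsometries}), so $(h_2\circ h_1)_*=(h_2)_*\circ(h_1)_*$ by that same lemma. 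Identity coarse maps are covered by the identity isometry, giving $(\id_X)_*=1_{C^*(X,A)}$, and closeness-invariance is again part of Lemma~\ref{lem:coveringIsometries}; hence $X\mapsto C^*(X,A)$, $h\mapsto h_*$ is a well-defined functor from the category of closures of open subsets of complete connected Riemannian manifolds with closeness classes of coarse maps into graded $C^*$-algebras (or rather into $\mathbf{E}$, composing with the inclusion of $*$-homomorphisms).

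The only genuine obstacle is the verification that the concrete module $L^2(X_2)\hatotimes A\hatotimes\widehat{\ell^2}$ satisfies the ampleness/largeness hypothesis needed to invoke \cite[Proposition 5.5]{HigsonPedersenRoe} in the graded, $A$-coefficient setting: the cited proposition is classically stated for Hilbert spaces, so one must check that its proof goes through when the scalars are replaced by a unital $C^*$-algebra $A$ and gradings are tracked. This is routine — the construction of the covering isometry is a partition-of-unity argument producing, over each bounded piece, an isometry into a summand of $L^2(X_2)\hatotimes\widehat{\ell^2}$ of infinite multiplicity, and these are assembled; tensoring everything with $\id_A$ and respecting the even/odd decomposition of $\widehat{\ell^2}=\ell^2\oplus\ell^2$ causes no difficulty — but it is the step that requires a few lines of genuine checking rather than mere citation. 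I would remark explicitly that $X_2$ having no isolated points (being the closure of an open subset of a manifold) is what rules out nonzero $f$ acting compactly, and that connectedness, while assumed in Definition~\ref{def:concreteRoealgebra}, is inessential here and handled coarse-component-wise as explained in the preamble to this section.
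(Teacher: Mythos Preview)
Your approach is essentially identical to the paper's: the paper's proof consists of the single sentence that the existence of a covering isometry is \cite[Proposition 5.5]{HigsonPedersenRoe} in the ungraded case and that gradings are easily incorporated, leaving the functoriality clause to follow from Lemma~\ref{lem:coveringIsometries}. You have simply spelled out in more detail why the concrete module is ample and how the functoriality statement is assembled from Lemma~\ref{lem:coveringIsometries}; nothing you wrote diverges from or adds to the paper's strategy.
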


\begin{proof}
The existence of an isometry covering $h$ was shown in \cite[Proposition 5.5]{HigsonPedersenRoe} in the case of ungraded coefficient $C^*$-algebras. The gradings are easily incorporated into the proof.
\end{proof}

In the context of index theory, it is usually helpful to choose Hilbert modules which take the presence of bundles $S\to X$ into account. Typically, these bundles are of the following type:
\begin{defn}
Let $X$ be as in Definiton \ref{def:concreteRoealgebra}.
\begin{itemize}
\item By an \emph{$A$-bundle} $S\to X$ we mean a smooth bundle whose fibres are finitely generated, projective, graded right Hilbert $A$-modules.
\item The Hilbert $A$-module $L^2(S)$ or $L^2(X,S)$ is the completion of $\Gamma^\infty_{\operatorname{cpt}}(S)$ with respect to the $A$-valued scalar product
\[(\xi,\zeta)=\int_M\langle\xi(x),\zeta(x)\rangle d\!\operatorname{vol}\,,\]
where $\langle.,.\rangle\colon \Gamma(S)\times\Gamma(S)\to C^\infty(X,A)$ is the fibre-wise $A$-valued inner product.
\item Furthermore, we define $C^*(X,S):=C^*(X,L^2(S)\hatotimes\widehat\ell^2)$.
\end{itemize}
\end{defn}

There are inclusions 
\begin{equation*}
C^*(X,L^2(S))\subseteq C^*(X,S)\subseteq C^*(X,A)
\end{equation*}
given by adjoining with grading preserving isometric embedding of bundles $S\subseteq S\hatotimes\widehat\ell^2\subseteq A\hatotimes\widehat\ell^2$.
Note that the $E$-theory elements corresponding to these inclusions are in fact both of the form $\id_*$ and therefore independent of the choices.
If the fibres of $S$ are ``sufficiently large'' in the appropriate sense, then these $E$-theory elements are even isomorphisms, but we will not use this fact.

As we will see soon, $C^*(X,L^2(S))$ is perfect for the direct construction of the coarse index (cf.\ \cite{HankePapeSchick}), but the extra space in $C^*(X,S)=C^*(X,L^2(S)\hatotimes\widehat\ell^2)$ is very useful when considering twisted operators.

Important coefficient $C^*$-algebras are of course the Clifford algebras $\Cl_k$. Note that the isomorphisms  \eqref{eq:AdjointableClifford} carry over to the Roe algebras:
\begin{lem}\label{lem:RoeClifford}
There are canonical isomorphisms
\begin{align*}
C^*(X,\cH\hatotimes\Cl_k)&\cong C^*(X,\cH)\hatotimes\Cl_k
\end{align*}
which are natural under the functoriality of Lemma \ref{lem:coveringIsometries}. \qed
\end{lem}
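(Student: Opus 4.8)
The plan is to show that the canonical $*$-isomorphism \eqref{eq:AdjointableClifford}, which we write here as
\[\Psi\colon \B_{A\hatotimes\Cl_k}(\cH\hatotimes\Cl_k)\xrightarrow{\ \cong\ }\B_A(\cH)\hatotimes\Cl_k\,,\]
restricts to an isomorphism of the two Roe subalgebras. Concretely, $\Psi$ sends an operator of the form $S\hatotimes c$ (acting on $\cH\hatotimes\Cl_k$ by $S$ on the first tensor leg and by left multiplication by $c$ on the second, with the implicit Koszul sign) to $S\hatotimes c\in\B_A(\cH)\hatotimes\Cl_k$, and every element of $\B_{A\hatotimes\Cl_k}(\cH\hatotimes\Cl_k)$ is a finite sum of such operators because $\Cl_k$ is a unital, finite-dimensional $C^*$-algebra with $\B_{\Cl_k}(\Cl_k)=\K_{\Cl_k}(\Cl_k)=\Cl_k$. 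The representation of $C_0(X)$ on $\cH\hatotimes\Cl_k$ is $\rho\hatotimes\id_{\Cl_k}$ (grading-preserving, since $C_0(X)$ is trivially graded), and $\Psi$ carries it to $\rho\hatotimes 1_{\Cl_k}$; more generally $\Psi\big((\rho(f)\hatotimes\id)\,T\,(\rho(g)\hatotimes\id)\big) = (\rho(f)\hatotimes 1)\,\Psi(T)\,(\rho(g)\hatotimes 1)$ for $f,g\in C_0(X)$.

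Next, fixing a linear basis $c_1,\dots,c_N$ of $\Cl_k$, I would write $\Psi(T)=\sum_i S_i\hatotimes c_i$ uniquely. Since the $c_i$ are linearly independent, $(\rho(f)\hatotimes 1)\Psi(T)(\rho(g)\hatotimes 1)=0$ if and only if $\rho(f)S_i\rho(g)=0$ for every $i$, and likewise $(\rho(f)\hatotimes 1)\Psi(T)$ and $\Psi(T)(\rho(f)\hatotimes 1)$ lie in $\K_A(\cH)\hatotimes\Cl_k=\K_{A\hatotimes\Cl_k}(\cH\hatotimes\Cl_k)$ if and only if $\rho(f)S_i$ and $S_i\rho(f)$ lie in $\K_A(\cH)$ for every $i$. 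Hence $T$ is locally compact of propagation at most $R$ precisely when each $S_i$ is locally compact of propagation at most $R$. Consequently $\Psi$ maps the $*$-subalgebra of locally compact finite-propagation operators on $\cH\hatotimes\Cl_k$ into the algebraic span of $\{S\hatotimes c : S\in C^*(X,\cH),\ c\in\Cl_k\}\subseteq C^*(X,\cH)\hatotimes\Cl_k$, and $\Psi^{-1}$ maps each $S\hatotimes c$ with $S$ locally compact of finite propagation back to such an operator. Taking closures — using that a sum of locally compact finite-propagation operators again has finite propagation (the maximum), a product has propagation at most the sum, and local compactness is preserved, so these operators form dense $*$-subalgebras of the respective Roe algebras — yields the asserted isomorphism $C^*(X,\cH\hatotimes\Cl_k)\cong C^*(X,\cH)\hatotimes\Cl_k$.

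For naturality, let $h\colon X_1\to X_2$ be a coarse map covered by an isometry $V\colon\cH_1\to\cH_2$. Then $V\hatotimes\id_{\Cl_k}\colon\cH_1\hatotimes\Cl_k\to\cH_2\hatotimes\Cl_k$ is an isometry, and since $\Cl_k$ is unital one has $(\rho_2(f_2)\hatotimes\id)(V\hatotimes\id)(\rho_1(f_1)\hatotimes\id)=(\rho_2(f_2)V\rho_1(f_1))\hatotimes\id$, whence $\supp(V\hatotimes\id_{\Cl_k})=\supp(V)$ and $V\hatotimes\id_{\Cl_k}$ covers $h$ as well. Under the identifications above, $\operatorname{Ad}_{V\hatotimes\id_{\Cl_k}}$ corresponds to $\operatorname{Ad}_V\hatotimes\id_{\Cl_k}$, so the square of $*$-homomorphisms relating $C^*(X_i,\cH_i\hatotimes\Cl_k)$ and $C^*(X_i,\cH_i)\hatotimes\Cl_k$ for $i=1,2$ commutes; passing to $E$-theory gives the naturality statement, since by Lemma \ref{lem:coveringIsometries} the class $h_*$ is represented by any covering isometry.

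The only step that requires genuine care — and the only place where the hypotheses enter — is the second paragraph: one must check that \emph{finite propagation} and \emph{local compactness}, the two conditions defining the Roe algebra, are preserved by $\Psi$ in both directions. This works precisely because $\Cl_k$ is finite-dimensional and unital, so the extra tensor factor contributes no propagation and no obstruction to compactness; everything else is formal bookkeeping with \eqref{eq:AdjointableClifford}.
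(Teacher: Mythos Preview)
Your proof is correct and follows exactly the approach the paper intends: the lemma is stated there with an immediate \qed, the only indication being that the isomorphisms \eqref{eq:AdjointableClifford} ``carry over to the Roe algebras'', and you have simply written out this verification in detail.
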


\section{Dirac operators over $C^*$-algebras}
The index theorems presented in this paper work for $A$-linear Dirac operators acting on sections of $A$-bundles.
Let's make all these notions precise.
\begin{defn}
Let $S\to M$ be a smooth $A$-bundle. A \emph{connection} on $S$ is a $\C$-linear map $\nabla\colon \Gamma(S)\to\Gamma(S\otimes T^*M)$ which is grading preserving, i.\,e.\ maps $\Gamma(S^\pm)$ to $\Gamma(S^\pm\otimes T^*M)$, satisfies the Leibniz rule $\nabla_X(\xi\cdot f)=\nabla_X(\xi)\cdot f+\xi\cdot \partial_Xf$
for all sections $\xi\in\Gamma(S)$, all smooth $A$-valued functions $f\in C^\infty(M,A)$ and all $X\in TM$, and is metric with respect to the $A$-valued inner product $\langle.,.\rangle$ on the fibres, i.\,e.\ 
$\partial_X\langle\xi,\zeta\rangle=\langle\nabla_X\xi,\zeta\rangle+\langle \xi,\nabla_X\zeta\rangle$ for all $\xi,\zeta\in\Gamma(S)$ and $X\in TM$.
\end{defn}

\begin{defn}
Let $(M,g)$ be a Riemannian manifold (possibly with boundary) and $\nabla^{LC}$ its Levi--Civita connection.
A \emph{Dirac $A$-bundle} is a smooth $A$-bundle along with 
\begin{itemize}
\item a connection $\nabla\colon \Gamma(S)\to\Gamma(S\otimes T^*M)$;
\item a Clifford multiplication 
$\Cl(TM)\to\End_A(S)$, i.\,e.\ a graded $C^*$-algebra bundle homomorphism;
\end{itemize}
such that connection and Clifford multiplication are related by the Leibniz rule 
$\nabla_X(Y\cdot\xi)=(\nabla^{LC}_XY)\cdot\xi+Y\cdot\nabla_X\xi$
for all $X\in TM$, $Y\in\Gamma(\Cl(TM))$ and $\xi\in\Gamma(S)$.
\end{defn}

\begin{defn}
The \emph{Dirac operator} associated to a Dirac $A$-bundle $S\to M$ is the $A$-linear first order differential operator $D\colon \Gamma^\infty_{\operatorname{cpt}}(S)\to\Gamma^\infty_{\operatorname{cpt}}(S)$ which is locally given by
\[D=\sum_{i=1}^ne_i\nabla^S_{e_i}\,,\]
where $e_1,\dots, e_n$ is a local orthonormal frame of $TM$.
\end{defn}

The following two theorems are proven in \cite[Theorem 2.3, Lemma 3.6]{HankePapeSchick} for ($\C$-linear) Dirac operators twisted by $A$-bundles. However, their proofs never make use of the special shape of the twisted operator $D_E$ in any essential way and thus carries over to $A$-linear Dirac operators. Alternatively, see \cite[Lemma 2.1]{Zadeh}, but \cite[Remark 1.6]{HankePapeSchick} pointed out some drawbacks of Zadeh's proof. 
A newer proof of the first of the two theorems in its full generality has recently been provided by Ebert in \cite{Ebert_AnalyticalFoundations}.
\begin{thm}[{cf.\ \cite[Theorem 2.3.]{HankePapeSchick} and \cite[Theorem 1.14]{Ebert_AnalyticalFoundations}}]
The Dirac operator $D$ is closable in $L^2(S)$ and the minimal closure is regular  and self-adjoint as unbounded Hilbert-$A$-module operator.  It is the unique self-adjoint extension of $D$. \qed
\end{thm}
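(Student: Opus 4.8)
The plan is to follow the classical route for Dirac-type operators on complete manifolds, due to Gaffney and Chernoff, adapted to the Hilbert-$A$-module setting as in \cite{HankePapeSchick,Ebert_AnalyticalFoundations}. First I would record Green's formula: for $\xi,\zeta\in\Gamma^\infty_{\operatorname{cpt}}(S)$ one has
\[\langle D\xi,\zeta\rangle-\langle\xi,D\zeta\rangle=\int_M\operatorname{div}(W_{\xi,\zeta})\,d\!\operatorname{vol}\,,\]
where $W_{\xi,\zeta}$ is a compactly supported vector field built from the $A$-valued pairing of $\xi$ and $\zeta$ together with Clifford multiplication; this identity follows directly from the three Leibniz rules (for $\nabla$, for metric compatibility, and for compatibility of $\nabla$ with Clifford multiplication) and from the fact that Clifford multiplication by a tangent vector is skew-adjoint with respect to the $A$-valued inner product. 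Since $M$ has no boundary and $W_{\xi,\zeta}$ is compactly supported, the integral vanishes, so $D$ is symmetric, $D\subseteq D^*$; in particular $D$ is closable, and I write $D_{\min}=\overline D$ for the minimal (graph) closure and $D_{\max}=D^*$ for the maximal (distributional) extension, so that $D_{\min}\subseteq D_{\max}$ with both densely defined.

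Next I would establish that $D_{\min}$ is a regular operator on $L^2(S)$. Locally $D$ is an $A$-linear elliptic first-order operator whose principal symbol is Clifford multiplication, invertible off the zero section; the standard pseudodifferential parametrix construction goes through verbatim fibrewise over the unital algebra $A$, and patching the local parametrices with a locally finite partition of unity --- using completeness of $M$ to keep the correction terms controlled near infinity --- yields an operator $Q$ with $(1+D_{\max}^*D_{\max})Q$ and $Q(1+D_{\min}^*D_{\min})$ equal to the identity modulo operators with dense range. This gives that $1+D_{\min}^*D_{\min}$ has dense range and that $D_{\min}^*=D_{\max}$, which is precisely the defining property of regularity for the symmetric operator $D_{\min}$. (Alternatively, regularity may simply be imported from \cite[Theorem 1.14]{Ebert_AnalyticalFoundations}.)

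Finally I would prove $D_{\min}=D_{\max}$, i.e.\ that $D$ is essentially self-adjoint on $\Gamma^\infty_{\operatorname{cpt}}(S)$, by the finite-propagation-speed method. An energy estimate shows that the Cauchy problem $\partial_t u=iDu$, $u(0)=\xi\in\Gamma^\infty_{\operatorname{cpt}}(S)$, has (via an exhaustion of $M$ by compact sets and a limiting argument) a smooth global solution $u\colon\R\to L^2(S)$ with $\supp u(t)\subseteq\{\,x\mid\dist(x,\supp\xi)\le|t|\,\}$; because $M$ is complete these sets are compact, so $u(t)\in\Gamma^\infty_{\operatorname{cpt}}(S)\subseteq\operatorname{dom}(D_{\min})$ for all $t$, and $\|u(t)\|$ is constant by symmetry. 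Chernoff's lemma then shows that $\Gamma^\infty_{\operatorname{cpt}}(S)$ is a core for the regular self-adjoint generator of the resulting one-parameter unitary group, forcing $D_{\min}=D_{\max}$; hence $D_{\min}$ is regular and self-adjoint. For uniqueness, any self-adjoint extension $T$ of $D$ satisfies $D_{\min}\subseteq T=T^*\subseteq D^*=D_{\min}$, so $T=D_{\min}$.

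The essential difficulty lies in the last step in the Hilbert-module setting: one cannot define $\cos(tD)$ by the spectral theorem before self-adjointness is known, so the wave propagator must be produced directly by solving the hyperbolic equation and controlling its energy, and one must verify that the generator of the resulting group really is a regular self-adjoint operator (a Hilbert-module version of Stone's theorem) admitting $\Gamma^\infty_{\operatorname{cpt}}(S)$ as a core. The regularity assertion of the second step is the secondary technical hurdle, since on a Hilbert module "densely defined, closed, symmetric" does not imply "regular", so the parametrix estimates genuinely have to be assembled globally using completeness rather than being read off from abstract spectral theory.
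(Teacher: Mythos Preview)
The paper does not actually prove this theorem: it is stated with a terminal \qed and attributed to \cite[Theorem~2.3]{HankePapeSchick} and \cite[Theorem~1.14]{Ebert_AnalyticalFoundations}, with the surrounding text explaining that the proofs in those references carry over verbatim to general $A$-linear Dirac operators. So there is no ``paper's own proof'' to compare against beyond the decision to quote the result.

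Your sketch is a faithful outline of the argument in the cited references. Green's formula for symmetry, the parametrix construction for regularity, and the finite-propagation/Chernoff argument for $D_{\min}=D_{\max}$ are exactly the ingredients used in \cite{HankePapeSchick} and \cite{Ebert_AnalyticalFoundations}; you also correctly flag the two genuine subtleties in the Hilbert-module setting (that regularity is not automatic for closed symmetric operators, and that the wave propagator must be built directly rather than via spectral theory). One minor comment: in practice the cited references avoid invoking a Hilbert-module Stone theorem by instead using a cut-off-function/commutator argument \`a la Gromov--Lawson to show directly that $\operatorname{dom}(D_{\max})\subseteq\operatorname{dom}(D_{\min})$, which sidesteps the issue of identifying the generator of a unitary group; this is equivalent in spirit to your wave-equation approach but technically lighter in the module context.
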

We denote the closure by the same letter $D$. The importance of self-adjoint regular operators on Hilbert modules is that they admit a functional calculus, see  \cite[Theorem 3.1]{HankePapeSchick} and \cite[Theorem 1.19]{Ebert_AnalyticalFoundations}. Relevant to us are the properties mentioned in the following theorem:
\begin{thm}[{cf.\ \cite[Lemma 3.6.]{HankePapeSchick}}]\label{thm:FuncCalc}
The functional calculus yields a graded $*$-homomorphism
\[\cS\to C^*(M,L^2(S))\,,\quad f\mapsto f(D)\]
with the property that $f(D)=Dg(D)=g(D)D$ if $f,g\in\cS$ satisfy $f(t)=tg(t)$ for all $t$.\qed
\end{thm}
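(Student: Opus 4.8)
The plan is to regard this statement as \cite[Lemma 3.6]{HankePapeSchick} adapted to $A$-linear Dirac operators with $\Z_2$-grading, and to split the verification into three parts resting on the functional calculus for self-adjoint regular operators recalled above: that $f\mapsto f(D)$ is a $*$-homomorphism into $\B_A(L^2(S))$ with $f(D)=Dg(D)=g(D)D$ whenever $f(t)=tg(t)$; that its image actually lies in the sub-$C^*$-algebra $C^*(M,L^2(S))$; and that it is grading-preserving. The compatibility $f(D)=Dg(D)=g(D)D$ is a formal property of the functional calculus and carries over verbatim --- one only records that $g(D)$ maps $L^2(S)$ into the domain of $D$, so that $Dg(D)$ is a genuine bounded adjointable operator. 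The grading-preservation I would get from the grading involution $\gamma\in\B_A(L^2(S))$: oddness of $D$ means $\gamma D\gamma=-D$, and since conjugation by $\gamma$ is a $*$-automorphism commuting with the functional calculus, $\gamma f(D)\gamma=f(-D)$, which is $f(D)$ for even $f$ and $-f(D)$ for odd $f$; hence $f\mapsto f(D)$ respects the grading.

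The substantive point is that $f(D)\in C^*(M,L^2(S))$, which I would establish in two steps: local compactness for all $f$, and finite propagation for a dense set of $f$. For the former, note that the set of $f\in\cS$ for which $f(D)$ is locally compact is a closed $*$-subalgebra of $\cS$ --- closed because $\K_A(L^2(S))$ is a closed ideal and left and right multiplication by $\rho(\varphi)$, $\varphi\in C_0(M)$, are norm-continuous, and a $*$-subalgebra because products of locally compact operators with adjointable operators, and adjoints of locally compact operators, stay locally compact. It contains the resolvents $(D\pm i)^{-1}$, whose local compactness $\rho(\varphi)(D\pm i)^{-1}\in\K_A(L^2(S))$ is precisely the local elliptic estimate contained in the regularity theory (cf.\ \cite[Theorem 2.3]{HankePapeSchick}, \cite[Theorem 1.14]{Ebert_AnalyticalFoundations}) and uses only that the fibres of $S$ are finitely generated projective, hence remains valid with $A$-valued inner products. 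Since the functions $t\mapsto(t\pm i)^{-1}$ separate the points of $\R$ and vanish nowhere, they generate $\cS$ as a $C^*$-algebra by Stone--Weierstrass, so $f(D)$ is locally compact for every $f\in\cS$. For finite propagation, for a Schwartz function $f$ with compactly supported Fourier transform one has $f(D)=\frac{1}{2\pi}\int_\R\hat f(s)\,e^{isD}\,ds$ as a Bochner integral of bounded adjointable operators; since $e^{isD}$ has propagation at most $|s|$ and propagation $\le R$ is a norm-closed condition, $f(D)$ has propagation bounded by the radius of $\supp\hat f$, so $f(D)$ is a locally compact operator of finite propagation, i.e.\ a generator of $C^*(M,L^2(S))$. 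These $f$ are dense in $\cS$ and $f\mapsto f(D)$ is contractive, so $f(D)\in C^*(M,L^2(S))$ for all $f\in\cS$.

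The one genuinely analytic ingredient, and the step I expect to need care, is the finite propagation speed estimate: that the unitary $e^{isD}$ has propagation at most $|s|$ has to be proven for the Hilbert-$A$-module Dirac operator $D$, via the usual energy estimate for the evolution $\frac{d}{ds}u_s=iDu_s$ with the $A$-valued inner product in place of the scalar one. Everything else is bookkeeping with $C^*$-algebra generators, approximation in $\cS$, and the grading involution, and depends only on the functional calculus and the elliptic estimates already cited.
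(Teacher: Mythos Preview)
The paper does not actually prove this theorem: it is stated with a terminal \qedsymbol{} and is attributed to \cite[Lemma 3.6]{HankePapeSchick}, with the surrounding text remarking only that the proof there ``never make[s] use of the special shape of the twisted operator $D_E$ in any essential way and thus carries over to $A$-linear Dirac operators.'' So there is no in-paper argument to compare against.

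Your proposal is a correct and well-organized outline of the standard proof --- precisely the argument of \cite{HankePapeSchick} that the paper is invoking. The three-part split (functional calculus formalities and the identity $f(D)=Dg(D)$; grading via conjugation by the grading involution $\gamma$; membership in $C^*(M,L^2(S))$ via local compactness of resolvents plus Stone--Weierstrass, together with finite propagation for Schwartz functions with compactly supported Fourier transform) is exactly how the cited lemma is established. You are also right to flag finite propagation speed of $e^{isD}$ as the one place demanding genuine care in the Hilbert-$A$-module setting; the energy-estimate proof with the $A$-valued inner product indeed goes through, and this is the content the paper is implicitly importing from the reference.
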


\begin{defn}
The coarse index $\ind(D)\in K(C^*(M,A))$ of $D$ is the $K$-theory class obtained by composing the $*$-homomorphism of the previous theorem with the inclusion $C^*(M,L^2(S))\subseteq C^*(M,A)$.
\end{defn}
Sometimes we will also call the class in $K(C^*(M,L^2(S)))$ or $K(C^*(M,S))$ the coarse index. However, we emphasize that we will use functoriality in special cases later on, and this is in general only available for $C^*(M,A)$.

Let us now proceed to twisted operators. The following works for the minimal as well as the maximal tensor product:
\begin{lem}\label{lem:twistedConnection}
Let $S\to M$ be a $A$-bundle with connection $\nabla^S$ and $E\to M$ a $B$-bundle with connection $\nabla^E$. Then there is a unique connection $\nabla^{S\hatotimes E}$ on the $A\hatotimes B$-bundle $S\hatotimes E$ such that 
\begin{equation}\label{eq:twistedConnection}
\nabla^{S\hatotimes E}(\xi\hatotimes\zeta)= \nabla^S\xi\hatotimes\zeta+\xi\hatotimes\nabla^E\zeta
\end{equation}
for all section $\xi\in\Gamma(S)$ and $\zeta\in\Gamma(E)$.

If $S$ is even a Dirac $A$-bundle, then $S\hatotimes E$ is a  Dirac $A\hatotimes B$-bundle with this connection and the obvious Clifford action.
\end{lem}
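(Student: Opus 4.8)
The plan is to establish existence and uniqueness of $\nabla^{S\hatotimes E}$ by the standard local-to-global argument for connections, and then to verify the Leibniz/compatibility conditions making $S\hatotimes E$ a Dirac $A\hatotimes B$-bundle. First I would address uniqueness: locally, every section of $S\hatotimes E$ is an $A\hatotimes B$-linear combination of elementary tensors $\xi\hatotimes\zeta$ with $\xi\in\Gamma(S)$, $\zeta\in\Gamma(E)$ (choosing local generating sections of $S$ and $E$ as projective Hilbert module bundles), so formula \eqref{eq:twistedConnection} together with the Leibniz rule over $C^\infty(M,A\hatotimes B)$ forces the value of $\nabla^{S\hatotimes E}$ on all local sections, hence globally by a partition of unity. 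For existence, I would define $\nabla^{S\hatotimes E}$ locally by the right-hand side of \eqref{eq:twistedConnection}, extended by the Leibniz rule, and check that the resulting operator is well-defined (independent of the chosen decomposition into elementary tensors — this uses that $\nabla^S$ and $\nabla^E$ are themselves genuine connections satisfying their Leibniz rules, so ambiguities cancel) and that local definitions on overlapping charts agree (again by uniqueness on the overlap). One then checks the three defining properties of a connection: $\C$-linearity and grading-preservation are immediate from those of $\nabla^S$, $\nabla^E$ and the fact that the grading on $S\hatotimes E$ is the tensor product grading; the Leibniz rule over $C^\infty(M,A\hatotimes B)$ holds by construction; and metric compatibility with the $A\hatotimes B$-valued inner product $\langle\xi_1\hatotimes\zeta_1,\xi_2\hatotimes\zeta_2\rangle=(-1)^{\partial\zeta_1\partial\xi_2}\langle\xi_1,\xi_2\rangle\hatotimes\langle\zeta_1,\zeta_2\rangle$ follows by expanding $\partial_X$ of this expression using the Leibniz rule and the metric compatibility of $\nabla^S$ and $\nabla^E$ separately, with the Koszul sign bookkeeping working out because $\nabla$ preserves the grading.

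For the second assertion, equip $S\hatotimes E$ with the Clifford action $c(Y)(\xi\hatotimes\zeta):=(c^S(Y)\xi)\hatotimes\zeta$ for $Y\in\Gamma(\Cl(TM))$, i.e.\ the Clifford multiplication acts only on the $S$-factor. I would check that this is a graded $C^*$-algebra bundle homomorphism $\Cl(TM)\to\End_{A\hatotimes B}(S\hatotimes E)$: it is grading-preserving and multiplicative since $c^S$ is, and it is compatible with the $A\hatotimes B$-valued inner product because $c^S(Y)$ is (graded) skew/self-adjoint on $S$ and acts trivially on $E$ — again a short sign computation. Finally one verifies the Dirac-bundle Leibniz rule $\nabla^{S\hatotimes E}_X(Y\cdot(\xi\hatotimes\zeta))=(\nabla^{LC}_XY)\cdot(\xi\hatotimes\zeta)+Y\cdot\nabla^{S\hatotimes E}_X(\xi\hatotimes\zeta)$: expanding the left side via the definition of $c$ and \eqref{eq:twistedConnection} gives $(\nabla^S_X(Y\xi))\hatotimes\zeta+(Y\xi)\hatotimes\nabla^E_X\zeta$, and applying the Dirac-bundle Leibniz rule for $S$ to the first term produces exactly the two terms on the right. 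It suffices to check this on elementary tensors since both sides are additive and $\C$-linear in the section argument.

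The main obstacle is not conceptual but organizational: keeping track of the Koszul signs in the graded tensor product of Hilbert modules — in particular in the formula for the $A\hatotimes B$-valued inner product on $S\hatotimes E$ and hence in the metric-compatibility verification — and making sure the whole argument respects that $\nabla^S$ and $\nabla^E$ are required to be grading-preserving (which is what makes the signs behave well, since no odd connection term ever appears). A secondary technical point is the well-definedness of the locally-defined operator: since $S$ and $E$ are bundles of finitely generated projective Hilbert modules rather than free ones, I would phrase the local construction using local sections that generate the fibres (coming from a local trivialization of the projections defining $S$ and $E$) and check independence of this choice, which reduces to the already-established uniqueness statement on the relevant chart.
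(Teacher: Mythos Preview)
Your proposal is correct and follows the standard local-to-global construction of tensor product connections; the paper even remarks that one \emph{could} construct the connection using local trivializations, but then deliberately chooses a different route for existence.

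The paper's existence argument is global rather than local: it picks smooth adjointable isometric grading-preserving embeddings $V\colon S\hookrightarrow M\times A\hatotimes\widehat\ell^2$ and $W\colon E\hookrightarrow M\times B\hatotimes\widehat\ell^2$ (available because the fibres are finitely generated projective), uses them to write down reference connections $\tilde\nabla^S_X=V^*\partial_X V$, $\tilde\nabla^E_X=W^*\partial_X W$, $\tilde\nabla^{S\hatotimes E}_X=(V\hatotimes W)^*\partial_X(V\hatotimes W)$, and then corrects $\tilde\nabla^{S\hatotimes E}$ by the antiselfadjoint endomorphisms $\sigma=\nabla^S-\tilde\nabla^S$ and $\eta=\nabla^E-\tilde\nabla^E$ to obtain $\nabla^{S\hatotimes E}=\tilde\nabla^{S\hatotimes E}+\sigma\hatotimes\id_E+\id_S\hatotimes\eta$. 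This sidesteps precisely the two obstacles you flag --- the Koszul sign bookkeeping in the metric-compatibility check and the well-definedness of a locally-defined operator on non-free fibres --- since metricity is inherited automatically from the trivial bundle and no local decomposition into elementary tensors is ever needed. A further payoff is that these same embeddings $V,W$ reappear verbatim in the proof of the main index formula (Theorem~\ref{mainthm:indexformula}), so the construction here is doing double duty. Your approach, by contrast, is self-contained and closer to the textbook treatment, at the cost of the bookkeeping you already anticipate.
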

\begin{proof}
Uniqueness: If there are two connections of this type, then their difference is an endomorphism of the bundle $S\hatotimes E$ which vanishes on the algebraic tensor product of the fibres and therefore on all of $S\hatotimes E$.

To prove existence, we could construct the connection using local trivializations, but we prefer to proceed differently:

As the fibres of the bundles are finitely generated projective Hilbert modules, we can find smooth adjointable isometric and grading preserving inclusions into  trivial bundles
\[V\colon S\xrightarrow{\subseteq} M\times A\hatotimes\widehat\ell^2\,, \quad W\colon E\xrightarrow{\subseteq} M\times B\hatotimes\widehat\ell^2\,.\]
These yield new metric connections on $S$, $E$ and $S\hatotimes E$, namely
\begin{align*}
\tilde\nabla^S_X&:=V^*\circ\partial_X\circ V\,, \qquad\tilde\nabla^E_X:=W^*\circ\partial_X\circ W\,,\\&\tilde\nabla^{S\hatotimes E}_X:=(V\hatotimes W)^*\circ\partial_X\circ (V\hatotimes W)\,,
\end{align*}
respectively. The differences  $\sigma(X):=\nabla^S_X-\tilde\nabla^S_X$ and $\eta(X):=\nabla^E_X-\tilde\nabla^E_X$ are bundle endomorphism of $S$, $E$, respectively. They are antiselfadjoint, because the connections are metric. Therefore, $\sigma(X)\hatotimes\id_E+\id_S\hatotimes \eta(x)$ is an antiselfadjoint bundle homomorphism of $S\hatotimes E$.
One now easily verifies that the metric connection
\[\nabla^{S\hatotimes E}=\tilde\nabla^{S\hatotimes E}+\sigma\hatotimes\id_E+\id_S\hatotimes \eta\]
satisfies Equation \eqref{eq:twistedConnection}.

If $S$ is Dirac, then the induced Clifford action on $S\hatotimes E$ is compatibe with the new connection, too, as is easily verified using Equation \eqref{eq:twistedConnection}.
\end{proof}

\begin{defn}\label{def:twistedoperator}
Let $S\to M$ be a Dirac $A$-bundle, $D$ its associated Dirac operator and $E\to M$ a $B$-bundle with connection. Then the \emph{twisted operator} $D_E$ is the Dirac operator associated to the Dirac $A\hatotimes B$-bundle $S\hatotimes E$.
\end{defn}

Similar to twisted Dirac bundles we can also construct external tensor products of Dirac bundles. The proof of the following lemma is completely analogous to the proof of Lemma \ref{lem:twistedConnection}.
\begin{lem}
Let $S_i\to M_i$ be $A_i$-bundles with connections $\nabla^{S_i}$ for $i=1,2$. Then there is a unique connection $\nabla^{S_1\hatboxtimes S_2}$ on the $A_1\hatotimes A_2$-bundle $S_1\hatboxtimes S_2\to M_1\times M_2$ such that 
\begin{equation*}
\nabla^{S_1\hatboxtimes S_2}_{X_1+X_2}(\xi_1\hatboxtimes\xi_2)= \nabla^{S_1}_{X_1}\xi_1\hatboxtimes\xi_2+ \xi_1\hatboxtimes\nabla^{S_2}_{X_2}\xi_2
\end{equation*}
for all section $\xi_i\in\Gamma(S_i)$ and $X_i\in TM_i$.

If $S_{1,2}$ are even Dirac $A_{1,2}$-bundles, then $S_1\hatboxtimes S_2$ is a  Dirac $A_1\hatotimes A_2$-bundle with this connection and the obvious Clifford action. \qed
\end{lem}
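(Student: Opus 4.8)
The plan is to follow the proof of Lemma \ref{lem:twistedConnection} essentially verbatim, the only new ingredient being the canonical splitting $T(M_1\times M_2)\cong\mathrm{pr}_1^*TM_1\oplus\mathrm{pr}_2^*TM_2$ of the tangent bundle of the product. This splitting lets us write every tangent vector of $M_1\times M_2$ uniquely as $X_1+X_2$ with $X_i\in TM_i$, so that the displayed identity makes sense and pins down $\nabla^{S_1\hatboxtimes S_2}$ on every directional derivative.

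For uniqueness I would argue, exactly as in Lemma \ref{lem:twistedConnection}, that the difference of two connections satisfying the identity is a $C^\infty(M_1\times M_2,A_1\hatotimes A_2)$-linear bundle endomorphism of $S_1\hatboxtimes S_2$: for each $X_1+X_2$ it vanishes on all elementary sections $\xi_1\hatboxtimes\xi_2$, hence, the algebraic tensor product being dense in each fibre, it vanishes identically. Since every tangent vector of the product is of the form $X_1+X_2$, this forces the two connections to agree.

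For existence I would choose smooth adjointable, isometric, grading-preserving inclusions $V_i\colon S_i\hookrightarrow M_i\times A_i\hatotimes\widehat\ell^2$, form the inclusion $V_1\hatboxtimes V_2$ of $S_1\hatboxtimes S_2$ into the corresponding trivial bundle over $M_1\times M_2$, and build the ``flat'' connection $\tilde\nabla^{S_1\hatboxtimes S_2}_{X_1+X_2}:=(V_1\hatboxtimes V_2)^*\circ\partial_{X_1+X_2}\circ(V_1\hatboxtimes V_2)$ together with the flat connections $\tilde\nabla^{S_i}_{X_i}:=V_i^*\circ\partial_{X_i}\circ V_i$, where $\partial$ is the componentwise derivative of functions on the product. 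The differences $\sigma_i(X_i):=\nabla^{S_i}_{X_i}-\tilde\nabla^{S_i}_{X_i}$ are antiselfadjoint bundle endomorphisms of $S_i$ by metricity, so $\sigma_1(X_1)\hatotimes\id_{S_2}+\id_{S_1}\hatotimes\sigma_2(X_2)$ is an antiselfadjoint endomorphism of $S_1\hatboxtimes S_2$; I would then set $\nabla^{S_1\hatboxtimes S_2}:=\tilde\nabla^{S_1\hatboxtimes S_2}+\sigma_1\hatotimes\id_{S_2}+\id_{S_1}\hatotimes\sigma_2$ and verify metricity and the required Leibniz identity by direct computation, using on functions that $\partial_{X_1+X_2}(f_1\hatotimes f_2)=\partial_{X_1}f_1\hatotimes f_2+f_1\hatotimes\partial_{X_2}f_2$.

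Finally, for the Dirac claim I would equip $S_1\hatboxtimes S_2$ with the graded tensor product of the two Clifford multiplications, which is a Clifford action of $\Cl(TM_1)\hatotimes\Cl(TM_2)\cong\Cl(T(M_1\times M_2))$, and check its compatibility with $\nabla^{S_1\hatboxtimes S_2}$ on elementary tensors, using that the Levi--Civita connection of the product metric is the direct sum $\nabla^{LC}_{X_1+X_2}(Y_1+Y_2)=\nabla^{LC}_{X_1}Y_1+\nabla^{LC}_{X_2}Y_2$ with no mixed terms. I expect no essential difficulty; the only place the argument differs in substance from the trivially-coefficient case is the consistent tracking of the Koszul signs produced by the graded tensor product whenever one of the $\sigma_i(X_i)$ or a Clifford element is odd, and that sign bookkeeping is the main — though entirely routine — obstacle.
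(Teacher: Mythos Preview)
Your proposal is correct and is exactly what the paper intends: it states explicitly that the proof is completely analogous to that of Lemma~\ref{lem:twistedConnection} and omits the details, and your adaptation of that argument to the external tensor product via the splitting $T(M_1\times M_2)\cong\mathrm{pr}_1^*TM_1\oplus\mathrm{pr}_2^*TM_2$ is the intended one.
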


\begin{defn}
Let $S_i\to M_i$ be $A_i$-bundles with associated Dirac operators $D_i$ for $i=1,2$.
Then the \emph{external tensor product} $D_1\times D_2$ is the Dirac operator of the Dirac $A_1\hatotimes A_2$-bundle $S_1\hatboxtimes S_2\to M_1\times M_2$.
\end{defn}
The closure of this operator is a regular operator on the Hilbert $A_1\hatotimes A_2$-module $L^2(M_1\times M_2,S_1\hatboxtimes S_2)\cong L^2(M_1,S_1)\hatotimes L^2(M_2,S_2)$. The following Lemma is more or less well known (cf.\ \cite[Proof of Theorem 4.1]{Dumitrascu}, \cite[Appendix A]{HigsonKasparovTrout}, \cite[Lemma 4.16]{ZeidlerPSCProductSecondary}) and is the key analytic ingredient to bordism invariance.
\begin{lem}\label{lem:externalproductfunctionalcalculus}
The functional calculus of the operators $D_1,D_2$ and $D_1\times D_2$ are related by the formula
\[
\exp(-t^{-2}(D_1\times D_2)^2)=\exp(-t^{-2}D_1^2)\hatotimes\exp(-t^{-2}D_2^2)\,.
\tag*{\qedsymbol}
\]
\end{lem}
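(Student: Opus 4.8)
The plan is to prove the identity $\exp(-t^{-2}(D_1\times D_2)^2)=\exp(-t^{-2}D_1^2)\hatotimes\exp(-t^{-2}D_2^2)$ by first establishing the algebraic anticommutation relation between the two factors and then invoking uniqueness of the heat semigroup. The key observation is that $D_1\times D_2$ acts on $L^2(M_1,S_1)\hatotimes L^2(M_2,S_2)$ as $D_1\hatotimes 1 + 1\hatotimes D_2$ in the graded sense (this is precisely how the Clifford action and connection on $S_1\hatboxtimes S_2$ were defined). Since $D_1$ is odd on $S_1$ and $D_2$ is odd on $S_2$, the graded commutator of $D_1\hatotimes 1$ and $1\hatotimes D_2$ vanishes: on elementary tensors of smooth compactly supported sections one computes $(D_1\hatotimes 1)(1\hatotimes D_2)(\xi_1\hatotimes\xi_2) = D_1\xi_1\hatotimes D_2\xi_2 = -(1\hatotimes D_2)(D_1\hatotimes 1)(\xi_1\hatotimes\xi_2)$, the sign coming from passing the odd operator $D_2$ past the odd section $D_1\xi_1$. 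Consequently $(D_1\times D_2)^2 = (D_1\hatotimes 1)^2 + (1\hatotimes D_2)^2 = D_1^2\hatotimes 1 + 1\hatotimes D_2^2$ on a core, and the two summands on the right strongly commute (they are functions of strongly commuting self-adjoint regular operators, after checking the resolvents commute).

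First I would make precise the identification $L^2(M_1\times M_2, S_1\hatboxtimes S_2)\cong L^2(M_1,S_1)\hatotimes L^2(M_2,S_2)$ as graded Hilbert $A_1\hatotimes A_2$-modules, and verify that under this identification $\Gamma^\infty_{\operatorname{cpt}}(S_1)\odot\Gamma^\infty_{\operatorname{cpt}}(S_2)$ (algebraic tensor product) is a core for the closure of $D_1\times D_2$ as well as for $D_1\hatotimes 1 + 1\hatotimes D_2$. Then I would carry out the local computation above using a product orthonormal frame $e_1,\dots,e_{n_1}$ for $TM_1$ and $f_1,\dots,f_{n_2}$ for $TM_2$, showing $D_1\times D_2 = D_1\hatotimes 1 + 1\hatotimes D_2$ on this core. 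Next I would establish that $(D_1\times D_2)^2 = D_1^2\hatotimes 1 + 1\hatotimes D_2^2$ on the core, using the anticommutation relation (graded Leibniz, with the odd/even bookkeeping handled carefully), and that both $D_1^2\hatotimes 1$ and $1\hatotimes D_2^2$ are regular self-adjoint with commuting resolvents so that the closure of their sum is their "strong sum".

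The final step is to apply uniqueness of solutions to the heat equation for regular self-adjoint operators on Hilbert modules: both $t\mapsto \exp(-t^{-2}(D_1\times D_2)^2)$ and $t\mapsto \exp(-t^{-2}D_1^2)\hatotimes\exp(-t^{-2}D_2^2)$ are strongly continuous semigroups (in the parameter $s=t^{-2}$) with the same generator $-(D_1^2\hatotimes 1 + 1\hatotimes D_2^2)$; for the right-hand side one checks that its generator on the algebraic tensor product of the domains is exactly $-(D_1^2\hatotimes 1) - (1\hatotimes D_2^2)$, using that $\exp(-sD_1^2)$ preserves $\operatorname{dom}(D_1^2)$ and similarly for $D_2$. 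Since the functional calculus for regular self-adjoint operators on Hilbert $C^*$-modules gives a unique such semigroup (Theorem \ref{thm:FuncCalc} and the cited references, or the strong-limit definition $\exp(-sL)=\lim_n(1+\frac{s}{n}L)^{-n}$, which manifestly factors through a tensor product when the resolvents do), the two agree for all $t\in(0,\infty)$.

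The main obstacle will be the graded bookkeeping in the anticommutation step: one must be careful that $D_1\times D_2$ really equals the \emph{graded} sum $D_1\hatotimes 1 + 1\hatotimes D_2$ (with the Koszul sign convention $(a\hatotimes b)(a'\hatotimes b') = (-1)^{\partial b\,\partial a'} aa'\hatotimes bb'$ from the symmetry isomorphism), and hence that the cross-terms in $(D_1\times D_2)^2$ cancel rather than add. A secondary technical point is justifying the interchange of the tensor product with the limit defining the heat operator on non-separable coefficient algebras, which I expect to follow from the resolvent identity $(1+sD_1^2\hatotimes 1 + s\,1\hatotimes D_2^2)^{-1}$ being expressible via the commuting resolvents of the two summands together with continuity of the maximal tensor product, but it merits an explicit line.
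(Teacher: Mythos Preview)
Your approach is correct and is essentially the standard argument: establish $(D_1\times D_2)^2 = D_1^2\hatotimes 1 + 1\hatotimes D_2^2$ via the graded anticommutation of the two odd summands, observe that the two summands strongly commute, and then use that the heat semigroup of a sum of commuting positive regular self-adjoint operators is the tensor product of the individual semigroups (via semigroup uniqueness or the resolvent-product formula). The graded sign pitfalls you flag are the only subtle points and you have them right.

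The paper itself does not give a proof: the lemma is stated with a \qed\ and is accompanied only by the remark that the result is ``more or less well known'' together with citations to Dumitra\c{s}cu, Higson--Kasparov--Trout, and Zeidler. There is, however, a proof in the \LaTeX\ source that has been suppressed with \verb|\iffalse...\fi|. That suppressed proof takes a rather different, more hands-on route: it works directly with the partial sums $f_n(x)=\sum_{k=0}^n \frac{1}{k!}t^{-2k}x^{2k}$ of the exponential series, uses strong continuity of the functional calculus to obtain bounds $\|D_i^{2k}\xi_i\|\leq \frac{k!\,t^{2k}}{2^k}C_i$ on suitable analytic vectors $\xi_i$, and then estimates directly that $f_n(D_1)\xi_1\hatboxtimes f_n(D_2)\xi_2 - f_n(D_1\times D_2)(\xi_1\hatboxtimes\xi_2)\to 0$ as $n\to\infty$. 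The virtue of that argument (as the author notes in the suppressed text) is that it uses nothing about the functional calculus beyond the properties listed in the cited Hanke--Pape--Schick theorem; your argument is cleaner conceptually but implicitly relies on a bit more infrastructure (joint functional calculus or semigroup uniqueness for regular operators on Hilbert modules), which you correctly identify as the one place meriting an explicit line.
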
 

\section{The $E$-theory class of an operator}
With all preparatory work done in the previous three sections, we can now turn our attention to the $C^*$-algebra $\cA(M,B)$ and the $E$-theory classes $\llbracket D;B\rrbracket$ of $A$-linear Dirac operators $D$.

\begin{defn}
Given a Riemannian manifold $M$, possibly with boundary, an \emph{arbitrary} graded $C^*$-algebra $Y$ and a smooth function $f\colon M\to Y$, we define the \emph{gradient} of $f$ to be the section $\grad(f)\in\Gamma(\Cl(TM)\hatotimes Y)$ which is locally defined by the formula 
$\grad(f):=\sum_{k=1}^{\dim M}e_k\hatotimes \partial_{e_k}f$
where $e_1,\dots,e_{\dim M}$ denotes a local orthonormal frame.
\end{defn}
The presence of the grading on $Y$ forces us to be a bit careful with this notion. For example, the following well known result holds in this context for the \emph{graded} commutator. Its proof is a direct calculation.
\begin{lem}\label{lem:commutatorgradient}
If $f\colon M\to Y$ is smooth and $D=\sum_{k=1}^{\dim M}e_k\nabla_{e_k}$ a Dirac operator acting on the smooth sections of a Dirac $A$-bundle $S\to M$, then $f$, $D$ and $\grad(f)$ act in the obvious way on smooth sections of $S\hatotimes Y$ and the action of $\grad(f)$ is equal to the \emph{graded} commutator of $D$ and $f$:
$[D,f]=\grad(f)$.\qed
\end{lem}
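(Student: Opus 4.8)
The plan is a direct local computation, so the first task is to pin down what ``$f$, $D$ and $\grad(f)$ act in the obvious way on smooth sections of $S\hatotimes Y$'' means and to reduce to elementary tensors. The bundle $S\hatotimes Y$ carries the connection induced from $\nabla^S$ (the $Y$-slot being a fixed algebra, this is just $\nabla^{S\hatotimes Y}_X(\xi\hatotimes g)=(\nabla^S_X\xi)\hatotimes g+\xi\hatotimes\partial_Xg$), and $D=\sum_k e_k\nabla^{S\hatotimes Y}_{e_k}$ with the Clifford multiplication acting only on the $S$-slot; $f$ acts by (graded) pointwise multiplication on the $Y$-slot; and $\grad(f)=\sum_k e_k\hatotimes\partial_{e_k}f\in\Gamma(\Cl(TM)\hatotimes Y)$ acts by Clifford multiplication on the $S$-slot and multiplication on the $Y$-slot. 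All three operators are local and at most first order, so it suffices to verify the identity on the domain of a local orthonormal frame $e_1,\dots,e_{\dim M}$ and on sections of the form $\sigma=\xi\hatotimes g$ with $\xi\in\Gamma^\infty(S)$, $g\in C^\infty(M,Y)$, since after trivializing $S$ such sections span $\Gamma^\infty(S\hatotimes Y)$ over $C^\infty(M)$.

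Next I would apply the Leibniz rules. Writing out $\nabla^{S\hatotimes Y}_{e_k}(f\sigma)$, using the Leibniz rule in $C^\infty(M,Y)$ for $\partial_{e_k}(fg)$, and tracking the Koszul signs produced by moving $f$ past $\xi$, one checks that they cancel and obtains the sign-free identity $\nabla^{S\hatotimes Y}_{e_k}(f\sigma)=(\partial_{e_k}f)\sigma+f\,\nabla^{S\hatotimes Y}_{e_k}\sigma$ (sign-free because $\partial_{e_k}$ is even). Applying $e_k$ and summing then splits $D(f\sigma)$ into two pieces. The first, $\sum_k e_k\cdot\big((\partial_{e_k}f)\sigma\big)$, is by inspection exactly $\grad(f)\cdot\sigma$; the only thing to verify is that the factor $(-1)^{\partial f\,\partial\xi}$ appearing on both sides from reshuffling the tensor factors agrees. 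For the second piece, $\sum_k e_k\cdot\big(f\,\nabla_{e_k}\sigma\big)$, I would commute $f$ to the left of $e_k$: since Clifford multiplication acts on the $S$-slot with $\partial e_k=1$ while $f$ acts on the $Y$-slot, one has $e_k f=(-1)^{\partial f}f e_k$ as operators on $S\hatotimes Y$, so this piece equals $(-1)^{\partial f}f\sum_k e_k\nabla_{e_k}\sigma=(-1)^{\partial f}fD\sigma$. Collecting terms yields $D(f\sigma)=\grad(f)\cdot\sigma+(-1)^{\partial f}fD\sigma$, i.e.\ $Df-(-1)^{\partial f}fD=\grad(f)$ on such $\sigma$; since $D$ is odd this is precisely the graded commutator $[D,f]=Df-(-1)^{\partial D\partial f}fD$, which finishes the proof.

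The only genuine obstacle is the sign bookkeeping: with $Y$ graded, essentially every product of $f$, $\xi$, $g$, $e_k$ and $\partial_{e_k}f$ carries a potential Koszul sign, and the content of the ``direct calculation'' is to check that these conspire to leave exactly the single factor $(-1)^{\partial D\partial f}$ in the commutator and nothing else. I expect the cleanest way to organize this is to fix once and for all the convention that $f$ and $\grad(f)$ act through the canonical homomorphisms $Y\to\End_{A\hatotimes Y}(S\hatotimes Y)$ and $\Cl(TM)\hatotimes Y\to\End_{A\hatotimes Y}(S\hatotimes Y)$ (the latter obtained by tensoring the given Clifford action $\Cl(TM)\to\End_A(S)$ with the regular representation of $Y$), so that every sign in the computation is forced by $\partial e_k=1$ together with the rule $a\hatotimes b\mapsto(-1)^{\partial a\partial b}b\hatotimes a$ for the symmetry $S\hatotimes Y\cong Y\hatotimes S$.
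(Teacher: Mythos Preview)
Your proposal is correct and is precisely the ``direct calculation'' the paper alludes to; the paper gives no further details beyond that phrase, so your write-up is in fact more explicit than the original. The sign bookkeeping you outline is exactly the content of the lemma, and your organization via the canonical graded homomorphisms into $\End_{A\hatotimes Y}(S\hatotimes Y)$ is a clean way to do it.
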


\begin{defn}
For a Riemannian manifold $M$, possibly non-complete and with boundary,
and a graded unital $C^*$-algebra $B$ we define
$\cA^\infty(M,B)$ to be the algebra of bounded smooth functions $f\in C_b^\infty(M,B\hatotimes\widehat\K)$ such that
\begin{itemize}
\item $\grad(f)$ is bounded and
\item the continuous extension of $f$ to the completion $\overline{M}$ of $M$, which exists because of the boundedness of the gradient, vanishes on $\overline{M}\setminus M$.
\end{itemize}
By $\cA(M,B)$ we denote the closure of $\cA^\infty(M,B)$ in $C_b(M,B\hatotimes\widehat\K)$.
\end{defn}
Note that the strange second condition is only relevant for non-complete manifolds. Its purpose is to ensure that elements of $\cA(M,B)$ behave like $C_0$-functions on finite scales. 
In particular, if $M$ is embedded Riemann-isometric\footnote{By Riemann-isometric we mean that the Riemannian metric is preserved by the inclusion, but not necessarily the path-metric.} as a codimension zero submanifold into $\tilde M$, then $\cA^\infty(M,B)$ contains $\{f\in\cA^\infty(\tilde M,B)\mid f|_{\tilde M\setminus M}=0\}$ as a norm-dense subalgebra and hence
\[\cA(M,B)=\{f\in\cA(\tilde M,B)\mid f|_{\tilde M\setminus M}=0\}\,.\]

Of particular importance to us are the bundles which have classes in the $K$-theory of these $C^*$-algebras:
\begin{defn}\label{def:bundleboundedvariation}
Let $M$ be complete, $B$ a graded unital $C^*$-algebra and $P\in \cA^{\infty}(M,B)$ a projection. The bundle $E\to M$ consisting of the finitely generated projective Hilbert $B$-modules $E_x:=\im(P(x))\subseteq B\hatotimes\widehat\ell^2$ as fibres
will be called a \emph{$B$-bundle of bounded variation}. Denote by $\llbracket E\rrbracket$ the class of the projection $P$ in $K(\cA(M,B))$.
\end{defn}

The following lemma provides the asymptotic morphism behind the $E$-theory class of an operator: 

\begin{lem}\label{lem:Eclassasymptoticmorphism}
Let $S\to M$ be a Dirac-$A$-bundle over a complete Riemannian manifold $M$ without boundary and $D$ its associated $A$-linear Dirac operator. Then for every unital graded $C^*$-algebra $B$ there is an asymptotic morphism
\begin{align*}
\cS\hatotimes \cA(M,B)&\to\fA( C^*(M,S\hatotimes B)))
\\\phi\hatotimes f&\mapsto\overline{t\mapsto (\phi(t^{-1}D)\hatotimes \id_{B\hatotimes\widehat\K}) \cdot (\id_S\hatotimes f)}\,.
\end{align*}
\end{lem}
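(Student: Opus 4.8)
The plan is to verify that the given formula $\phi\hatotimes f\mapsto\overline{t\mapsto(\phi(t^{-1}D)\hatotimes\id_{B\hatotimes\widehat\K})\cdot(\id_S\hatotimes f)}$ really defines a $*$-homomorphism $\cS\hatotimes\cA(M,B)\to\fA(C^*(M,S\hatotimes B))$. Since $\cS\hatotimes\cA(M,B)$ is a maximal tensor product, it suffices to exhibit two commuting $*$-homomorphisms into $\fA(C^*(M,S\hatotimes B))$, one defined on $\cS$ and one on $\cA(M,B)$, and check that on the distinguished generators the prescribed formula agrees with their product. For the $\cS$-factor I would use the functional calculus homomorphism $\cS\to C^*(M,L^2(S))$ of Theorem \ref{thm:FuncCalc}, rescaled by $t$: the assignment $\phi\mapsto\overline{t\mapsto\phi(t^{-1}D)}$ is an asymptotic morphism $\cS\to\fA(C^*(M,L^2(S)))$ (this is the standard rescaling trick, using that $\phi\mapsto\phi(t^{-1}D)$ is a $*$-homomorphism for each fixed $t$ and that these vary continuously in $t$), and then tensoring with $\id_{B\hatotimes\widehat\K}$ and composing with the inclusion $C^*(M,L^2(S))\hatotimes B\hatotimes\widehat\K\hookrightarrow C^*(M,S\hatotimes B)$ of the previous section lands in $\fA(C^*(M,S\hatotimes B))$. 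For the $\cA(M,B)$-factor, I would send $f\mapsto\overline{t\mapsto\id_S\hatotimes f}$ (a constant family); one must check that multiplication by such an $f$ actually lies in $C^*(M,S\hatotimes B)$, which follows because $f$ is bounded, hence $\id_S\hatotimes f$ is an adjointable operator on $L^2(S)\hatotimes B\hatotimes\widehat\ell^2$ commuting with the $C_0(M)$-representation, so it is a multiplier of $C^*(M,S\hatotimes B)$ — but in fact we only ever use it as a multiplier acting after a locally compact finite-propagation operator, so the products that appear are genuinely in $C^*(M,S\hatotimes B)$.

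\textbf{Key steps.}
First I would establish that for each $t\ge 1$ the map $\phi\hatotimes f\mapsto(\phi(t^{-1}D)\hatotimes\id_{B\hatotimes\widehat\K})\cdot(\id_S\hatotimes f)$ takes values in $C^*(M,S\hatotimes B)$ and not merely in its multiplier algebra: $\phi(t^{-1}D)$ is locally compact with finite propagation (for $\phi$ with compactly supported Fourier transform, propagation is finite by unit propagation speed; general $\phi$ by density), so tensoring with $\id_{B\hatotimes\widehat\K}$ keeps it in $C^*(M,S\hatotimes B)$, and left or right multiplication by the bounded multiplier $\id_S\hatotimes f$ preserves this $C^*$-algebra. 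Second, I would check norm-continuity in $t$ and the vanishing-at-infinity behavior is not required — what is required is that the family $t\mapsto(\phi(t^{-1}D)\hatotimes\id)(\id_S\hatotimes f)$ is bounded and continuous in $t\in[1,\infty)$, so it defines an element of $C_b([1,\infty),C^*(M,S\hatotimes B))$ and hence a class in $\fA(C^*(M,S\hatotimes B))$; continuity follows from strong continuity of the functional calculus in the parameter together with uniform bounds. Third, I would verify the algebraic relations: the assignment is linear, and for products $(\phi_1\hatotimes f_1)(\phi_2\hatotimes f_2)=(-1)^{\partial f_1\partial\phi_2}(\phi_1\phi_2)\hatotimes(f_1f_2)$ one computes that the two candidate images multiply correctly modulo $C_0([1,\infty),C^*(M,S\hatotimes B))$ — here the crucial point is the \emph{asymptotic commutation} $[\phi(t^{-1}D),\id_S\hatotimes f]\to 0$ in norm as $t\to\infty$. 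Fourth, $*$-compatibility and grading-preservation are direct since the functional calculus is a graded $*$-homomorphism and $f\mapsto\id_S\hatotimes f$ is too.

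\textbf{Main obstacle.}
The heart of the matter — and the only genuinely nontrivial analytic point — is the asymptotic commutation estimate $\lVert[\phi(t^{-1}D),\,\id_S\hatotimes f]\rVert\to 0$ as $t\to\infty$, for $\phi\in\cS$ and $f\in\cA(M,B)$. By Lemma \ref{lem:commutatorgradient}, the graded commutator of $D$ (acting on $S\hatotimes B\hatotimes\widehat\K$) with $\id_S\hatotimes f$ is $\id_S\hatotimes\grad(f)$, which is bounded precisely because $f\in\cA^\infty(M,B)$ has bounded gradient; by density it suffices to treat such smooth $f$. Then for $\phi(x)=e^{-x^2}$ (and $v(x)=xe^{-x^2}$, which generate $\cS$) one estimates the commutator of $e^{-t^{-2}D^2}$ with $f$ via the Duhamel/integral formula
\begin{equation*}
[e^{-t^{-2}D^2},f]=-t^{-2}\int_0^1 e^{-st^{-2}D^2}\,[D^2,f]\,e^{-(1-s)t^{-2}D^2}\,ds,
\end{equation*}
and since $[D^2,f]=D\,\grad(f)+\grad(f)\,D$ (graded), inserting $D e^{-st^{-2}D^2}$ which is bounded of order $O(t\,s^{-1/2})$ by functional calculus shows the whole expression is $O(t^{-1})\to 0$. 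Passing from these generators to arbitrary $\phi\in\cS$ is then a routine density and $C^*$-algebraic argument. This is exactly the estimate underlying the classical Higson–Guentner picture of the index class, adapted here to the Hilbert-module-bundle setting, and it is the step where one must be careful about domains of unbounded regular operators; everything else is bookkeeping with the definitions from Sections \ref{sec:Etheory}--\ref{sec:generalbundles}.
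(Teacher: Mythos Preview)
Your overall architecture matches the paper's: map $\cS$ and $\cA(M,B)$ separately into $\fA$ of some ambient algebra, verify asymptotic graded commutation, and then check that the image lands in the Roe algebra. There are, however, two points worth flagging.

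\textbf{The commutator estimate.} Your Duhamel argument for $[e^{-t^{-2}D^2},f]$ is correct in outline and gives the right $O(t^{-1})$ bound, but it is heavier than necessary: you must justify the integral formula for regular operators on Hilbert modules and keep track of graded signs for odd $f$. The paper instead uses the resolvent generators $\phi_\pm(x)=(x\pm i)^{-1}$, for which the graded commutator with $f$ is computed purely algebraically as
\[
[\phi_\pm(t^{-1}D),f]=\mp t^{-1}(t^{-1}D\pm i)^{-1}\grad(f)(t^{-1}D\pm i)^{-1}
\]
(with a sign flip in the last resolvent when $f$ is odd), from which $\|[\phi_\pm(t^{-1}D),f]\|\le t^{-1}\|\grad(f)\|\to 0$ is immediate. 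This bypasses all domain and heat-kernel analysis.

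\textbf{A gap in the local-compactness step.} Your claim that ``tensoring $\phi(t^{-1}D)$ with $\id_{B\hatotimes\widehat\K}$ keeps it in $C^*(M,S\hatotimes B)$'' is false: if $T\in\K_A(L^2(S))$ then $T\hatotimes\id_{B\hatotimes\widehat\ell^2}$ is \emph{not} in $\K_{A\hatotimes B}(L^2(S)\hatotimes B\hatotimes\widehat\ell^2)$, since the identity on an infinite-dimensional module is not compact. So $\phi(t^{-1}D)\hatotimes\id$ lies only in the multiplier algebra, not in the Roe algebra itself. What actually puts the product in $C^*(M,S\hatotimes B)$ is that $f$ takes values in $B\hatotimes\widehat\K$: for $g\in C_0(M)$ one has $fg\in C_0(M,B\hatotimes\widehat\K)$, hence $fg=\sum_m f_m\hatotimes T_m$ with $f_m\in C_0(M)$ and $T_m\in B\hatotimes\widehat\K$, and then $(\Phi_n f)g=\sum_m \Phi_n f_m\hatotimes T_m$ is compact because each $\Phi_n f_m$ is. This is exactly the argument the paper gives, and it is not a mere technicality --- without it your ``multiplier preserves the Roe algebra'' reasoning does not close.

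A minor remark: for continuity in $t$ you invoke ``strong continuity together with uniform bounds'', but norm continuity of $t\mapsto\phi(t^{-1}D)$ is in fact immediate from norm continuity of $t\mapsto\phi(t^{-1}\,\cdot\,)$ in $\cS$, which is simpler and what the paper uses. Correspondingly, the paper first lands both factors in $\fA(\B_{A\hatotimes B}(L^2(S)\hatotimes B\hatotimes\widehat\ell^2))$ to invoke the universal property of the maximal tensor product cleanly, and only afterwards checks that the combined map factors through $\fA(C^*(M,S\hatotimes B))$; organizing your proof this way would resolve the ambiguity you yourself notice about $f$ being only a multiplier.
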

From now on we shall simplify the notation and abbreviate 
the canonical actions of $\phi(t^{-1}D)\hatotimes \id_{B\hatotimes\widehat\K}$ and $\id_S\hatotimes f$ on the Hilbert $A\hatotimes B$-module $L^2(M,S)\hatotimes B\hatotimes\widehat\ell^2\cong L^2(M,S\hatotimes B\hatotimes \widehat\ell^2)$ simply by  $\phi(t^{-1}D)$ and $f$, respectively.
\begin{proof}
Consider the two $\Z_2$-graded asymptotic morphisms 
\begin{align*}
\cS&\to\fA(\B_{A\hatotimes B}(L^2(M,S)\hatotimes B\hatotimes\widehat\ell^2))\,,&\phi&\mapsto \overline{t\mapsto\phi(t^{-1}D)}\,,
\\\cA(M,B)&\to\fA(\B_{A\hatotimes B}(L^2(M,S)\hatotimes B\hatotimes\widehat\ell^2))\,,&\phi&\mapsto \overline{t\mapsto f}\,.
\end{align*}
Continuity of $t\mapsto\phi(t^{-1}D)$ follows from norm-continuity of $t\mapsto \phi(t^{-1}\cdot\_)\in\cS$ and the fact that the functional calculus of $D$ is a $*$-homomorphism and therefore continuous.
We have to prove that their images gradedly commute. Let $\phi_{\pm}(x)=(x\pm i)^{-1}$ and $f\in\cA^\infty(M,B)$. If $f$ is of even degree, then
\begin{align*}
[\phi_\pm(t^{-1}D),f]&=(t^{-1}D\pm i)^{-1}f-f(t^{-1}D\pm i)^{-1}
\\&=(t^{-1}D\pm i)^{-1}(f(t^{-1}D\pm i)-(t^{-1}D\pm i)f)(t^{-1}D\pm i)^{-1}
\\&=t^{-1}(t^{-1}D\pm i)^{-1}(-\grad(f))(t^{-1}D\pm i)^{-1}\,.
\end{align*}
If, however, $f$ is of odd degree, then 
\begin{align*}
[\phi_\pm(t^{-1}D),f]&=(t^{-1}D\pm i)^{-1}f-f(-t^{-1}D\pm i)^{-1}
\\&=(t^{-1}D\pm i)^{-1}(f(t^{-1}D\mp i)+(t^{-1}D\pm i)f)(t^{-1}D\mp i)^{-1}
\\&=t^{-1}(t^{-1}D\pm i)^{-1}(+\grad(f))(t^{-1}D\mp i)^{-1}\,.
\end{align*}
Both expressions converge to $0$ in norm as $t\to\infty$. As $\phi_\pm$ generate $\cS$, the universal property of the maximal tensor product is satisfied and we obtain an asymptotic morphism
\[\cS\hatotimes\cA(M,B)\to\fA(\B_{A\hatotimes B}(L^2(M,S)\hatotimes B\hatotimes\widehat\ell^2))\,.\]

By Theorem \ref{thm:FuncCalc}, it is in fact an asymptotic homomorphism to the Roe algebra: If $\phi\in\cS$ and $f\in\cA(M,B)$, then $\phi(t^{-1}D)\in C^*(M,L^2(S))$, i.\,e.\ $\phi(t^{-1}D)$ is the norm limit of locally compact operators $\Phi_n\in \B_A(L^2(M,S))$ with finite propagation $R_n<\infty$, $n\in\N$. Clearly $\phi(t^{-1}D)f$ is the norm limit of the operators $\Phi_nf\in \B_{A\hatotimes B}(L^2(M,S)\hatotimes S\hatotimes \widehat\ell^2)$, which have the same propagations $R_n$. We have to show that they are also locally compact.

For $g\in C_0(M)$ we have $fg\in C_0(M)\hatotimes B\hatotimes\widehat\K$, and therefore $fg=\sum_{m\in\N} f_m\hatotimes T_m$ norm convergent with $f_m\in C_0(M)$ and $T_m\in B\hatotimes \widehat\K$. Hence, $\Phi_nf_m\in\K_A(L^2(M,S))$ and we obtain 
\[(\Phi_nf)g=\sum_{m\in\N}\Phi_nf_m\hatotimes T_m\in \K_A(L^2(M,S))\hatotimes B\hatotimes\widehat\K\,.\]
In order to check that $g(\Phi_n f)$ is compact, too, we may assume without loss of generality that $g$ is compactly supported. Let $g'\in C_0(M)$ be a function which is $1$ on the $R_n$-neighborhood of the support of $g$. Then $g(\Phi_nf)=g(\Phi_nf)g'$ is compact as we have seen above.
\end{proof}

\begin{defn}\label{def:ETheoryClassBoundedGradient}
The asymptotic morphism from the previous lemma determines an $E$-theory class
\[\llbracket D;B\rrbracket\in E(\cA(M,B), C^*(M,S\hatotimes B))\,.\]
We denote the image of this class in $E(\cA(M,B), C^*(M,A\hatotimes B))$ by the same symbol and call it the \emph{$E$-theory class of $D$ with respect to $B$}.
\end{defn}
Note that $K(\cA(M,\C))$ contains a canonical element $1_M$: It is the class of a constant function with value a rank-one projection in $\K=\left(\begin{smallmatrix}\K&0\\0&0\end{smallmatrix} \right)\subseteq\widehat{\K}$. It follows directly from the definition, that $\ind(D)$ is the product of $\llbracket D;\C\rrbracket$ and $1_M$ (and the inclusion $C^*(M,S)\subseteq C^*(M,A)$).

For the sake of dealing with this $E$-theory class, it is very helpful to also define $E$-theory classes for bordisms.

\begin{lem}\label{lem:relativeasymptoticmorphism}
Let $M$ be a complete Riemannian manifold without boundary, $S\to M$ a Dirac $A$-bundle over $M$ with Dirac operator $D$, $U\subseteq M$ an open subset, $B$ a $\Z_2$-graded $C^*$-algebra and $V\colon L^2(\overline{U},S|_{\overline{U}}\hatotimes B)\hatotimes\widehat\ell^2\to L^2(M,S\hatotimes B)\hatotimes\widehat\ell^2$ the isometric inclusion of Hilbert $A\hatotimes B$-modules. Then there is an asymptotic morphism
\begin{align*}
\cS\hatotimes \cA(U,B)&\to\fA( C^*(\overline{U}^{\subseteq M},S|_{\overline{U}}\hatotimes B))
\\\phi\hatotimes f&\mapsto\overline{t\mapsto V^*\phi(t^{-1}D)fV}\,.
\end{align*}
\end{lem}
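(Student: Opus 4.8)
The plan is to reduce Lemma~\ref{lem:relativeasymptoticmorphism} to Lemma~\ref{lem:Eclassasymptoticmorphism} by an auxiliary-space argument. First I would observe that $\overline U$, equipped with the restricted path-metric $\overline U^{\subseteq M}$, is the closure of an open subset of the complete Riemannian manifold $M$, so the Roe algebra $C^*(\overline U^{\subseteq M},S|_{\overline U}\hatotimes B)$ and the constructions of the previous sections apply to it verbatim. The asymptotic morphism of Lemma~\ref{lem:Eclassasymptoticmorphism} was built directly out of the functional calculus of $D$ and multiplication operators; here we must replace $\phi(t^{-1}D)f$ by its ``compression'' $V^*\phi(t^{-1}D)fV$, and the issue is precisely that compression does not commute with products, so we cannot simply pull back the morphism of Lemma~\ref{lem:Eclassasymptoticmorphism} along a $*$-homomorphism. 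Instead one checks the defining properties of an asymptotic morphism by hand, exploiting that the failure of multiplicativity of $V^*(\,\cdot\,)V$ is negligible in the asymptotic algebra.

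The key steps, in order, are: (1) Identify the two ingredient asymptotic morphisms. The first, $\cS\to\fA(\B_{A\hatotimes B}(L^2(\overline U,S|_{\overline U})\hatotimes B\hatotimes\widehat\ell^2))$, $\phi\mapsto\overline{t\mapsto V^*\phi(t^{-1}D)V}$, is continuous by the same argument as before (norm-continuity of $t\mapsto\phi(t^{-1}\,\cdot\,)$ and continuity of functional calculus), but it is genuinely only an asymptotic morphism, not a $*$-homomorphism, because $V^*\phi(t^{-1}D)\psi(t^{-1}D)V$ differs from $V^*\phi(t^{-1}D)VV^*\psi(t^{-1}D)V$ by $V^*\phi(t^{-1}D)(1-VV^*)\psi(t^{-1}D)V$. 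This defect term vanishes in norm as $t\to\infty$: since $1-VV^*$ is the projection onto $L^2$ of sections supported essentially outside $\overline U$, and since $\phi,\psi$ may be taken to be generators $\phi_\pm(x)=(x\pm i)^{-1}$ for which $\phi(t^{-1}D)=t(D\pm it)^{-1}$ has propagation tending to $0$ (by the standard estimate on resolvents of Dirac operators, encoded in $\phi(t^{-1}D)\in C^*(M,L^2(S))$ from Theorem~\ref{thm:FuncCalc}, together with the $t^{-1}$-rescaling), the product $\phi(t^{-1}D)(1-VV^*)\psi(t^{-1}D)$ can be shown to go to zero; more robustly, one verifies the asymptotic-morphism axioms only on the dense subalgebra generated by $\phi_\pm$, using that $(1-VV^*)$ commutes asymptotically with $\phi_\pm(t^{-1}D)$ up to a term of order $t^{-1}$ coming from $[\phi_\pm(t^{-1}D),\text{(cutoff)}]$ exactly as in the proof of Lemma~\ref{lem:Eclassasymptoticmorphism}.

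(2) The second ingredient, $\cA(U,B)\to\fA(\B_{A\hatotimes B}(\cdots))$, $f\mapsto\overline{t\mapsto f}$ (acting as a multiplication operator on $L^2(\overline U,S|_{\overline U})\hatotimes B\hatotimes\widehat\ell^2$, which makes sense because $f\in\cA(U,B)$ extends by $0$ to $\overline U$), is an honest constant-in-$t$ $*$-homomorphism. (3) Check that the images of the two morphisms graded-commute in the asymptotic algebra: the graded commutator $[V^*\phi_\pm(t^{-1}D)V,f]$ equals $V^*\phi_\pm(t^{-1}D)Vf - fV^*\phi_\pm(t^{-1}D)V$; inserting $VV^*$ and using $Vf = (\text{mult.\ by }f)V$ (valid since $f$ vanishes on $\overline U\setminus U$), this reduces up to a $t^{-1}$-order defect to $V^*[\phi_\pm(t^{-1}D),f]V$, which is $O(t^{-1})$ by the identical resolvent computation in the proof of Lemma~\ref{lem:Eclassasymptoticmorphism}. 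By the universal property of the maximal graded tensor product we obtain an asymptotic morphism $\cS\hatotimes\cA(U,B)\to\fA(\B_{A\hatotimes B}(\cdots))$. (4) Finally show the image lands in the Roe algebra $C^*(\overline U^{\subseteq M},S|_{\overline U}\hatotimes B)$: for $\phi\in\cS$, $f\in\cA(U,B)$ one has $V^*\phi(t^{-1}D)fV = V^*\phi(t^{-1}D)VV^*fV$ up to negligible defect, $V^*fV$ is the multiplication operator by $f|_{\overline U}$, and $V^*\phi(t^{-1}D)V$ is locally compact with finite propagation measured in $\overline U^{\subseteq M}$ (propagation can only decrease under compression; local compactness follows as in Lemma~\ref{lem:Eclassasymptoticmorphism} by the cutoff trick with a function $g'$ equal to $1$ on a path-metric $R_n$-neighborhood in $\overline U$). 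The main obstacle is Step~(1): making rigorous that compression by $V$ is multiplicative up to asymptotically vanishing error; the cleanest route, which I would adopt, is to only verify the axioms on the generators $\phi_\pm$ and to control all defect terms by the propagation estimate $\phi_\pm(t^{-1}D)\in C^*(M,L^2(S))$ with propagation $\to 0$, exactly mirroring the commutator estimates already carried out for Lemma~\ref{lem:Eclassasymptoticmorphism}.
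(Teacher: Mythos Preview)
Your Step~(1) has a genuine gap, and in fact its conclusion is false: the compressed functional calculus $\phi\mapsto V^*\phi(t^{-1}D)V$ is \emph{not} an asymptotic morphism from $\cS$. The defect term $V^*\phi(t^{-1}D)(1-VV^*)\psi(t^{-1}D)V$ does not vanish as $t\to\infty$. To see this concretely, take $M=\R$ with the standard Dirac operator and $U=(0,\infty)$. The dilation $S_t\colon u\mapsto t^{1/2}u(t\,\cdot\,)$ is unitary, satisfies $S_t\phi(D)S_t^{-1}=\phi(t^{-1}D)$, and commutes with the half-line projections; hence
\[
\bigl\|V^*\phi(t^{-1}D)(1-VV^*)\psi(t^{-1}D)V\bigr\|
=\bigl\|V^*\phi(D)(1-VV^*)\psi(D)V\bigr\|
\]
is independent of $t$ and nonzero for generic $\phi,\psi$. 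Your heuristic that resolvents $(t^{-1}D\pm i)^{-1}$ have propagation tending to $0$ is also incorrect: resolvents have infinite propagation for every $t$; membership in $C^*(M,L^2(S))$ only says they are norm limits of finite-propagation operators. Consequently the universal-property argument you set up, which requires each factor separately to be an asymptotic morphism, cannot be run.

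The paper's proof avoids this problem by not splitting the map into two pieces. It composes the asymptotic morphism of Lemma~\ref{lem:Eclassasymptoticmorphism} (restricted along the inclusion $\cA(U,B)\subseteq\cA(M,B)$) with the norm-decreasing compression $T\mapsto V^*TV$, and then checks multiplicativity of the combined map directly. The entire argument reduces to the single identity $fVV^*=f$ for $f\in\cA(U,B)$, which holds because such $f$ vanish outside $\overline U$; this gives
\[
V^*\phi(t^{-1}D)fV\cdot V^*\psi(t^{-1}D)gV
=V^*\phi(t^{-1}D)\,fVV^*\,\psi(t^{-1}D)gV
=V^*\phi(t^{-1}D)f\psi(t^{-1}D)gV,
\]
so multiplicativity is \emph{exact}, with no asymptotic error to control. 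You essentially already have this identity in your Steps~(3) and~(4) (where you write ``$Vf=(\text{mult.\ by }f)V$'' and ``up to negligible defect'', though the defect is actually zero); the point is that it does all the work, and the detour through Step~(1) is both unnecessary and incorrect.
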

\begin{proof}
The map 
\[\B_{A\hatotimes B}(L^2(M,S\hatotimes B)\hatotimes\widehat\ell^2)\to\B_{A\hatotimes B}(L^2(\overline{U},S|_{\overline{U}}\hatotimes B)\hatotimes\widehat\ell^2)\,,\quad T\mapsto V^*TV\]
is clearly norm-decreasing and maps $C^*(M,S\hatotimes B)$ to $C^*(\overline{U}^{\subseteq M},S|_{\overline{U}}\hatotimes B))$. Thus, it induces a map
\[\fA(C^*(M,S\hatotimes B))\to\fA( C^*(\overline{U}^{\subseteq M},S|_{\overline{U}}\hatotimes B))\,.\]
We now define the asymptotic morphism by combining the asymptotic isomorphism of Lemma \ref{lem:Eclassasymptoticmorphism} with this map and the inclusion $\cA(U,B)\subseteq\cA(M,B)$.

It is clearly linear and preserves the involution. What remains to be proven is the multiplicativity.  This, however, follows directly from the fact that $fVV^*=f$ for all $f\in\cA(U,B)$.
\end{proof}

\begin{defn}
The asymptotic morphism of the preceding Lemma defines the \emph{$E$-theory class of $D|_U$}
\[\llbracket D|_{U};B\rrbracket\in E(\cA(U,B),C^*(\overline{U}^{\subseteq M},S|_{\overline{U}}\hatotimes B))\,.\]
\end{defn}

The notation $\llbracket D|_U;B\rrbracket$ already indicates that this class depends only on the restriction $D|_U$ of $D$ to the open subset $U$.
This is not completely true, because, first of all, an extension to some $M$ has to exist and, second, $M$ determines the metric on $\overline{U}$. But apart from this, we have the following property:
\begin{lem}[Independence of extension] \label{lem:independence}
Let $U$ be a Riemannian manifold, $S\to U$ a Dirac $A$-bundle with associated Dirac operator $D$ and $B$ a graded unital $C^*$-algebra. Assume that there are Riemann-isometric embeddings $U\subseteq M_1$, $U\subseteq M_2$ into complete Riemannian manifolds $M_1,M_2$ and assume that $S,D$ extend to $S_{1,2},D_{1,2}$ over $M_{1,2}$.
If the path-metric of $M_1$ restricted to $U$ is greater or equal to the path metric of $M_2$ restricted to $U$, then there is a commutative diagram in $E$-theory:
\[\xymatrix@C=2cm{
\cA(U,B)\ar[dr]_-{\llbracket D;B\rrbracket}\ar[r]^-{\llbracket D;B\rrbracket}
&C^*(\overline{U}^{\subseteq M_1},S\hatotimes B)\ar[d]^{\substack{\text{inclusion of $C^*$-algebras}\\=(\id\colon \overline{U}^{\subseteq M_1}\to \overline{U}^{\subseteq M_2})_*}}
\\&C^*(\overline{U}^{\subseteq M_2},S\hatotimes B)
}\]
\end{lem}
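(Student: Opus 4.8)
The plan is to trace through the definitions and show that the $*$-homomorphism on the right is exactly the one appearing when we compare the two asymptotic morphisms defining $\llbracket D;B\rrbracket$ for the embeddings into $M_1$ and $M_2$. First I would note that since $U$ embeds Riemann-isometrically into both $M_1$ and $M_2$, the Dirac operator $D$ on $U$ together with the extensions $D_{1,2}$ determine, via Lemma \ref{lem:relativeasymptoticmorphism}, two asymptotic morphisms with values in $\fA(C^*(\overline{U}^{\subseteq M_i},S\hatotimes B))$ for $i=1,2$. The key observation is that the operator $\phi(t^{-1}D)f$ appearing in both is computed from the \emph{same} unbounded operator on the \emph{same} Hilbert module $L^2(U,S\hatotimes B)\hatotimes\widehat\ell^2$ (extended by the respective $V^*(\cdot)V$ compressions), because $f\in\cA(U,B)$ is supported in $U$ and the metric on $U$ is the same in both cases — only the ambient metric (hence the ambient propagation bookkeeping and local compactness checks on $M_i$) differs. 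So the two asymptotic morphisms literally agree as maps into $\fA(\B_{A\hatotimes B}(L^2(U,S\hatotimes B)\hatotimes\widehat\ell^2))$; they differ only in which Roe-type $C^*$-subalgebra they are declared to land in.

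Next I would verify that the hypothesis on path-metrics makes $C^*(\overline{U}^{\subseteq M_1},S\hatotimes B)$ a sub-$C^*$-algebra of $C^*(\overline{U}^{\subseteq M_2},S\hatotimes B)$: if the $M_1$-restricted path metric dominates the $M_2$-restricted one, then any operator of propagation $\le R$ with respect to $\overline{U}^{\subseteq M_1}$ has propagation $\le R$ with respect to $\overline{U}^{\subseteq M_2}$ as well (larger distances in $M_1$ means the support condition in $M_1$ is \emph{harder} to violate, so it persists in $M_2$), and local compactness is a metric-independent condition since $C_0(\overline{U})$ is the same set of functions. Hence the identity map $\overline{U}^{\subseteq M_1}\to\overline{U}^{\subseteq M_2}$ is a coarse map covered by the identity isometry, and $(\id)_*$ is represented by the inclusion of $C^*$-algebras, as claimed in the vertical label. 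This identifies the vertical arrow.

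Finally I would assemble the diagram: the top horizontal arrow $\llbracket D;B\rrbracket$ is the $E$-theory class of the $M_1$-asymptotic morphism, the diagonal is the $E$-theory class of the $M_2$-asymptotic morphism, and we have just seen that composing the $M_1$-morphism with the inclusion $C^*(\overline{U}^{\subseteq M_1},S\hatotimes B)\hookrightarrow C^*(\overline{U}^{\subseteq M_2},S\hatotimes B)$ gives, on the nose, the $M_2$-morphism. Since the inclusion represents $(\id)_*$ in $E$-theory, functoriality of the passage from asymptotic morphisms to $E$-theory classes yields the commuting triangle. The only mild subtlety — and the step I expect to require the most care — is the propagation comparison and the local compactness check for the compressions $V^*(\cdot)V$, i.e.\ confirming that $T\mapsto V^*TV$ really does send $C^*(M_i,S\hatotimes B)$ into the stated subalgebra and that the two compressions produce the identical family of operators; this is exactly the bookkeeping already carried out in the proofs of Lemmas \ref{lem:Eclassasymptoticmorphism} and \ref{lem:relativeasymptoticmorphism}, so it should go through without new ideas.
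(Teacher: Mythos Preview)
There is a genuine gap in your argument. You claim that the two compressed families $V_1^*\phi(t^{-1}D_1)fV_1$ and $V_2^*\phi(t^{-1}D_2)fV_2$ are ``the identical family of operators'' on $L^2(U,S\hatotimes B)\hatotimes\widehat\ell^2$, but this is not true. The functional calculus $\phi(t^{-1}D_i)$ is taken of the \emph{global} self-adjoint operator $D_i$ on $L^2(M_i,S_i\hatotimes B)\hatotimes\widehat\ell^2$, not of $D$ on $U$ (which, for $U$ incomplete, is not even essentially self-adjoint). For fixed finite $t$, the operator $\phi(t^{-1}D_i)$ has nonzero propagation and ``feels'' the extension $M_i\setminus U$; its compression back to $U$ genuinely depends on what happens outside $U$. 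Think of the wave kernel or heat kernel: information leaves $U$, interacts with the ambient geometry of $M_i$, and returns. So the two families are different operators for each $t$, and your claim that they ``differ only in which Roe-type $C^*$-subalgebra they are declared to land in'' is false.

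What is true --- and what the paper actually proves --- is that the two families agree \emph{asymptotically}, i.e.\ their difference tends to $0$ in norm as $t\to\infty$. This is not automatic and is the entire analytic content of the lemma. The paper establishes it by a resolvent computation on the generators $\phi_\pm(x)=(x\pm i)^{-1}$: one inserts $V_2V_1^*(t^{-1}D_1\pm i)(t^{-1}D_1\pm i)^{-1}V_1$ and uses that $D_1,D_2$ agree on sections supported in $U$ together with the commutator estimate $[\phi_\pm(t^{-1}D_i),f]\to 0$ to transform $V_2^*\phi_\pm(t^{-1}D_2)fV_2$ into $V_1^*\phi_\pm(t^{-1}D_1)fV_1$ up to terms vanishing as $t\to\infty$. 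Your treatment of the vertical arrow (the inclusion of Roe algebras under the metric hypothesis) is fine, but you are missing this core step entirely.
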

Of course, the two sets $\overline{U}^{\subseteq M_{1,2}}$ are the closures in the two distinct manifolds $M_{1,2}$ and therefore it is possible that they are not only different as metric spaces, but also even  as sets.
Note that in the diagram we have simply written $C^*(\overline{U}^{\subseteq M_{1,2}}, S\hatotimes B)$ instead of $C^*(\overline{U}^{\subseteq M_{1,2}}, S_{1,2}|_{\overline{U}^{\subseteq M_{1,2}}}\hatotimes B)$, which is of course unproblematic, because $\overline{U}^{\subseteq M_{1,2}}\setminus U$ is a null set.

\begin{proof}
Denote by $V_{1,2}\colon L^2(U,S\hatotimes B)\hatotimes\widehat\ell^2\to L^2(M_{1,2},S_{1,2}\hatotimes B)\hatotimes\widehat\ell^2$ the isometric inclusions. Let $\phi_{\pm}(x)=(x\pm i)^{-1}$ and $f\in \cA^\infty(U,B)\subseteq \cA(M_{1,2},B)$.

In the following two calculations, as well as later on in this paper, we use the symbol ``$\varpropto$'' to express that the difference between the term to the left and the term to the right vanish in norm in the limit $t\to\infty$. This is, of course, an equivalence relation.

If $f$ is of even degree, we calculate
\begin{align*}
V_2^*\phi_\pm(t^{-1}D_2)fV_2&= V_2^*(t^{-1}D_2\pm i)^{-1}fV_2V_1^*(t^{-1}D_1\pm i)(t^{-1}D_1\pm i)^{-1}V_1
\\&=V_2^*(t^{-1}D_2\pm i)^{-1}V_2\underbrace{V_1^*f(t^{-1}D_1\pm i)}_{=f(t^{-1}D\pm i)V_1^*}(t^{-1}D_1\pm i)^{-1}V_1
\\&=V_2^*(t^{-1}D_2\pm i)^{-1}V_2f(t^{-1}D\pm i)V_1^*(t^{-1}D_1\pm i)^{-1}V_1
\\&\varpropto V_2^*(t^{-1}D_2\pm i)^{-1}V_2(t^{-1}D\pm i)fV_1^*(t^{-1}D_1\pm i)^{-1}V_1
\\&=V_1^*f(t^{-1}D_1\pm i)^{-1}V_1 
\\&\varpropto V_1^*(t^{-1}D_1\pm i)^{-1}fV_1\,.
\end{align*}
If $f$ is of odd degree, the calculation is very similar:
\begin{align*}
V_2^*\phi_\pm(t^{-1}D_2)fV_2&= V_2^*(t^{-1}D_2\pm i)^{-1}fV_2V_1^*(t^{-1}D_1\mp i)(t^{-1}D_1\mp i)^{-1}V_1
\\&=V_2^*(t^{-1}D_2\pm i)^{-1}V_2\underbrace{V_1^*f(t^{-1}D_1\mp i)}_{=f(t^{-1}D\mp i)V_1^*}(t^{-1}D_1\mp i)^{-1}V_1
\\&=V_2^*(t^{-1}D_2\pm i)^{-1}V_2f(t^{-1}D\mp i)V_1^*(t^{-1}D_1\mp i)^{-1}V_1
\\&\varpropto -V_2^*(t^{-1}D_2\pm i)^{-1}V_2(t^{-1}D\pm i)fV_1^*(t^{-1}D_1\mp i)^{-1}V_1
\\&=-V_1^*f(t^{-1}D_1\mp i)^{-1}V_1 
\\&\varpropto V_1^*(t^{-1}D_1\pm i)^{-1}fV_1\,.
\end{align*}
These calculations prove the equality of the two asymptotic morphisms $\cS\hatotimes\cA(U,B)\to\fA(C^*(\overline{U}^{\subseteq M_2},S\hatotimes B))$
behind the diagram in the statement of the lemma.
\end{proof}

\begin{lem}[Restriction]\label{lem:restriction}
Let $M$ be a complete Riemannian manifold without boundary, $S\to M$ a Dirac $A$-bundle with associated Dirac operator $D$, $B$ another graded unital $C^*$-algebra and $U_1\subseteq U_2\subseteq M$ open subsets. Then 
\[\xymatrix@C=2cm{
\cA(U_1,B)\ar[r]^-{\llbracket D|_{U_1};B\rrbracket}\ar[d]^{\text{incl.}}
&C^*(\overline{U_1}^{\subseteq M},S|_{U_1}\hatotimes B)\ar[d]^{\text{incl.}}
\\\cA(U_2,B)\ar[r]^-{\llbracket D|_{U_2};B\rrbracket}
&C^*(\overline{U_2}^{\subseteq M},S|_{U_2}\hatotimes B)
}\]
commutes in $E$-theory.
\end{lem}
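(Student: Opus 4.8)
The plan is to compute both composites in the square at the level of the underlying asymptotic morphisms and check that these coincide. Both horizontal arrows are, by construction, represented by the honest asymptotic morphisms of Lemma~\ref{lem:relativeasymptoticmorphism}, and both vertical arrows are $*$-homomorphisms: the left one is the inclusion $\cA(U_1,B)\hookrightarrow\cA(U_2,B)$ ``by extension with $0$'' (legitimate since $\cA(U_1,B)=\{f\in\cA(U_2,B)\mid f|_{U_2\setminus U_1}=0\}$), and the right one is $\operatorname{Ad}_W$ for the isometric inclusion $W\colon L^2(\overline{U_1},S|_{\overline{U_1}}\hatotimes B)\hatotimes\widehat\ell^2\hookrightarrow L^2(\overline{U_2},S|_{\overline{U_2}}\hatotimes B)\hatotimes\widehat\ell^2$, which by Lemma~\ref{lem:coveringIsometries} represents $(\overline{U_1}^{\subseteq M}\subseteq\overline{U_2}^{\subseteq M})_*$. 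Since the $E$-theory composition of an element represented by an asymptotic morphism with the class of a $*$-homomorphism, on either side, is again computed by honest composition (the remarks following Theorem~\ref{thm:EFunctoriality}), it suffices to show that the two asymptotic morphisms $\cS\hatotimes\cA(U_1,B)\to\fA\bigl(C^*(\overline{U_2}^{\subseteq M},S|_{\overline{U_2}}\hatotimes B)\bigr)$ given on elementary tensors by $\phi\hatotimes f\mapsto\overline{t\mapsto WV_1^*\phi(t^{-1}D)fV_1W^*}$ and $\phi\hatotimes f\mapsto\overline{t\mapsto V_2^*\phi(t^{-1}D)fV_2}$ coincide, where $V_i\colon L^2(\overline{U_i},S|_{\overline{U_i}}\hatotimes B)\hatotimes\widehat\ell^2\hookrightarrow L^2(M,S\hatotimes B)\hatotimes\widehat\ell^2$ denotes the inclusion and the spectator $\widehat\K$-factors in the definition of $E$-theory are irrelevant.

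Next I would reduce this equality to a single norm estimate. From $V_1=V_2W$ one has $WV_1^*=WW^*V_2^*$; and since an element $f\in\cA(U_1,B)$, viewed as a multiplication operator, is supported in $\overline{U_1}$, the operators $V_1W^*$ and $V_2$ agree after left multiplication by $f$, i.e.\ $fV_1W^*=fV_2$. Combining, $WV_1^*\phi(t^{-1}D)fV_1W^*=WW^*V_2^*\phi(t^{-1}D)fV_2$, so the difference of the two families is $t\mapsto(1-WW^*)V_2^*\phi(t^{-1}D)fV_2$. Conjugating by the isometry $V_2$ and using $V_2(1-WW^*)V_2^*=\chi_{\overline{U_2}}-\chi_{\overline{U_1}}=\chi_{\overline{U_2}\setminus\overline{U_1}}\le\chi_{M\setminus\overline{U_1}}$ together with $f\chi_{\overline{U_2}}=f$, its norm is bounded by $\|\chi_{M\setminus\overline{U_1}}\,\phi(t^{-1}D)\,f\|$. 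Thus the whole lemma reduces to: for every $f\in\cA(U_1,B)$ and $\phi\in\cS$,
\[\bigl\|\chi_{M\setminus\overline{U_1}}\,\phi(t^{-1}D)\,f\bigr\|\xrightarrow{\,t\to\infty\,}0\,.\]

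Finally comes the key estimate. By contractivity it is enough to prove this for $f$ in the dense subalgebra $\{f\in\cA^\infty(M,B)\mid f|_{M\setminus U_1}=0\}$ of $\cA(U_1,B)$ and for $\phi$ in the dense subalgebra of $\cS$ consisting of functions with compactly supported Fourier transform, the general $\phi$ then following by approximation. For such $\phi$ the finite propagation speed of the $A$-linear Dirac operator $D$ gives $\phi(t^{-1}D)$ propagation $\le c_\phi/t$, with $c_\phi$ depending only on $\phi$. Writing $f=\chi_{\overline{U_1}}f$ and applying this bound twice, $\chi_{M\setminus\overline{U_1}}\phi(t^{-1}D)f$ is unchanged upon replacing $f$ by $\chi_{N_r(\partial U_1)}f$ with $r=2c_\phi/t$: indeed a point of $M\setminus\overline{U_1}$ that is within $c_\phi/t$ of the closed set $\overline{U_1}$ lies within $c_\phi/t$ of $\partial U_1$, because (by completeness of $M$) the nearest point of $\overline{U_1}$ to a point not in $\overline{U_1}$ lies on $\partial U_1$. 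Since $f$ is Lipschitz (it has bounded gradient) and vanishes on $\partial U_1\subseteq M\setminus U_1$, we have $\|\chi_{N_r(\partial U_1)}f\|_\infty\le r\cdot\mathrm{Lip}(f)$, whence $\|\chi_{M\setminus\overline{U_1}}\phi(t^{-1}D)f\|\le\|\phi\|_\infty\cdot r\cdot\mathrm{Lip}(f)\to0$. This last step is the crux and the main obstacle: because $\phi(t^{-1}D)$ does not commute with the multiplication operator $f$, the product $\phi(t^{-1}D)f$ ``overspills'' $\overline{U_1}$, and it is precisely the ``strange second condition'' built into the definition of $\cA^\infty$ — vanishing of $f$ at the boundary — combined with the vanishing of the propagation of $\phi(t^{-1}D)$ as $t\to\infty$ that renders this overspill asymptotically negligible.
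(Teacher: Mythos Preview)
Your proof is correct, but it takes a genuinely different route from the paper's argument, and the contrast is instructive.

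The paper's proof is purely algebraic on top of the asymptotic commutativity already established in Lemma~\ref{lem:Eclassasymptoticmorphism}. With the notation $V_1\colon L^2(\overline{U}_1)\to L^2(\overline{U}_2)$ and $V_2\colon L^2(\overline{U}_2)\to L^2(M)$, it writes the upper-right composite as $\phi\hatotimes f\mapsto V_1(V_2V_1)^*\phi(t^{-1}D)f(V_2V_1)V_1^*$, uses the identity $fV_1V_1^*=f=V_1V_1^*f$ for $f\in\cA(U_1,B)$, and then applies the known relation $\phi(t^{-1}D)f\varpropto(-1)^{\partial\phi\partial f}f\phi(t^{-1}D)$ twice to strip away the projection $V_1V_1^*$ and arrive at $V_2^*\phi(t^{-1}D)fV_2$. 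No new analytic input is needed: everything reduces to the resolvent estimate $[\phi_\pm(t^{-1}D),f]\to 0$ already proved.

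Your approach, by contrast, isolates the obstruction term $(1-WW^*)V_2^*\phi(t^{-1}D)fV_2$ directly and kills it with a genuinely different analytic tool: finite propagation speed of the wave group $s\mapsto e^{isD}$ in the $A$-linear setting, combined with the vanishing of $f$ on $\partial U_1$. This is more geometric and perhaps more illuminating as to \emph{why} the diagram commutes---the ``overspill'' of $\phi(t^{-1}D)f$ past $\overline{U_1}$ is confined to a shrinking collar of $\partial U_1$, where $f$ is small by Lipschitz continuity. The price is that you invoke a fact (exact finite propagation of $\phi(t^{-1}D)$ for $\phi$ with compactly supported Fourier transform) that the paper does not state explicitly; Theorem~\ref{thm:FuncCalc} only asserts membership in the Roe algebra. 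This is of course standard and available in the references cited (e.g.\ \cite{HankePapeSchick}), but it does make your argument slightly less self-contained within the paper's logical structure. The paper's route, reusing only the resolvent commutator estimate, is the more economical one here.
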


\begin{proof}
Let $V_1,V_2$ denote the isometric inclusions
\[L^2(\overline{U}_1,S|_{\overline{U}_1}\hatotimes B)\hatotimes\widehat\ell^2 \xrightarrow{V_1} L^2(\overline{U}_2,S|_{\overline{U}_2}\hatotimes B)\hatotimes\widehat\ell^2 \xrightarrow{V_2} L^2(M,S\hatotimes B)\hatotimes\widehat\ell^2\,.\]
The upper right composition in the diagram is represented by the asymptotic morphism
\begin{align*}
\cS\hatotimes \cA(U_1,B)&\to\fA(C^*(\overline{U_2}^{\subseteq M},S|_{\overline{U}_2}\hatotimes B))
\\\phi\hatotimes f&\mapsto \overline{t\mapsto V_1(V_2V_1)^*\phi(t^{-1}D)f(V_2V_1)V_1^*}\,.
\end{align*}
For every $f\in\cA(U_1,B)$ we have $fV_1V_1^*=f=V_1V_1^*f$
and this implies
\begin{align*}
V_1(V_2V_1)^*\phi(t^{-1}D)f(V_2V_1)V_1^*
&=V_1V_1^*V_2^*\phi(t^{-1}D)V_2fV_1V_1^*
\\&=V_1V_1^*V_2^*\phi(t^{-1}D)V_2f
\\&\varpropto (-1)^{\partial \phi\cdot\partial f}V_1V_1^*fV_2^*\phi(t^{-1}D)V_2
\\&=(-1)^{\partial \phi\cdot\partial f}fV_2^*\phi(t^{-1}D)V_2
\\&\varpropto V_2^*\phi(t^{-1}D)fV_2\,.
\end{align*}
The claim follows, because the lower left composition is represented by the asymptotic morphism
\[\phi\hatotimes f\mapsto \overline{t\mapsto V_2^*\phi(t^{-1}D)fV_2}\,.\qedhere\]
\end{proof}

\begin{lem}[Exterior product]\label{lem:extProd}
Let $M_1,M_2$ be complete Riemannian manifolds without boundary, $S_{1}\to M_{1}$ a Dirac $A_1$-bundle, $S_2\to M_2$ a Dirac $A_2$-bundle, $D_1,D_2$ the associated Dirac operators and $B_1,B_2$ $\Z_2$-graded unital $C^*$-algebras. Consider the exterior product bundle $S_1\hatboxtimes S_2\to M_1\times M_2$ which is a Dirac $A_1\hatotimes A_2$-bundle with associated Dirac operator $D_1\times D_2=D_1\hatotimes 1+1\hatotimes D_2$. Then the following diagram in $E$-theory commutes:
\[\xymatrix@C=3.5cm{
\cA(U_1,B_1)\hatotimes\cA(U_2,B_2)\ar[r]^-{\llbracket D_1|_{U_1};B_1\rrbracket\hatotimes \llbracket D_2|_{U_2};B_2\rrbracket}\ar[d]
&{\begin{array}{l}C^*(\overline{U_1}^{\subseteq M_1},S_1|_{\overline{U_1}}\hatotimes B_1)\hatotimes\\\hatotimes C^*(\overline{U_2}^{\subseteq M_2},S_2|_{\overline{U_2}}\hatotimes B_2)\end{array}}\ar[d]
\\\cA(U_1\times U_2,B_1\hatotimes B_2)\ar[r]^-{\llbracket D_1\times D_2|_{\overline{U_1\times U_2}};B_1\hatotimes B_2\rrbracket}
&{\begin{array}{l}C^*(\overline{U_1\times U_2}^{\subseteq M_1\times M_2},
\\\phantom{C^*(}
S_1\hatboxtimes S_2|_{\overline{U_1\times U_2}}\hatotimes B_1\hatotimes B_2)\end{array}}
}\]
\end{lem}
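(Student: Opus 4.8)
The plan is to represent each of the two composites in the square by a single explicit asymptotic morphism and to compare them directly; the only substantial analytic input is Lemma~\ref{lem:externalproductfunctionalcalculus}, and everything else amounts to bookkeeping of the canonical stabilisation and graded-tensor identifications. To set up, I would first record what the two vertical arrows are. The left one is the external tensor product of functions, $f_1\hatotimes f_2\mapsto f_1\hatboxtimes f_2$ followed by $\widehat\K\hatotimes\widehat\K\cong\widehat\K$; a Leibniz-type bound on $\grad(f_1\hatboxtimes f_2)$, together with the obvious behaviour at the boundary of the completion, shows this extends to a $*$-homomorphism $\cA(U_1,B_1)\hatotimes\cA(U_2,B_2)\to\cA(U_1\times U_2,B_1\hatotimes B_2)$. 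The right one is $\operatorname{Ad}_W$ for the canonical isometry $W$ assembled from $L^2(\overline{U_1}\times\overline{U_2},S_1\hatboxtimes S_2\hatotimes B_1\hatotimes B_2)\cong L^2(\overline{U_1},S_1\hatotimes B_1)\hatotimes L^2(\overline{U_2},S_2\hatotimes B_2)$ and $\widehat\ell^2\hatotimes\widehat\ell^2\cong\widehat\ell^2$; since $\overline{U_1\times U_2}=\overline{U_1}\times\overline{U_2}$ and the restricted path-metric of $M_1\times M_2$ on this set is dominated by the sum of the restricted path-metrics of $M_1$ and $M_2$, $\operatorname{Ad}_W$ carries finite-propagation locally compact operators to operators of the same kind, hence maps $C^*(\overline{U_1}^{\subseteq M_1},\ldots)\hatotimes C^*(\overline{U_2}^{\subseteq M_2},\ldots)$ into $C^*(\overline{U_1\times U_2}^{\subseteq M_1\times M_2},\ldots)$.

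Writing $V_i$ for the inclusion isometry of Lemma~\ref{lem:relativeasymptoticmorphism} for $D_i$ and $W'$ for the one for $D_1\times D_2$ over $M_1\times M_2$, and noting $V_1\hatotimes V_2=W'\circ W$ up to the stabilisation identifications, I would unwind the definition of the exterior product in $E$-theory (the coproduct $\Delta\colon\cS\to\cS\hatotimes\cS$ and the graded twist $\cS\hatotimes\cS\hatotimes A_1\hatotimes A_2\cong\cS\hatotimes A_1\hatotimes\cS\hatotimes A_2$) together with Lemma~\ref{lem:relativeasymptoticmorphism} for $D_1$ and $D_2$, obtaining that the top-then-right composite is represented by
\[\phi\hatotimes f_1\hatotimes f_2\mapsto\overline{t\mapsto (W')^*\bigl(\phi_{(1)}(t^{-1}D_1)\hatotimes\phi_{(2)}(t^{-1}D_2)\bigr)(f_1\hatboxtimes f_2)\,W'}\,,\]
where $\Delta\phi=\phi_{(1)}\hatotimes\phi_{(2)}$ in Sweedler notation and the sign from the graded twist cancels the one in $(ab)\hatotimes(cd)=\pm(a\hatotimes c)(b\hatotimes d)$. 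On the other hand, Lemma~\ref{lem:relativeasymptoticmorphism} applied to $D_1\times D_2$ over $M_1\times M_2$ and the open subset $U_1\times U_2$, precomposed with the left arrow, represents the left-then-bottom composite by $\phi\hatotimes f_1\hatotimes f_2\mapsto\overline{t\mapsto (W')^*\phi(t^{-1}(D_1\times D_2))(f_1\hatboxtimes f_2)\,W'}$.

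Everything thus reduces to the operator identity $\phi(t^{-1}(D_1\times D_2))=\phi_{(1)}(t^{-1}D_1)\hatotimes\phi_{(2)}(t^{-1}D_2)$ for all $\phi\in\cS$ and $t\geq1$, under $L^2(M_1\times M_2,S_1\hatboxtimes S_2)\cong L^2(M_1,S_1)\hatotimes L^2(M_2,S_2)$. Both sides are $*$-homomorphisms of $\cS$ (the right one being $\Delta$ followed by $f\mapsto f(t^{-1}D_1)\hatotimes f(t^{-1}D_2)$); they agree on the generator $u(x)=e^{-x^2}$ by Lemma~\ref{lem:externalproductfunctionalcalculus}, and on the generator $v(x)=xe^{-x^2}$ because, using $D_1\times D_2=D_1\hatotimes1+1\hatotimes D_2$, evenness of the operators $u(t^{-1}D_i)$, and $t^{-1}D_i\,u(t^{-1}D_i)=v(t^{-1}D_i)$,
\[v(t^{-1}(D_1\times D_2))=t^{-1}(D_1\times D_2)\,u(t^{-1}(D_1\times D_2))=v(t^{-1}D_1)\hatotimes u(t^{-1}D_2)+u(t^{-1}D_1)\hatotimes v(t^{-1}D_2)\,,\]
which matches $\Delta v=u\hatotimes v+v\hatotimes u$. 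Since $u$ and $v$ generate $\cS$, the two $*$-homomorphisms coincide, finishing the proof.

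I expect the main obstacle to be not the analysis --- Lemma~\ref{lem:externalproductfunctionalcalculus} is exactly the ingredient that does the real work and is already available --- but the careful bookkeeping of the graded-tensor and stabilisation identifications hidden in the definition of the exterior product, so that the two asymptotic morphisms literally become the same map into $\fA\bigl(C^*(\overline{U_1\times U_2}^{\subseteq M_1\times M_2},S_1\hatboxtimes S_2|_{\overline{U_1\times U_2}}\hatotimes B_1\hatotimes B_2)\bigr)$; in particular one must verify that the signs cancel as claimed, that $V_1\hatotimes V_2$ and $W'\circ W$ agree up to homotopy (which suffices), and that $\overline{U_1\times U_2}^{\subseteq M_1\times M_2}$ and $\overline{U_1}^{\subseteq M_1}\times\overline{U_2}^{\subseteq M_2}$ carry the same Roe algebra.
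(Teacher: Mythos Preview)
Your proposal is correct and follows essentially the same route as the paper: write out both composites explicitly as asymptotic morphisms using the compression isometries, reduce to the operator identity $\phi(t^{-1}(D_1\times D_2))=\phi_{(1)}(t^{-1}D_1)\hatotimes\phi_{(2)}(t^{-1}D_2)$, and verify it on the generators $u,v$ of $\cS$ via Lemma~\ref{lem:externalproductfunctionalcalculus}. The paper is slightly terser---it uses $V_1\hatotimes V_2$ directly as the product compression rather than factoring through $W'\circ W$, and leaves the $v$-computation implicit---but the argument is the same.
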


\begin{proof}
Let $V_{1,2}\colon L^2(\overline{U}_{1,2},S|_{\overline{U}_{1,2}}\hatotimes B_{1,2})\hatotimes\widehat\ell^2\to L^2(M_{1,2},S\hatotimes B_{1,2})\hatotimes\widehat\ell^2$ be the canonical inclusions.
The composition of the upper and the right morphism is represented by composing $\Delta\colon\cS\to\cS\hatotimes\cS$
with the asymptotic morphism
\begin{align*}
\cS\hatotimes\cS&\hatotimes\cA(U_1,B_1)\hatotimes\cA(U_2,B_2)\xrightarrow{\cong} \cS\hatotimes\cA(U_1,B_1)\hatotimes\cS\hatotimes\cA(U_2,B_2)
\\&\to\fA(C^*(\overline{U_1}^{\subseteq M_1},S_1|_{\overline{U_1}}\hatotimes B_1)\hatotimes C^*(\overline{U_2}^{\subseteq M_2},S_2|_{\overline{U_2}}\hatotimes B_2))
\\&\to \fA(C^*(\overline{U_1\times U_2}^{\subseteq M_1\times M_2},
S_1\hatboxtimes S_2|_{\overline{U_1\times U_2}}\hatotimes B_1\hatotimes B_2))
\end{align*}
which maps
\begin{align*}
\phi_1\hatotimes\phi_2&\hatotimes f_1\hatotimes f_2
\mapsto (-1)^{\partial \phi_2\cdot\partial f_1} \phi_1\hatotimes f_1\hatotimes\phi_2\hatotimes f_2
\\&\mapsto (-1)^{\partial \phi_2\cdot\partial f_1}\overline{t\mapsto V_1^*\phi_1(t^{-1}D_1)f_1V_1 \hatotimes V_2^*\phi_2(t^{-1}D_2)f_2V_2}
\\&\mapsto \overline{t\mapsto (V_1\hatotimes V_2)^*(\phi_1(t^{-1}D_1)\hatotimes \phi_2(t^{-1}D_2))(f_1\hatotimes f_2)(V_1\hatotimes V_2)}\,.
\end{align*}
The composition of the left and the lower morphism is simply
\begin{align*}
\cS\hatotimes\cA(U_1,B_1)&\hatotimes\cA(U_2,B_2)
\to\fA(C^*(\overline{U_1\times U_2}^{\subseteq M_1\times M_2},
\\&\qquad\qquad\qquad\phantom{\to\fA(C^*(}S_1\hatboxtimes S_2|_{\overline{U_1\times U_2}}\hatotimes B_1\hatotimes B_2))
\\\phi\hatotimes f_1\hatotimes f_2&\mapsto \overline{t\mapsto (V_1\hatotimes V_2)^*\phi(t^{-1}(D_1\times D_2))(f_1\hatotimes f_2)(V_1\hatotimes V_2)}\,.
\end{align*}
The equality of the two asymptotic morphisms can now be checked directly on the generating functions $u(x)=e^{-x^2}$ and $v(x)=xe^{-x^2}$ of $\cS$. Like in the proof of \cite[Theorem 4.1]{Dumitrascu}, this follows from the formula of Lemma \ref{lem:externalproductfunctionalcalculus}.
\end{proof}

In the following theorem, for any interval $I$ we let $S_I$ denote the trivial Dirac bundle with fiber $\Cl_1$ over $I$ and  $D_I$ denote its Dirac operator. For $I=\R$, this operator is already known from the end of Section \ref{sec:Etheory}.
\begin{thm}[Bordism invariance]\label{thm:BordismInvariance}
Let $W$ be a complete Riemannian manifold with a boundary $\partial W$ which decomposes into two complete Riemanian manifolds $M_0$, $M_1$ and assume that there are collar neighborhoods $M_0\times[0,2\varepsilon)$ of $M_0$ and $M_1\times(-2\varepsilon,0]$ of $M_1$ on which the Riemannian metric of $W$ is the product metric.
Let $S_{0}\to M_0$ and $S_1\to M_1$ be Dirac $A$-bundles with associated Dirac operators $D_0$, $D_1$, respectively, and assume that there is a Dirac $A\hatotimes\Cl_1$-bundle $S\to W$, which restricts to the product Dirac $A\hatotimes\Cl_1$-bundles $S_0\hatboxtimes S_{[0,2\varepsilon)}$ and $S_1\hatboxtimes S_{(-2\varepsilon,0]}$ over the respective collar neighborhoods. 
Then for every graded unital $C^*$-algebra $B$ the following diagram in $E$-theory commutes:
\[\xymatrix@C=3em{
&\cA(M_0,B)\ar[r]_-{\llbracket D_0;B\rrbracket}&C^*(M_0,A\hatotimes B)\ar[dr]_{(M_0\subseteq W)_*}&
\\\cA(W,B)\ar[ur]_{\text{restr.}}\ar[dr]^{\text{restr.}}
&&&C^*(W,A\hatotimes B)
\\&\cA(M_1,B)\ar[r]^-{\llbracket D_1;B\rrbracket}&C^*(M_1,A\hatotimes B)\ar[ur]^{(M_1\subseteq W)_*}&
}\]
\end{thm}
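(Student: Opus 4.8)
The goal is to reduce the whole diagram to the collar neighbourhoods, where $D_W$ is an external product $D_i\times D_I$ over $M_i\times I$, and then to absorb the extra Clifford variable: the $\Cl_1$ carried by $S\to W$ is exactly the one sitting on the interval direction $S_I$, and it will be cancelled against a Bott element. So the proof is an interplay of the exterior product formula (Lemma~\ref{lem:extProd}, resting analytically on the heat--kernel factorisation of Lemma~\ref{lem:externalproductfunctionalcalculus}), Bott periodicity (Theorem~\ref{thm:Bottperiodicity}), the bookkeeping Lemmas~\ref{lem:restriction} and~\ref{lem:independence}, and a homotopy/flasqueness argument along the collars.

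Concretely I would proceed as follows. First enlarge the picture: attach infinite half--cylinders $M_0\times(-\infty,0]$ and $M_1\times[0,\infty)$ to $W$ along $\partial W$ to get a complete manifold $\hat W$ without boundary, and use the product structure near the boundary to extend $S$, $D_W$ to a Dirac $A\hatotimes\Cl_1$--bundle $\hat S\to\hat W$ with Dirac operator $\hat D$; over the two ends $\hat D$ restricts to the external products $D_0\times D_{(-\infty,\varepsilon)}$ and $D_1\times D_{(-\varepsilon,\infty)}$. Lemma~\ref{lem:independence} absorbs the metric discrepancy between $W$ and $\hat W$ on the collars, and Lemma~\ref{lem:restriction} lets one pass between the classes over $W$ and over its open sub--collars. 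Next, on each cylindrical end apply Lemma~\ref{lem:extProd} to factor $\llbracket\hat D|_{\mathrm{end}};B\rrbracket$ as an exterior product of $\llbracket D_i;B\rrbracket$ with the $E$--theory class of the interval operator; up to stabilisation (Theorem~\ref{thm:EStability}) and the canonical identifications of Lemma~\ref{lem:RoeClifford} the interval factor is a (dual) Bott element. Composing with $\mathfrak{b}$ turns $\Cl_1\hatotimes\Cl_1\cong\Cl_2$ into the unit object of $\mathbf{E}$: this is precisely what reconciles the $A\hatotimes\Cl_1$--linear $\hat D$ with the $A$--linear $D_0,D_1$ and rewrites each leg of the diagram purely in terms of $\llbracket D_i;B\rrbracket$ and coarse data. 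Finally, observing that the half--infinite cylinders are flasque, hence zero objects of $\mathbf{E}$, that sliding the evaluation map along a collar is a homotopy equivalence of the algebras $\cA(\,\cdot\,,B)$, and that the two boundary inclusions $M_0,M_1\hookrightarrow W$ become close once routed through $\hat W$ (so they induce the same morphism by Lemma~\ref{lem:coveringIsometries}), one concludes that both compositions coincide with one and the same morphism. All of these homotopies should be realised already at the level of asymptotic morphisms, so that they remain valid in the $E$--theory of the non--separable algebras involved.

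\textbf{Main obstacle.}
The real difficulty is bookkeeping around the ``vanishing at the metric completion'' clause in the definition of $\cA(M,B)$: it obstructs the naive restriction of functions on $W$ to an open sub--collar (multiplying by a cut--off is not a $*$--homomorphism), so the reduction to the collars genuinely has to be routed through the enlarged manifold $\hat W$ and through Lemma~\ref{lem:independence}, rather than by excision. Correspondingly, the possibly non--compact core of $W$ cannot be discarded directly and must be swallowed by the flasqueness argument. Keeping the $\Cl_1$--degree coherent through the successive Bott periodicity isomorphisms, and checking carefully that the splitting of $\hat D$ on the ends and the collar--sliding homotopies are compatible with the definitions of all the relevant $E$--theory classes, is the other point that needs care.
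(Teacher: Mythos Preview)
Your overall architecture is close to the paper's: both routes reduce each leg to an exterior product of $\llbracket D_i;B\rrbracket$ with a dual Bott class on an interval, using Lemmas~\ref{lem:restriction}, \ref{lem:independence}, \ref{lem:extProd} and Theorem~\ref{thm:Bottperiodicity}, and both then have to compare the resulting maps. The paper, however, does \emph{not} attach infinite half-cylinders; it works with the interior $\mathring W$ and its class $\llbracket D_{\mathring W};B\rrbracket$, shows that each leg equals $\llbracket D_{\mathring W};B\rrbracket\hatotimes 1_{\Cl_1}$ precomposed with an explicit $*$-homomorphism $\cA(W,B)\hatotimes C_0(0,\varepsilon)\to\cA(\mathring W,B)$ supported near the relevant boundary component, and then proves that the \emph{sum} of these two $*$-homomorphisms (with a sign coming from the reflection $x\mapsto -x$ on the interval) is null-homotopic via the family $f\hatotimes\psi\mapsto f\cdot\psi(t\cdot\dist_{\partial W})$, $t\in[0,1]$. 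This final homotopy, done entirely on the $\cA$ side, is the actual content that your proposal is missing.

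Your replacement for this step has two concrete problems. First, ``half-infinite cylinders are flasque, hence zero objects of $\mathbf{E}$'' is not correct: flasqueness gives $K_*(C^*(M_i\times[0,\infty),A))=0$ via an Eilenberg swindle, but that does not make the Roe algebra a zero object in $E$-theory, and in any case you need to compare $E$-theory \emph{morphisms}, not $K$-theory classes. Second, the assertion that ``the two boundary inclusions $M_0,M_1\hookrightarrow W$ become close once routed through $\hat W$'' is false in general---$M_0$ and $M_1$ may be at unbounded distance in $W$ (and hence in $\hat W$), so Lemma~\ref{lem:coveringIsometries} does not apply. Relatedly, attaching non-compact cylinders changes the coarse type of $W$, so getting back to $C^*(W,A\hatotimes B)$ is not automatic. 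The paper avoids all of this by keeping the comparison on the domain side and using the distance-to-boundary homotopy, which genuinely uses that both boundary components sit inside the \emph{same} $W$; you should replace your flasqueness/closeness endgame with that argument.
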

\begin{proof}
Let us first consider the upper composition using the diagram in Figure \ref{fig:bordinv}.
\begin{sidewaysfigure}
\[\xymatrix@R=1.5cm@C=3.5cm{
\cA(W,B)\ar[d]^{\text{restr.}}&&
\\\cA(M_0,B)\ar[r]^{\llbracket D_0;B\rrbracket}\ar[d]^{\text{Bott}}
&C^*(M_0,S_0\hatotimes B)\ar[d]^{\cong} \ar[dr]^{\id_*}&
\\\cA(M_0,B)\hatotimes C_0(0,\varepsilon)\hatotimes\Cl_1 \ar[r]^{\llbracket D_0;B\rrbracket\hatotimes\llbracket D_{(0,\varepsilon)};\C\rrbracket\hatotimes\id_{\Cl_1}}\ar[d]^{\text{incl.}}
&C^*(M_0, S_0\hatotimes B)\hatotimes\Cl_2
\ar[r]^{\alpha}\ar[d]_{\beta}
&C^*(M_0,A\hatotimes B)\ar[ddd]_{(M_0\subseteq W)_*}
\\\cA(M_0\times(0,\varepsilon),B)\hatotimes \Cl_1\ar[r]^{\llbracket D_0\times D_{(0,\varepsilon)};B\rrbracket\hatotimes\id_{\Cl_1}}\ar[dr]_{\llbracket D_{M_0\times (0,\varepsilon)};B\rrbracket\hatotimes\id_{\Cl_1}\qquad} \ar[dd]^{\text{incl.}}
&{\begin{array}{l}C^*((M_0\times[0,\varepsilon])^{\subseteq M_0\times\R},
\\\phantom{C^*(}(S_0\hatboxtimes S_{[0,\varepsilon]})\hatotimes B)\hatotimes\Cl_1\end{array}}\ar[d]^{\id_*}
&
\\&{\begin{array}{l}C^*((M_0\times[0,\varepsilon])^{\subseteq W},
\\\phantom{C^*(}S|_{M_0\times[0,\varepsilon]}\hatotimes B)\hatotimes\Cl_1\end{array}}\ar[d]^{(M_0\times [0,\varepsilon]\subseteq W)_*}
&
\\\cA(\mathring W,B)\hatotimes\Cl_1\ar[r]^{\llbracket D_{\mathring W};B\rrbracket\hatotimes\id_{\Cl_1}}
&C^*(W,S\hatotimes B)\hatotimes\Cl_1\ar[r]^{\gamma}
&C^*(W,A\hatotimes B)
}\]
\caption{Proving bordism invariance.}
\label{fig:bordinv}
\end{sidewaysfigure}
The arrows labeled by ``incl.'' are the canonical inclusions of $C^*$-algebras, ``restr.'' denotes restriction of functions and ``Bott'' denotes the isomorphism given by exterior product with the Bott element $\mathfrak{b}$ composed by the inclusion $C_0(0,\varepsilon)\subseteq C_0(\R)$, which is an isomorphism in $E$-theory. 
The interior of $W$ is denoted by $\mathring W$ and the Dirac operator of $S|_{\mathring W}$ by $D_{\mathring W}$.

The three arrows marked by $\alpha$, $\beta$, $\gamma$ are functoriality under the coarse maps $\id_{M_0}$, $M_0\xrightarrow{\subseteq} M_0\times [0,\varepsilon]$, $\id_W$, respectively, \`a la Lemma \ref{lem:coarseFunctoriality}, possibly combined with some isomorphism of Lemma \ref{lem:RoeClifford} and or the canonical isomorphism in $E$-theory $\Cl_2\cong M_2\cong \C$.
An equivalent description of $\beta$ is to combine the inclusion 
\[\Cl_1\subseteq \K_{\Cl_1}(L^2([0,\varepsilon],S_{[0,\varepsilon]})\hatotimes\widehat\ell^2)=C^*([0,\varepsilon],S_{[0,\varepsilon]})\]
with the canonical exterior multiplication of Roe algebras seen in  Lemma~\ref{lem:extProd}.

We denoted by $D_{M_0\times(0,\varepsilon)}$ the restriction of $D_{\mathring W}$ to $M_0\times(0,\varepsilon)$. It is, by assumption, the same as $D_0\times D_{(0,\varepsilon)}$, but we use different notations in this diagram to indicate that the $E$-theory class of the first is taken with respect to the coarse structure induced by $W$ while the $E$-theory class of the second is taken with respect to the coarse structure induced by $M_0\times\R$.

The top left square in the diagram commutes because of Theorem \ref{thm:Bottperiodicity} and commutativity of
\[\xymatrix{
C_0(0,\varepsilon)\ar[rr]^{\llbracket D_{(0,\varepsilon)};\C\rrbracket} \ar[d]^{\text{incl.}}&&\C
\\C_0(\R)\ar[urr]_{\mathfrak{b}^*}&&
}\]
in $E$-theory, which works analogous to Lemma \ref{lem:restriction}.

The remaining parts of the left side of the diagram commute because of (from top to bottom) Lemmas \ref{lem:extProd}, \ref{lem:independence} and \ref{lem:restriction}.
The right part commutes because of the naturality of the isomorphisms of Lemma \ref{lem:RoeClifford}.

It follows that the composition of the upper arrows in the assertion is the same as the composition of $\llbracket D_{\mathring W};B\rrbracket\hatotimes 1_{\Cl_1}$ with
\begin{align}
\cA(W,B)&\xrightarrow{\text{restr.}}\cA(M_0,B)\nonumber
\\&\xrightarrow{\text{Bott}}\cA(M_0,B)\hatotimes C_0(0,\varepsilon)\hatotimes\Cl_1\nonumber
\\&\xrightarrow{\text{incl.}}\cA(\mathring W,B)\hatotimes\Cl_1\,.\label{eq:uppercompBordism}
\end{align}
Analogously one sees that the composition of the lower arrows is the same as the composition of 
$\llbracket D_{\mathring W};B\rrbracket\hatotimes 1_{\Cl_1}$ with
\begin{align}
\cA(W,B)&\xrightarrow{\text{restr.}}\cA(M_1,B)\nonumber
\\&\xrightarrow{\text{Bott}}\cA(M_0,B)\hatotimes C_0(-\varepsilon,0)\hatotimes\Cl_1\nonumber
\\&\xrightarrow{\text{incl.}}\cA(\mathring W,B)\hatotimes\Cl_1\,.\label{eq:lowercompBordism}
\end{align}
It therefore suffices to show that the difference in $E$-theory of \eqref{eq:uppercompBordism} and \eqref{eq:lowercompBordism} vanishes.

Recall that the reflection map $s\colon \R\to\R,x\mapsto -x$ induces $s^*=-\id\in E(C_0\R,C_0\R)$. From this one easily deduces that 
\[\xymatrix{
\C\ar[rr]^-{\text{Bott}}\ar[drr]_-{\text{Bott}}&&C_0(0,\varepsilon)\hatotimes\Cl_1\ar[d]^{s^*}
\\&&C_0(-\varepsilon,0)\hatotimes\Cl_1
}\]
commutes up to the sign $-1$. This will take care of the minus sign.

As the Bott isomorphism also commutes with the restriction homomorphisms, it suffices to show that the sum  in $E$-theory of 
\begin{align*}
\kappa_0\colon \cA(W,B)\hatotimes C_0(0,\varepsilon)&\xrightarrow{\text{restr.}}\cA(M_0,B)\hatotimes C_0(0,\varepsilon)
\xrightarrow{\text{incl.}}\cA(\mathring W,B)
\end{align*}
and 
\begin{align*}
\kappa_1\colon \cA(W,B)\hatotimes C_0(0,\varepsilon)&\xrightarrow{\text{restr.}}\cA(M_1,B)\hatotimes C_0(0,\varepsilon)
\\&\xrightarrow{s^*}\cA(M_1,B)\hatotimes C_0(-\varepsilon,0)
\xrightarrow{\text{incl.}}\cA(\mathring W,B)
\end{align*}
vanishes. But as the images of these $*$-homomorphisms are orthogonal, their sum $\kappa=\kappa_0+\kappa_1$  is again a $*$-homomorphism and the $E$-theory class of this sum is the same as the sum of their $E$-theory classes. This follows from a simple homotopy.

This sum, however, can easily be described as follows:
Let $\pi\colon \partial W\times [0,\varepsilon]\to \partial W$ be the canonical projection and $\dist_{\partial W}\colon W\to \R$ be the function returning the distance to the boundary. We consider every $\psi\in C_0(0,\varepsilon)$ to be extended to all of $\R$ by $0$.
Then $\kappa$ maps $f\hatotimes\psi\in\cA(W,B)\hatotimes C_0(0,\varepsilon)$ to $(f|_{\partial W}\circ\pi)\cdot(\psi\circ\dist_{\partial W})$. This $*$-homomorphism is homotopic in the obvious way to $f\hatotimes\psi\mapsto f\cdot (\psi\circ\dist_{\partial W})$, because $\grad(f)$ is bounded, and the latter is homotopic via $f\hatotimes\psi\mapsto f\cdot \psi(t\cdot\dist_{\partial W})$ ($t\in[0,1]$) to the zero homomorphism.

Thus, $0=\llbracket\kappa\rrbracket =\llbracket\kappa_0\rrbracket + \llbracket\kappa_1\rrbracket$
and the proof is complete.
\end{proof}

This theorem is a far reaching enhancement of the bordism invariance of \cite{WulffBordism}. To obtain the bordism invariance of the coarse index from the bordism invariance of the $E$-theory class, just compose with the canonical element $1_W\in K(\cA(W,\C))$. In the present context we only consider Dirac-type operators, but on the other hand we allow more general coefficient $C^*$-algebras $A$ than just $\C$.

In the following we shall only need a few special cases, where $W$ is a product bordism.

\begin{cor}\label{cor:princSymp}
The $E$-theory class $\llbracket D;B\rrbracket\in E(\cA(M,B),C^*(M,A\hatotimes B))$ depends only on the principle symbol of $D$, i.\,e.\ it is independent of the choice of the Dirac connection on the bundle $S$.
\end{cor}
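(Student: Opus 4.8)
The principal symbol of a Dirac operator $D=\sum_k e_k\nabla^S_{e_k}$ is the Clifford multiplication, so two Dirac $A$-bundle structures on $S$ with the same principal symbol differ only in the Dirac connection. Let $\nabla^0$ and $\nabla^1$ be two Dirac connections on $S$ compatible with the given Clifford multiplication, with associated Dirac operators $D^0$ and $D^1$; the task is to show $\llbracket D^0;B\rrbracket=\llbracket D^1;B\rrbracket$. The first observation is that $\omega:=\nabla^1-\nabla^0$ is a $1$-form on $M$ with values in the bundle endomorphisms of $S$; it is anti-self-adjoint (both connections are metric) and commutes with Clifford multiplication (both are compatible with the same Clifford action, so the Levi--Civita terms cancel in the difference). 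Hence, for any smooth $\chi\colon[0,1]\to[0,1]$, the interpolation $\nabla^s:=\nabla^0+\chi(s)\,\omega$ is again a Dirac connection on $S$ compatible with the given Clifford multiplication, for every $s\in[0,1]$.

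The plan is to use these connections as the two ends of a product bordism. Fix $\chi$ with $\chi\equiv 0$ on $[0,2\varepsilon]$ and $\chi\equiv 1$ on $[1-2\varepsilon,1]$ for some small $\varepsilon>0$, and set $W:=M\times[0,1]$ with the product metric. Let $S_W:=\operatorname{pr}_M^*S\hatotimes\Cl_1$ be the bundle over $W$ with the Clifford action of $\Cl(TW)\cong\Cl(TM)\hatotimes\Cl(\R\partial_s)$ in which $\Cl(TM)$ acts through the Clifford action of $S$ and the generator $\partial_s$ acts by the generator of $\Cl_1$, and with the connection that in the $M$-directions at $(x,s)$ equals $\nabla^s$ and in the $\partial_s$-direction is the plain derivative on the pullback bundle. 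Since the metric on $W$ is a product, its Levi--Civita connection has no mixed terms, and the Dirac $A\hatotimes\Cl_1$-bundle axioms for $S_W$ reduce to a short case check: for $X,Y$ both tangent to $M$ the compatibility is exactly the statement that each $\nabla^s$ is a Dirac connection, and if $X$ or $Y$ equals $\partial_s$ it holds because the two tensor factors of $S_W$ are acted on separately (and graded-commute) while $\nabla^{LC}_W$ annihilates $\partial_s$. By the choice of $\chi$, over the collar $M\times[0,2\varepsilon)$ one has $\nabla^s=\nabla^0$, so $S_W$ restricts there to the product Dirac bundle $S_0\hatboxtimes S_{[0,2\varepsilon)}$ with $S_0=(S,\nabla^0)$, and over the collar $M\times(1-2\varepsilon,1]$, reparametrised by $s\mapsto s-1$ to $M\times(-2\varepsilon,0]$, it restricts to $S_1\hatboxtimes S_{(-2\varepsilon,0]}$ with $S_1=(S,\nabla^1)$.

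Thus $W$, with $M_0=M_1=M$ and $D_0=D^0$, $D_1=D^1$, satisfies the hypotheses of Theorem \ref{thm:BordismInvariance}, which yields in $E(\cA(W,B),C^*(W,A\hatotimes B))$ the identity $(\iota_0)_*\circ\llbracket D^0;B\rrbracket\circ r_0=(\iota_1)_*\circ\llbracket D^1;B\rrbracket\circ r_1$, where $\iota_i\colon M=M\times\{i\}\hookrightarrow W$ and $r_i\colon\cA(W,B)\to\cA(M,B)$ is restriction to the $i$-th end. Now precompose with the $*$-homomorphism $\operatorname{pr}^*\colon\cA(M,B)\to\cA(W,B)$, $f\mapsto f\circ\operatorname{pr}_M$ (well defined because $\operatorname{pr}_M$ is smooth and $1$-Lipschitz and $W$ is complete, so the vanishing condition at infinity is vacuous), which satisfies $r_i\circ\operatorname{pr}^*=\id_{\cA(M,B)}$; and postcompose with the $E$-theory morphism $(\operatorname{pr}_M)_*\colon C^*(W,A\hatotimes B)\to C^*(M,A\hatotimes B)$ of Lemma \ref{lem:coarseFunctoriality}, which by functoriality satisfies $(\operatorname{pr}_M)_*\circ(\iota_i)_*=(\operatorname{pr}_M\circ\iota_i)_*=(\id_M)_*=\id$. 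After these cancellations the identity becomes $\llbracket D^0;B\rrbracket=\llbracket D^1;B\rrbracket$, which is the claim.

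The only step demanding genuine care is the verification that $S_W$ --- with its $s$-dependent connection and its extra $\Cl_1$-factor --- really is a Dirac $A\hatotimes\Cl_1$-bundle restricting to the stated product bundles over the two collars; the remainder is formal, combining bordism invariance with the functoriality of the Roe algebra and of $\cA(\,\cdot\,,B)$ under the coarse projection $W\to M$.
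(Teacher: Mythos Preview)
Your proof is correct and follows essentially the same route as the paper: build the product bordism $W=M\times[0,1]$, put an interpolating Dirac connection on $S\hatboxtimes S_{[0,1]}$, apply Theorem \ref{thm:BordismInvariance}, and cancel using that restriction $\circ$ pullback is the identity on $\cA(M,B)$ while the slice inclusions are coarse equivalences. Your explicit verification of the Dirac $A\hatotimes\Cl_1$-bundle axioms for $S_W$ and your use of $(\operatorname{pr}_M)_*$ as inverse to $(\iota_i)_*$ are just more detailed renderings of what the paper states in a single sentence.
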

\begin{proof}
Given two Dirac connections $\nabla_0$, $\nabla_1$ on $S$, we consider the product bordism $W=M\times[0,1]$ with the product metric and the product Dirac bundle $S_W=S\hatboxtimes S_{[0,1]}$ but the connection we choose on $S_W$ interpolates between the product connections of $\nabla_0$, $\nabla_1$ with the canonical connection on $S_{[0,1]}$.

In this case, the inclusions of $M$ into $W$ as slices all induce the same canonical isomorphism $C^*(M,A\hatotimes B)\cong C^*(W,A\hatotimes B)$ and the compositions \[\cA(M,B)\xrightarrow[\text{projection}]{\text{pullback under}}\cA(W,B) \xrightarrow[\text{at } 0 \text{ or }1]{\text{restriction}}\cA(M,B)\] are the identity.
The theorem thus implies the equality of $\llbracket D_0;B\rrbracket$ and $\llbracket D_1;B\rrbracket$.
\end{proof}

The corollary allows us to prove the main theorem of this section:
\begin{thm}\label{mainthm:indexformula}
Let $M$ be a complete Riemannian manifold, $S\to M$ a Dirac $A$-bundle with associated Dirac operator $D$
and $E\to M$  a $B$-bundle of bounded variation.
Then the index of the twisted operator $D_E$ is
\[\ind(D_E)=\llbracket D;B\rrbracket\circ\llbracket E\rrbracket\,.\]
\end{thm}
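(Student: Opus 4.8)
The plan is to reduce, via Corollary~\ref{cor:princSymp}, to a convenient connection on $E$, and then to recognise both $\ind(D_E)$ and $\llbracket D;B\rrbracket\circ\llbracket E\rrbracket$ as the $E$-theory class of one and the same asymptotic morphism. First I would apply Corollary~\ref{cor:princSymp} to the Dirac operator $D_E$ itself: its principal symbol is the Clifford action on $S\hatotimes E$ and does not depend on the connection chosen on $E$, so $\ind(D_E)$ is unchanged if we equip $E$ with the Grassmann connection $\nabla^E\zeta=P\cdot d(P\zeta)$ determined by the projection $P\in\cA^\infty(M,B)$ defining $E$. With this choice the characterising property~\eqref{eq:twistedConnection} forces $\nabla^{S\hatotimes E}$ to be the compression, to $\im(P_0)$ with $P_0:=\id_S\hatotimes P$, of the product connection $\nabla^S\hatotimes\id+\id\hatotimes d$ on the trivially twisted bundle. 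Hence, under the isometric identification $L^2(M,S\hatotimes E)=\im(P_0)\subseteq L^2(M,S)\hatotimes B\hatotimes\widehat{\ell^2}$, the twisted operator $D_E$ is the compression $P_0\,(D\hatotimes\id)\,P_0$ of the operator $D\hatotimes\id$ on $L^2(M,S)\hatotimes B\hatotimes\widehat{\ell^2}$, and crucially $[D\hatotimes\id,P_0]=\id_S\hatotimes\grad(P)$ is \emph{bounded} since $P\in\cA^\infty(M,B)$.

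Next I would compare functional calculi. For $\phi_\pm(x)=(x\pm i)^{-1}$ a direct resolvent computation gives $\phi_\pm(t^{-1}D_E)-P_0\phi_\pm(t^{-1}(D\hatotimes\id))P_0=-(t^{-1}D_E\pm i)^{-1}\bigl(t^{-1}(\id_S\hatotimes\grad P)\bigr)(1-P_0)(t^{-1}(D\hatotimes\id)\pm i)^{-1}P_0$, whose norm is $O(t^{-1})$; using $\phi_\pm(t^{-1}(D\hatotimes\id))=\phi_\pm(t^{-1}D)\hatotimes\id$ together with the fact that $\phi(t^{-1}D)$ asymptotically commutes with the multiplication operator $P$ (the commutator estimates in the proof of Lemma~\ref{lem:Eclassasymptoticmorphism}), this yields
\[\phi_\pm(t^{-1}D_E)\;\varpropto\;P_0\bigl(\phi_\pm(t^{-1}D)\hatotimes\id\bigr)P_0\;\varpropto\;\bigl(\phi_\pm(t^{-1}D)\hatotimes\id\bigr)P_0\,,\]
where ``$\varpropto$'' means that the difference vanishes in norm as $t\to\infty$. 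Since $\phi_\pm$ generate $\cS$ and all three expressions are generators of asymptotic morphisms $\cS\to\fA(C^*(M,S\hatotimes B))$, these relations hold for every $\phi\in\cS$; hence, after the canonical $\widehat{\K}$-stabilisation and the pushforward along $C^*(M,S\hatotimes B)\hookrightarrow C^*(M,A\hatotimes B)$, the asymptotic morphisms $\phi\mapsto\overline{t\mapsto\phi(t^{-1}D_E)}$ and $\phi\mapsto\overline{t\mapsto(\phi(t^{-1}D)\hatotimes\id)P_0}$ represent one and the same class in $K(C^*(M,A\hatotimes B))$.

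It then remains to identify the two ends of this chain. The class of $\phi\mapsto\overline{t\mapsto\phi(t^{-1}D_E)}$ is $\ind(D_E)$: the standard rescaling homotopy $\phi\mapsto\overline{t\mapsto\phi\bigl((1-s+st^{-1})D_E\bigr)}$, $s\in[0,1]$, connects it through asymptotic morphisms to the constant family $\phi\mapsto\phi(D_E)$, whose class is $\ind(D_E)$ by Theorem~\ref{thm:FuncCalc} and the definition of the coarse index. The class of $\phi\mapsto\overline{t\mapsto(\phi(t^{-1}D)\hatotimes\id)P_0}$ is $\llbracket D;B\rrbracket\circ\llbracket E\rrbracket$: unwinding the $E$-theory composition product, this class is represented by the composite of morphisms in the asymptotic category $\cS\hatotimes\widehat{\K}\xrightarrow{\Delta\hatotimes\id}\cS\hatotimes\cS\hatotimes\widehat{\K}\xrightarrow{\id\hatotimes\llbracket E\rrbracket}\cS\hatotimes\cA(M,B)\hatotimes\widehat{\K}\xrightarrow{\llbracket D;B\rrbracket}C^*(M,A\hatotimes B)\hatotimes\widehat{\K}$, and since $\llbracket E\rrbracket$ is the class of $\phi\mapsto\phi(0)P$ while the counit of $\cS$ is $\psi\mapsto\psi(0)$, the counit relation in~\eqref{eq:coalgebraaxioms} collapses $\Delta$ followed by $\id\hatotimes\llbracket E\rrbracket$ to $\phi\mapsto\phi\hatotimes P$ (up to the canonical identification $\widehat{\K}\hatotimes\widehat{\K}\cong\widehat{\K}$), so the composite is precisely the $\widehat{\K}$-stabilisation of the asymptotic morphism $\phi\mapsto\overline{t\mapsto\phi(t^{-1}D)\cdot P}$ of Definition~\ref{def:ETheoryClassBoundedGradient} evaluated at the projection $P$. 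Chaining these identifications gives $\ind(D_E)=\llbracket D;B\rrbracket\circ\llbracket E\rrbracket$.

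I expect the genuine obstacle to be the resolvent estimate of the second paragraph, i.e.\ controlling the functional calculus of the \emph{compressed} operator $D_E$ by that of $D\hatotimes\id$; this is exactly the point where the ``bounded variation'' hypothesis — the boundedness of $\grad(P)$, hence of $[D\hatotimes\id,P_0]$ — is indispensable. The reduction to the Grassmann connection and the identification of $D_E$ with $P_0(D\hatotimes\id)P_0$ are routine once Corollary~\ref{cor:princSymp} is in hand, and the remaining steps are bookkeeping inside the $E$-theory formalism (the counit cancellation and the $\widehat{\K}$-stabilisation).
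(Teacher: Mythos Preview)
Your proposal is correct and follows the same strategy as the paper: reduce via Corollary~\ref{cor:princSymp} to the Grassmann connection on $E$, use the boundedness of $\grad(P)$ to compare $\phi(t^{-1}D_E)$ asymptotically with $(\phi(t^{-1}D)\hatotimes\id)P_0$, and identify the resulting asymptotic morphism with the $E$-theory composition product. The only difference is organisational: the paper packages your resolvent comparison by introducing the block-diagonal bounded perturbation $D_P=P(D\hatotimes\id)P+(1-P)(D\hatotimes\id)(1-P)$, so that $\phi(t^{-1}D_E)=W^*\phi(t^{-1}D_P)W$ holds \emph{exactly} and only the step $\phi(t^{-1}D_P)\varpropto\phi(t^{-1}(D\hatotimes\id))$ is asymptotic, and it tacitly uses the rescaling homotopy from $\phi(D_E)$ to $\phi(t^{-1}D_E)$ that you spell out.
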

\begin{proof}
By Corollary \ref{cor:princSymp}, the $E$- and $K$-theory classes in the assertion are independent of the choice of the connections on $S$ and $E$, so we are free to choose them as we like.
According to Definition \ref{def:bundleboundedvariation} the bundle $E$ was constructed from a projection $P\in\cA^\infty(M,B)$ and hence it already comes with a smooth adjointable isometric inclusion $W\colon E\xrightarrow{\subseteq}M\times B\hatotimes\widehat\ell^2$ with the property that $WW^*=P$.
We simply choose the connection induced by it (cf.\ proof of Lemma \ref{lem:twistedConnection}). On $S$ we choose the connection induced by any smooth adjointable isometric inclusions
$V\colon S\xrightarrow{\subseteq}M\times A\hatotimes\widehat\ell^2$.

The same Lemma \ref{lem:twistedConnection} applied to $S$ and the trivial rank-one bundle $M\times B\to M$ yields another Dirac $A\hatotimes B$-bundle $S\hatotimes B$ with Dirac operator $D_B$. The closure of $D_B$ is a regular selfadjoint operator on $L^2(S\hatotimes B)$, and consequently the direct sum of infinitely many copies of $D_B$ is a regular selfadjoint operator $D_B^\infty=D_B\hatotimes\id_{\widehat\ell^2}$ on $L^2(S\hatotimes B)\hatotimes\widehat\ell^2$.
It is not an $A\hatotimes B$-linear Dirac operator in our sense, because we require finitely generated Hilbert modules as fibres. Nonetheless, it behaves like a $A\hatotimes B$-linear Dirac operator whenever we need it.
We note that the construction in the proof of Lemma \ref{lem:twistedConnection} immediately implies $D_E=(\id\hatotimes W)^*\circ D_B^\infty\circ (\id\hatotimes W)$. 

On $L^2(S\hatotimes B)\hatotimes\widehat\ell^2$ we also define the self-adjoint unbounded operator 
\[D_P:=P\circ D_B^\infty\circ P+(\id-P)\circ D_B^\infty\circ (\id-P)\,.\]
The same calculation which proves Lemma \ref{lem:commutatorgradient} also shows
$[D_B^\infty,\id_S\hatotimes P]=\grad (P)\in\Gamma_b(\Cl(TM)\hatotimes B\hatotimes\widehat\K)$
and thus allows us to express $D_P$ as
\[D_P=D_B^\infty+(\id_S\hatotimes(2P-\id))\circ\grad (P)\,.\]
Therefore it is also regular by the following lemma.
\begin{lem}
Let $E,F$ be Hilbert $A$-modules, $t\colon E\to F$ a regular operator and $b\colon E\to F$ a bounded adjointable operator. Then $t+b$ is a regular operator, too.
\end{lem}
\begin{proof}
By \cite[Theorem 9.3]{Lance}, the graph $G(t)$ of $t$ is an orthocomplemented submodule of $E\oplus F$, so in particular the inclusion $i\colon G(t)\to E\oplus F$ is adjointable. The Hilbert module isomorphism 
\[j\colon E\oplus F\to E\oplus F\,,\quad (x,y)\mapsto (x,y+bx)\]
maps the closed subset $G(t)\subseteq  E\oplus F$ to $G(t+b)\subseteq  E\oplus F$. Therefore, $j\circ i$ is adjointable and its range $\im(j\circ i)=G(t+b)$ is closed. It now follows from \cite[Theorem 3.2]{Lance} that it is orthocomplemented and \cite[Proposition 9.5]{Lance} implies that $t+b$ is regular.
\end{proof}
We may now apply functional calculus and for $\phi_\pm(x)=(x\pm i)^{-1}$ we see that
\begin{align*}
\phi_\pm(t^{-1}D_B^\infty)-&\phi_\pm(t^{-1}D_P)=
\\&=t^{-1}\phi_\pm(t^{-1}D_B^\infty)\circ(\id_S\hatotimes(2P-\id)) \circ\grad (P) \circ\phi_\pm(t^{-1}D_P)
\end{align*}
converges to $0$ as $t\to\infty$, because $\grad (P)$ is bounded. This implies that the two asymptotic morphisms
\begin{align*}
\cS&\to\fA(\B_{A\hatotimes B}(L^2(S\hatotimes B)\hatotimes\widehat\ell^2)\,,
\\\phi&\mapsto\phi(t^{-1}D_B^\infty)\qquad\text{and} \qquad\phi\mapsto\phi(t^{-1}D_P)
\end{align*}
are in fact equal.

Note that the operator $D_P$ preserves the submodules $\im(P)$ and $\im(\id-P)$. Thus, it decomposes into the direct sum of two regular operator on these submodules.  This allows us to calculate
\begin{align*}
\phi(t^{-1}D_E)&=\phi(t^{-1}(\id\hatotimes W)^*\circ D_B^\infty\circ (\id\hatotimes W))
\\&=\phi(t^{-1}(\id\hatotimes W)^*\circ D_P\circ (\id\hatotimes W))
\\&=(\id\hatotimes W)^*\circ\phi(t^{-1} D_P)\circ (\id\hatotimes W)
\\&\varpropto(\id\hatotimes W)^*\circ\phi(t^{-1} D_B^{\infty})\circ (\id\hatotimes W)
\\&=(\id\hatotimes W)^*\circ(\phi(t^{-1} D_B)\hatotimes\id_{\widehat\ell^2})\circ (\id\hatotimes W)
\\&=(\id\hatotimes W)^*\circ(\phi(t^{-1} D)\hatotimes\id_{B\hatotimes\widehat\ell^2}) \circ (\id\hatotimes W)
\end{align*}
and consequently
\begin{align*}
(V\hatotimes W)&\circ\phi(t^{-1}D_E)\circ (V\hatotimes W)^*\varpropto
\\&\varpropto(V\hatotimes P)\circ (\phi(t^{-1}D)\hatotimes\id_{B\hatotimes\widehat\ell^2}) \circ(V^*\hatotimes P)
\\&\varpropto(V\phi(t^{-1}D)V^* \hatotimes \id_{B\hatotimes\widehat\ell^2})\circ(\id\hatotimes P)
\end{align*}

The claim now follows by recalling that the index of $D_E$ is represented by the asymptotic morphism
\begin{align*}
\cS\to&\fA(C^*(M,A\hatotimes B))
\\\phi\mapsto&\overline{t\mapsto(V\hatotimes W)^*\circ\phi(t^{-1}D_E)\circ (V\hatotimes W)}
\\&=\overline{t\mapsto(V^*\phi(t^{-1}D)V\hatotimes \id_{B\hatotimes\widehat\ell^2})\circ(\id\hatotimes P)}
\end{align*}
and this is exactly the canonical representative of the product of the $E$-theory class $\llbracket D;B\rrbracket\in E(\cA(M,B),C^*(M,A\hatotimes B))$ and the $K$-theory class $\llbracket E\rrbracket\in K(\cA(M,B))$.
\end{proof}

\section{Relation of the $E$-theory classes to $K$-homology classes}\label{sec:descent}
Recall from \cite[Chapter 12]{HigRoe} that the coarse index $\ind(D)\in K_n(C^*(M))$ of a $\Cl_n$-linear Dirac operator $D$ can be computed from its fundamental class $[D]\in K_n(M)$  by applying the coarse assembly map. A natural question is in how far our  $E$-theoretic fundamental class $\llbracket D;B\rrbracket$ is already determined by this $K$-homological fundamental class.
In this section we construct a ``descent'' homomorphism
\begin{equation}\label{eq:descentmap}
K_n(M)\to E(\cA(M,B),C^*(M,\Cl_n\hatotimes B))
\end{equation}
for manifolds $M$ of bounded geometry which maps
$[D]$ to $\llbracket D;B\rrbracket$. 
Its construction is motivated by the observation that
the formula in \cite{ZeidlerPSCProductSecondary} describing the $K$-homology class by means of Yu's localization algebra (cf.\ Definition \ref{def:localizationFundamentalclass} below) is very similar to our definition of the $E$-theory class.

It has to be assumed that there are similarly more general descent homomorphisms
\[E(C_0(M),A)\to E(\cA(M,B),C^*(M,A\hatotimes B))\]
for $A$-linear Dirac operators, where the fundamental class on the left hand side is defined by an asymptotic morphism  $\cS\hatotimes C_0(M)\to \fA(\K(L^2(S)))\subseteq \fA(A\hatotimes \widehat\K)$ similar to the one of Lemma \ref{lem:Eclassasymptoticmorphism}.
This general form will not be pursued in this paper.

To construct \eqref{eq:descentmap}, let $M$ be a complete Riemannian manifold of bounded geometry (cf.\ {\cite[Definition 2.1]{RoeIndexOpenManifolds}}). We briefly review this notion and its basic properties.
\begin{defn}\label{def:boundedGeometry}
\begin{itemize}
\item A complete Riemannian manifold $M$ has \emph{bounded geometry} if 
it has uniformly positive injectivity radius, 
i.\,e.\ $r_0:=\inf\{r_{inj}(x)\mid x\in M\}>0$, and
the curvature tensor $R$ of $M$ and all its derivatives $\nabla^kR$, $k\in\N$, are uniformly bounded.
\item A metric space $X$ has \emph{bounded geometry} if there is a subset $Y\subseteq X$ and a number $r>0$ such that $X=\bigcup_{y\in Y}B_r(y)$ and for each $R>0$ the number $\#(Y\cap B_R(x))$ is uniformly bounded in $x$ by some constant $K_{r,R}>0$.
\end{itemize}
\end{defn}
It is a well known fact that Riemannian manifolds of bounded geometry do also have bounded geometry when seen as metric spaces. This also follows from the following lemma.
\begin{lem}\label{lem:BGmainproperties}
Let $M$ be a complete Riemannian manifold of bounded geometry. For each $r>0$, let $\mathcal{Y}_r$ be the set of subsets $Y\subseteq M$ such that $\{B_r(y)\}_{y\in Y}$ is a cover of $M$ and $d(y,z)\geq r$ for all $y,z\in Y$, $y\not=z$. 
For each $R>0$ define
\[K_{r,R}:=\sup\{\#(Y\cap B_R(x))\mid Y\in\mathcal{Y}_r,x\in M\}\,.\]
Then $K_{r,R}$ is finite for all $r,R>0$ and for all $\alpha>0$ there is $\varepsilon>0$ such that $K_{r,\alpha r}$ is bounded uniformly in $r\in(0,\varepsilon)$.
\end{lem}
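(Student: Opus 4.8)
## Proof Plan for Lemma \ref{lem:BGmainproperties}

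The plan is to exploit the two geometric consequences of bounded geometry: a uniform lower bound on the volumes of small balls and a uniform upper bound on the volumes of balls of any fixed radius. Both follow from standard comparison geometry (Bishop--Gromov type estimates), using the curvature bounds and the uniformly positive injectivity radius $r_0$. Concretely, there are functions $0 < v(\rho) \le V(\rho) < \infty$ defined for $\rho \in (0,\infty)$ such that for every $x \in M$ and every $\rho \in (0, r_0)$ one has $v(\rho) \le \operatorname{vol}(B_\rho(x))$, and for every $\rho > 0$ one has $\operatorname{vol}(B_\rho(x)) \le V(\rho)$; moreover $v$ can be chosen with the asymptotics $v(\rho) = \omega_n \rho^n(1 + o(1))$ as $\rho \to 0$, where $\omega_n$ is the volume of the Euclidean unit $n$-ball, since the metric is $C^\infty$-close to the Euclidean one at small scales with uniform bounds.

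First I would prove finiteness of $K_{r,R}$. Fix $r, R > 0$, a set $Y \in \mathcal{Y}_r$, and a point $x \in M$. The balls $\{B_{r/2}(y)\}_{y \in Y}$ are pairwise disjoint because $d(y,z) \ge r$ for distinct $y, z \in Y$. If $y \in Y \cap B_R(x)$, then $B_{r/2}(y) \subseteq B_{R + r/2}(x)$. Hence, summing volumes,
\[
\#(Y \cap B_R(x)) \cdot \min\bigl(v(r/2),\, v(r_0/2)\bigr) \;\le\; \operatorname{vol}\bigl(B_{R+r/2}(x)\bigr) \;\le\; V(R + r/2),
\]
so $K_{r,R} \le V(R + r/2) / \min(v(r/2), v(r_0/2))$, which is finite and independent of $Y$ and $x$. (The truncation at $r_0/2$ is only needed in case $r/2 \ge r_0$; for the asymptotic statement below we will have $r$ small, so it is irrelevant there.)

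Next I would prove the uniform bound on $K_{r, \alpha r}$ for small $r$. Fix $\alpha > 0$. Running the same disjointness-and-packing argument with $R = \alpha r$, and now using that $r/2 < r_0$ once $r < \varepsilon_0 := 2 r_0$, we get
\[
K_{r, \alpha r} \;\le\; \frac{V\bigl((\alpha + \tfrac12) r\bigr)}{v(r/2)}.
\]
Using the Euclidean asymptotics, $V(\rho) \le \omega_n \rho^n (1 + o(1))$ and $v(\rho) \ge \omega_n \rho^n(1 + o(1))$ as $\rho \to 0$, the right-hand side is bounded above by
\[
\frac{\omega_n ((\alpha + \tfrac12) r)^n (1 + o(1))}{\omega_n (r/2)^n (1 + o(1))} \;=\; (2\alpha + 1)^n (1 + o(1)) \qquad (r \to 0),
\]
which is bounded uniformly for $r \in (0, \varepsilon)$ once $\varepsilon \le \varepsilon_0$ is chosen small enough that the $o(1)$ terms are, say, under control by a factor of $2$. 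This gives the desired $\varepsilon$ and the uniform bound (e.g.\ by $2(2\alpha+1)^n$).

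The main obstacle is making the volume comparison estimates $v(\rho) \le \operatorname{vol}(B_\rho(x)) \le V(\rho)$ and their Euclidean asymptotics rigorous and genuinely uniform in $x$; this is where bounded geometry is essential and cannot be replaced by, say, a mere Ricci lower bound if one wants the lower volume bound. I would cite the standard literature on bounded geometry (for instance the estimates underlying \cite[Definition 2.1]{RoeIndexOpenManifolds} and the references therein, or Eichhorn's work, or a Bishop--Gromov argument combined with the injectivity radius bound) rather than reproving it. Everything else is the elementary packing argument above.
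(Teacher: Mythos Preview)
Your proposal is correct. The packing argument for the asymptotic bound on $K_{r,\alpha r}$ is essentially identical to the paper's, yielding the same limit $(2\alpha+1)^{\dim M}$.

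The genuine difference lies in how you establish finiteness of $K_{r,R}$ for \emph{arbitrary} $r,R>0$. You invoke a uniform upper volume bound $V(\rho)$ valid for \emph{all} radii $\rho$ (a Bishop--Gromov consequence of the curvature bound) and run the same packing argument once. The paper instead proves finiteness only in the regime $\alpha r + r/2 < r_0$ via volume comparison at sub-injectivity-radius scale, and then bootstraps to large $R$ by a chaining argument: given $y\in Y\cap B_R(x)$, it places roughly $R/r$ intermediate points of $Y$ along a minimal geodesic from $x$ to $y$, giving $K_{r,R}\le (K_{r,3r})^{\lceil R/r\rceil}$; a monotonicity observation ($K_{r_2,R}\le K_{r_1,R}$ for $r_1<r_2$, since any $Y\in\mathcal{Y}_{r_2}$ embeds into some $Y'\in\mathcal{Y}_{r_1}$) then handles large $r$. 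Your route is shorter and conceptually cleaner but leans on the global volume growth bound as a black box; the paper's route needs comparison geometry only at scales below $r_0$, at the cost of the extra combinatorial step. One minor wrinkle in your write-up: the expression $\min(v(r/2),v(r_0/2))$ is awkward because your $v$ is only asserted to bound volumes for $\rho<r_0$; cleaner would be to replace each $B_{r/2}(y)$ by the smaller ball $B_{\min(r,r_0)/2}(y)$ when $r$ is large.
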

We remark that each $\mathcal{Y}_r$ is non-empty by Zorn's lemma and all $Y\in\mathcal{Y}_r$ are countable, because manifolds are assumed to be second countable by definition. 
The proof of the Lemma is a modification of the proof of \cite[Lemma 1.2]{Shubin}:
\begin{proof}
Let $\alpha>0$. For all $r>0$ such that $\alpha r+\frac r2<r_0$ and all $y\in Y\cap B_{\alpha r}(x)$, the balls $B_{r/2}(y)$ are contained in  $B_{\alpha r+r/2}(x)$ and disjoint for distinct $y$. Thus,
\[\#(Y\cap B_{\alpha r}(x))\cdot\inf_{z\in X}\operatorname{Vol}(B_{r/2}(z))\leq \sup_{z\in X}\operatorname{Vol}(B_{\alpha r+r/2}(z))\]
and we conclude from
\[K_{r,\alpha r}\leq \frac{\sup_{z\in X}\operatorname{Vol}(B_{\alpha r+r/2}(z))}{\inf_{z\in X}\operatorname{Vol}(B_{r/2}(z))}\xrightarrow{r\to 0}(2\alpha+1)^{\dim(M)}\]
that $K_{r,\alpha r}$ is bounded by a constant slightly larger than $(2\alpha+1)^{\dim(M)}$ for small $r$.

To prove the other parts of the statement, we use the finiteness of $K_{r,3r}$ for small $r>0$ in the following way: For $R>0$ define $N:=\lceil \frac{R}{r}\rceil$. If $x\in X$ and $y\in Y\cap B_R(x)$, choose  a minimal geodesic $\gamma\colon [0,1]\to X$ from $x$ to $y$ and pick
$y_i\in Y\cap B_r(\gamma(\frac{i}{N}))$ for $i=1,\dots,N-1$. Then $d(x,y_1)<2r$, $d(y_i,y_{i+1})<3r$ for $i=1,\dots,N-2$ and $d(y_{N-1},y)<2r$. By counting consecutively the possibilities for $y_1,\dots,y_{N-1},y$ for each fixed $x$, we obtain the inequality
$K_{r,R}\leq (K_{r,3r})^{N}<\infty$.

The proof is completed by the following monotony properties: If $0<r_1<r_2$ and $R>0$, then any $Y_2\in \mathcal{Y}_{r_2}$ is contained in some $Y_1\in \mathcal{Y}_{r_1}$ (again by Zorn's lemma) and therefore $K_{r_2,R}\leq K_{r_1,R}$.
\end{proof}
Later on we will also need the following version of \cite[Lemmas 1.3]{Shubin}:
\begin{lem}\label{lem:BGPartitionofUnity}
Let $M$ be a complete Riemannian manifold of bounded geometry. Then there exists an $r>0$, a $Y\in\mathcal{Y}_r$ and a partition of unity $\{\psi_y\}_{y\in Y}$ with the properties that $\supp(\psi_y)\subseteq B_r(y)$ and such that the gradients $\grad(\psi_y)$ are uniformly bounded in norm by some constant $C$. \qed
\end{lem}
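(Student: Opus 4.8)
The plan is to produce the partition of unity by the standard smoothing-of-characteristic-functions trick adapted to bounded geometry, following the scheme of \cite[Lemma 1.3]{Shubin}. First I would fix $\alpha = 3$ in Lemma \ref{lem:BGmainproperties}, obtaining some $\varepsilon>0$ so that $K_{r,3r}$ is uniformly bounded for $r\in(0,\varepsilon)$; then I would shrink $r$ further so that $2r$ is smaller than the uniform injectivity radius $r_0$ and pick any $Y\in\mathcal Y_r$ (nonempty by Zorn, countable by second countability). For each $y\in Y$, using bounded geometry one has normal coordinates on $B_{2r}(y)$ with all metric coefficients and their derivatives controlled uniformly in $y$; in these coordinates one can choose a fixed smooth bump profile $\chi\colon[0,\infty)\to[0,1]$ with $\chi\equiv 1$ on $[0,r/2]$ and $\chi\equiv 0$ on $[r,\infty)$ and set $\tilde\psi_y(x):=\chi(d(x,y))$. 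Because the model bump has a fixed shape and the Riemannian metrics are uniformly comparable to the Euclidean one on these balls (with uniformly bounded Christoffel symbols), $\|\grad\tilde\psi_y\|_\infty$ is bounded by a constant $C_0$ independent of $y$; moreover $\supp(\tilde\psi_y)\subseteq B_r(y)$ and $\tilde\psi_y\equiv 1$ on $B_{r/2}(y)$.

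Next I would form the sum $\Psi:=\sum_{y\in Y}\tilde\psi_y$. At any point $x\in M$, only the $y\in Y\cap B_r(x)$ contribute, and there are at most $K_{r,r}\le K_{r,3r}$ such $y$, so the sum is locally finite with a uniform bound on the number of nonzero terms; hence $\Psi$ is smooth, and since $\{B_{r/2}(y)\}_{y\in Y}$ still covers $M$ (any point is within $r$ of some $y\in Y$, and one can re-run the covering argument, or simply start from a maximal $r/2$-separated set so that the radius-$r/2$ balls cover) we get $1\le\Psi(x)\le K_{r,3r}$ everywhere. Thus $\Psi$ is bounded below and above by positive constants, and $\|\grad\Psi\|_\infty\le K_{r,3r}\cdot C_0$. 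Finally I set $\psi_y:=\tilde\psi_y/\Psi$; this is a partition of unity subordinate to $\{B_r(y)\}$, and by the quotient rule
\[
\grad(\psi_y)=\frac{\grad(\tilde\psi_y)}{\Psi}-\frac{\tilde\psi_y\,\grad(\Psi)}{\Psi^2},
\]
so $\|\grad(\psi_y)\|_\infty\le C_0 + C_0\,K_{r,3r}=:C$, uniformly in $y$, which is the claim.

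The routine obstacle is the one genuinely geometric input: verifying that the bump functions $\tilde\psi_y$ can be chosen with a gradient bound independent of $y$. This is exactly where bounded geometry is used — via the uniform lower bound on the injectivity radius and the uniform control on the curvature (hence on the Christoffel symbols in normal coordinates, by the standard estimates in \cite{Shubin}), one gets that the distance function $d(\cdot,y)$ is smooth away from $y$ on $B_{2r}(y)\subseteq B_{r_0}(y)$ with $|\grad d(\cdot,y)|=1$, so that $\grad\tilde\psi_y = \chi'(d(\cdot,y))\grad d(\cdot,y)$ has norm at most $\|\chi'\|_\infty$, a constant depending only on the fixed profile and on $r$. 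Once this uniformity is in place, everything else is bookkeeping with the covering multiplicity $K_{r,3r}$ from Lemma \ref{lem:BGmainproperties}, and no further estimates are needed.
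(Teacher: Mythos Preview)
The paper does not give its own proof of this lemma; it simply cites \cite[Lemma~1.3]{Shubin} and marks the statement with \qed. Your argument is exactly the standard Shubin construction the paper is appealing to, so in that sense you are in complete agreement with the paper.

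One small bookkeeping point: your parenthetical fix (``start from a maximal $r/2$-separated set so that the radius-$r/2$ balls cover'') produces a set $Y$ that is only $r/2$-separated, hence $Y\in\mathcal Y_{r/2}$ rather than $Y\in\mathcal Y_r$ as the lemma literally demands, while the supports lie in $B_r(y)$. This mismatch between the separation parameter and the support radius is unavoidable in the standard construction and is harmless for every application in the paper (only the finite covering multiplicity $K_{r,R}$ is ever used, and that holds for $\mathcal Y_{r/2}$ just as well). To match the statement verbatim you should either relabel the radii or note explicitly that the two parameters differ by a factor of two. Also, your appeal to uniformly bounded Christoffel symbols is more than you need: since $|\grad d(\cdot,y)|=1$ wherever $d(\cdot,y)$ is smooth, the bound $\|\grad\tilde\psi_y\|_\infty\le\|\chi'\|_\infty$ follows from the injectivity radius bound alone; the curvature bounds enter only through the covering multiplicity via Lemma~\ref{lem:BGmainproperties}.
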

We construct the homomorphism \eqref{eq:descentmap} by expressing the $K$-homology of $M$ as the $K$-theory of Yu's localization algebra from \cite{YuLocalization}, see also \cite{QiaoRoe}. We use the $\Cl_n$-linear version of  this algebra from \cite[Definition 3.1(3)]{ZeidlerPSCProductSecondary}:
\begin{defn}\label{def:LocalizationAlgebra}
The $\Cl_n$-linear \emph{localization algebra} $C^*_L(M,\Cl_n)$ is the $C^*$-subalgebra of $C_b(\,[1,\infty)\,,\,C^*(M,\Cl_n))$ generated by the uniformly continuous functions $L\colon [1,\infty)\to C^*(M;\Cl_n)$ such that the propagation of $L(t)$ is finite for all $t$ and tends to zero as $t\to\infty$.
\end{defn}
\begin{lem}[{\cite{QiaoRoe}}]\label{lem:localizationduality}
$K_n(M)\cong K(C^*_L(M,\Cl_n))$ \qed
\end{lem}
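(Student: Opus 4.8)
Since Lemma \ref{lem:localizationduality} is quoted from \cite{QiaoRoe} without proof, I sketch the argument one would give for the cited statement, which is the $\Cl_n$-linear form of Yu's identification of the $K$-theory of the localisation algebra with $K$-homology \cite{YuLocalization,QiaoRoe,ZeidlerPSCProductSecondary}. The plan is to construct a natural \emph{local index map} $\mu_L\colon K_n(M)\to K(C^*_L(M,\Cl_n))$ and show that it is an isomorphism. This map is modelled on the construction behind Lemma \ref{lem:Eclassasymptoticmorphism}: a class in $K_n(M)$ is represented by a $\Cl_n$-linear abstract elliptic operator $D$ (e.g.\ of Dirac type) on an ample graded Hilbert module over $M$, and $\mu_L$ sends it to the $K$-theory class of the path $t\mapsto\chi(t^{-1}D)$ for a normalising function $\chi$; after smoothing $\chi$ and using that $\psi(D)$ has finite propagation whenever $\widehat\psi$ is compactly supported, the propagation of $\chi(t^{-1}D)$ tends to $0$, so the path lies in $C^*_L(M,\Cl_n)$. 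One checks that $\mu_L$ is independent of the chosen operator and normalising function by standard homotopies of $K$-homology cycles, and is natural for proper continuous maps; composing with evaluation at $t=1$ recovers the coarse assembly map into $K(C^*(M,\Cl_n))$.

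Next I would reduce to a combinatorial model. A manifold $M$ of bounded geometry admits a proper homotopy equivalence to a locally finite simplicial complex $\Sigma$ that again has bounded geometry: one triangulates subordinate to a good cover extracted from Lemmas \ref{lem:BGmainproperties} and \ref{lem:BGPartitionofUnity}, so the number of simplices meeting a ball of fixed radius is uniformly bounded. Both $K_n(-)$ and $K(C^*_L(-,\Cl_n))$ are invariant under proper homotopy equivalence ($K$-homology by functoriality, the localisation side by the homotopy invariance of the localisation algebra proved in \cite{YuLocalization,QiaoRoe}), and $\mu_L$ respects these identifications, so it suffices to treat $\Sigma$; and since both functors commute with the direct limit over finite subcomplexes, it suffices to treat finite complexes.

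The final step is a uniqueness-of-homology-theories argument on finite simplicial complexes. Both $X\mapsto K_n(X)$ and $X\mapsto K(C^*_L(X,\Cl_n))$ are generalised homology theories: homotopy invariance holds on both sides, and for a decomposition $X=X_0\cup X_1$ into subcomplexes there is a Mayer--Vietoris sequence on the localisation side, because cutting $C^*_L(X,\Cl_n)$ by the open stars of $X_0$ and $X_1$ produces, up to $K$-theory, the localisation algebras of $X_0$, $X_1$ and $X_0\cap X_1$ -- here one uses that an operator whose propagation tends to $0$ is eventually supported in any prescribed neighbourhood, so these ideals really are localisation algebras. The transformation $\mu_L$ intertwines these sequences. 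On a point, $C^*_L(\mathrm{pt},\Cl_n)$ has the $K$-theory of $\K\hatotimes\Cl_n$, whose $K$-theory is canonically $K_n(\mathrm{pt})$, and a direct computation identifies $\mu_L$ there with this isomorphism. An induction over skeleta, handling each cell attachment by the Mayer--Vietoris sequence of the corresponding pushout of subcomplexes, then upgrades the isomorphism to all finite complexes, which finishes the proof.

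I expect the main obstacle to be verifying that the localisation algebra is genuinely excisive in the sense needed -- that the ideals obtained by cutting along subcomplexes carry the $K$-theory of the localisation algebras of those subcomplexes and that the boundary maps are the expected ones -- together with making $\mu_L$ a bona fide natural transformation of homology theories, which requires choosing ample representations functorially and controlling propagation under the connecting maps. The reduction to a bounded-geometry triangulation is routine but, as indicated, genuinely relies on the bounded-geometry estimates recalled above.
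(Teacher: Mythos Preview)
The paper gives no proof of this lemma; it simply cites \cite{QiaoRoe} and places a \qed. Your sketch follows the strategy of that reference (and of \cite{YuLocalization}), so at the level of overall approach there is nothing to compare: constructing the local index map and verifying it is an isomorphism via Mayer--Vietoris and induction is exactly what those papers do.

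There is, however, a genuine gap in your reduction step. You write that ``both functors commute with the direct limit over finite subcomplexes, so it suffices to treat finite complexes.'' This is false for the $K$-homology in play here. The group $K_n(M)$ in this paper is analytic $K$-homology $KK_*(C_0(M),\C)$, which is a \emph{locally finite} homology theory; it does not arise as the colimit of $K_n$ of finite subcomplexes of a triangulation. (Think of $M=\R$: the colimit of $K$-homology over an exhaustion by compact intervals does not recover $K_*(\R)$.) The same failure occurs on the localisation side: $C^*_L(\Sigma,\Cl_n)$ is not the direct limit of the $C^*_L(\Sigma_i,\Cl_n)$ over finite subcomplexes $\Sigma_i$. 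So the passage to finite complexes collapses, and with it the remainder of your argument.

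The correct argument in \cite{QiaoRoe} never reduces to finite complexes. One inducts on the \emph{dimension} of the (possibly infinite, but finite-dimensional) simplicial complex, using Mayer--Vietoris for the pair given by a neighbourhood of the $(n-1)$-skeleton and the union of open top cells. The base case is a $0$-dimensional complex, i.e.\ a countable discrete metric space, and already here one must verify directly that both sides agree on countably infinite disjoint unions of points---this requires a cluster/product argument rather than a colimit. Your sketch of the Mayer--Vietoris machinery and the identification on a single point is fine, but you need to replace the direct-limit step by this skeleton induction and supply the $0$-dimensional base case for infinite discrete spaces.
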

\begin{lem}
If $M$ is a complete Riemannian manifold of bounded geometry, then there is an asymptotic homomorphism
\begin{align*}
m\colon C^*_L(M,\Cl_n)\hatotimes \cA(M,B)&\to \fA(C^*(M,\Cl_n\hatotimes B))
\\L\hatotimes f&\mapsto\overline{t\mapsto L_tf}
\end{align*}
\end{lem}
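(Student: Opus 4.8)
The plan is to build $m$ by first producing two grading preserving $*$-homomorphisms into $\fA(\B_{\Cl_n\hatotimes B}(\mathcal H))$, where $\mathcal H$ is the Hilbert $\Cl_n\hatotimes B$-module on which $C^*(M,\Cl_n\hatotimes B)$ is represented, and then restricting the target to $\fA(C^*(M,\Cl_n\hatotimes B))$. Here $L_t$ acts on $\mathcal H$ through the tensor factor carrying $C^*(M,\Cl_n)$ and acts trivially on the $B\hatotimes\widehat\ell^2$-direction into which $f\in\cA(M,B)\subseteq C_b(M,B\hatotimes\widehat\K)$ multiplies; in particular a \emph{constant} element of $B\hatotimes\widehat\K$ super-commutes exactly with $L_t$. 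Since $C^*_L(M,\Cl_n)\subseteq C_b([1,\infty),C^*(M,\Cl_n))$ maps canonically into $C_b([1,\infty),\B)/C_0([1,\infty),\B)=\fA(\B)$, one obtains a $*$-homomorphism $\alpha\colon C^*_L(M,\Cl_n)\to\fA(\B)$, $L\mapsto\overline{t\mapsto L_t}$, and the constant paths give $\beta\colon\cA(M,B)\to\fA(\B)$, $f\mapsto\overline{t\mapsto f}$, so that $\alpha(L)\beta(f)=\overline{t\mapsto L_tf}$.

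The heart of the proof is that the ranges of $\alpha$ and $\beta$ super-commute in $\fA(\B)$, i.\,e.\ that $\|[L_t,f]\|\to0$ as $t\to\infty$ for a generator $L$ of $C^*_L(M,\Cl_n)$ and $f\in\cA^\infty(M,B)$. Since $\grad(f)$ is bounded, $f$ is Lipschitz (integrate along minimising geodesics, using completeness of $M$). Let $R_t$ be the propagation of $L_t$, which tends to $0$ by Definition~\ref{def:LocalizationAlgebra}, and choose a scale $\rho=\rho(R_t)\ge R_t$ with $\rho(R_t)\to0$. Picking a Borel partition $\{U_i\}$ of $M$ with $\operatorname{diam}(U_i)\le\rho$ and base points $y_i\in U_i$, and writing $\chi_i$ for the indicator of $U_i$, we have $[L_t,f]=\sum_i[L_t,f]\chi_i=\sum_i[L_t,f-f(y_i)]\chi_i$, and on the $R_t$-neighbourhood of $U_i$ the function $f$ differs from the constant $f(y_i)$ by at most the Lipschitz constant of $f$ times $2\rho$, so $\|[L_t,f]\chi_i\|\lesssim\rho\,\|L_t\|$. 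Because $L_t$ has propagation $\le\rho$, the source projections $\chi_i$ are pairwise orthogonal and the ranges of the summands $[L_t,f]\chi_i$ overlap with multiplicity bounded by some $K(\rho)$; invoking bounded geometry (Lemma~\ref{lem:BGmainproperties}, or Lemma~\ref{lem:BGPartitionofUnity} if a smooth partition is preferred) to bound $K(\rho)$ uniformly for small $\rho$, a standard almost-orthogonality estimate gives $\|[L_t,f]\|\le C_M\sqrt{K(\rho)}\,\rho$ with $C_M$ depending only on $M$, $f$ and $\|L\|$; hence $\|[L_t,f]\|\to0$. Since $f\mapsto\overline{t\mapsto[L_t,f]}$ is norm-decreasing up to the factor $2\|L\|$, this extends from the dense subalgebra $\cA^\infty(M,B)$ to all of $\cA(M,B)$.

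By the universal property of the maximal graded tensor product, $\alpha$ and $\beta$ combine to a grading preserving $*$-homomorphism $C^*_L(M,\Cl_n)\hatotimes\cA(M,B)\to\fA(\B)$ with $L\hatotimes f\mapsto\overline{t\mapsto L_tf}$, and it remains to see that its image lies in $\fA(C^*(M,\Cl_n\hatotimes B))$. The latter is a closed $*$-subalgebra of $\fA(\B)$, being the image of the $*$-homomorphism obtained by applying $\fA$ to the inclusion $C^*(M,\Cl_n\hatotimes B)\hookrightarrow\B$, so it suffices to check this on the generating elementary tensors $L\hatotimes f$. Writing $L_t$ as a norm limit of locally compact operators of finite propagation, each such operator composed with the multiplication operator $f$ keeps the same finite propagation and is again locally compact by exactly the argument in the proof of Lemma~\ref{lem:Eclassasymptoticmorphism} (for $g\in C_0(M)$ one has $fg\in C_0(M)\hatotimes B\hatotimes\widehat\K$, and one cuts $g$ down by a function that is $1$ on the relevant neighbourhood), so $L_tf\in C^*(M,\Cl_n\hatotimes B)$ for all $t$ and $t\mapsto L_tf$ is bounded and norm-continuous since $\|L_tf-L_sf\|\le\|L_t-L_s\|\,\|f\|$. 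This produces the desired asymptotic homomorphism $m$. The one genuine difficulty is the commutator estimate of the second paragraph: one must control $\|[L_t,f]\|$ \emph{uniformly} as the propagation of $L_t$ shrinks to zero, and it is exactly in bounding the overlap constant $K(\rho)$ simultaneously for all small $\rho$ that the bounded geometry hypothesis --- through Lemmas~\ref{lem:BGmainproperties} and \ref{lem:BGPartitionofUnity} --- is indispensable.
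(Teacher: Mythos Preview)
Your proof is correct and follows essentially the same route as the paper's: reduce to showing $\|[L_t,f]\|\to 0$ for $f\in\cA^\infty(M,B)$ and $L$ a generator with propagation $p_t\to 0$, discretise $f$ at a scale $\rho\sim p_t$ via a Borel partition, bound each localised piece using the Lipschitz constant of $f$, and then sum using an almost-orthogonality estimate whose overlap constant is controlled uniformly in $\rho$ by Lemma~\ref{lem:BGmainproperties}. The paper organises the discretisation slightly differently---it replaces $f$ by a step function $f_r=\sum_y f(y)\chi_y$ and estimates $[L_t,f_r]$ directly, whereas you keep $f$ and write $[L_t,f]\chi_i=[L_t,f-f(y_i)]\chi_i$---but the two manipulations are equivalent, and the almost-orthogonality step (arithmetic--quadratic mean in the paper, your $\sqrt{K(\rho)}$ bound) is the same idea. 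One small point worth making explicit: the Borel partition must be chosen subordinate to a cover coming from some $Y\in\mathcal{Y}_\rho$, as the paper does, since an \emph{arbitrary} partition with $\operatorname{diam}(U_i)\le\rho$ carries no a~priori overlap bound; you gesture at this when citing Lemma~\ref{lem:BGmainproperties}, but the choice should be stated. Your additional verification that the image lands in $\fA(C^*(M,\Cl_n\hatotimes B))$ is correct and is left implicit in the paper.
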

Recall that overlines in this paper always denote equivalence classes.
\begin{proof}
We have to prove that the graded commutators $[L_t,f]$ vanish in the limit $t\to\infty$ for generating elements $L$ with propagation $p_t:=\operatorname{prop}(L_t)\xrightarrow{t\to\infty}0$ as in Definition \ref{def:LocalizationAlgebra} and  $f\in\cA^\infty(M,B)$.

For $r>0$ choose $Y\in\mathcal{Y}_r$ and a Borel decomposition $\{Z_y\}_{y\in Y}$ subordinate to $\{B_r(y)\}_{y\in Y}$. Denote by $\chi_y$ the characteristic function of $Z_y$ and define $f_r:=\sum_{y\in Y}f(y)\chi_y$. 
Thus
$\|f_r-f\|\leq \|\grad(f)\|\cdot r$. Furthermore, 
\begin{align*}
[L_t,f_r]&=L_t\sum_{z\in Y}f(z)\chi_z-(-1)^{\partial L\partial f}\sum_{y\in Y}f(y)\chi_yL_t
\\&=\sum_{y,z\in Y}\chi_yL_t\chi_z(f(z)-f(y))
=\sum_{\substack{y,z\in Y\\d(y,z)\leq 2r+p_t}}\chi_yL_t\chi_z(f(z)-f(y))
\end{align*}
with sums converging in the strong topology
and for $v\in L^2(M,\Cl_n\hatotimes B\hatotimes\widehat\ell^2)$, $v_z:=\chi_zv$ we estimate the norm
\begin{align*}
\|[L_t,f_r]v\|^2&=\sum_{y\in Y}\left\|\sum_{\substack{z\in Y\\d(y,z)\leq 2r+p_t}}\chi_yL_t(f(z)-f(y))v_z\right\|^2
\\&\leq\sum_{y\in Y}\left(\sum_{\substack{z\in Y\\d(y,z)\leq 2r+p_t}}\|L_t\|\cdot \|\grad(f)\|\cdot(2r+p_t)\cdot\|v_z\|\right)^2
\\&\leq\sum_{y\in Y}K_{r,2r+p_t}\cdot\sum_{\substack{z\in Y\\d(y,z)\leq 2r+p_t}}\|L_t\|^2\cdot \|\grad(f)\|^2\cdot(2r+p_t)^2\cdot\|v_z\|^2
\\&\leq K_{r,2r+p_t}^2\cdot\|L_t\|^2\cdot \|\grad(f)\|^2\cdot(2r+p_t)^2\cdot\|v\|^2
\end{align*}
where the second inequality was the inequality between arithmetic and quadratic mean.
Thus, $\|[L_t,f_r]\|\leq  K_{r,2r+p_t}\cdot\|L_t\|\cdot \|\grad(f)\|\cdot(2r+p_t)$ and therefore
\[\|[L_t,f]\|\leq  (K_{r,2r+p_t}+1)\cdot\|L_t\|\cdot \|\grad(f)\|\cdot(2r+p_t)\,.\]
By letting $r$ go to $0$ and using the boundedness of $K_{r,\alpha r}$ for small $r$ we conclude, that the commutator vanishes in the limit $t\to\infty$.
\end{proof}

The homomorphism \eqref{eq:descentmap} is now simply obtained from the previous two lemmas using an $E$-theoretic product:
\begin{defn}
The \emph{descent homomorphism}
\[\nu\colon K_n(M)\to E(\cA(M,B),C^*(M,\Cl_n\hatotimes B))\]
maps an element $x\in K_n(M)\cong K(C_L^*(M,\Cl_n))$ to 
the product
$\llbracket m\rrbracket\circ(x\otimes 1_{\cA(M,B)})$.
\end{defn}
It obviously maps the following $K$-homology class of $D$ to the $E$-theory class of the previous section. 
\begin{defn}[{cf.\ \cite[Definition 4.1]{ZeidlerPSCProductSecondary}}] \label{def:localizationFundamentalclass}
Let $D$ be a $\Cl_n$-linear Dirac operator over $M$.
Under the isomorphism of Lemma \ref{lem:localizationduality}, the $K$-homology class $[D]\in K_n(M)$ is defined as corresponding to the $K$-theory element determined by the $*$-homomorphism
\[\cS\to C^*_L(M,\Cl_n)\,,\quad \phi\mapsto (t\mapsto \phi(t^{-1}D))\,.\]
\end{defn}

\begin{rem}
In \cite[Section 4.3]{AlexUniform} a cap product
\[K_p^u(M)\otimes K^q_u(M)\to K_{p-q}^u(M)\]
between the uniform $K$-homology and the uniform $K$-theory of a manifold of bounded geometry $M$ was constructed. 
It has the property that it maps the uniform $K$-homology class $[D]$ of a Dirac operator $D$ over $M$ and the uniform $K$-theory class $[E]$ of a vector bundle $E\to M$ of bounded geometry to the uniform $K$-homology class of the twisted operator: $[D]\cap[E]=[D_E]$.

In view of this formula and Theorem \ref{mainthm:indexformula}, we conjecture that the assembly map $\mu$, the descent homomorphism $\nu$, the forgetful map $\mathfrak{f}\colon K_*^u(M)\to K_*(M)$ and the canonical stabilization map $\iota\colon K^*_u(M)\to K_{-*}(\cA(M,\C))$ induced by inclusion of $C^*$-algebras are related by the formula.
\[(\nu(\mathfrak{f}(x))\circ\iota(y)=\mu(\mathfrak{f}(x\cap y))\]
for all $x\in K_*^u(M)$, $y\in K_u^*(M)$.
\end{rem}

\section{Twisting by more general vector bundles}\label{sec:generalbundles}
Let $S$ be a Dirac $A$-bundle over a complete Riemannian manifold $(M,g)$ with Dirac operator $D$. In this section we address the question what to do if we want to calculate the index of the twisted operator $D_E$ by a $B$-bundle $E$ which is represented by a smooth projection $P\in C^{\infty}(M,B\hatotimes\widehat\K)$ with \emph{unbounded gradient} $\grad(P)$.
In this case there exists a smooth function $\lambda\colon M\to [1,\infty)$ such that $P$ is a projection in the following algebra:
\begin{defn}
Denote by $\cA_\lambda(M,B)$ be the closure of
\[\cA_\lambda^\infty(M,B):=\{f\in C_b^\infty(M,B\hatotimes\widehat\K)\mid \lambda^{-1}\cdot\grad(f)\text{ is bounded}\}\]
in $C_b(M,B\hatotimes\widehat\K)$.
\end{defn}
We treat this case by a conformal change of the metric, which allows us to apply our previously developed theory. 
Let $M^\lambda$ denote the manifold $M$ equipped with the Riemannian metric $\lambda^2 g$. Note that it is again complete, because $\lambda\geq 1$, and that
\[\cA_\lambda^{(\infty)}(M,B)=\cA^{(\infty)}(M^\lambda,B)\,.\]

From $S$ we construct a Dirac $A$-bundle $S^\lambda\to M^\lambda$ whose underlying Hilbert $A$-module bundle is $S$, i.\,e.\ also the $A$-module multiplication and the $A$-valued scalar products on the fibers of $S$ and $S^\lambda$ are identical. It is obtained by rescaling the Clifford action of tangential vectors by the factor ${\lambda}$ and changing the connection $\nabla$ to 
\[\nabla^\lambda_X:=\nabla_X+\frac{1}{4\lambda}(\grad(\lambda) X-X \grad(\lambda))\,.\]
The second summand acts by Clifford multiplication.
It is straightforward to verify that this new connection is grading preserving, metric and fulfills the Leibniz rule with respect to the multiplication by $A$-valued smooth functions.

The Leibniz rule with respect to Clifford multiplication by smooth vector fields is more tricky to verify, as it contains traps: One has to keep in mind that both the Clifford action and the Levi--Civita connection are different for $M^\lambda$. The Levi--Civita connection $\nabla^{LC,\lambda}$ of $M^\lambda$ is related to the Levi--Civita connection of $M$ by the formula 
\[\nabla^{LC,\lambda}_XY=\nabla^{LC}_XY+\frac{1}{\lambda}\big((\partial_Y\lambda) X + (\partial_X\lambda)Y - g(X,Y)\grad(\lambda)\big)\]
as is easily derived from the Koszul formula. Hence, using the abbreviation $Z:=\grad(\lambda)$ we have
\begin{align*}
\lambda\nabla^{LC,\lambda}_X(Y)=&\lambda\nabla^{LC}_X(Y)+(\partial_Y\lambda) X + (\partial_X\lambda)Y - g(X,Y)Z
\\=&\nabla^{LC}_X(\lambda Y)+g(Y,Z)X - g(X,Y)Z
\\=&\nabla^{LC}_X(\lambda Y)+\frac14\big(-(YZ+ZY)X-X(YZ+ZY)
\\&\phantom{\nabla^{LC}_X(\lambda Y)+\frac14\big(}+(XY+YX)Z+Z(XY+YX)\big)
\\=&\nabla^{LC}_X(\lambda Y)+\frac14(YXZ+ZXY-YZX-XZY)
\end{align*}
and therefore
\begin{align*}
\nabla^\lambda_X(\lambda Y \xi)-&\lambda Y \nabla^\lambda_X\xi
=\nabla_X(\lambda Y \xi)+\frac{1}{4\lambda}(Z X-XZ)(\lambda Y \xi)
\\&\phantom{\lambda Y \nabla^\lambda_X\xi
=}-\lambda Y \left(\nabla_X\xi+\frac{1}{4\lambda}(Z X-XZ)\xi\right)
\\=&\left(\nabla^{LC}_X(\lambda Y)\right)\xi+\frac14(ZXY-XZY-YZX+YXZ)\xi
\\=&\lambda\left(\nabla^{LC,\lambda}_X(Y)\right)\xi\,,
\end{align*}
which is exactly the Leibniz formula for the new Clifford action with respect to the new connections.

From a given local orthonormal frame $e_1,\dots,e_{\dim M}$ of $TM$ we obtain the local orthonormal frame $\lambda^{-1}e_1,\dots,\lambda^{-1}e_{\dim M}$ of $TM^\lambda$. This gives us the relation between the Dirac operator $D$ of $S$ and the Dirac operator $D^\lambda$ of $S^\lambda$:
\begin{align*}
D^\lambda&=\sum_{i=1}^{\dim M}\lambda(\lambda^{-1}e_i)\nabla^{\lambda}_{\lambda^{-1}e_i}
\\&=\sum_{i=1}^{\dim M}\lambda^{-1}e_i\left(\nabla_{e_i}+\frac{1}{4\lambda}(\grad(\lambda)e_i-e_i\grad(\lambda))\right)
\\&=\frac{1}{{\lambda}}D+\frac{\dim M-1}{2\lambda^{2}}\grad(\lambda)
\end{align*}

\begin{thm}\label{thm:lambdaBundleTwist}
The index of the twisted operator $D_E$ is the $E$-theory product
\[\ind(D_E)=(c_\lambda)_*\circ \llbracket D^\lambda;B\rrbracket \circ \llbracket E\rrbracket\in K(C^*(M,A\hatotimes B))\]
of the element induced by the coarse map $c_\lambda:=\id\colon M^\lambda\to M$, the $E$-theory class $\llbracket D^\lambda;B\rrbracket\in E(\cA(M^\lambda,B),C^*(M^\lambda,A\hatotimes B))$ and the $K$-theory class $\llbracket E\rrbracket\in K(\cA_\lambda(M,B))$ determined by the projection $P\in\cA_\lambda(M,B)$.
\end{thm}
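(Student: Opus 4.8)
The plan is to reduce to Theorem~\ref{mainthm:indexformula} via the conformal change of metric, and to pay for that change with the bordism invariance of Theorem~\ref{thm:BordismInvariance}.

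First I would record the setup. As noted above, $\cA_\lambda^{(\infty)}(M,B)=\cA^{(\infty)}(M^\lambda,B)$, so $P$ is a projection in $\cA^\infty(M^\lambda,B)$ and hence defines a $B$-bundle of bounded variation $E\to M^\lambda$ with $\llbracket E\rrbracket\in K(\cA(M^\lambda,B))=K(\cA_\lambda(M,B))$. Since $\lambda\geq 1$ we have $d_M\leq d_{M^\lambda}$, and properness is clear, so $c_\lambda=\id\colon M^\lambda\to M$ is a coarse map and $(c_\lambda)_*$ is defined. Because the conformal rescaling only affects the Clifford action on $TM$ and the connection correction term, which involve only the Clifford-module factor, one has $S^\lambda\hatotimes E=(S\hatotimes E)^\lambda$; thus the twisted operator of $S^\lambda$ over $M^\lambda$ is the conformal rescaling $D^\lambda_E$ of $D_E$. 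Applying Theorem~\ref{mainthm:indexformula} over the complete manifold $M^\lambda$ to $S^\lambda$, $D^\lambda$ and the $B$-bundle of bounded variation $E$ gives
\[\ind(D^\lambda_E)=\llbracket D^\lambda;B\rrbracket\circ\llbracket E\rrbracket\in K(C^*(M^\lambda,A\hatotimes B)),\]
so after applying $(c_\lambda)_*$ the theorem becomes equivalent to the identity $(c_\lambda)_*\ind(D^\lambda_E)=\ind(D_E)$, a ``conformal invariance'' of the coarse index transported along $c_\lambda$.

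This last identity is where the real work lies, and I would deduce it from bordism invariance. Let $W:=M\times[0,1]$ carry the metric $h_t\,g+dt^2$, where $h_t=(1-\beta(t))\lambda^2+\beta(t)$ for a smooth $\beta\colon[0,1]\to[0,1]$ with $\beta\equiv 0$ near $0$ and $\beta\equiv 1$ near $1$. Since $h_t\geq 1$ this metric is complete, it equals $\lambda^2 g+dt^2$ near $t=0$ and $g+dt^2$ near $t=1$, so $\partial W$ decomposes into $M_0=M^\lambda$ and $M_1=M$ with product collars. Over $W$ I would build the Dirac $(A\hatotimes B)\hatotimes\Cl_1$-bundle $S_W$ with underlying Hilbert module bundle $(S\hatotimes E)\hatotimes\Cl_1$, Clifford multiplication by $TM$-vectors rescaled by $\sqrt{h_t}$, Clifford multiplication by $\partial_t$ the generator of $\Cl_1$, connection obtained from $\nabla^{S\hatotimes E}$ by the correction $\tfrac1{4\sqrt{h_t}}(\grad(\sqrt{h_t})\cdot X-X\cdot\grad(\sqrt{h_t}))$ in the $M$-directions (exactly as in the construction of $S^\lambda$ above, but with the $t$-dependent factor and an extra $\partial_t$-term from $\partial_t h_t$) and the flat connection on $\Cl_1$ in the $t$-direction. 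A direct computation, namely the one already carried out in this section for $S^\lambda$ enlarged by the $\partial_t$-direction, shows that $S_W$ is a genuine Dirac bundle and restricts over the two collars to the product Dirac bundles $S^\lambda\hatotimes E\hatboxtimes S_{[0,2\varepsilon)}$ and $S\hatotimes E\hatboxtimes S_{(-2\varepsilon,0]}$, with Dirac operators $D^\lambda_E$ and $D_E$. This verification — the bookkeeping of the correction terms and of the Levi--Civita connection of $W$, which now mixes $\partial_t$ and $TM$ because $h_t$ depends on $t$ — is the main obstacle of the proof.

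Finally I would feed $W$ and $S_W$ into Theorem~\ref{thm:BordismInvariance} with the free coefficient algebra set to $\C$. Evaluating the resulting commuting diagram on the canonical class $1_W\in K(\cA(W,\C))$, and using that restriction sends $1_W$ to $1_{M_0}$ and to $1_{M_1}$ together with $\ind(D_\bullet)=\llbracket D_\bullet;\C\rrbracket\circ 1_\bullet$, yields $(\iota_0)_*\ind(D^\lambda_E)=(\iota_1)_*\ind(D_E)$ in $K(C^*(W,A\hatotimes B))$, where $\iota_0\colon M^\lambda\hookrightarrow W$, $\iota_1\colon M\hookrightarrow W$ are the boundary inclusions. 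Because the $[0,1]$-factor has bounded diameter ($\partial_t$ has unit length) and $h_t\geq 1$, the inclusion $\iota_1$ is a coarse equivalence with coarse inverse the projection $\pi\colon W\to M$; hence $(\iota_1)_*$ is invertible with inverse $\pi_*$, and since $\pi\circ\iota_0=c_\lambda$ as coarse maps I conclude
\[\ind(D_E)=\pi_*(\iota_1)_*\ind(D_E)=\pi_*(\iota_0)_*\ind(D^\lambda_E)=(c_\lambda)_*\ind(D^\lambda_E)=(c_\lambda)_*\circ\llbracket D^\lambda;B\rrbracket\circ\llbracket E\rrbracket,\]
which is the assertion. (One could instead identify $(c_\lambda)_*\ind(D^\lambda_E)$ directly with the class of $\phi\mapsto\phi(\lambda^{-1/2}D_E\lambda^{-1/2})$ and connect $\lambda^{-1/2}D_E\lambda^{-1/2}$ to $D_E$ through the self-adjoint regular operators $a_s^{1/2}D_Ea_s^{1/2}$ with $a_s=(1-s)\lambda^{-1}+s$; this works, but the norm-continuity of the resolvents as $s\to 0$ then forces one to control $\grad(\log\lambda)$ carefully, so the bordism route is cleaner.)
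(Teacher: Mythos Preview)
Your proof is correct and follows essentially the same route as the paper: reduce to the identity $\ind(D_E)=(c_\lambda)_*\ind((D_E)^\lambda)$ via Theorem~\ref{mainthm:indexformula} and $(D_E)^\lambda=(D^\lambda)_E$, then prove that identity by bordism invariance over $W=M\times[0,1]$ with the interpolating metric and the conformally adjusted Dirac bundle, using that the inclusion $M=M\times\{1\}\hookrightarrow W$ is a coarse equivalence. The only difference is organisational: the paper first isolates the bordism step as a lemma at the level of $E$-theory classes (namely $\llbracket D^{\lambda_1};B\rrbracket=(c_{\lambda_0,\lambda_1})_*\circ\llbracket D^{\lambda_0};B\rrbracket\circ i_{\lambda_1,\lambda_0}$ for $\lambda_0\geq\lambda_1$), then specialises to the twisted bundle and composes with $1_M$, whereas you run the bordism argument directly on the twisted operator at the index level; the paper's packaging is reused later to build the inverse-limit element $\underleftarrow{\llbracket D;B\rrbracket}$, but for the theorem itself your direct version is equivalent.
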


This is an easy consequence of the bordism invariance of the coarse index. In fact, it is a special case of the following Lemma:

\begin{lem}\label{lem:lambdaEtheoryclasses}
Let $\lambda_0, \lambda_1\colon M\to[1,\infty)$ be smooth functions with $\lambda_0\geq\lambda_1$, $c_{\lambda_0,\lambda_1}\colon M^{\lambda_0}\to M^{\lambda_1}$ the coarse map whose underlying map is the identity and $i_{\lambda_1,\lambda_0}\colon \cA_{\lambda_1}(M,B)\to \cA_{\lambda_0}(M,B)$ the inclusion. Then 
\[\llbracket D^{\lambda_1};B\rrbracket=(c_{\lambda_0,\lambda_1})_*\circ \llbracket D^{\lambda_0};B\rrbracket\circ i_{\lambda_1,\lambda_0}\,.\]
\end{lem}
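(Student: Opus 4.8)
The plan is to realize the passage from $M^{\lambda_0}$ to $M^{\lambda_1}$ as a bordism and apply Theorem \ref{thm:BordismInvariance}, exactly as was done for Corollary \ref{cor:princSymp}. First I would reduce to the case where the target metric is fixed and only the source varies: since $\cA_{\lambda_i}(M,B)=\cA(M^{\lambda_i},B)$ and the claimed identity only involves the operators $D^{\lambda_0}$, $D^{\lambda_1}$ together with the canonical maps $c_{\lambda_0,\lambda_1}$ and $i_{\lambda_1,\lambda_0}$, the statement is purely about the behaviour of the fundamental class $\llbracket{\cdot};B\rrbracket$ under a conformal rescaling of the metric by a factor $\geq 1$. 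I would then interpolate: choose a smooth family of conformal factors $\lambda_s=(1-\chi(s))\lambda_0+\chi(s)\lambda_1$ on $M\times[0,1]$ (with $\chi$ a smooth step function, constant near the endpoints) and let $W$ be $M\times\R$ equipped with the metric $\lambda_s^2\,g+ds^2$ on $M\times[0,1]$ and the product metrics $\lambda_i^2 g + ds^2$ on the two collar ends $M\times(-2\varepsilon,0]$ and $M\times[1,1+2\varepsilon)$ for small $\varepsilon$. On $W$ one builds a Dirac $A\hatotimes\Cl_1$-bundle $S_W$ whose underlying Hilbert module bundle is the pullback of $S$, with Clifford action of the $M$-directions rescaled by $\lambda_s$ and the connection correction term $\tfrac{1}{4\lambda_s}(\grad(\lambda_s)X - X\grad(\lambda_s))$ as in the definition of $S^\lambda$, plus the obvious $\Cl_1$ factor in the $s$-direction; over the two collars this restricts to $S^{\lambda_1}\hatboxtimes S_{(-2\varepsilon,0]}$ and $S^{\lambda_0}\hatboxtimes S_{[1,1+2\varepsilon)}$, respectively, as required by the hypotheses of Theorem \ref{thm:BordismInvariance}.

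Next I would feed this $W$ into Theorem \ref{thm:BordismInvariance}. The theorem yields a commuting diagram in $E$-theory relating $\llbracket D^{\lambda_0};B\rrbracket$ and $\llbracket D^{\lambda_1};B\rrbracket$ via the restriction maps $\cA(W,B)\to\cA(M^{\lambda_i},B)$ and the pushforwards $(M^{\lambda_i}\subseteq W)_*\colon C^*(M^{\lambda_i},A\hatotimes B)\to C^*(W,A\hatotimes B)$. The remaining work is to identify these auxiliary maps. Since the underlying topological space of $W$ is $M\times\R$, which coarsely retracts onto each slice $M\times\{i\}$, the inclusion $M^{\lambda_i}\hookrightarrow W$ is a coarse equivalence, so $(M^{\lambda_i}\subseteq W)_*$ is an isomorphism in $E$-theory; moreover composing $(M^{\lambda_1}\subseteq W)_*$ with the inverse of $(M^{\lambda_0}\subseteq W)_*$ is precisely $(c_{\lambda_0,\lambda_1})_*$, because the identity map $M^{\lambda_0}\to M^{\lambda_1}$ is covered by a composite of the two inclusions up to closeness (here $\lambda_0\geq\lambda_1$ guarantees that $c_{\lambda_0,\lambda_1}$ is genuinely coarse, not just a set map). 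On the $\cA$-side, the two restriction maps $\cA(W,B)\to\cA(M^{\lambda_i},B)$ are, up to the pullback-along-projection isomorphism $\cA(M^{\lambda_i},B)\xrightarrow{\sim}\cA(W,B)$ coming from the product structure on the collars (as in the proof of Corollary \ref{cor:princSymp}), identified with the identity on $M^{\lambda_1}$ and with the inclusion $i_{\lambda_1,\lambda_0}$ on the $M^{\lambda_0}$ side — precisely because a function with $\lambda_1^{-1}\grad$ bounded a fortiori has $\lambda_0^{-1}\grad$ bounded when $\lambda_0\geq\lambda_1$. Chasing the diagram then gives $\llbracket D^{\lambda_1};B\rrbracket = (c_{\lambda_0,\lambda_1})_*\circ\llbracket D^{\lambda_0};B\rrbracket\circ i_{\lambda_1,\lambda_0}$.

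Finally, Theorem \ref{thm:lambdaBundleTwist} follows immediately: apply the Lemma with $\lambda_1$ the constant function $1$ (so $M^{\lambda_1}=M$, $D^{\lambda_1}=D$, $c_{\lambda,1}=c_\lambda$, and $i_{1,\lambda}$ the inclusion $\cA(M,B)\hookrightarrow\cA_\lambda(M,B)$), then compose both sides on the right with $\llbracket E\rrbracket\in K(\cA_\lambda(M,B))$ and invoke Theorem \ref{mainthm:indexformula} for the operator $D^\lambda$ over $M^\lambda$ twisted by the bounded-variation bundle determined by $P$, which computes $(c_\lambda)_*\circ\llbracket D^\lambda;B\rrbracket\circ\llbracket E\rrbracket$ as $\ind(D^\lambda_E)$; it remains only to observe that $D^\lambda_E$ and $D_E$ have the same coarse index in $K(C^*(M,A\hatotimes B))$, which is again bordism invariance of the coarse index (the bundle $E$ being unchanged, only the metric is rescaled), or alternatively follows by composing the displayed identity of the Lemma with $\llbracket E\rrbracket$ directly. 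The main obstacle I anticipate is not any single estimate but the bookkeeping in the second paragraph: verifying that the restriction map $\cA(W,B)\to\cA(M^{\lambda_0},B)$ really does match $i_{\lambda_1,\lambda_0}$ after the product-structure identification, and that the two coarse inclusions of slices compose to give exactly $(c_{\lambda_0,\lambda_1})_*$ rather than some other coarse map; both hinge on the collars carrying honest product metrics, which is why the step function $\chi$ must be constant near $0$ and $1$.
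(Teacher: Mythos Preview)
Your strategy is the same as the paper's: interpolate the conformal factor over $W=M\times[0,1]$, apply Theorem~\ref{thm:BordismInvariance}, and unwind the auxiliary maps. The bookkeeping you flagged as the main obstacle, however, does go wrong in one place. You claim that both inclusions $\iota_i\colon M^{\lambda_i}\hookrightarrow W$ are coarse equivalences and then invert $(\iota_0)_*$. In fact only $\iota_1$ is a coarse equivalence: since every slice of $W$ carries a metric at least $\lambda_1^2 g$, the manifold $W$ is coarsely equivalent to $M^{\lambda_1}$, but when $\lambda_0/\lambda_1$ is unbounded two points far apart in $M^{\lambda_0}=M\times\{0\}$ can be close in $W$ by detouring through the $t=1$ slice, so the projection $W\to M^{\lambda_0}$ is not coarse and $\iota_0$ has no coarse inverse. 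Likewise, the pullback $\pi^*\colon\cA(M^{\lambda_i},B)\to\cA(W,B)$ is well-defined only for $i=1$ (boundedness of $\lambda_0^{-1}\grad f$ does not imply boundedness of $\lambda_t^{-1}\grad f$ for $\lambda_t\leq\lambda_0$), and it is a section of $r_1$, not an isomorphism. The fix is the asymmetric argument in the paper: precompose the bordism identity with $\pi^*\colon\cA(M^{\lambda_1},B)\to\cA(W,B)$ and postcompose with $((\iota_1)_*)^{-1}$, then identify $((\iota_1)_*)^{-1}\circ(\iota_0)_*=(c_{\lambda_0,\lambda_1})_*$ (since $\pi\circ\iota_0=c_{\lambda_0,\lambda_1}$) and $r_0\circ\pi^*=i_{\lambda_1,\lambda_0}$. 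The hypothesis $\lambda_0\geq\lambda_1$ is precisely what makes one side invertible and not the other.

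A smaller issue in your last paragraph: applying the Lemma to $D$ with $\lambda_1=1$ produces an element of $E(\cA(M,B),C^*(M,A\hatotimes B))$, which cannot be composed on the right with $\llbracket E\rrbracket\in K(\cA_\lambda(M,B))$ because the domains do not match. The paper instead applies the Lemma to the $A\hatotimes B$-linear operator $D_E$ with coefficient $\C$ in place of $B$, composes with $1_M\in K(\cA(M,\C))$ to obtain $\ind(D_E)=(c_\lambda)_*\ind((D_E)^\lambda)$, and finishes via $(D_E)^\lambda=(D^\lambda)_E$ and Theorem~\ref{mainthm:indexformula}.
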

\begin{proof}
Let $\omega\colon [0,1]\to \R$ be smooth, equal to $1$ near $0$ and equal to $0$ near $1$.
We equip $W=M\times [0,1]$ with the Riemannian metric 
$dt^2+\omega(t)\lambda_0^2g+(1-\omega(t))\lambda_1^2 g$.
Note that for $\varepsilon$ small enough the collar neighborhoods $M\times[0,2\varepsilon)$ and $M\times(1-2\varepsilon,1]$  carry the product metrics $dt^2+\lambda_0^2g$ and $dt^2+\lambda_1^2g$, respectively, and that the inclusion $\iota_1\colon M^{\lambda_1}= M\times\{1\}\subseteq W$ is a coarse equivalence.

Similar to the construction of $S^\lambda$ we can change the Clifford action and connection of $S\hatboxtimes S_{[0,1]}\to M\times[0,1]$ to obtain a Dirac $A\hatotimes\Cl_1$-bundle $S_W\to W$ which restricts to $S^{\lambda_0}\hatboxtimes S_{[0,2\varepsilon)}$ and $S^{\lambda_1}\hatboxtimes S_{(1-2\varepsilon,1]}$ over the respective collar neighborhoods of the boundary components.

By pullback under the projection $\pi\colon W\to M^{\lambda_1}$ we obtain a $*$-homo\-mor\-phism
$\pi^*\colon \cA_{\lambda_1}(M,B)\to\cA(W,B)$ which is left inverse to the restriction $*$-homomorphism $r_1\colon \cA(W,B)\to\cA_{\lambda_1}(M,B)$. Composing $\pi^*$ with the other restriction 
$r_0\colon \cA(W,B)\to\cA_{\lambda_0}(M,B)$ yields the inclusion
$i_{\lambda_1,\lambda_0}$.
The claim now follows from Theorem \ref{thm:BordismInvariance} by composing the diagram with $\pi^*$ from the left and with $((\iota_1)_*)^{-1}$ from the right.
\end{proof}
\begin{proof}[Proof of Theorem \ref{thm:lambdaBundleTwist}]
We apply this lemma to the Dirac $A\hatotimes B$-bundle $S\hatotimes B$ instead of the Dirac $A$-bundle $S$, $\C$ instead of $B$ and $\lambda_0=\lambda$, $\lambda_1=1$. By composing the resulting equation with $1_M\in K(\cA(M,\C))$ we obtain
\[\ind(D_E)=(c_\lambda)_*\ind((D_E)^\lambda)\,.\]
The proof is completed by noting that $(D_E)^\lambda=(D^\lambda)_E$, i.\,e.\ the operations of twisting and changing the metric conformally commute, and that the index of the latter is $\llbracket D^\lambda;B\rrbracket\circ \llbracket E\rrbracket$.
\end{proof}

Lemma \ref{lem:lambdaEtheoryclasses} allows us to construct an even more general ``$E$-theory class'' which incorporates all $\lambda$ at once. The set of smooth functions $\lambda\colon M\to [1,\infty)$ is a directed set under ``$\leq$'' and the $E$-theory groups $E(\cA_\lambda(M,B),C^*(M,A\hatotimes B))$
comprise a inverse system on this directed set via composing with the inclusions $i_{\lambda_1,\lambda_0}$.
\begin{defn}
The collection of elements 
\[(c_\lambda)_*\circ\llbracket D^\lambda;B\rrbracket\in E(\cA_\lambda(M,B),C^*(M,A\hatotimes B))\]
defines an element $\underleftarrow{\llbracket D;B\rrbracket}$ in the inverse limit
\[\underleftarrow{E}(C_b(M,B\hatotimes\widehat\K),C^*(M,A\hatotimes B)):=\varprojlim_\lambda E(\cA_\lambda(M,B),C^*(M,A\hatotimes B))\,.\]
\end{defn} 

The notation is explained as follows:
As the union of the $\cA_\lambda(M,B)$ is dense in $C_b(M,B\hatotimes\widehat\K)$ (it contains all smooth bounded functions), we have \[K(C_b(M,B\hatotimes\widehat\K))=\varinjlim_\lambda K(\cA_\lambda(M,B))\,.\]
Thus, there is a composition product
\[\circ\colon \underleftarrow{E}(C_b(M,B\hatotimes\widehat\K),C^*(M,A\hatotimes B))\hatotimes K(C_b(M,B\hatotimes\widehat\K))\to K(C^*(M,A\hatotimes B))\]
and Theorem \ref{thm:lambdaBundleTwist} implies:
\begin{thm}\label{thm:Inverselimitindexformula}
If the $B$-bundle $E$ is determined by a smooth projection valued function
$P\colon M\to B\hatotimes\widehat\K$, then 
$\ind(D_E)=\underleftarrow{\llbracket D;B\rrbracket}\circ \llbracket E\rrbracket$, where $\llbracket E\rrbracket$ denotes the $K$-theory class of $P$.
\end{thm}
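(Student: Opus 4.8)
The plan is to reduce the statement entirely to Theorem~\ref{thm:lambdaBundleTwist} by unwinding the definition of the composition product $\underleftarrow{E}(C_b(M,B\hatotimes\widehat\K),C^*(M,A\hatotimes B))\hatotimes K(C_b(M,B\hatotimes\widehat\K))\to K(C^*(M,A\hatotimes B))$.

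First I would fix a convenient witness for $\llbracket E\rrbracket$. Since $P\colon M\to B\hatotimes\widehat\K$ is smooth, its gradient is locally bounded, so one can choose a smooth $\lambda\colon M\to[1,\infty)$ growing fast enough that $\lambda^{-1}\grad(P)$ is bounded; this is precisely the observation opening Section~\ref{sec:generalbundles}. Then $P\in\cA_\lambda^\infty(M,B)$ is a projection, hence represents a class $\llbracket E\rrbracket_\lambda\in K(\cA_\lambda(M,B))$ whose image under the canonical map $K(\cA_\lambda(M,B))\to\varinjlim_\mu K(\cA_\mu(M,B))=K(C_b(M,B\hatotimes\widehat\K))$ is by definition $\llbracket E\rrbracket$.

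Next I would make the pairing explicit. A class in $\varinjlim_\mu K(\cA_\mu(M,B))$ is represented by some $y_\lambda\in K(\cA_\lambda(M,B))$; an element $x$ of $\varprojlim_\mu E(\cA_\mu(M,B),C^*(M,A\hatotimes B))$ has a component $x_\lambda$ in the same index; and the product is $x\circ y:=x_\lambda\circ y_\lambda$, using the ordinary $E$-theory composition. I would check independence of the choice of $\lambda$: if $y$ is represented instead by $y_{\lambda'}=i_{\lambda,\lambda'}\circ y_\lambda$ for $\lambda'\geq\lambda$, then $x_{\lambda'}\circ y_{\lambda'}=(x_{\lambda'}\circ i_{\lambda,\lambda'})\circ y_\lambda=x_\lambda\circ y_\lambda$, since membership in the inverse limit means exactly $x_{\lambda'}\circ i_{\lambda,\lambda'}=x_\lambda$ (the inverse-system maps being precomposition with the inclusions $i_{\lambda_1,\lambda_0}$, which cancel against the direct-system maps in $K$-theory). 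The coherence of the components $(c_\mu)_*\circ\llbracket D^\mu;B\rrbracket$, i.e.\ that $\underleftarrow{\llbracket D;B\rrbracket}$ is a genuine element of the inverse limit, is already established in the definition preceding the theorem and rests on Lemma~\ref{lem:lambdaEtheoryclasses}. Applying the pairing with $x=\underleftarrow{\llbracket D;B\rrbracket}$, $y=\llbracket E\rrbracket$ and the $\lambda$ chosen above yields
\[\underleftarrow{\llbracket D;B\rrbracket}\circ\llbracket E\rrbracket=(c_\lambda)_*\circ\llbracket D^\lambda;B\rrbracket\circ\llbracket E\rrbracket_\lambda\,.\]

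Finally, Theorem~\ref{thm:lambdaBundleTwist} applied to this particular $\lambda$ identifies the right-hand side with $\ind(D_E)$, which completes the argument. I do not expect a serious obstacle: the entire content is the bookkeeping of the inverse/direct limit pairing, and the only point requiring genuine care—independence of all choices of $\lambda$, both in the chosen representative of $\llbracket E\rrbracket$ and in the value of the pairing—is exactly the coherence supplied by Lemma~\ref{lem:lambdaEtheoryclasses} together with the evident compatibility of $K$-theory functoriality with the inclusions $i_{\lambda_1,\lambda_0}$.
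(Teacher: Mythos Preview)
Your proposal is correct and follows exactly the paper's approach: the paper states the theorem as an immediate consequence of Theorem~\ref{thm:lambdaBundleTwist} after observing that $K(C_b(M,B\hatotimes\widehat\K))=\varinjlim_\lambda K(\cA_\lambda(M,B))$ and that the inverse-limit element pairs with this direct limit. In fact you are more explicit than the paper, which leaves the well-definedness of the pairing and the choice of a representing $\lambda$ implicit.
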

This is the most general form of the index pairing.
\begin{rem}
It is somewhat awkward, that this $E$-theory class does not only depend on $C_b(M,B\hatotimes\widehat\K)$ and $C^*(M,A\hatotimes B)$ but also incorporates the directed system of $\lambda$'s. This raises the question whether there is an even more suitable version of bivariant $K$-theory, which does not show this inconvenience.
\end{rem}

\section{Dividing out the compact operators}\label{sec:dividingoutcompacts}
The coarse index was invented as a generalization of the $A$-index of elliptic operators over compact manifolds. Indeed, if $M$ is compact and connected\footnote{We have to make this assumption, because in our context non-connected compact manifolds are not bounded.}, and $S\to M$ a Dirac $A$-bundle with Dirac operator $D$, then $C^*(M,A)=\K(L^2(M)\hatotimes A\hatotimes\widehat{\ell^2})\cong A\hatotimes \widehat{\K}$ and the coarse index of $D$ corresponds to the $A$-index under the isomorphism $K(C^*(M,A))\cong K(A)$. However, the coarse index behaves quite different from the Fredholm index if $M$ is non-compact. For example, the following theorem is not true for compact $M$:
\begin{thm}[cf.\ {\cite[Theorem 1.7]{HankePapeSchick}}] \label{thm:HPSpartialvanishing}
Let $(M,g)$ be a complete connected non-compact Riemannian spin manifold such that, outside of a compact subset, the scalar curvature is uniformly positive. Let 
$E\to M$ be an $A$-bundle with a flat connection. Denote by $\slashed D$ the spin Dirac operator and by $\slashed D_E$ the twisted operator. Then the coarse index 
\[\ind(\slashed D_E)\in K_*(C^*(M,A))\]
vanishes.
\end{thm}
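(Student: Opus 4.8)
The plan is to run the classical Lichnerowicz--Gromov--Lawson argument, adapted to $A$-linear operators and to the coarse setting, with the final reduction supplied by the insensitivity of coarse indices to compact data that is emphasized at the beginning of this section.

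\emph{Step 1: Invertibility at infinity.} The twisted operator $\slashed D_E$ is the Dirac operator of the Dirac $A$-bundle $S\hatotimes E$, where $S$ is the spinor bundle and $\nabla^E$ is flat. The Bochner--Lichnerowicz--Weitzenb\"ock formula for a twisted spin Dirac operator reads $\slashed D_E^2=(\nabla^{S\hatotimes E})^*\nabla^{S\hatotimes E}+\tfrac{\kappa}{4}+\mathfrak{R}^E$, where $\kappa$ is the scalar curvature of $M$ and $\mathfrak{R}^E$ is the bundle endomorphism obtained by Clifford-contracting the curvature of $\nabla^E$; the derivation is the $A$-valued analogue of the computation behind Lemma \ref{lem:commutatorgradient} and is unaffected by the grading. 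Since $\nabla^E$ is flat, $\mathfrak{R}^E=0$, so $\slashed D_E^2\geq\tfrac{\kappa}{4}$ as $A$-linear operators. By hypothesis there are a compact $K\subseteq M$ and $c>0$ with $\kappa\geq c$ on $M\setminus K$, whence $\langle\slashed D_E\xi,\slashed D_E\xi\rangle\geq\tfrac c4\langle\xi,\xi\rangle$ in $A$ for every smooth compactly supported section $\xi$ of $S\hatotimes E$ supported in $M\setminus K$. In other words, $\slashed D_E$ is uniformly invertible outside the compact set $K$.

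\emph{Step 2: Vanishing modulo compacts.} Recall from Theorem \ref{thm:FuncCalc} that $\ind(\slashed D_E)$ is the $K$-theory class of the $*$-homomorphism $\alpha\colon\cS\to C^*(M,A)$, $\phi\mapsto\phi(\slashed D_E)$. The key claim is that $\chi(\slashed D_E)\in\K$ for every $\chi\in\cS$ supported in $(-\tfrac{\sqrt c}{2},\tfrac{\sqrt c}{2})$. Granting this, the composition $q\circ\alpha$ of $\alpha$ with the quotient map $q\colon C^*(M,A)\to C^*_{/\K}(M,A)$ annihilates the ideal $C_0((-\tfrac{\sqrt c}{2},\tfrac{\sqrt c}{2}))\subseteq\cS$ and thus factors through $\cS/C_0((-\tfrac{\sqrt c}{2},\tfrac{\sqrt c}{2}))\cong C_0((-\infty,-\tfrac{\sqrt c}{2}])\oplus C_0([\tfrac{\sqrt c}{2},\infty))$; each summand is isomorphic to the cone over $\C$, hence contractible, so this quotient $C^*$-algebra has vanishing $K$-theory, and therefore the image of $\ind(\slashed D_E)$ in $K_*(C^*_{/\K}(M,A))$ is zero. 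To prove the claim one must localize the low-energy part of $\slashed D_E$ near $K$: approximate $\chi$ by functions with compactly supported Fourier transform (which give finite-propagation operators via functional calculus), and compare $\chi(\slashed D_E)$ with $\chi$ applied to the restriction of $\slashed D_E$ to $M\setminus K'$ with suitable boundary conditions over a compact separating hypersurface bounding a compact region $K'\supseteq K$; by Step 1 this restricted operator has a genuine spectral gap at $0$, so the corresponding term vanishes, and the difference is a locally compact finite-propagation operator concentrated near the hypersurface, hence compact. This is the relative coarse index mechanism of \cite{HankePapeSchick} (compare also Roe's partitioned-manifold technique), and it is the technical heart of the argument.

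\emph{Step 3: From the quotient to the honest index.} Since $M$ is connected and non-compact it admits a ray escaping to infinity, and the coarse type of such a ray is flasque; consequently the Roe algebra of the ray has vanishing $K$-theory by the usual Eilenberg swindle with the shift. Realizing any class in $K_*(\K)\cong K_*(A)$ inside the Roe algebra of the ray and applying functoriality shows that the inclusion $\K\hookrightarrow C^*(M,A)$ is zero on $K$-theory, i.e.\ $K_*(C^*(M,A))\to K_*(C^*_{/\K}(M,A))$ is injective --- this is precisely the phenomenon recorded at the start of this section. Combining this with Step 2 yields $\ind(\slashed D_E)=0$ in $K_*(C^*(M,A))$.

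The main obstacle is the compactness assertion in Step 2: a naive estimate obtained by commuting $\slashed D_E$ past a cutoff gives only a bounded, not a small, error, so one genuinely has to pass through the cut-down operator with a spectral gap and keep track of the ideal in which the cutting error lives; the Lichnerowicz computation and the flasque-ray swindle are routine by comparison.
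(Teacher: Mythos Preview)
The paper does not actually prove this theorem: it is quoted with a ``cf.'' to \cite[Theorem~1.7]{HankePapeSchick} and used as a black box (in particular, as input to the proof of the subsequent generalization where $M$ is only spin outside $K$). Your three-step outline is essentially a reconstruction of the Hanke--Pape--Schick argument, and it is sound. Step~1 is the standard Lichnerowicz identity with the flat-twist simplification. Step~3 is precisely the mechanism the paper records in the short exact sequence \eqref{eq:modKseq1}. The only substantive content is the compactness claim in Step~2, which you correctly flag as the technical heart; your sketch (finite-propagation approximation plus comparison with a gapped cut-down operator) is in the right spirit, though in \cite{HankePapeSchick} and Roe's original treatment one argues a bit more directly that the positivity estimate forces $\chi(\slashed D_E)$ to be a locally compact finite-propagation operator whose effective support is confined near $K$, hence lies in $\K(M,A)$. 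Either way, there is nothing to compare against in the present paper beyond the observation that your Step~3 matches its discussion of \eqref{eq:modKseq1} verbatim.
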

We can get rid of this distinction between compact and non-compact $M$ if we consider the coarse index as an element of the $K$-theory of the following quotient $C^*$-algebra instead.
\begin{defn}
For $M$ a complete Riemannian manifold with connected components $\{M_i\}_{i\in I}$ and $A$ a $\Z_2$-graded unital $C^*$-algebra we define the ideal
\[\K(M,A):=\bigoplus_{i\in I}\K(L^2(M_i)\hatotimes A\hatotimes\widehat{\ell^2}))=C^*(M,A)\cap\K(L^2(M)\hatotimes A\hatotimes\widehat{\ell^2})\]
of $C^*(M,A)$ and the quotient $C^*$-algebra
\[C^*_{/\K}(M,A):=C^*(M,A)/\K(M,A)=\bigoplus_{i\in I}C^*(M_i,A)/\K(M_i,A)\,.\]
\end{defn}

Indeed, if $M$ is compact, then this $C^*$-algebra vanishes and the theorem becomes trivially true. If, on the other hand, $M$ is non-compact and connected, then there is a coarsely embedded ray $[0,\infty)\to M$ and the map 
$K_*(\K(M,A))\to K_*(C^*(M,A))$ factors through $K_*(C^*([0,\infty),A))=0$ (cf.\ \cite[Proposition 3.10]{HankePapeSchick}).
Thus, the long exact sequence in $K$-theory decomposes into short exact sequences
\begin{equation}\label{eq:modKseq1}
0\to K_*(C^*(M,A))\to K_*(C^*_{/\K}(M,A))\to K_{*-1}(\K(M,A))\to 0
\end{equation}
and therefore no information is lost by passing to the quotient. If $M$ is non-compact but disconnected, then we can argue on each component separately to see that exactly the information of the compact components is lost.

Another way of thinking about the groups $K_*(C^*_{/\K}(M,A))$ is provided by Roe's $X^\to$-construction from \cite{RoeFoliations}: Given a proper metric space $X$, the proper metric space $X^\to$ is obtained from $X$ by attaching a ray $[0,\infty)$ to an arbitrary point of $X$. Applied to a \emph{non-compact, connected}, complete Riemannian manifold $M$, the coarse Mayer--Vietoris sequence (cf.\ \cite{HigsonRoeYuMayerVietoris} in the case of Roe algebras without coefficients, \cite[Corollary 9.5]{HigsonPedersenRoe} in the case with coefficients) yields short exact sequences
\begin{equation}\label{eq:modKseq2}
0\to K_*(C^*(M,A))\to K_*(C^*(M^{\to},A))\to K_{*-1}(\K(M,A))\to 0\,.
\end{equation}
Equations \eqref{eq:modKseq1} and \eqref{eq:modKseq2} and the five Lemma imply that the canonical comparision $*$-homomorphism $C^*(M^{\to},A)\to C^*_{/\K}(M,A)$, which we will see in more detail in Lemma \ref{lem:comparemodKandnotmodK} below, induces a canonical isomorphism
\[K_*(C^*(M^{\to},A))\cong K_*(C^*_{/\K}(M,A))\,.\]
Note that this trivially holds for compact $M$, too. 
Thus, dividing out the compact operators has the same effect of ``unreducing'' the ``coarse homology theory'' $K_*(C^*(M,A))$ as the constructions seen in \cite[Section 5]{WulffCoassemblyRinghomo}.

The coarse index in the $K$-theory of this quotient $C^*$-algebra, and similarly an $E$-theory class, can even be defined more generally for elliptic operators which are only defined on the complement of a compact subset, as we will now show.

\begin{lem}\label{lem:defWtildeMtildeStildeD}
Let $M$ be a complete Riemannian manifold, $K\subseteq M$ a compact subset and $S\to M\setminus K$ a Dirac $A$-bundle with Dirac operator $D$. Then there exist
\begin{itemize}
\item a complete submanifold $W\subseteq M$ of codimension $0$ with boundary such that  $M\setminus W$ is precompact and contains $K$,
\item a Riemann-isometric inclusion $W\subseteq \tilde M$  of $W$ as a complete submanifold of codimension $0$ with boundary into another complete Riemannian manifold $\tilde M$, such that the identity $W^{\subseteq\tilde M}\to W^{\subseteq M}$ is a coarse map and such that
\item $S|_W$ extends to a Dirac $A$-bundle $\tilde S\to\tilde M$ with Dirac operator $\tilde D$.\qed
\end{itemize}
\end{lem}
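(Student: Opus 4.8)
The plan is to carve out a smooth-boundaried neighbourhood of $K$, to attach a cylindrical end, and to extend the Dirac bundle over that end by a product structure. First, smooth the function $x\mapsto\dist(x,K)$ to some $\rho\in C^\infty(M)$ with $\sup_M|\rho-\dist(\cdot,K)|\le 1$, and fix, by Sard's theorem, a regular value $c\in(2,3)$ of $\rho$. Put $W:=\{\rho\ge c\}$ and $N:=\partial W=\{\rho=c\}$. Then $N$ is a compact smooth hypersurface; $M\setminus W=\{\rho<c\}$ contains $K$ (there $\rho\le 1<c$) and is precompact, as it lies in $\{\dist(\cdot,K)\le c+1\}$, a bounded subset of each of the finitely many components of $M$ meeting $K$ (invoke Lemma~\ref{lem:boundednotprecompact} and Hopf--Rinow); and $W$ is a complete codimension-$0$ submanifold with boundary on which $S$ and $D$ are defined --- completeness holding because a sequence Cauchy for the intrinsic metric of $W$ is Cauchy in $M$, hence converges in $M$ to a point of the closed set $W$, near which the intrinsic and ambient metrics are bi-Lipschitz (half-space model).

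Next I build $\tilde M$ and $\tilde S$. Using the normal exponential map of $W$, fix a collar $N\times[0,\varepsilon)\hookrightarrow W$ identifying $N$ with $N\times\{0\}$, and glue on a half-cylinder: $\tilde M:=W\cup_N(N\times(-\infty,0])$, joined at $N\times\{0\}=\partial W$, which is a boundaryless manifold containing $W$ as a codimension-$0$ submanifold with boundary $N$. On the attached cylinder put a metric $dt^2+\bar g_N(t)$ where $\bar g_N$ is a smooth path of metrics on $N$ agreeing (through the collar) with $g$ near $t=0$, equal to the product $g|_N$ for $t\le -1$, and $\ge\tfrac12\,g|_N$ throughout the cylinder (possible, since the only constraint near $t=0$ is on the $1$-jet and $g_N(0)=g|_N$); then $\tilde M$ is complete. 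For the bundle, parallel transport of $S$ along the normal geodesics identifies $S$ over the collar with $\pi^*(S|_N)$ carrying a $t$-dependent compatible connection and Clifford action, with $c(\partial_t)$ transported to a $t$-independent operator; over $\{t\le -1\}$ take the product Dirac $A$-bundle $\pi^*(S|_N)$ (connection $\pi^*\nabla\oplus d_t$, Clifford action extended constantly), which is a genuine Dirac bundle because the Levi--Civita connection of a product metric splits; and on the compact collar $\{-1\le t\le 0\}$ interpolate between the two structures. This yields a Dirac $A$-bundle $\tilde S\to\tilde M$ with $\tilde S|_W=S|_W$; let $\tilde D$ be its Dirac operator.

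It remains to see that $\id\colon W^{\subseteq\tilde M}\to W^{\subseteq M}$ is a coarse map. Since $W\subseteq\tilde M$ isometrically, $d_{\tilde M}|_W\le d_W$, the intrinsic path metric of $W$. Conversely, any $\tilde M$-path between two points of $W$ can be turned into an $M$-path at most $\sqrt 2$ times as long, by leaving its portions inside $W$ untouched (they are $M$-paths) and replacing each maximal excursion into the cylinder interior --- whose endpoints lie on $\partial W\subseteq W\subseteq M$ --- by a minimizing $M$-geodesic between those endpoints, the estimate $\bar g_N\ge\tfrac12\,g|_N$ bounding the length of such an excursion below by $\tfrac1{\sqrt2}$ times the $d_M$-distance between its endpoints. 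Hence $d_M|_W\le\sqrt2\,d_{\tilde M}|_W$, so the identity is Lipschitz, hence bornologous; and it is proper, since a precompact subset of $W^{\subseteq M}$ lies in a compact subset of $M$, hence --- being inside the closed set $W$ --- in a compact subset of $W$, which is a finite union of $d_W$-bounded, hence $d_{\tilde M}$-bounded, pieces and thus precompact in $W^{\subseteq\tilde M}$. This completes the construction.

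The one non-formal point is the interpolation of Dirac-bundle structures on the collar $\{-1\le t\le 0\}$: one must extend the whole compatible package (Riemannian metric, fibre metric, grading-preserving metric connection, Clifford action obeying the module Leibniz rule and the rule $\nabla_X(Y\cdot\xi)=(\nabla^{LC}_XY)\cdot\xi+Y\cdot\nabla_X\xi$) across the collar without breaking any relation. This is handled by extending the underlying bundle and fibre metric, choosing any metric connection and correcting it by a smoothly damped skew-adjoint-valued $1$-form to match both ends (using that metric connections form an affine space), and propagating the Clifford action in the $t$-direction by the first-order flow encoded in the Leibniz identity above, which by metricity preserves skew-adjointness and the Clifford relations and carries the tangential Leibniz rule from $t=0$; it is the same deformation argument that underlies Corollary~\ref{cor:princSymp}, and is the only step requiring care.
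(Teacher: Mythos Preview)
Your proof is correct. The paper itself gives no proof at all --- the statement carries a bare \qed\ and is followed only by the remark that ``such a manifold $\tilde M$ can be constructed essentially by doubling $W$'', so your argument is in fact considerably more detailed than what the paper offers.

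The one genuine difference is the construction of $\tilde M$: the paper doubles $W$ along $\partial W$, while you attach a half-infinite cylinder $N\times(-\infty,0]$. Both are standard and both require the same care at the boundary (neither the reflected metric nor the reflected Dirac bundle is automatically smooth across $N$, so some smoothing or interpolation on the attached side is needed in either case). Doubling buys a slightly cleaner verification of the coarse-map condition: the reflection isometry folds any $\tilde M$-path between points of $W$ back into $W$ with the same length, giving $d_M\le d_W\le d_{\tilde M}$ immediately, whereas your cylinder argument needs the metric lower bound $\bar g_N(t)\ge\tfrac12 g|_N$ and the $\sqrt2$-Lipschitz estimate. Conversely, your cylinder makes the Dirac-bundle extension conceptually cleaner, since the product structure on $N\times(-\infty,-1]$ is unambiguous once you declare $c(\partial_t)$ to act as the old unit normal did; with doubling one has to say what ``reflecting'' a Clifford module means. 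Your final paragraph on the collar interpolation is the only genuinely delicate step, and your sketch (affine interpolation of metric connections, then transporting the Clifford action by the flow dictated by the Leibniz rule) is adequate --- it is exactly the kind of deformation that the paper itself invokes without comment in Corollary~\ref{cor:princSymp}.
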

Such a manifold $\tilde M$ can be constructed essentially by doubling $W$, and so we observe that $\tilde M\setminus W$ need not be precompact and if $M$ was connected, then $\tilde M$ need not be connected. 

By assumption, 
\[W^{\subseteq \tilde M}\xrightarrow{\id}W^{\subseteq M}\xrightarrow{\text{incl.}}M\]
are coarse maps, and they are covered by the obvious isometries 
\[L^2(W,S|_W)\xrightarrow{\id}L^2(W,S|_W)\xrightarrow{\subseteq}L^2(M,S)\]
and thereby yielding canonical inclusions
\[C^*(W^{\subseteq\tilde M},S|_W\hatotimes B)\subseteq C^*(W^{\subseteq M},S|_W\hatotimes B)\subseteq C^*(M,S\hatotimes B)\,.\]
With these inclusions in mind, Lemma \ref{lem:relativeasymptoticmorphism} yields an asymptotic morphism
\begin{align*}
\cS\hatotimes\cA(\mathring W,B)&\to\fA(C^*(W^{\subseteq\tilde M},S|_W\hatotimes B))\subseteq\fA(C^*(M,A\hatotimes B))\,,
\\\phi\hatotimes f&\mapsto\overline{t\mapsto V^*\phi(t^{-1}\tilde D)fV}\,,%
\end{align*}
where $V\colon L^2(W,S|_W\hatotimes B) \hatotimes\widehat\ell^2 \to L^2(\tilde M,\tilde S\hatotimes B) \hatotimes\widehat\ell^2$ denotes the canonical isometry. The proof of Lemma \ref{lem:independence} shows that it is independent of the extension 
$\tilde M$ of $W$. Furthermore, it maps $\cS\hatotimes C_0(\mathring W,B\hatotimes\widehat\K)$ to $\K(M,A)$. Thus we obtain an asymptotic morphism
\begin{equation*}
\cS\hatotimes(\cA(\mathring W,B)/C_0(\mathring W,B\hatotimes\widehat\K))\to \fA(C^*_{/\K}(M,A\hatotimes B))\,.
\end{equation*}
Note that the quotient $\cA(\mathring W,A\hatotimes B)/C_0(\mathring W,B\hatotimes\widehat\K)$ depends only on $M$ and not on the choice of $W$, because it is isomorphic to the following $C^*$-algebra.
\begin{defn}
For a complete Riemannian manifold $M$ and a $\Z_2$-graded unital $C^*$-algebra $B$, let
\[\cA_{/C_0}(M,B):=\cA(M,B)/C_0(M,B\hatotimes\widehat\K)\,.\]
\end{defn}
The vector bundles defining elements in the $K$-theory of this $C^*$-algebras are the following:
\begin{defn}
Let $P\in\cA^\infty(M,B)$ be such that $P_x$ is a projection in $B\hatotimes\widehat\K$ for each $x$ outside of a compact subset $K\subseteq M$.
The $B$-bundle $E\to M\setminus K$ whose fibre at $x$ is the image of $P_x$ in $B\hatotimes\widehat\ell^2$ is called a \emph{$B$-bundle of bounded variation defined outside of a compact subset}.
Its $K$-theory class $\llbracket E\rrbracket_{/C_0}\in K(\cA_{/C_0}(M,B))$ is defined to be the class of the projection $\overline{P}\in\cA_{/C_0}(M,B)$ determined by $P$.
\end{defn}

Furthermore, the proof of Lemma \ref{lem:restriction} shows that the asymptotic morphism 
\begin{equation}\label{eq:modKasympmorph}
\cS\hatotimes\cA_{/C_0}(M,B)\to \fA(C^*_{/\K}(M,A\hatotimes B))
\end{equation}
so obtained is independent of the choice of $W$.
\begin{defn}\label{def:modKindex}
Let $D$ be as above an $A$-linear Dirac operator defined over the complement of a compact set $K\subset M$.
The $E$-theory class of $D$ is the class
\[\llbracket D;B\rrbracket_{/\K/C_0}\in E(\cA_{/C_0}(M,B),C^*_{/\K}(M,A\hatotimes B))\]
of the just defined asymptotic morphism \eqref{eq:modKasympmorph}. Its index is defined as
\[\ind_{/\K}(D):=\llbracket D;\C\rrbracket_{/\K/C_0}\circ 1_M\in K(C^*_{/\K}(M,A))\,.\] 
\end{defn}
Here, $1_M\in K(\cA_{/C_0}(M,\C))$ denotes the image of $1_M\in K(\cA(M,\C))$ under the quotient map.

Note that if $D$ is defined over all of $M$, then we can simply choose $K=\emptyset$, $\tilde M=W=M$, $\tilde S=S$ in the construction above to see that $\ind_{/\K}(D)$ is indeed the image of $\ind(D)$ under the canonical map.

The following lemmas provide an important alternative way of constructing the index modulo $\K$:
\begin{lem}\label{lem:comparemodKandnotmodK}
Let $M,K,S,D,W,\tilde M,\tilde S,\tilde D$ be as in the construction above and define $W^c:=\tilde M\setminus W$. 
\begin{itemize}
\item Let $\omega\in C_b(\tilde M)$ be $0$ on $W^c$ and $1$ outside of the $1$-neighborhood of $W^c$. Then 
\[\tau_{\tilde M,W,B}\colon \cA(\tilde M,B)\to \cA_{/C_0}(\mathring W,B)\subseteq\cA_{/C_0}(M,B)\,,\quad f\mapsto \omega f\]
is a $*$-homomorphism which is independent of the choice of $\omega$.
\item Denote by $V_{\tilde M,W,A}\colon L^2(W,A)\hatotimes\widehat\ell^2\to L^2(\tilde M,A)\hatotimes\widehat\ell^2$ the isometric inclusion. Then 
\begin{align*}
\pi_{\tilde M,W,A}\colon C^*(\tilde M,A)&\to C^*_{/\K}(W^{\subseteq\tilde M},A)\subseteq C^*_{/\K}(M,A)
\\T&\mapsto \overline{(V_{\tilde M,W,A})^*TV_{\tilde M,W,A}}
\end{align*}
is a $*$-homomorphism.
\item The following diagram commutes in $E$-theory:
\[\xymatrix@C=2cm{
\cA(\tilde M,B)\ar[r]^{\llbracket\tilde D;B\rrbracket}\ar[d]^{\tau_{\tilde M,W,B}}
&C^*(\tilde M,A\hatotimes B)\ar[d]^{\pi_{\tilde M,W,A\hatotimes B}}
\\\cA_{/C_0}(M,B)\ar[r]^{\llbracket D;B\rrbracket_{/\K/C_0}}
&C^*_{/\K}(M,A\hatotimes B)
}\]
\end{itemize}
\end{lem}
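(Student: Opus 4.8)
The plan is to dispatch the three items in order; the first two are structural, and the third carries the analytic content.

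\emph{First item.} On the complete manifold $\tilde M$ one may take $\omega=\chi\circ\dist_{\tilde M}(\cdot\,,W^c)$ for a smooth $\chi\colon[0,\infty)\to[0,1]$ with $\chi(0)=0$ and $\chi\equiv 1$ on $[1,\infty)$, smoothed so that $\grad(\omega)$ is bounded; since $\omega$ vanishes on the closed set $\overline{W^c}\supseteq\partial W$, for $f\in\cA^\infty(\tilde M,B)$ the product $\omega f$ has bounded gradient by Leibniz, and its restriction to $\mathring W$ extends continuously to $W=\overline{\mathring W}$ by a function vanishing on $\partial W$, so $\omega f|_{\mathring W}\in\cA^\infty(\mathring W,B)$. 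By density this gives a $*$-homomorphism $\cA(\tilde M,B)\to\cA(\mathring W,B)$, and composing with the quotient map and the canonical isomorphism $\cA(\mathring W,B)/C_0(\mathring W,B\hatotimes\widehat\K)\cong\cA_{/C_0}(M,B)$ produces $\tau_{\tilde M,W,B}$. For independence, observe first that $\partial W$ is \emph{compact} because $M\setminus W$ is precompact; if $\omega_1,\omega_2$ are two admissible choices then $\omega_1-\omega_2$ vanishes on $W^c$ and outside the $1$-neighborhood $N_1(W^c)$, hence is supported in $W\cap N_1(W^c)$, and a length $<1{+}\varepsilon$ path from a point of $W$ to $W^c$ must cross $\partial W$, so this set lies in the precompact set $\overline{N_1(\partial W)}$; thus $(\omega_1-\omega_2)f$ is compactly supported, lies in $C_0(\mathring W,B\hatotimes\widehat\K)$, and $\tau_{\omega_1}=\tau_{\omega_2}$. (A general $\omega\in C_b(\tilde M)$ differs from such a smooth one by a compactly supported function, so $\tau_\omega$ is defined and agrees.)

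\emph{Second item.} The compression $T\mapsto V^*TV$ with $V=V_{\tilde M,W,A}$ is linear, contractive and $*$-preserving; since $V$ intertwines the representation of $C_0(\tilde M)$ with that of $C_0(W)$ via restriction and extension by zero, it preserves finite propagation and local compactness and therefore maps the generators of $C^*(\tilde M,A)$ into $C^*(W^{\subseteq\tilde M},A)$. It fails multiplicativity only up to $V^*T(1-VV^*)T'V$, and if $T,T'$ have propagation $\le R$ then $V^*T(1-VV^*)$ has left support inside $W\cap N_R(W^c)\subseteq\overline{N_R(\partial W)}$, which is precompact; local compactness of $T$ then forces $V^*T(1-VV^*)T'V\in\K_A(L^2(W,A)\hatotimes\widehat\ell^2)\subseteq\K(M,A)$. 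Hence $\pi_{\tilde M,W,A}=(\text{quotient})\circ(\text{compression})$ is multiplicative on the dense $*$-subalgebra of finite-propagation locally compact operators, so it is a $*$-homomorphism $C^*(\tilde M,A)\to C^*_{/\K}(W^{\subseteq\tilde M},A)\subseteq C^*_{/\K}(M,A)$.

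\emph{Third item.} By Theorem~\ref{thm:EFunctoriality} and the explicit description of $E$-theory functoriality that follows it, $\pi_{\tilde M,W,A\hatotimes B}\circ\llbracket\tilde D;B\rrbracket$ is represented by $\phi\hatotimes f\mapsto\overline{t\mapsto\overline{V^*\phi(t^{-1}\tilde D)fV}}$ on $\cS\hatotimes\cA(\tilde M,B)$, while $\llbracket D;B\rrbracket_{/\K/C_0}\circ\tau_{\tilde M,W,B}$ is represented by precomposing the asymptotic morphism \eqref{eq:modKasympmorph} with $\tau$, which (using that $\llbracket D;B\rrbracket_{/\K/C_0}$ is built from this same $\tilde D$, cf.\ Lemma~\ref{lem:independence}, and that the extension-by-zero of $(\omega f)|_{\mathring W}$ is again $\omega f$) gives $\phi\hatotimes f\mapsto\overline{t\mapsto\overline{V^*\phi(t^{-1}\tilde D)(\omega f)V}}$; here all $V$'s are identified under the canonical bundle inclusions $\tilde S\subseteq\tilde S\hatotimes\widehat\ell^2\subseteq(A\hatotimes B)\hatotimes\widehat\ell^2$. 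Thus it suffices to show the difference asymptotic morphism $\phi\hatotimes f\mapsto\overline{t\mapsto\overline{V^*\phi(t^{-1}\tilde D)((1{-}\omega)f)V}}$ is zero, and since $\phi_\pm(x)=(x\pm i)^{-1}$ generate $\cS$ it is enough to prove $V^*\phi_\pm(t^{-1}\tilde D)((1{-}\omega)f)V\in\K(M,A\hatotimes B)$ for every $t\ge 1$ and every $f\in\cA^\infty(\tilde M,B)$. To do this, approximate $\phi_\pm(t^{-1}\tilde D)\in C^*(\tilde M,\tilde S)$ in norm by locally compact operators $\Phi$ of finite propagation $R$; then $\Phi((1{-}\omega)f)$ is locally compact of propagation $R$, and because $(1-\omega)f$ is supported in $N_1(W^c)$ the operator $V^*\Phi((1{-}\omega)f)V$ has left support inside $W\cap N_{R+1}(W^c)\subseteq\overline{N_{R+1}(\partial W)}$, which is precompact; local compactness then yields $V^*\Phi((1{-}\omega)f)V\in\K$, and passing to the norm limit in $\Phi$ gives $V^*\phi_\pm(t^{-1}\tilde D)((1{-}\omega)f)V\in\K(M,A\hatotimes B)$, hence $0$ in $C^*_{/\K}(M,A\hatotimes B)$. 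Therefore the two representing asymptotic morphisms agree on generators, and the diagram commutes.

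The main obstacle is the last compactness claim: it is the one place where one must combine the finite-propagation approximation of the functional calculus, the local compactness of $\phi(t^{-1}\tilde D)$, and — crucially — the compactness of $\partial W$ (equivalently, precompactness of $M\setminus W$), which is exactly what turns the boundary collar of $W$ into a precompact region so that the ``mass near $W^c$'' becomes a compact operator after compression. The first two items are bookkeeping around these same two facts together with the intertwining property of $V$.
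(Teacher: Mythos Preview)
Your argument is correct and follows essentially the same route as the paper: in parts two and three the key point is that the failure of multiplicativity, respectively the difference of the two asymptotic morphisms, is a compact operator because it factors through a function supported in a precompact collar of the compact boundary $\partial W$. Your treatment of the first item is more detailed than the paper's (which simply calls it trivial), and in the third item you take a slightly longer path: you approximate $\phi_\pm(t^{-1}\tilde D)$ by finite-propagation operators and track the \emph{left} support of $V^*\Phi((1-\omega)f)V$, whereas the paper observes directly that $(1-\omega)fV=(1-\omega)f\chi_W V$ with $(1-\omega)\chi_W$ already compactly supported (in $W\cap N_1(W^c)$), so local compactness of $\phi(t^{-1}\tilde D)$ alone yields the result without the finite-propagation approximation.
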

\begin{proof}
The first part is trivial.

For the second part, let $S,T$ have propagation bounded by $R>0$. Denote by $\chi\colon \tilde M\to\{0,1\}$ the characteristic function of $W$ and by 
$\rho\colon \tilde M\to\{0,1\}$ the characteristic function of the $R+1$-neighborhood of $W$ and abbreviate $V:=V_{\tilde M,W,A}$.
We make use of the propagation of $S,T$ to see that
\[V^*STV-V^*SVV^*TV=V^*S(1-\chi)TV=V^* S(1-\chi)\rho TV\,,\]
which is $A$-compact, because $\rho(1-\chi)$ has compact support. This proves multiplicativity.

The two compositions in the diagram  of the third part are represented by the asymptotic morphisms
\begin{align*}
\cS\hatotimes\cA(\tilde M,B)&\to\fA(C^*_{/\K}(M,A\hatotimes B))
\\\phi\hatotimes f&\mapsto\overline{t\mapsto \overline{(V_{\tilde M,W,A\hatotimes B})^*\,\phi(t^{-1}\tilde D)f\,V_{\tilde M,W,A\hatotimes B}}}\qquad\text{and}
\\\phi\hatotimes f&\mapsto\overline{t\mapsto \overline{(V_{\tilde M,W,A\hatotimes B})^*\,\phi(t^{-1}\tilde D)(\omega f)\,V_{\tilde M,W,A\hatotimes B}}}\,,
\end{align*}
respectively. The task of keeping a clear head about which operators in these formulas act on which Hilbert modules in which canonical  way is left to the reader.

The two asymptotic morphisms are equal, because the function $f(1-\omega)\chi$ has compact support and hence the difference 
\begin{align*}
(V_{\tilde M,W,A\hatotimes B})^*&\phi(t^{-1}\tilde D)fV_{\tilde M,W,A\hatotimes B}-(V_{\tilde M,W,A})^*\phi(t^{-1}\tilde D)V_{\tilde M,W,A}\,\omega f
\\&=(V_{\tilde M,W,A\hatotimes B})^*\phi(t^{-1}\tilde D)f(1-\omega)\chi V_{\tilde M,W,A\hatotimes B}
\end{align*}
is compact.
\end{proof}

\begin{cor}\label{cor:indexprojection}
$\pi_{\tilde M,W,A}\circ\ind(\tilde D)=\ind_{/\K}(D)$.
\end{cor}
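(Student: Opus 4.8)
The plan is to obtain this as an immediate consequence of Lemma~\ref{lem:comparemodKandnotmodK}. Recall from the remark following Definition~\ref{def:ETheoryClassBoundedGradient} that the coarse index factors as $\ind(\tilde D)=\llbracket\tilde D;\C\rrbracket\circ 1_{\tilde M}$, where $\llbracket\tilde D;\C\rrbracket$ is viewed as an element of $E(\cA(\tilde M,\C),C^*(\tilde M,A))$ and $1_{\tilde M}\in K(\cA(\tilde M,\C))$ is the canonical class of the constant function with value a fixed rank-one projection $p\in\K\subseteq\widehat\K$ (which lies in $\cA(\tilde M,\C)$, $\tilde M$ being a manifold without boundary). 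Specializing the commuting square of Lemma~\ref{lem:comparemodKandnotmodK} to $B=\C$ and composing on the right with $1_{\tilde M}$ gives
\[\pi_{\tilde M,W,A}\circ\ind(\tilde D)=\llbracket D;\C\rrbracket_{/\K/C_0}\circ(\tau_{\tilde M,W,\C})_*(1_{\tilde M})\,.\]
In view of Definition~\ref{def:modKindex}, the corollary therefore reduces to the single claim that $(\tau_{\tilde M,W,\C})_*(1_{\tilde M})=1_M$ in $K(\cA_{/C_0}(M,\C))$.

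To verify this I would work with explicit representatives. By definition $\tau_{\tilde M,W,\C}$ sends the constant function $p$ to $\overline{\omega p}$, where $\omega$ is the scalar cutoff from Lemma~\ref{lem:comparemodKandnotmodK}: it vanishes on $W^c=\tilde M\setminus W$ and equals $1$ outside the $1$-neighborhood of $W^c$. On the other hand, $1_M\in K(\cA_{/C_0}(M,\C))$ is, by definition, the image under the quotient map of the class in $K(\cA(M,\C))$ of the constant projection $p$ on $M$; under the canonical isomorphism $\cA_{/C_0}(M,\C)\cong\cA_{/C_0}(\mathring W,\C)$ discussed before Definition~\ref{def:modKindex} (which comes from the decomposition $\cA(M,\C)=\cA(\mathring W,\C)+C_0(M,\widehat\K)$ via extension by zero) this image is represented by $\overline{\chi p}$ for any $\chi\in C_b^\infty(\mathring W)$ that equals $1$ outside a precompact subset and whose continuous extension to $\overline{\mathring W}$ vanishes on $\partial W$. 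Now $\omega|_{\mathring W}$ is one such $\chi$: since $M\setminus W$ is precompact the hypersurface $\partial W$ is compact, and for $x\in\mathring W$ one has $d(x,W^c)=d(x,\partial W)$, so $\{x\in\mathring W:d(x,W^c)<1\}$ is precompact; and $\omega$ vanishes on $\overline{W^c}\supseteq\partial W$. Hence $\omega p$ and $\chi p$ differ by a function with compact support in $\mathring W$, i.e.\ by an element of $C_0(\mathring W,\widehat\K)$, so they define the same $K$-theory class modulo $C_0$. This establishes $(\tau_{\tilde M,W,\C})_*(1_{\tilde M})=1_M$ and completes the proof.

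The only real work is the second paragraph, namely transporting the generator $1_M$ correctly through the identification $\cA_{/C_0}(M,\C)\cong\cA_{/C_0}(\mathring W,\C)$; no analytic input beyond Lemma~\ref{lem:comparemodKandnotmodK} is needed, and the crux is the elementary observation that a bounded function supported near the compact hypersurface $\partial W$ and vanishing on it belongs to $C_0$.
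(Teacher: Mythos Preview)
Your proof is correct and follows exactly the same approach as the paper: specialize the commuting square of Lemma~\ref{lem:comparemodKandnotmodK} to $B=\C$, compose with $1_{\tilde M}$, and use $(\tau_{\tilde M,W,\C})_*(1_{\tilde M})=1_M$. The paper simply asserts this last equality without justification, whereas you spell it out; one tiny slip is that you require $\chi\in C_b^\infty(\mathring W)$ while $\omega$ is only assumed $C_b$, but this is irrelevant since the argument only needs that $p-\omega p$ has precompact support in $M$ and hence lies in $C_0(M,\widehat\K)$.
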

\begin{proof}
The homomorphism $(\tau_{\tilde M,W,\C})_*\colon K(\cA(\tilde M,\C))\to K(\cA_{/C_0}(M,\C))$ maps $1_{\tilde M}$ to $1_{M}$. The claim now follows from the third part of the lemma with $B=\C$.
\end{proof}

Using this construction, we may now generalize Theorem \ref{thm:HPSpartialvanishing} to the situation where $M$ is only spin outside of the compact set $K$.
\begin{thm}
Let $(M,g)$ be a complete connected non-compact Riemannian manifold such that, outside of a compact subset $K$, it is spin and the scalar curvature is uniformly positive. Let 
$E\to M\setminus K$ be an $A$-bundle with a flat connection. Denote by $\slashed D$ the spin Dirac operator over $M\setminus K$ and by $\slashed D_E$ the twisted operator. Then the coarse index 
\[\ind_{/\K}(\slashed D_E)\in K_*(C_{/\K}^*(M,A))\]
vanishes. 
\end{thm}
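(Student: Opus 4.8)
The key observation is that the modulo-$\K$ index $\ind_{/\K}(\slashed D_E)$ was, by construction, obtained from an extension: choose $W \subseteq M$ as in Lemma \ref{lem:defWtildeMtildeStildeD} with $M \setminus W$ precompact containing $K$, and choose $\tilde M$ a complete Riemannian manifold extending $W$ (essentially the double of $W$). Then by Corollary \ref{cor:indexprojection}, $\ind_{/\K}(\slashed D_E) = \pi_{\tilde M, W, A} \circ \ind(\widetilde{\slashed D_E})$, where $\widetilde{\slashed D_E}$ is a Dirac operator on $\tilde M$ extending $(\slashed D_E)|_W$. So it suffices to show that $\ind(\widetilde{\slashed D_E}) \in K_*(C^*(\tilde M, A))$ vanishes, and for that we want to apply Theorem \ref{thm:HPSpartialvanishing} to each connected component of $\tilde M$.

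\textbf{The main work is arranging the extension $\tilde M$ to satisfy the hypotheses of Theorem \ref{thm:HPSpartialvanishing}.} First I would note that $W$ already carries a spin structure (restricted from $M \setminus K$, since $W \subseteq M \setminus K$ up to the precompact piece — more precisely we may shrink $W$ so that $W \subseteq M \setminus K$, hence $W$ is spin) and uniformly positive scalar curvature outside a precompact set, in fact on all of $W$ after possibly shrinking $W$ so that $W$ lies in the region of positive scalar curvature. Moreover $E|_W$ is a flat $A$-bundle and $\widetilde{\slashed D_E}$ restricted to $W$ is the spin Dirac operator twisted by this flat bundle. The doubling construction of Lemma \ref{lem:defWtildeMtildeStildeD} can be performed so as to preserve the spin structure, the metric (hence positive scalar curvature — one must be slightly careful near the gluing region, but since the collar is a product and the metric near $\partial W$ has positive scalar curvature, the doubled metric does too, possibly after a small perturbation supported near the gluing locus that keeps scalar curvature positive), and the flat connection on the extended bundle $\tilde E$. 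Thus each component $\tilde M_j$ of $\tilde M$ is a complete Riemannian spin manifold with uniformly positive scalar curvature outside a compact set, and $\tilde E|_{\tilde M_j}$ is flat, so Theorem \ref{thm:HPSpartialvanishing} applies provided $\tilde M_j$ is non-compact: but a component of a double of a manifold with nonempty boundary is non-compact (or compact only if $W$ had a compact component disjoint from $K$, which we may exclude since $M$ is connected and non-compact, so $W$ can be taken connected and non-compact). Hence $\ind(\widetilde{\slashed D_E}) = \bigoplus_j \ind(\widetilde{\slashed D_E}|_{\tilde M_j}) = 0$, and applying $\pi_{\tilde M, W, A}$ gives $\ind_{/\K}(\slashed D_E) = 0$.

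\textbf{The subtle point — and the step I expect to be the main obstacle — is the compatibility of extension with twisting:} I need that $\widetilde{\slashed D_E}$ agrees with $(\tilde{\slashed D})_{\tilde E}$, i.e.\ that one may extend the twisted operator by extending the untwisted Dirac bundle and the twisting bundle separately. Since the definition of $\ind_{/\K}$ only depended on the restriction $D|_W$ (by the independence-of-extension statement following Definition \ref{def:modKindex}, which in turn rests on Lemma \ref{lem:independence}), and since $(\slashed D_E)|_W = (\slashed D|_W)_{E|_W}$ is itself the Dirac operator of a Dirac $A \hatotimes B$-bundle over $W$, I can simply take $\tilde S$ in Lemma \ref{lem:defWtildeMtildeStildeD} to be the extension of this twisted Dirac bundle $S \hatotimes E|_W$ directly. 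This sidesteps the need to extend $\slashed D$ and $E$ compatibly: one extends the single Dirac $A \hatotimes B$-bundle $(S \hatotimes E)|_W$ at once. But for Theorem \ref{thm:HPSpartialvanishing} to apply one does need to recognize this extension as (spin Dirac) $\otimes$ (flat bundle); this is automatic if the doubling is done spin-structure- and flat-connection-preservingly, which is the standard doubling and presents no real difficulty. So the genuine content is just bookkeeping: verify the doubled metric can be taken with positive scalar curvature outside a compact set (clear from the product collar), and that the doubled twisting bundle is still flat (clear, as flatness is a local condition preserved under the gluing isometry). Then invoke Theorem \ref{thm:HPSpartialvanishing} component-wise and push forward via $\pi_{\tilde M, W, A}$.
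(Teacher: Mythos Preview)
Your proposal is correct and follows essentially the same route as the paper: reduce via Corollary \ref{cor:indexprojection} to an honest coarse index on the double $\tilde M$, arrange the geometric data (spin structure, uniformly positive scalar curvature, flat twisting bundle) to extend across the doubling, and invoke Theorem \ref{thm:HPSpartialvanishing}. The paper's own proof is two sentences and simply asserts that $\tilde M$ can be given uniformly positive scalar curvature everywhere (not just outside a compact set), leaving the bookkeeping you spell out---extension of spin structure and flat connection to the double, non-compactness of the components---implicit; your more careful accounting of these points is appropriate and does not diverge from the paper's argument.
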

\begin{proof}
In the construction of the index modulo $\K$ we can choose $W,\tilde M$ in such a way that $\tilde M$ is --as differentiable manifold-- the double of $W$ and the metric on $\tilde M\setminus W$ can be chosen such that $\tilde M$ has uniformly positive scalar curvature. Theorem \ref{thm:HPSpartialvanishing} implies $\ind(\widetilde{\slashed D_E})=0$ and the claim follows from the previous corollary.
\end{proof}

Another consequence of Corollary \ref{cor:indexprojection} is the expected formula for twisted operators:
\begin{thm}\label{mainthm:modKindexformula}
Let $M$ be a complete Riemannian manifold, $S\to M\setminus K$ a Dirac $A$-bundle  with Dirac operator $D$ and $E\to M\setminus K$ a $B$-bundle of bounded variation defined outside of a compact subset $K\subseteq M$. Then the index of the twisted operator $D_E$ is
\[\ind_{/\K}(D_E)=\llbracket D;B\rrbracket_{/\K/C_0}\circ \llbracket E\rrbracket_{/C_0}\,.\]
\end{thm}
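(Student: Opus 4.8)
The plan is to reduce Theorem \ref{mainthm:modKindexformula} to the already-established Theorem \ref{mainthm:indexformula} by the same doubling-and-projecting device that gave Corollary \ref{cor:indexprojection}. Concretely, choose $W$, $\tilde M$, $\tilde S$, $\tilde D$ as in Lemma \ref{lem:defWtildeMtildeStildeD} applied to the Dirac $A$-bundle $S\to M\setminus K$. Since $P\in\cA^\infty(M,B)$ is a genuine projection outside of $K$, and $W$ may be chosen with $M\setminus W$ precompact and containing $K$, the restriction $P|_W$ extends (after a compactly supported perturbation, or simply by enlarging $W$ slightly inside the region where $P$ is already a projection) to a projection $\tilde P\in\cA^\infty(\tilde M,B)$; this defines a genuine $B$-bundle of bounded variation $\tilde E\to\tilde M$ in the sense of Definition \ref{def:bundleboundedvariation}, with $\tilde E|_W=E|_W$.

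Next I would invoke Theorem \ref{mainthm:indexformula} for $\tilde M$, $\tilde S$, $\tilde E$: it gives $\ind(\tilde D_{\tilde E})=\llbracket\tilde D;B\rrbracket\circ\llbracket\tilde E\rrbracket$ in $K(C^*(\tilde M,A\hatotimes B))$. Now apply the $*$-homomorphism $\pi_{\tilde M,W,A\hatotimes B}$ of Lemma \ref{lem:comparemodKandnotmodK} to both sides. On the left, $\tilde D_{\tilde E}$ is the Dirac operator of the Dirac $A\hatotimes B$-bundle $\tilde S\hatotimes\tilde E$, which restricts over $W$ to $S\hatotimes E$, i.e.\ to the twisted operator $D_E$; hence Corollary \ref{cor:indexprojection} (applied to the Dirac $A\hatotimes B$-bundle $S\hatotimes E$ rather than to $S$) yields $\pi_{\tilde M,W,A\hatotimes B}(\ind(\tilde D_{\tilde E}))=\ind_{/\K}(D_E)$. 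On the right, I would use the third bullet of Lemma \ref{lem:comparemodKandnotmodK}: composing the commuting square
\[
\xymatrix@C=2cm{
\cA(\tilde M,B)\ar[r]^{\llbracket\tilde D;B\rrbracket}\ar[d]^{\tau_{\tilde M,W,B}}
&C^*(\tilde M,A\hatotimes B)\ar[d]^{\pi_{\tilde M,W,A\hatotimes B}}
\\\cA_{/C_0}(M,B)\ar[r]^{\llbracket D;B\rrbracket_{/\K/C_0}}
&C^*_{/\K}(M,A\hatotimes B)
}
\]
with the $K$-theory class $\llbracket\tilde E\rrbracket\in K(\cA(\tilde M,B))$ gives
\[
\pi_{\tilde M,W,A\hatotimes B}\circ\llbracket\tilde D;B\rrbracket\circ\llbracket\tilde E\rrbracket
=\llbracket D;B\rrbracket_{/\K/C_0}\circ(\tau_{\tilde M,W,B})_*\llbracket\tilde E\rrbracket .
\]
It remains to identify $(\tau_{\tilde M,W,B})_*\llbracket\tilde E\rrbracket$ with $\llbracket E\rrbracket_{/C_0}\in K(\cA_{/C_0}(M,B))$. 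By definition $\tau_{\tilde M,W,B}$ sends a function $f$ to $\omega f$ with $\omega$ supported in $W$ and equal to $1$ off a neighborhood of $W^c$; so it sends the projection $\tilde P$ to $\overline{\omega\tilde P}$, which in $\cA_{/C_0}(M,B)$ equals $\overline{\tilde P|_W}=\overline{P|_W}$ modulo $C_0$, and this is precisely the class of the projection $\overline P$ defining $\llbracket E\rrbracket_{/C_0}$ (note $\omega\tilde P-\tilde P|_W$ is compactly supported, hence lies in $C_0(M,B\hatotimes\widehat\K)$, and $P$ and $\tilde P$ agree on $W$). Combining the three displayed equalities gives the claimed formula.

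The main obstacle is the bookkeeping around the extension of $P$ and the naturality of the $K$-theory classes under $\tau_{\tilde M,W,B}$: one must check carefully that $\tilde P$ can be taken to be a smooth projection in $\cA^\infty(\tilde M,B)$ agreeing with $P$ on $W$ — this uses that $W$ was chosen inside the region where $P$ is already a projection and that the doubling construction can be performed so as to extend the inclusion $W\colon E\hookrightarrow W\times B\hatotimes\widehat\ell^2$ — and that $(\tau_{\tilde M,W,B})_*\llbracket\tilde E\rrbracket=\llbracket E\rrbracket_{/C_0}$, which hinges on the fact that $\omega\tilde P$ and $P$ differ only by a compactly supported (hence $C_0$) term. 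Everything else is a formal composition of morphisms in $E$-theory using Theorem \ref{mainthm:indexformula}, Corollary \ref{cor:indexprojection} and Lemma \ref{lem:comparemodKandnotmodK}, together with the functoriality of the composition product. As with Theorem \ref{mainthm:indexformula} one also first reduces to a convenient choice of connections on $S$ and $E$ via Corollary \ref{cor:princSymp} (and its evident modulo-$\K$ analogue, which follows from bordism invariance applied to $W\times[0,1]$), so that $E$ genuinely comes from an isometric inclusion $W\colon E\hookrightarrow W\times B\hatotimes\widehat\ell^2$.
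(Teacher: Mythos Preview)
Your proposal is correct and follows essentially the same approach as the paper: extend $P|_W$ to a projection on $\tilde M$, apply Theorem~\ref{mainthm:indexformula} there, then push down via Corollary~\ref{cor:indexprojection} on the left and the commuting square of Lemma~\ref{lem:comparemodKandnotmodK} on the right, identifying $(\tau_{\tilde M,W,B})_*\llbracket\tilde E\rrbracket=\llbracket E\rrbracket_{/C_0}$. The paper's proof is more terse but structurally identical; your extra remarks about connection choices via Corollary~\ref{cor:princSymp} are not needed here since Theorem~\ref{mainthm:indexformula} already absorbs that independence.
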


\begin{proof}
We can find $W,\tilde M,\tilde S$ as in the construction above but with the additional property that $P|_W$ extends to a projection $P\in\cA^\infty(\tilde M,B)$.
By Theorem \ref{mainthm:indexformula}, $\ind(\tilde D_E)=\llbracket \tilde D;B\rrbracket\circ[P]$ and by Lemma \ref{lem:comparemodKandnotmodK} and Corollary \ref{cor:indexprojection} we conclude
\begin{align*}
\ind_{/\K}(D_E)&=\pi_{\tilde M,W,A\hatotimes B}\circ \llbracket \tilde D;B\rrbracket_B\circ[P]
=\llbracket D;B\rrbracket_{/\K/C_0}\circ\tau_{\tilde M,W,B}\circ [P]
\\&=\llbracket D;B\rrbracket_{/\K/C_0}\circ \llbracket E\rrbracket_{/C_0}\,.
\qedhere
\end{align*}
\end{proof}

\section{Vector bundles of vanishing variation}\label{sec:duality}
In this section we specialize even further to operators twisted by $B$-bundles of vanishing variation. To make these notions precise, we recall some terminology from \cite[Chapter 5]{RoeCCITCRM} and \cite[Section 3]{EmeMey}:
\begin{defn}
Let $X$ be a proper metric space.
\begin{itemize}
\item Let $f\colon X\to Y$ be a map into another metric space $Y$ and $R>0$. 
The \emph{$R$-variation} of $f$ is the function
\[\operatorname{Var}_R\colon X\to [0,\infty)\,,\quad x\mapsto\sup\{d_Y(f(x),f(y))\mid d_X(x,y)\leq R\}\,.\]
The map $f$ is said to have \emph{vanishing variation} if $\operatorname{Var}_R$ vanishes at infinity for all $R>0$.
\item Let $X$ be a proper metric space and $B$ a $\Z_2$-graded unital $C^*$-algebra. The \emph{stable Higson compactification} of $X$ with coefficients in $B$ is the $C^*$-algebra
\[\cbar(X,B):=\{f\in C_b(X,B\hatotimes\widehat\K)\mid f\text{ has vanishing variation}\}\]
and the \emph{stable Higson corona} is
\[\cfrak(X,B):=\cbar(X,B)/C_0(X,B\hatotimes\widehat\K)\,.\]
\end{itemize}
\end{defn}
In contrast to the original definition, we use the graded $C^*$-algebra $\widehat\K$  instead of the ungraded $\K$ here. This difference is irrelevant from an $E$-theoretic point of view because of stability.

Recall from \cite{WulffCoassemblyRinghomo} that there are multiplication $*$-homomorphisms
\[\nabla_{B,C}\colon \cfrak(X,B)\hatotimes\cfrak(X,C)\to\cfrak(X,B\hatotimes C)\,,\quad \overline{f}\hatotimes\overline{g}\mapsto\overline{x\mapsto f(x)\hatotimes g(x)}\]
which are unique up to homotopy. They induce multiplications
\begin{equation}\label{eq:ringstructure}
K_*(\cfrak(X,B))\otimes K_*(\cfrak(X,C))\to K_*(\cfrak(X,B\hatotimes C))
\end{equation}
in $K$-theory and in particular for $B=C=\C$ (or $B$ and $C$ Clifford algebras) we see that $K_*(\cfrak(X,\C))$ is a $\Z_2$-graded, graded commutative ring.

In the context of index theory, a more analytic description of these notions is necessary (cf.\ remarks at the beginning of \cite[Chapter 5]{RoeCCITCRM}). This is available in the context of complete Riemannian manifolds of bounded geometry.

\begin{lem}\label{lem:BG}
Assume $M$ is a complete Riemannian manifold of bounded geometry. Then 
\[\cbar^{\infty}(M,B):=\{f\in C^\infty_b(M,B\hatotimes\widehat\K)\mid \grad(f)\text{ vanishes at }\infty\}\]
is a dense subalgebra of $\cbar(X,B)$.
\end{lem}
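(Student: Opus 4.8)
The claim is that on a complete Riemannian manifold of bounded geometry, a bounded smooth function $f\colon M\to B\hatotimes\widehat\K$ whose gradient vanishes at infinity automatically has vanishing variation, and that such functions are dense in $\cbar(M,B)$. There are really two things to prove: first, $\cbar^\infty(M,B)\subseteq\cbar(M,B)$, i.e.\ that vanishing gradient at infinity implies vanishing variation; second, density.

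For the inclusion, I would fix $R>0$ and estimate $\operatorname{Var}_R(f)(x)$ as follows. For $y$ with $d(x,y)\le R$, pick a minimizing geodesic $\gamma\colon[0,\ell]\to M$ from $x$ to $y$ parametrized by arc length, so $\ell\le R$. Then
\[
\|f(x)-f(y)\|=\Bigl\|\int_0^\ell \partial_{\dot\gamma(s)}f\bigl(\gamma(s)\bigr)\,ds\Bigr\|\le \int_0^\ell \|\grad(f)(\gamma(s))\|\,ds\le R\cdot\sup_{z\in \overline{B_R(x)}}\|\grad(f)(z)\|\,.
\]
Since $\grad(f)$ vanishes at infinity and $R$ is fixed, the quantity $\sup_{z\in \overline{B_R(x)}}\|\grad(f)(z)\|$ tends to $0$ as $x\to\infty$ (the closed $R$-ball around a point going to infinity eventually avoids any fixed compact set). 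Hence $\operatorname{Var}_R(f)$ vanishes at infinity for every $R$, so $f\in\cbar(M,B)$. Boundedness of $f$ is assumed, so $f\in\cbar(M,B)$ indeed. Note this step does not even use bounded geometry; it only uses completeness (to get geodesics, via Hopf--Rinow) and could be phrased with any path instead.

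For density, which is where bounded geometry enters, the natural approach is a smoothing/convolution argument against the cover provided by Lemma~\ref{lem:BGPartitionofUnity}. Given $g\in\cbar(M,B)$ and $\varepsilon>0$, choose $r>0$, $Y\in\mathcal{Y}_r$, and a partition of unity $\{\psi_y\}_{y\in Y}$ with $\supp(\psi_y)\subseteq B_r(y)$ and $\|\grad(\psi_y)\|\le C$ uniformly, all as in Lemma~\ref{lem:BGPartitionofUnity}. Define $g_r:=\sum_{y\in Y}\psi_y\cdot g(y)$. This is a smooth bounded function into $B\hatotimes\widehat\K$. The uniform-norm error $\|g_r-g\|$ is controlled by $\sup_x\sum_{y\in\supp\text{ at }x}\psi_y(x)\|g(y)-g(x)\|\le\operatorname{Var}_r(g)$ evaluated near the relevant points, hence is small once we know $g$ has small variation — but here we only know $g$ has \emph{vanishing} variation, not small variation, so we must combine this with a cutoff: outside a large compact set $\operatorname{Var}_r(g)$ is $<\varepsilon$, and on the compact set we can handle things by ordinary mollification since $g$ is continuous there. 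Concretely, I would split $g=g\cdot\chi+g\cdot(1-\chi)$ for a suitable cutoff $\chi$ supported on a compact neighborhood, approximate the (compactly supported, continuous) piece $g\chi$ by a smooth compactly supported function in $C_0(M,B\hatotimes\widehat\K)\subseteq\cbar^\infty(M,B)$, and approximate the piece $g(1-\chi)$, which lives far out where variation is tiny, by the discretized-then-smoothed $g_r$. For the $g_r$ piece one checks the gradient vanishes at infinity: $\grad(g_r)=\sum_y\grad(\psi_y)\cdot g(y)$, and near a point $x$ going to infinity the terms $\grad(\psi_y)(x)$ (finitely many, bounded in number by $K_{r,r}$) multiply $g(y)$ with $y$ near $x$; since $\sum_y\grad(\psi_y)\equiv 0$ we may rewrite $\grad(g_r)(x)=\sum_y\grad(\psi_y)(x)\,(g(y)-g(x))$, and each $\|g(y)-g(x)\|\le\operatorname{Var}_{2r}(g)(x)\to 0$, with at most $K_{r,2r}$ nonzero terms each of norm $\le C$. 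So $\|\grad(g_r)(x)\|\le C\cdot K_{r,2r}\cdot\operatorname{Var}_{2r}(g)(x)\to 0$, giving $g_r\in\cbar^\infty(M,B)$.

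**Main obstacle.** The delicate point is the density statement, specifically matching up the ``large-scale'' smoothing (where bounded geometry gives us a uniformly locally finite cover with uniformly bounded partition-of-unity gradients) with the ``local'' behavior on a compact core, and making sure the two approximations patch into a single element of $\cbar^\infty(M,B)$ with controlled gradient everywhere. The inclusion $\cbar^\infty\subseteq\cbar$ is essentially immediate from the fundamental theorem of calculus along geodesics as above; the real content is the discretize-and-smooth argument, which is structurally the same as the commutator estimate in the proof of the Lemma preceding Definition~\ref{def:localizationFundamentalclass} (and modeled on \cite[Lemma~1.3]{Shubin}), so I would cite that pattern rather than redo all the bookkeeping.
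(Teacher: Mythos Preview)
Your approach is correct and matches the paper's proof almost exactly: the inclusion via integrating along geodesics, and density via the discretize-and-smooth construction $g_r=\sum_y\psi_y\,g(y)$ together with the rewriting trick $\sum_y\grad(\psi_y)=0$ to show $\grad(g_r)$ vanishes at infinity.

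The one place you make life harder than necessary is what you flag as the ``main obstacle.'' You worry that $\|g_r-g\|_\infty$ is not small because $\operatorname{Var}_r(g)$ need not be uniformly small, and therefore introduce a cutoff splitting $g=g\chi+g(1-\chi)$. The paper avoids this entirely: the pointwise bound $\|g_r(x)-g(x)\|\le K_{r,r}\cdot\operatorname{Var}_r(g)(x)$ already shows $g-g_r\in C_0(M,B\hatotimes\widehat\K)$, so one simply approximates this $C_0$-piece by smooth compactly supported functions (which lie in $\cbar^\infty(M,B)$) and adds the result to $g_r$. No patching of a compact core with a far-out piece is needed; your ``delicate point'' dissolves once you observe that the error is automatically $C_0$ rather than trying to make it uniformly small.
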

\begin{cor}
If $M$ has bounded geometry, then
$\cfrak(M,B)$ is a sub-$C^*$-algebra of $\cA_{/C_0}(M,B)$. We denote its inclusion by $\iota_B$.
\end{cor}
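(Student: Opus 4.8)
The statement to prove is: if $M$ has bounded geometry, then $\cfrak(M,B)$ is a sub-$C^*$-algebra of $\cA_{/C_0}(M,B)$, with a canonical inclusion $\iota_B$. The plan is to derive this directly from Lemma \ref{lem:BG} together with the definitions of the two quotient $C^*$-algebras. First I would observe that $\cfrak(M,B) = \cbar(M,B)/C_0(M,B\hatotimes\widehat\K)$ and $\cA_{/C_0}(M,B) = \cA(M,B)/C_0(M,B\hatotimes\widehat\K)$ have the \emph{same} ideal $C_0(M,B\hatotimes\widehat\K)$ divided out, so it suffices to exhibit $\cbar(M,B)$ as a sub-$C^*$-algebra of $\cA(M,B)$ containing $C_0(M,B\hatotimes\widehat\K)$; the induced map on quotients is then automatically an injective $*$-homomorphism, because passing to quotients by a common ideal is exact and preserves injectivity here (the preimage of $C_0$ inside $\cbar(M,B)$ is exactly $C_0$).

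The key inclusion $\cbar(M,B)\subseteq\cA(M,B)$ is proved at the level of the dense smooth subalgebras provided by Lemma \ref{lem:BG}. Indeed, $\cbar^\infty(M,B)$ consists of $f\in C_b^\infty(M,B\hatotimes\widehat\K)$ whose gradient \emph{vanishes at infinity}; in particular such an $f$ has \emph{bounded} gradient, so $f\in\cA^\infty(M,B)$ as soon as we check the second defining condition of $\cA^\infty$. But $M$ is complete, so $\overline M = M$ and that boundary condition is vacuous (this is exactly the remark following the definition of $\cA^\infty(M,B)$ that the second condition is only relevant for non-complete manifolds). Hence $\cbar^\infty(M,B)\subseteq\cA^\infty(M,B)$. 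Taking closures in $C_b(M,B\hatotimes\widehat\K)$ and invoking Lemma \ref{lem:BG} (density of $\cbar^\infty$ in $\cbar$) and the definition of $\cA(M,B)$ as the closure of $\cA^\infty(M,B)$, we conclude $\cbar(M,B)\subseteq\cA(M,B)$, and this is a sub-$C^*$-algebra since both are norm-closed $*$-subalgebras of the same ambient $C^*$-algebra. Clearly $C_0(M,B\hatotimes\widehat\K)\subseteq\cbar(M,B)$ since $C_0$-functions have vanishing variation.

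Finally I would record that the quotient map $\cbar(M,B)\to\cbar(M,B)/C_0(M,B\hatotimes\widehat\K)=\cfrak(M,B)$, composed with the inclusion into $\cA(M,B)/C_0(M,B\hatotimes\widehat\K)=\cA_{/C_0}(M,B)$, gives the desired $\iota_B$, and that it is injective because $\cbar(M,B)\cap C_0(M,B\hatotimes\widehat\K) = C_0(M,B\hatotimes\widehat\K)$ inside $\cA(M,B)$. I do not anticipate a serious obstacle: the only point requiring care is the verification that the boundary condition in the definition of $\cA^\infty$ is automatic for complete $M$, and that the two quotients really are by the identical ideal so that the induced map on quotients is both well-defined and injective; everything else is a routine ``closures of nested dense subalgebras'' argument built on Lemma \ref{lem:BG}.
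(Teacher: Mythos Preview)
Your proposal is correct and follows precisely the line of reasoning the paper has in mind: the paper states the corollary without proof, treating it as immediate from Lemma~\ref{lem:BG}, and your argument spells out exactly the intended deduction (vanishing gradient at infinity $\Rightarrow$ bounded gradient, hence $\cbar^\infty(M,B)\subseteq\cA^\infty(M,B)$, then pass to closures and to the common quotient by $C_0$). There is nothing to add.
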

\begin{proof}[Proof of Lemma \ref{lem:BG}]
Clearly $\cbar^{\infty}(M,B)\subseteq\cbar(M,B)$. 

Now let $f\in\cbar(M,B)$. 
Let $\mathcal{Y}_r$ and $K_{r,R}$ ($r,R>0$) be as in Lemma \ref{lem:BGmainproperties} and choose $r>0,Y\in\mathcal{Y}_r,\psi_y,C$ as in Lemma \ref{lem:BGPartitionofUnity}.
We define a new function 
$\hat f:=\sum_{y\in Y}f(y)\psi_y$.
It is clearly smooth and because of $\sum_{y\in Y}\grad(\psi_y)=\grad\left(\sum_{y\in Y}\psi_y\right)=0$ we can calculate
\[\grad(\hat f)(x)=\sum_{y\in Y}\grad(\psi_y)(x)\hatotimes f(y)=\sum_{y\in Y}\grad(\psi_y)\hatotimes (f(y)-f(x))\]
and obtain the pointwise norm estimate
\[\|\grad(\hat f)(x)\|\leq K_{r,R}\cdot C\cdot\operatorname{Var}_r(f)(x)\]
and this vanishes at infinity. Thus, $\hat f\in \cbar^{\infty}(M,B)$. Similarly, 
\[f-\hat f=\sum_{y\in Y}(f(y)-f)\psi_y\]
is pointwise bounded in norm by $K_{r,R}\cdot\operatorname{Var}_r(f)$ and therefore also vanishes at infinity. The claim now follows from the fact that $f-\hat f$ can be approximated in norm by smooth functions of compact support.
\end{proof}

We can now define the bundles corresponding to elements of the $K$-theory of the stable Higson corona:
\begin{defn}
Let $P\in \cbar^\infty(M,B)$ be projection valued outside of a compact subset $K\subseteq M$.
The $B$-bundle $E\to M\setminus K$ whose fibre at $x$ is the image of $P_x$ in $B\hatotimes\widehat\ell^2$ is called a \emph{$B$-bundle of vanishing variation defined outside of a compact subset}.
Its $K$-theory class $\llbracket E\rrbracket_{\cfrak}\in K(\cfrak(M,B))$ is defined to be the class of the projection $\overline{P}\in\cfrak(M,B)$ determined by $P$.
\end{defn}
This notion is obviously  a special case of $B$-bundles of bounded variation and the inclusion $\iota_B$ of the previous corollary maps $\llbracket E\rrbracket_{\cfrak}$ to $\llbracket E\rrbracket_{/C_0}$.

We now turn to the construction of a module multiplication which allows us to state the main result of this section.
\begin{lem}
Let $M$ be a complete Riemannian  manifold of bounded geometry. 
We let $C^*(M,A)$ and $\cbar(M,B)$ act on $\cH:=L^2(M,(A\hatotimes\widehat\ell^2)\hatotimes (B\hatotimes\widehat\ell^2))$ in the obvious way.
Then 
\begin{align*}
\mathfrak{m}_{A,B}\colon C^*_{/\K}(M,A)\hatotimes\cfrak(M,B)&\to C^*_{/\K}(M,A\hatotimes B)
\\\overline{T}\hatotimes \overline{f}&\mapsto\overline{Tf}
\end{align*}
defines a $*$-homomorphism. Furthermore, the following associativity diagram commutes up to homotopy:
\[\xymatrix@C=3.5em{
C^*_{/\K}(M,A)\hatotimes\cfrak(M,B)\hatotimes\cfrak(M,C) \ar[r]^-{\id\hatotimes\nabla_{B,C}} \ar[d]_{\mathfrak{m}_{A,B}\hatotimes\id}
&C^*_{/\K}(M,A)\hatotimes\cfrak(M,B\hatotimes C)\ar[d]^{\mathfrak{m}_{A,B\hatotimes C}}
\\C^*_{/\K}(M,A\hatotimes B)\hatotimes\cfrak(M,C) \ar[r]_-{\mathfrak{m}_{A\hatotimes B,C}}
&C^*_{/\K}(M,A\hatotimes B\hatotimes C)
}\]
\end{lem}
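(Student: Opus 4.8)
The plan is to check first that $\mathfrak{m}_{A,B}$ is well-defined as a $*$-homomorphism and then to verify the associativity diagram by a direct comparison of the two composites, reducing everything to a statement about representatives and using a homotopy to absorb the discrepancy. The two $C^*$-algebras $C^*(M,A)$ and $\cbar(M,B)$ act on $\cH$ by commuting representations (one acts on the $L^2\hatotimes A\hatotimes\widehat\ell^2$ tensor leg via the Roe picture, the other by pointwise multiplication with $B\hatotimes\widehat\K$-valued functions), so the prescription $\overline T\hatotimes\overline f\mapsto\overline{Tf}$ certainly defines a $*$-homomorphism out of $C^*(M,A)\hatotimes\cbar(M,B)$ into $\B_{A\hatotimes B}(\cH)$, and its image lies in $C^*(M,A\hatotimes B)$: if $T$ has finite propagation $R$ and is locally compact, then $Tf$ still has propagation $\le R$ (multiplication operators have propagation $0$), and it is locally compact because for $g\in C_0(M)$ we have $Tf\cdot\rho(g)=T\rho(g)f$ up to absorbing the propagation as in the proof of Lemma~\ref{lem:Eclassasymptoticmorphism}, writing $fg\in C_0(M)\hatotimes B\hatotimes\widehat\K$ as a norm-convergent sum $\sum f_m\hatotimes T_m$ and using $T\rho(f_m)\in\K_A$. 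Thus $\mathfrak{m}_{A,B}$ lands in $C^*(M,A\hatotimes B)$; to see that it descends modulo $\K$, note that if $T\in\K(M,A)$ or $f\in C_0(M,B\hatotimes\widehat\K)$ then $Tf$ is already $A\hatotimes B$-compact, so the ideal $\K(M,A)\hatotimes\cbar(M,B)+C^*(M,A)\hatotimes C_0(M,B\hatotimes\widehat\K)$ maps into $\K(M,A\hatotimes B)$. (At this point one uses that for vanishing-variation $f$ the operator $Tf$ already differs from $fT$ by a compact operator, a fact which is not needed here but will reassure the reader that the formula is symmetric up to $\K$; it is the analogue of the commutator estimate in the proof of Lemma~\ref{lem:restriction}.)

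For the associativity square, both composites are $*$-homomorphisms out of $C^*_{/\K}(M,A)\hatotimes\cfrak(M,B)\hatotimes\cfrak(M,C)$, so it suffices to compare them on elementary tensors $\overline T\hatotimes\overline f\hatotimes\overline g$ lifted to $C^*(M,A)$, $\cbar^\infty(M,B)$, $\cbar^\infty(M,C)$ (dense by Lemma~\ref{lem:BG}). Going around the top, $\nabla_{B,C}$ sends $\overline f\hatotimes\overline g$ to the class of $x\mapsto f(x)\hatotimes g(x)$, and then $\mathfrak{m}_{A,B\hatotimes C}$ produces $\overline{T\cdot(f\hatotimes g)}$; going around the bottom we obtain $\overline{(Tf)\cdot g}=\overline{Tfg}$, where now $fg$ means the pointwise product in $(B\hatotimes\widehat\K)\hatotimes(C\hatotimes\widehat\K)$. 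These two operators differ only by which canonical identification $(B\hatotimes\widehat\K)\hatotimes(C\hatotimes\widehat\K)\cong (B\hatotimes C)\hatotimes\widehat\K$ is used inside the $C\hatotimes\widehat\K$ leg versus the combined leg, i.e.\ by an isometry $V$ of the relevant Hilbert spaces as in the stabilization discussion of Section~\ref{sec:Etheory}; since $\nabla_{B,C}$ is only unique up to homotopy, any two such choices are joined by a path of isometries $V_t$ (e.g.\ the standard rotation trick used already in the proof of Lemma~\ref{lem:coveringIsometries}), and adjoining by $V_t$ gives a homotopy of $*$-homomorphisms between the two composites. This establishes commutativity up to homotopy.

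The main obstacle I expect is purely bookkeeping: keeping straight which operator acts on which tensor leg of the four-fold tensor product Hilbert module $L^2(M)\hatotimes A\hatotimes\widehat\ell^2\hatotimes B\hatotimes\widehat\ell^2\hatotimes C\hatotimes\widehat\ell^2$, and verifying that the reshuffling isomorphisms needed to interpret $\mathfrak{m}_{A\hatotimes B,C}$ and $\mathfrak{m}_{A,B\hatotimes C}$ are indeed related by a path of isometries (this is where bounded geometry enters, only insofar as it is part of the standing hypothesis guaranteeing that $C^*(M,-)$ and $\cbar(M,-)$ are the right algebras and that Lemma~\ref{lem:BG} applies so that smooth representatives are available). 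The Roe-algebra side of the argument is then routine: finite propagation and local compactness are preserved because multiplication operators have zero propagation, exactly as in the earlier lemmas, so no new analytic input beyond what is already in Section~\ref{sec:dividingoutcompacts} is required.
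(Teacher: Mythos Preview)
Your proposal contains a genuine gap at the very first step: the claim that $C^*(M,A)$ and $\cbar(M,B)$ act on $\cH$ by \emph{commuting} representations is false. Both representations use the $L^2(M)$ tensor leg---$T$ acts on $L^2(M)\hatotimes A\hatotimes\widehat\ell^2$ via the Roe picture, while $f$ acts by pointwise multiplication on $L^2(M)\hatotimes B\hatotimes\widehat\ell^2$---so they share the spatial variable and do not graded-commute on the nose. Concretely, if $T$ has propagation $R>0$ and $f$ is non-constant, then $Tf\neq(-1)^{\partial T\partial f}fT$ in general. Without graded commutativity of the images, the universal property of the maximal graded tensor product does not give you a $*$-homomorphism out of $C^*(M,A)\hatotimes\cbar(M,B)$, and multiplicativity of $T\hatotimes f\mapsto Tf$ fails before passing to quotients.

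The fact you parenthetically dismiss as ``not needed here''---that $[T,f]$ is compact for vanishing-variation $f$---is precisely the missing ingredient, and it is the main analytic content of the lemma. The paper proves it by an explicit estimate: approximate $f$ by a step function $\check f=\sum_y f(y)\chi_y$ subordinate to a bounded-geometry cover $\{B_r(y)\}_{y\in Y}$, write $[T,\check f]=\sum_{d(y,z)\le R+2r}\chi_yT\chi_z(f(z)-f(y))$, and use vanishing variation together with the uniform bound $K_{r,R+2r}$ on the number of overlapping balls to show that this strongly convergent sum of compacts actually converges in norm (via a quadratic--arithmetic mean inequality). This is where bounded geometry is genuinely used, not merely as a standing hypothesis for Lemma~\ref{lem:BG}. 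Once $[T,f]\in\K$ is established, the map descends to a $*$-homomorphism on the quotients as you intended. Your treatment of the associativity square is essentially the same as the paper's and is fine.
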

This lemma actually  needs only the metric space version of bounded geometry. Furthermore, we note that if one prefers working with the (unstable) Higson corona (without coefficients) instead of the stable Higson corona with coefficients, then bounded geometry is in fact unnecessary. This follows by modifying our proof using the idea of the proof of \cite[Proposition 5.18]{RoeCCITCRM}. However, this idea is not compatible with stabilization and the presence of  coefficient $C^*$-algebras.
\begin{proof}
We have to prove that the graded commutators $[T,f]$ are compact. Assume that $T$ and $f$ are of homogeneous degree and that $T$ has propagation bounded by $R>0$. 
Choose $Y\in\mathcal{Y}_r$ and let $M=\bigcupdot_{y\in Y} Z_y$ be a decomposition of $M$ into Borel subsets $Z_y\subseteq B_r(y)$.
Denote the characteristic function of $Z_y$ by $\chi_y$
and define
$\check f:=\sum_{y\in Y}f(y)\chi_y$. As before, $f-\check f$ vanishes at infinity and therefore $T(f-\check f)$ and $(f-\check f)T$ are compact. It is thus sufficient to prove that the graded commutators $[T,\check f]$ are compact.
We can write these graded commutators as strongly converging sums of compact operators as follows:
\begin{align}
[T,\check f]&=T\sum_{z\in Y}f(z)\chi_z-(-1)^{\partial T\partial f}\sum_{y\in Y}f(y)\chi_yT\nonumber
\\&=\sum_{y,z\in Y}\chi_yT\chi_z(f(z)-f(y))\nonumber
\\&=\sum_{\substack{y,z\in Y\\d(y,z)\leq R+2r}}\underbrace{\chi_yT\chi_z(f(z)-f(y))}_{\in\K(\cH)}\,.\label{eq:comutatorcompact}
\end{align}
Here we have used that $\chi_yT\chi_z$ vanishes if $d(y,z)>R+2r$.

Let $\varepsilon>0$ and choose $L\subseteq Y$ finite and with the property that $\operatorname{Var}_{R+2r}(y)<\varepsilon$ for all $y\in Y\setminus L$. For  $v\in\cH$ and $v_y:=\chi_zv$ we calculate
\begin{align*}
\left\|\sum_{\substack{y,z\in Y\setminus L\\d(y,z)\leq R+2r}}
\right.&\left.\vphantom{\sum_{\substack{y,z\in Y\setminus L\\d(y,z)\leq R+2r}}}
\chi_yT\chi_z(f(z)-f(y))v\right\|^2=
\\
&=\sum_{y\in Y\setminus L}\left\|\sum_{\substack{z\in Y\setminus L\\d(y,z)\leq R+2r}}\chi_yT(f(z)-f(y))v_z\right\|^2
\\&\leq\sum_{y\in Y\setminus L}\left(\sum_{\substack{z\in Y\setminus L\\d(y,z)\leq R+2r}}\|T\|\cdot\varepsilon\cdot\|v_z\|\right)^2
\\&\leq\sum_{y\in Y\setminus L}K_{r,R+2r}\cdot\sum_{\substack{z\in Y\setminus L\\d(y,z)\leq R+2r}}\|T\|^2\cdot\varepsilon^2\cdot\|v_z\|^2
\\&\leq K_{r,R+2r}^2\cdot\|T\|^2\cdot\varepsilon^2\cdot \sum_{z\in Y\setminus L}\|v_z\|^2
\\&\leq K_{r,R+2r}^2\cdot\|T\|^2\cdot\varepsilon^2\cdot\|v\|^2\,.
\end{align*}
The second inequality was the inequality between arithmetic and quadratic mean, as the sum over the $z$ within the parenthesis had at most $K_{r,R+2r}$ summands.
For $\varepsilon\to0$ this norm inequality implies that
the sum \eqref{eq:comutatorcompact} converges even in  norm and is therefore compact.

It remains to show the commutativity up to homotopy of the diagram. Note that both compositions map 
\[\overline{T}\hatotimes\overline{f}\hatotimes\overline{g}\in C^*_{/\K}(M,A)\hatotimes\cfrak(M,B)\hatotimes\cfrak(M,C)\]
to the element of $C^*_{/\K}(M,A\hatotimes B\hatotimes C)$ represented by the operator $Tfg$ acting on 
\[\cH':=L^2(M,(A\hatotimes\widehat\ell^2)\hatotimes (B\hatotimes\widehat\ell^2)\hatotimes (C\hatotimes \widehat\ell^2))\]
in the obvious way. However, they are not equal, as $\cH'$ is identified with $L^2(M,A\hatotimes B\hatotimes C\hatotimes\widehat\ell^2)$ in two different ways, via two different isometric isomorphisms $\widehat\ell^2\hatotimes\widehat\ell^2\hatotimes \widehat\ell^2\cong\widehat\ell^2$. Therefore, the diagram commutes only up to homotopy.
\end{proof}

\begin{cor}
There are multiplications
\[(\mathfrak{m}_{A,B})_*\colon K_*(C^*_{/\K}(M,A))\hatotimes K_*(\cfrak(M,B))\to K_*(C^*_{/\K}(M,A\hatotimes B))\]
which are associative with respect to the multiplication
of \eqref{eq:ringstructure}.
In particular, $K_*(C^*_{/\K}(M,A))$ is a (right) module over the ring $K_*(\cfrak(M,\C))$. We denote all these multiplications by the infix notation $x\cdot y:=(\mathfrak{m}_{A,B})_*(x\hatotimes y)$.
\end{cor}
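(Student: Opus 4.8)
The plan is to deduce the corollary formally from the preceding lemma together with the standard properties of $K$-theory, viewed as $K(-)=E(\C,-)$, as a functor on graded $C^*$-algebras.

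First I would define the multiplication. The exterior product on $E$-theory from Section \ref{sec:Etheory} specialises, by setting the first variables equal to $\C$, to a natural, biadditive pairing $K_*(P)\hatotimes K_*(Q)\to K_*(P\hatotimes Q)$ for arbitrary graded $C^*$-algebras $P,Q$; since $\mathbf{E}$ is a monoidal category, this pairing is associative and compatible with the symmetry isomorphism. Taking $P=C^*_{/\K}(M,A)$ and $Q=\cfrak(M,B)$ and postcomposing with the homomorphism induced by the $*$-homomorphism $\mathfrak{m}_{A,B}$ of the previous lemma yields the composite
\[
K_*(C^*_{/\K}(M,A))\hatotimes K_*(\cfrak(M,B))\to K_*(C^*_{/\K}(M,A)\hatotimes\cfrak(M,B))\xrightarrow{(\mathfrak{m}_{A,B})_*} K_*(C^*_{/\K}(M,A\hatotimes B))\,,
\]
which I take as the definition of $(\mathfrak{m}_{A,B})_*$; it is biadditive, being a composition of a biadditive map with a group homomorphism, and hence factors through the tensor product as required.

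Next I would verify associativity. Apply the functor $K_*$ to the associativity diagram of the previous lemma; since homotopic $*$-homomorphisms induce equal maps on $K$-theory, the resulting square commutes strictly. Splicing this square together with the associativity and naturality of the exterior product on $K$-theory, and using that by construction $\nabla_{B,C}$ induces the ring multiplication \eqref{eq:ringstructure}, produces the commuting associativity square in the statement; concretely, $(x\cdot y)\cdot z=x\cdot(y\cdot z)$ after the evident identification $(A\hatotimes B)\hatotimes C\cong A\hatotimes(B\hatotimes C)$. The module claim is then the special case $B=C$ specialised via $A\hatotimes\C\cong A$: the multiplication $(\mathfrak{m}_{A,\C})_*$ together with the associativity just established makes $K_*(C^*_{/\K}(M,A))$ a right module over the graded-commutative ring $\bigl(K_*(\cfrak(M,\C)),\cdot\bigr)$ noted earlier.

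I expect the main — and essentially only — obstacle to be bookkeeping: one must be careful that the exterior product on $K$-theory is genuinely associative and natural (this is inherited from the monoidal structure of $\mathbf{E}$, so no real work is needed) and one must keep track of the several isometric identifications $\widehat\ell^2\hatotimes\widehat\ell^2\cong\widehat\ell^2$ hidden inside $\mathfrak{m}$. These identifications are precisely the reason the previous lemma only yields a homotopy-commutative diagram, but since $K$-theory is homotopy invariant this causes no difficulty at the level of $K$-groups.
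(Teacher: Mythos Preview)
Your proposal is correct and is exactly the argument the paper has in mind: the corollary is stated without proof in the paper, as an immediate consequence of the preceding lemma, and your write-up simply spells out the standard passage from a $*$-homomorphism (and a homotopy-commutative associativity square) to the induced $K$-theory product via the exterior product in $\mathbf{E}$. The only wrinkle worth tightening is the phrase ``the special case $B=C$'': for the module statement you want $B=C=\C$ in the associativity diagram together with the identification $A\hatotimes\C\cong A$, so that $(\mathfrak{m}_{A,\C})_*$ lands back in $K_*(C^*_{/\K}(M,A))$.
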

The fact that the module structure pops up  as a right module comes from the fact that we always twist Dirac operators with bundles from the right.

We can now formulate and prove the coarse geometric version of ``calculating indices by module multiplication''. For a similar result in foliation index theory, see \cite{WulffFoliations}.

\begin{thm}\label{mainthm:moduleindexformula}
Let $M$ be a complete Riemannian manifold of bounded geometry, $D$ the Dirac operator of a Dirac $A$-bundle $S\to M\setminus K$ and $E\to M\setminus K$ a
$B$-bundle of vanishing variation defined outside of the compact subset $K\subseteq M$.
Then 
\[\ind_{/\K}(D_E)=\ind_{/\K}(D)\cdot \llbracket E\rrbracket_{\cfrak}\,.\]
\end{thm}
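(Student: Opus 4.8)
The plan is to chain together the results of the previous two sections. Start from the modulo-$\K$ index formula of Theorem~\ref{mainthm:modKindexformula}, which already gives
\[\ind_{/\K}(D_E)=\llbracket D;B\rrbracket_{/\K/C_0}\circ\llbracket E\rrbracket_{/C_0}\,,\]
valid because a $B$-bundle of vanishing variation is in particular a $B$-bundle of bounded variation defined outside of a compact set, and since $M$ has bounded geometry Lemma~\ref{lem:BG} (via its corollary) gives the inclusion $\iota_B\colon\cfrak(M,B)\hookrightarrow\cA_{/C_0}(M,B)$ sending $\llbracket E\rrbracket_{\cfrak}$ to $\llbracket E\rrbracket_{/C_0}$. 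So the task reduces to showing that, after restricting the first factor along $\iota_B$, the $E$-theory product with the fundamental class $\llbracket D;B\rrbracket_{/\K/C_0}$ agrees with the module multiplication $(\mathfrak{m}_{A,B})_*$ applied to $\ind_{/\K}(D)\hatotimes\llbracket E\rrbracket_{\cfrak}$. In symbols, the key identity to prove is
\[\llbracket D;B\rrbracket_{/\K/C_0}\circ\iota_B=\mathfrak{m}_{A,B}\circ(\ind_{/\K}(D)\hatotimes\id_{\cfrak(M,B)})\]
as maps $\cfrak(M,B)\to C^*_{/\K}(M,A\hatotimes B)$ in the appropriate sense, after which composing both sides with $\llbracket E\rrbracket_{\cfrak}$ yields the theorem.

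First I would unwind the definitions on both sides at the level of asymptotic morphisms. The left-hand side: by Definition~\ref{def:modKindex} the class $\llbracket D;B\rrbracket_{/\K/C_0}$ is represented by $\phi\hatotimes\bar f\mapsto\overline{t\mapsto\overline{V^*\phi(t^{-1}\tilde D)fV}}$, and precomposing with $\iota_B$ just means restricting the argument $\bar f$ to lie in $\cfrak(M,B)$; the $E$-theory product with $\llbracket E\rrbracket_{\cfrak}$ then plugs in $f=P$ (a vanishing-variation projection) and $\phi(0)$-evaluates. On the other hand, $\ind_{/\K}(D)\in K(C^*_{/\K}(M,A))$ is represented by $\phi\mapsto\overline{t\mapsto\overline{\phi(t^{-1}D)}}$ (using $1_M$ and $\pi_{\tilde M,W,A}$, cf.\ Corollary~\ref{cor:indexprojection}), and $\mathfrak{m}_{A,B}$ multiplies this on the right by $\bar f$. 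So both sides produce the asymptotic morphism $\cS\hatotimes\cfrak(M,B)\to\fA(C^*_{/\K}(M,A\hatotimes B))$ with $\phi\hatotimes\bar f\mapsto\overline{t\mapsto\overline{\phi(t^{-1}D)\cdot f}}$ — the point being that $\phi(t^{-1}D)$ (a locally compact finite-propagation operator) and $f$ (vanishing variation) have a graded commutator that is compact, so that modulo $\K$ the order of composition and the bookkeeping of which tensor slot $f$ sits in become irrelevant. This compactness is exactly what the proof of the module-multiplication lemma established, and it is also what makes $\mathfrak{m}_{A,B}$ well-defined; the same estimate, combined with the commutator computation in the proof of Lemma~\ref{lem:Eclassasymptoticmorphism} (graded commutator of $\phi(t^{-1}D)$ with $f$ is $O(t^{-1}\|\grad f\|)$ in the bounded-variation case, and compact after passing to $C^*_{/\K}$ in the vanishing case), shows the two representing asymptotic morphisms are equal.

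The cleanest way to organize this is to avoid recomputing everything and instead invoke Theorem~\ref{mainthm:modKindexformula} as a black box, then prove the single commutativity statement that $\iota_B$ intertwines the "Roe-algebra fundamental class" picture with the "module multiplication" picture. Concretely: compose the diagram of Lemma~\ref{lem:comparemodKandnotmodK} with $\iota_B$ and with the inclusion-induced maps, observe that $\pi_{\tilde M,W,A}\circ\ind(\tilde D)=\ind_{/\K}(D)$ (Corollary~\ref{cor:indexprojection}), and note that multiplying a locally-compact finite-propagation operator $T$ on $\tilde M$ by an $\omega$-truncated vanishing-variation function before or after restricting to $W$ differs by a compact operator — the same truncation argument used in the proof of the second bullet of Lemma~\ref{lem:comparemodKandnotmodK}. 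The main obstacle, and the only place real care is needed, is the bounded-geometry estimate showing that $[\phi(t^{-1}D),f]$ is norm-compact when $f$ has vanishing variation: this requires the finite-propagation structure of $\phi(t^{-1}D)$ together with the counting bound $K_{r,R}<\infty$ from Lemma~\ref{lem:BGmainproperties}, exactly as in the proof of the module-multiplication lemma, and one must be slightly careful that $\phi(t^{-1}D)$ is only a \emph{norm limit} of finite-propagation operators, so the estimate is applied to the approximants and then passed to the limit. Everything else is diagram-chasing in $\mathbf{E}$ using associativity of the composition product and the already-established associativity of $\mathfrak{m}_{A,B}$ with~\eqref{eq:ringstructure}.
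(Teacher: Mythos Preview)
Your proposal is correct and follows essentially the same route as the paper: reduce via Theorem~\ref{mainthm:modKindexformula} and the inclusion $\iota_B$ to the key identity $\llbracket D;B\rrbracket_{/\K/C_0}\circ\iota_B=\mathfrak{m}_{A,B}\circ(\ind_{/\K}(D)\hatotimes\id_{\cfrak(M,B)})$, then verify this by writing out both sides as asymptotic morphisms $\cS\hatotimes\cfrak(M,B)\to\fA(C^*_{/\K}(M,A\hatotimes B))$ and observing that they differ by compact operators via the $\omega$-truncation argument of Lemma~\ref{lem:comparemodKandnotmodK}. One small remark: the bounded-geometry commutator estimate you flag as ``the main obstacle'' is needed only for $\mathfrak{m}_{A,B}$ to be well-defined (already done), not for comparing the two asymptotic morphisms here---the paper dispatches that comparison purely with the compact-support cutoff argument, since $V^*\phi(t^{-1}\tilde D)f\omega V$ and $V^*\phi(t^{-1}\tilde D)Vf$ differ by a term involving $f(1-\omega)\chi_W$, which is compactly supported.
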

\begin{proof}
We shall prove the formula
\begin{equation}\label{eq:proovingmoduleformula}
\llbracket D;B\rrbracket_{/\K/C_0}\circ \iota_{B}=\mathfrak{m}_{A,B}\circ(\ind_{/\K}(D)\hatotimes \id_{\cfrak(M,B)})\,,
\end{equation}
from which the claim follows by composing with $\llbracket E\rrbracket_{\cfrak}$ and applying Theorem \ref{mainthm:modKindexformula} and the definition of the module structure.

Let $W,\tilde M,\tilde S,\tilde D$ be as in the construction of the $E$-theory class of $D$. Then the index $\ind_{/\K}(D)$ is represented by the asymptotic morphism
\[\cS\to \fA(C^*_{/\K}(M,A))\,,\quad\phi\mapsto \overline{t\mapsto\overline{V_{\tilde M,W,A}^*\phi(t^{-1}\tilde D)V_{\tilde M,W,A}}}\]
and so the right hand side of \eqref{eq:proovingmoduleformula} is represented by
\[\cS\hatotimes\cfrak(M,B)\to \fA(C^*_{/\K}(M,A\hatotimes B))\,,\quad\phi\hatotimes\overline{f} \mapsto \overline{t\mapsto\overline{V_{\tilde M,W,A}^*\phi(t^{-1}\tilde D)V_{\tilde M,W,A}f}}\,.\]
The left hand side, on the other hand, is represented by the asymptotic morphism
\begin{align*}
\cS\hatotimes\cfrak(M,B)&\to \fA(C^*_{/\K}(M,A\hatotimes B)\,,
\\\phi\hatotimes\overline{f}&\mapsto \overline{t\mapsto\overline{ V_{\tilde M,W,A\hatotimes B}^*\phi(t^{-1}\tilde D)f\omega V_{\tilde M,W,A\hatotimes B}}}
\end{align*}
with $\omega$ as in Lemma \ref{lem:comparemodKandnotmodK}. The claim follows just like in the proof of Lemma \ref{lem:comparemodKandnotmodK}, because $V_{\tilde M,W,A}^*\phi(t^{-1}\tilde D)V_{\tilde M,W,A}f$ and $V_{\tilde M,W,A\hatotimes B}^*\phi(t^{-1}\tilde D)f\omega V_{\tilde M,W,A\hatotimes B}$ differ only by a compact operator.
\end{proof}

Emerson and Meyer invented the stable Higson corona as a dual to the Roe algebra. This duality emerges in the shape of a pairing
\[\tilde K_{-*+1}(\cfrak(X,\C))\times K_*(C^*(X,\C))\to\Z\]
which is compatible with assembly and co-assembly \cite[Theorem 6.1]{EmeMey}. In our context, i.\,e.\ after unreducing these groups using the $X^\to$-construction and focussing on the manifold case, the pairing can be described in terms of the module multiplication:
\begin{defn}\label{def:Pairings}
For a connected, complete Riemannian manifold $M$ of bounded geometry we define the pairing
\[K_*(C_{/\K}^*(M,\C))\hatotimes K_{-*+1}(\cfrak(M,\C))\to \Z\]
as the composition of the module multiplication with the connecting homomorphism
$K_{1}(C_{/\K}^*(M,\C))\to K_0(\widehat\K)\cong\Z$.
In more generality, the pairing can be defined as 
\[K_i(C_{/\K}^*(M,A))\hatotimes K_j(\cfrak(M,B))\to K_{i+j-1}(\K(M,A\hatotimes B))\,.\]
\end{defn}
These pairings are stabilized versions of the pairing of 
\cite{YuKIndices} and may be used to detect non-vanishing of indices.

Furthermore, for a non-compact, connected complete Riemannian manifold of bounded geometry, the short exact sequence \eqref{eq:modKseq1}, the product formula of Theorem \ref{mainthm:moduleindexformula} and the remark following Definition \ref{def:modKindex} immediately
yield the following interpretation of the index pairing:
\begin{thm}\label{thm:pairinginterpretation}
Let $M$ be a complete non-compact Riemannian manifold of bounded geometry, $D$ the Dirac operator of a Dirac $A$-bundle $S\to M\setminus K$ and $E\to M\setminus K$ a
$B$-bundle of vanishing variation defined outside of the compact subset $K\subseteq M$.
Then the pairing between $\ind_{/\K}(D)$ and $\llbracket E\rrbracket_{\cfrak}$ is an obstruction to the extendibility of $D_E$ to all of $M$.\qed
\end{thm}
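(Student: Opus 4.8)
The plan is to assemble three ingredients that are already in place: the identification of $\ind_{/\K}$ of an operator defined on all of $M$ with the image of the honest coarse index (the remark following Definition~\ref{def:modKindex}), the product formula $\ind_{/\K}(D_E)=\ind_{/\K}(D)\cdot\llbracket E\rrbracket_{\cfrak}$ of Theorem~\ref{mainthm:moduleindexformula}, and the short exact sequence~\eqref{eq:modKseq1} together with the description of the pairing in Definition~\ref{def:Pairings} as ``module multiplication followed by the connecting homomorphism''.

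First I would make the hypothesis precise: ``$D_E$ extends to all of $M$'' is to mean that the Dirac $A\hatotimes B$-bundle $S\hatotimes E\to M\setminus K$, together with its connection and Clifford action, is the restriction of a Dirac $A\hatotimes B$-bundle over all of $M$. Write $\widehat{D_E}$ for the associated Dirac operator; it is a genuine $A\hatotimes B$-linear Dirac operator on $M$ that restricts to $D_E$ off $K$. By the remark following Definition~\ref{def:modKindex}, applied with empty compact set, $\ind_{/\K}(D_E)=\ind_{/\K}(\widehat{D_E})$ is the image of $\ind(\widehat{D_E})\in K_*(C^*(M,A\hatotimes B))$ under the canonical map to $K_*(C^*_{/\K}(M,A\hatotimes B))$.

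Next, by Theorem~\ref{mainthm:moduleindexformula} one has $\ind_{/\K}(D_E)=\ind_{/\K}(D)\cdot\llbracket E\rrbracket_{\cfrak}$, and by Definition~\ref{def:Pairings} the pairing of $\ind_{/\K}(D)$ with $\llbracket E\rrbracket_{\cfrak}$ is precisely the connecting homomorphism $\partial$ of the $K$-theory sequence of the ideal $\K(M,A\hatotimes B)\subseteq C^*(M,A\hatotimes B)$ applied to this module product, hence equals $\partial\bigl(\ind_{/\K}(D_E)\bigr)$. Finally, exactness of~\eqref{eq:modKseq1} (which holds on each non-compact component of $M$, the compact components contributing nothing to $C^*_{/\K}$) says that the image of $K_*(C^*(M,A\hatotimes B))$ in $K_*(C^*_{/\K}(M,A\hatotimes B))$ is exactly the kernel of $\partial$; since $\ind_{/\K}(D_E)$ lies in that image by the previous paragraph, $\partial\bigl(\ind_{/\K}(D_E)\bigr)=0$. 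Contrapositively, if the pairing between $\ind_{/\K}(D)$ and $\llbracket E\rrbracket_{\cfrak}$ is nonzero, then $D_E$ admits no extension to $M$.

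The argument is a short diagram chase with no analytic content of its own; the only point demanding a little care is to confirm that the notion of ``extending $D_E$'' relevant here produces an honest Dirac $A\hatotimes B$-bundle over all of $M$ (so that the remark after Definition~\ref{def:modKindex} genuinely applies, rather than merely over a neighbourhood of infinity), and to keep the degree shifts in Definition~\ref{def:Pairings} and in the six-term sequence consistent.
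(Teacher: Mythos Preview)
Your proof is correct and follows exactly the approach the paper indicates: the sentence immediately preceding the theorem names precisely the three ingredients you assemble---the short exact sequence~\eqref{eq:modKseq1}, the product formula of Theorem~\ref{mainthm:moduleindexformula}, and the remark following Definition~\ref{def:modKindex}---and the theorem is stated with a \qed and no further argument. Your write-up simply spells out the diagram chase the paper leaves implicit, including the small observation that the sequence~\eqref{eq:modKseq1} applies componentwise.
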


\section{Application to non-vanishing of indices}\label{sec:Application}
In this final Section we want to illustrate how the module structure can be used to prove the non-vanishing of indices. The main technical tool we use is the partitioned manifold theorem, see \cite[Theorem 4.4]{RoeITCGTM} for the case without coefficient $C^*$-algebras $A$ and \cite[Theorem 2.6]{Zadeh} for a version with coefficients.  
We need it in the following version, which can be proven by adapting the proof of \cite[Theorem 5.15]{ZeidlerPSCProductSecondary}.
\begin{thm}
Let $M$ be a complete Riemannian manifold and $N\subseteq M$ a connected compact codimension $1$ submanifold such that $M\setminus N$ decomposes into two open subsets $M^\pm\subseteq M$ with common boundary $\partial M^\pm=N$. Assume further that the metric of $M$ restricts to the product metric on a tubular neighborhood $N\times(-\varepsilon,\varepsilon)$ of $N$. Let $D_M$ be a $A\hatotimes\Cl_1$-linear Dirac operator over $M$ and $D_N$ a $A$-linear Dirac operator over $N$ such that $D_M$ restricts to $D_N\times D_{(-\varepsilon,\varepsilon)}$ over the tubular neighborhood. Then the Mayer--Vietoris connecting homomorphism
\[\partial_{MV}\colon K(C^*(M,A)\hatotimes \Cl_1)\to K(A)\]
maps $\ind(D_M)$ to $\ind(D_N)$.
\end{thm}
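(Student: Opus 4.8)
The plan is to reduce the statement to the known partitioned manifold theorem of \cite{RoeITCGTM}/\cite{Zadeh} via Bott periodicity, exactly as in the proof of \cite[Theorem 5.15]{ZeidlerPSCProductSecondary}. First I would recall that the $\Cl_1$-linear setup is equivalent, through Bott periodicity, to the ungraded one: using the canonical isomorphism $K(C^*(M,A)\hatotimes\Cl_1)\cong K_{*-1}(C^*(M,A))$ and $K(A\hatotimes\Cl_1)\cong K_{*-1}(A)$, a $\Cl_1$-linear Dirac operator $D_M$ over $M$ has an index which, under these identifications, agrees with the coarse index of the underlying ungraded (odd) Dirac operator. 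The same is true for $D_N$ on the compact manifold $N$, where the coarse index in $K(C^*(N,A))\cong K(A)$ is the classical $A$-index. So the claim becomes: the Mayer--Vietoris boundary map $K_0(C^*(M,A))\to K_0(A)$ (or $K_1\to K_1$, depending on parity) sends the coarse index of the ungraded Dirac operator $D_M$ to the $A$-index of $D_N$. This is precisely the content of the coefficient version of the partitioned manifold theorem.

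The key steps, in order, are: (1) set up the decomposition $M=M^+\cup_N M^-$ and the associated coarse Mayer--Vietoris sequence for the Roe algebras with coefficients, following \cite{HigsonPedersenRoe}; identify the connecting map $\partial_{MV}\colon K(C^*(M,A))\to K(C^*(N,A))\cong K(A)$, where the last isomorphism uses compactness of $N$ together with $C^*(N,A)\cong\K(L^2(N)\hatotimes A\hatotimes\widehat\ell^2)\cong A\hatotimes\widehat\K$. (2) Use the product structure of the metric on the tubular neighborhood $N\times(-\varepsilon,\varepsilon)$ and the hypothesis $D_M|_{N\times(-\varepsilon,\varepsilon)}=D_N\times D_{(-\varepsilon,\varepsilon)}$ to localize the index of $D_M$ near $N$; the functional calculus of the external product, in the form of Lemma~\ref{lem:externalproductfunctionalcalculus}, shows that $\exp(-t^{-2}D_M^2)$ factors as $\exp(-t^{-2}D_N^2)\hatotimes\exp(-t^{-2}D_{(-\varepsilon,\varepsilon)}^2)$ over the neighborhood. (3) Apply the ungraded partitioned manifold theorem with coefficients (\cite[Theorem 4.4]{RoeITCGTM} in the scalar case, \cite[Theorem 2.6]{Zadeh} with coefficients) to conclude that $\partial_{MV}(\ind(D_M))=\ind(D_N)$ in $K(A)$. (4) Translate back through the $\Cl_1 \leftrightarrow$ suspension dictionary to obtain the statement as phrased, checking that the Mayer--Vietoris boundary map is compatible with the Bott isomorphism (which is a standard naturality argument, since Bott periodicity is implemented by exterior multiplication with $\mathfrak b$ as in Theorem~\ref{thm:Bottperiodicity} and exterior products commute with connecting homomorphisms up to sign).

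The main obstacle I anticipate is Step (4): carefully tracking the identifications and signs so that the $\Cl_1$-linear Mayer--Vietoris boundary map really does correspond to the ungraded one under Bott periodicity, rather than to its negative or to a shifted version. One must verify that the collar/tubular-neighborhood product structure on the $\Cl_1$-side matches the suspension direction used in Bott periodicity --- i.e.\ that the ``extra'' $\Cl_1$ and the normal direction $(-\varepsilon,\varepsilon)$ to $N$ are being used consistently. A secondary technical point is that \cite{ZeidlerPSCProductSecondary} works with localization algebras and secondary invariants, so one has to extract just the primary statement and re-prove the relevant naturality of $\partial_{MV}$ directly for the Roe algebras with coefficients $C^*(M,A)$; this is routine but requires care about which isometries cover which coarse maps, in the spirit of Lemma~\ref{lem:coveringIsometries} and Lemma~\ref{lem:coarseFunctoriality}.
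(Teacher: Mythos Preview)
The paper does not actually give its own proof of this theorem: it merely states that the result ``can be proven by adapting the proof of \cite[Theorem 5.15]{ZeidlerPSCProductSecondary}'' and then moves on. So there is nothing detailed to compare against, and your proposal is entirely in the spirit of what the paper suggests.

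That said, your route is not quite the same as a direct adaptation of Zeidler. Zeidler's Theorem~5.15 works natively in the $\Cl_n$-linear framework (via localization algebras and the spectral picture of the index), and an adaptation would presumably stay in that framework throughout, computing the Mayer--Vietoris boundary map directly on the $\Cl_1$-linear index class. You instead propose to strip off the $\Cl_1$ via Bott periodicity, invoke the classical ungraded partitioned manifold theorem with coefficients (\cite{RoeITCGTM}, \cite{Zadeh}), and then translate back. Both approaches are valid; yours has the advantage of reducing to results already in the literature in their stated form, at the cost of the bookkeeping you flag in Step~(4). The direct $\Cl_1$-linear adaptation avoids that bookkeeping but requires reproducing more of Zeidler's argument. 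One caution on your Step~(3): the statements in \cite{RoeITCGTM} and \cite{Zadeh} are not phrased in terms of the coarse Mayer--Vietoris boundary map, so you will still need to identify their ``partitioned index'' map with $\partial_{MV}$; this is standard but is an additional step beyond a pure citation.
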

\begin{rem}
It might be worthwhile to also construct a lift of  this theorem from indices to fundamental $E$-theory classes, just like we lifted bordism invariance in Theorem \ref{thm:BordismInvariance}.
\end{rem}

Now let $M$ be a complete Riemannian manifold and $D$ a $A\hatotimes\Cl_1$-linear Dirac operator over the complement $M\setminus K$ of a compact subset. Choose $W,\tilde M,\tilde D$ as in the construction of $\ind_{/\K}(D_M)$.
We want to apply the partitioned manifold theorem to $\tilde M$ partitioned by $\partial W$, so we assume that $\partial W$ is connected. Metric and operator might have to be perturbed over a compact subset in order to be of product type over the tubular neighborhood, but this does not effect the construction of the index of $D_M$.

Denote by $C^*(W\subseteq\tilde M,A)$, $C^*(\tilde M\setminus W\subseteq\tilde M,A)$, $C^*(\partial W\subseteq\tilde M,A)$ the $C^*$-subalgebras of $C^*(\tilde M,A)$ generated by the operators whose support are contained in some $R$-neighborhood of $W$, $\tilde M\setminus W$, $\partial W$, respectively. They are ideals in  $C^*(\tilde M,A)$ and $C^*(\partial W\subseteq \tilde M,A)\cong A\hatotimes \widehat\K$, because $\partial W$ is compact and connected. 
One way of obtaining the Mayer--Vietoris boundary map is by composing functoriality under
\[C^*(\tilde M,A)\to \frac{C^*(\tilde M,A)}{C^*(\tilde M\setminus W\subseteq\tilde M,A)}
\cong\frac{C^*(W\subseteq\tilde M,A)}{C^*(\partial W\subseteq\tilde M,A)}\]
with the connecting homomorphism associated to the short exact sequence 
\[0\to C^*(\partial W\subseteq\tilde M,A) \to C^*(W\subseteq\tilde M,A)\to \frac{C^*(W\subseteq\tilde M,A)}{C^*(\partial W\subseteq\tilde M,A)}\to 0\,.\]

Now note that there is a canonical isomorphism 
\[\frac{C^*(W\subseteq\tilde M,A)}{C^*(\partial W\subseteq\tilde M,A)}\cong C_{/\K}^*(M,A)\]
and using it we see that $\partial_{MV}$ factors as the composition of $\pi_{\tilde M,W,A\hatotimes\Cl_1}=\pi_{\tilde M,W,A}\hatotimes\id_{\Cl_1}$ and the connecting homomorphism $\partial\colon K(C_{/\K}^*(M,A\hatotimes\Cl_1))\to K(A)$ which we already encountered in Definition \ref{def:Pairings}. Finally, Corollary \ref{cor:indexprojection} yields the following consequence of the partitioned manifold theorem:
\begin{lem}\label{lem:HalfofPartitionedManifold}
$\partial(\ind_{/\K}(D_M))=\ind(D_N)$.\qed
\end{lem}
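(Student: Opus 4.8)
The plan is to assemble the statement from three ingredients already in hand: the partitioned manifold theorem (the displayed theorem at the start of this section, applied to $\tilde M$ partitioned by $\partial W$), the factorization of the Mayer--Vietoris boundary map $\partial_{MV}$ that was just derived, and Corollary \ref{cor:indexprojection}. Since this is stated as a ``Lemma'' with no substantial new content beyond what precedes it, the proof is essentially a bookkeeping argument that chains these together.

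First I would invoke the partitioned manifold theorem for the complete manifold $\tilde M$ with the connected compact codimension one submanifold $\partial W$, partitioning $\tilde M$ into $W$ and $\tilde M\setminus W$ (after the harmless compact perturbation of metric and operator near $\partial W$ that makes everything of product type on a tubular neighborhood $\partial W\times(-\varepsilon,\varepsilon)$, which does not affect the index of $D_M$). Here $\tilde D$ plays the role of $D_M$ in the theorem and $D_N$ is the $A$-linear Dirac operator over $N=\partial W$ to which $\tilde D$ restricts. The theorem gives
\[
\partial_{MV}(\ind(\tilde D))=\ind(D_N)\,.
\]

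Next I would feed in the factorization established in the paragraphs immediately preceding the lemma: $\partial_{MV}$ equals the composition of functoriality under the isomorphism
\[
\frac{C^*(W\subseteq\tilde M,A)}{C^*(\partial W\subseteq\tilde M,A)}\cong C_{/\K}^*(M,A\hatotimes\Cl_1)
\]
precomposed with $\pi_{\tilde M,W,A\hatotimes\Cl_1}=\pi_{\tilde M,W,A}\hatotimes\id_{\Cl_1}$, and the connecting homomorphism $\partial\colon K(C_{/\K}^*(M,A\hatotimes\Cl_1))\to K(A)$ of Definition \ref{def:Pairings}. Thus $\partial_{MV}(\ind(\tilde D))=\partial\bigl((\pi_{\tilde M,W,A}\hatotimes\id_{\Cl_1})(\ind(\tilde D))\bigr)$. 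Finally, Corollary \ref{cor:indexprojection} identifies $\pi_{\tilde M,W,A\hatotimes\Cl_1}(\ind(\tilde D))$ with $\ind_{/\K}(D_M)$, so the right-hand side is $\partial(\ind_{/\K}(D_M))$, and combining with the partitioned manifold theorem gives $\partial(\ind_{/\K}(D_M))=\ind(D_N)$.

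Since all three inputs are already available, there is no genuine obstacle; the only point requiring care is checking that Corollary \ref{cor:indexprojection}, which was stated for $A$-linear operators, applies verbatim to the $A\hatotimes\Cl_1$-linear operator $D_M$ — but this is immediate since $A\hatotimes\Cl_1$ is itself an admissible coefficient algebra, and the identification $\pi_{\tilde M,W,A\hatotimes\Cl_1}=\pi_{\tilde M,W,A}\hatotimes\id_{\Cl_1}$ is compatible with the $\Cl_1$-factor throughout. One should also confirm that the compact perturbation near $\partial W$ leaves $\ind_{/\K}(D_M)$ unchanged, which follows because such a perturbation affects the asymptotic morphism \eqref{eq:modKasympmorph} only by terms supported on a compact set, hence by $\K(M,A\hatotimes\Cl_1)$, which is divided out.
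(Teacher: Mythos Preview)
Your proposal is correct and matches the paper's approach exactly: the lemma is stated with an inline \qedsymbol\ because its proof is the discussion immediately preceding it, namely the partitioned manifold theorem applied to $\tilde M$ partitioned by $\partial W$, the factorization $\partial_{MV}=\partial\circ\pi_{\tilde M,W,A\hatotimes\Cl_1}$, and Corollary~\ref{cor:indexprojection}. You have simply spelled out these three steps explicitly, with the additional care about the $\Cl_1$-coefficient and the compact perturbation being welcome but not strictly necessary elaborations.
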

This lemma is an important tool for proving non-vanishing of the index.

We conclude with the following concrete example, which shows how the lemma together with the module structure can be used to prove non-vanishing of indices:

\begin{exam}\label{ex:Hopfbundletwist}
Let $\slashed D$ be the unique $\Cl_3$-linear spinor Dirac operator over $\R^3$. We have $\partial(\ind_{/\K}(\slashed D))=0$, because $\slashed D$ is defined on all of $M$. Thus, $\partial$ alone is not sufficient for detecting the non-vanishing of the index.

Now let $H\to S^2$ be the two-dimensional vector bundle defining the nontrivial class in $K(S^2)$. Denote by $E\to\R^3\setminus\{0\}$ the pullback of $H$ under the radial projection. It satisfies the prerequisites of Theorem \ref{mainthm:moduleindexformula} and therefore
$\ind_{/\K}(\slashed D_E)=\ind_{/\K}(\slashed D)\cdot\llbracket E\rrbracket_{\cfrak}$ by module multiplication.

To perform the constructions above, we may take $W\subseteq \R^3$ to be the complement of the unit ball and modify the metric in a tubular neighborhood of $\partial W=S^2$ such that it is of product type. 
On this tubular neighborhood $S^2\times(-\varepsilon,\varepsilon)$, $\slashed D$ is equal to $\slashed D'\times D_{(-\varepsilon,\varepsilon)}$, where $\slashed D'$ is the Dirac operator associated to the spin structure bounding the disk.
Consequently, $\slashed D_E$ restricts to $\slashed D'_H\times D_{(-\varepsilon,\varepsilon)}$ and Lemma \ref{lem:HalfofPartitionedManifold} yields
$\partial(\ind_{/\K}(\slashed D_E))=\ind(\slashed D'_H)\in\Z$.

It is well-known that the latter is non-zero. Thus, $\ind_{/\K}(\slashed D_E)\not=0$ and by exploiting the module structure also $\ind_{/\K}(\slashed D)\not=0$. An alternative way of expressing this is to say that
$\langle\ind_{/\K}(\slashed D),\llbracket E\rrbracket_{\cfrak}\rangle=\partial(\ind_{/\K}(\slashed D_E))=\ind(\slashed D'_H)\not=0$ $\Rightarrow\ind_{/\K}(\slashed D)\not=0$.
\end{exam}


{
\bigskip

\noindent
Author's affiliation at the time of the research:

\noindent
\textsc{Instituto de Matem\'aticas (Unidad Cuernavaca), 
Universidad Nacional Aut\'onoma de M\'exico, 
Avenida Universidad s/n, Colonia Lomas de Chamilpa, 
62210 Cuernavaca, Morelos, Mexico}

\noindent
\textit{E-mail address:} \textsf{christopher.wulff@im.unam.mx}
}

{
\bigskip

\noindent
Current address:
\\
\textsc{
Mathematisches Institut, 
Georg--August--Universit\"at G\"ottingen,
Bunsenstr. 3-5, 
D-37073 G\"ottingen, 
Germany}

\noindent
\textit{E-mail address:} \textsf{christopher.wulff@mathematik.uni-goettingen.de}
}

\end{document}